\def\a{\alpha}
\def\b{\beta}
\def\Ga{\Gamma}
\def\de{\delta}
\def\De{\Delta}
\def\ep{\epsilon}
\def\la{\lambda}
\def\si{\sigma}
\def\Om{\Omega}
\def\nab{\nabla}
\def\varep{\varepsilon}
\def\DD{{\cal D}}
\def\DD{{\cal D}}
\newcommand{\N}[0]{\mathbb{N}}
\newcommand{\R}[0]{\mathbb{R}}
\newcommand{\Z}[0]{\mathbb{Z}}
\newcommand{\T}[0]{\mathbb{T}}
\newcommand{\bbE}[0]{{\mathbb E}}
\newcommand{\fr}[2]{\frac{#1}{#2}}
\newcommand{\ALI}[1]{\begin{align*} #1 \end{align*}}
\newcommand{\tx}[1]{\mbox{#1}}
\newcommand{\leqc}[0]{\lesssim}
\newcommand{\geqc}[0]{\gtrsim}
\newcommand{\pr}[0]{\partial}
\newcommand{\co}[1]{\| #1 \|_{C^0}}
\newcommand{\ctdcxa}[1]{\| #1 \|_{C_t {\dot C}_x^\a} }
\newcommand{\kotn}[0]{k_0(\T^n)}
\newcommand{\Dkdt}[1]{\fr{D_{\lesssim k}^{#1}}{\pr t^{#1}}}
\newcommand{\pleqkvcn}[0]{ P_{\leq k} v \cdot \nab }
\newcommand{\plkv}[0]{ P_{\leq k} v }
\newcommand{\lk}[0]{\leq k}
\newcommand{\Dlkdt}[0]{\fr{D_{\lk}}{\pr t}}
\newcommand{\ali}[1]{ \begin{align} #1 \end{align} }
\def\XXint#1#2#3{{\setbox0=\hbox{$#1{#2#3}{\int}$}
     \vcenter{\hbox{$#2#3$}}\kern-.5\wd0}}
\newtheorem{thm}{Theorem}[section]
\newtheorem{lem}{Lemma}[section]
\newtheorem{prop}{Proposition}[section]
\newtheorem{cor}{Corollary}[section]
\newtheorem{remk}{Remark}[section]
\newtheorem{ques}{Question}[section]
\theoremstyle{definition}
\newtheorem{defn}{Definition}[section]
\theoremstyle{remark}
\title{ Regularity in time along the coarse scale flow for the incompressible Euler equations }
\author{ Philip Isett\thanks{Department of Mathematics, Caltech, Pasadena, CA, USA. (\href{mailto:isett@caltech.edu}{isett@caltech.edu}).} }
\date{}
\begin{document}

\maketitle

\begin{abstract}
One of the most remarkable features of known nonstationary solutions to the incompressible Euler equations is the phenomenon known as the Taylor hypothesis, which predicts that fine scale features of the flow are advected by the mean velocity.  In this work, we develop an extensive theory of time regularity for Euler weak solutions in any dimension based on quantitative realizations of this idea.  

Our work provides the key estimates for showing that the particle trajectories of any Euler flow that is $C^\alpha$ in the spatial variables uniformly in time are of class $C^{1/(1-\alpha)}$ when $1/(1-\alpha)$ is not an integer, whether or not the trajectories or solutions are unique.  In particular, we prove the smoothness of trajectories in borderline spaces such as $v \in C^1$ or bounded vorticity in any dimension.   An essential point is the existence and improved regularity of advective derivatives of high order.
\end{abstract}

\begin{comment}
\begin{abstract}
One of the most remarkable features of known nonstationary solutions to the incompressible Euler equations is the phenomenon known as the Taylor hypothesis, which predicts that fine scale features of the flow are advected by the mean velocity.  In this work, we develop a time regularity theory for Euler weak solutions based on quantitative expressions of this idea. 

  We assume only that our velocity field is H\"{o}lder continuous in the spatial variables, which is well-motivated by considerations related to turbulence, but prohibits the application of methods based on a flow map or well-posedness theory. Despite the dramatic lack of well-posedness, we obtain a rich theory of regularity in time for solutions, especially concerning advective derivatives. In particular, any Euler flow that is $C^\alpha$ in the spatial variables uniformly in time has continuous advective derivatives of any order less than $\frac{\alpha}{1-\alpha}$, and every point has a trajectory passing through it that is of class $C^r$ for all $r < \frac{1}{1-\alpha}$, and one that is $C^\infty$ if $v$ is $C^1$ or $v \in \bigcap_{\alpha < 1} L_t^\infty C_x^\alpha$ has borderline regularity.  
In a follow up work, we show that all trajectories are of class $C^{1/(1-\alpha)}$ in time if $1/(1-\alpha) \notin {\mathbb Z}$, whether or not the trajectories are unique.
\end{abstract}
\end{comment}

\section{Introduction}

The present paper concerns weak solutions to the incompressible Euler equations
\begin{align} \label{eq:euSystem} \tag{E}
\left\{
 \begin{aligned} \pr_t v + \tx{div } v \otimes v + \nab p = 0  \\
 \tx{div } v = 0,
 \end{aligned}
\right.
\end{align}
which describe the motion of an ideal, incompressible fluid with velocity given by the vector field $v(t,x)$ and pressure given by the scalar function $p(t,x)$.  We assume only that our velocity field $v$ is H\"{o}lder continuous in the spatial variables uniformly in time ($v \in L_t^\infty C_x^\a$, $0 < \a < 1$), and therefore interpret the system \eqref{eq:euSystem} in the sense of distributions, although our results will also provide new estimates for $C^1$ solutions as well.  We will work in the periodic setting, so that $v : I \times \T^n \to \R^n$ and $p : I \times \T^n \to \R$.  Taking the divergence of \eqref{eq:euSystem}, one sees that the pressure is determined up to the addition of a scalar depending on time $C(t)$, which we will normalize so that $\int_{\T^n} p(t,x) dx = 0$ and hence $p = - \De^{-1} \pr_l \pr_j(v^j v^l)$.  We use the summation convention to indicate a sum over repeated indices.  Our assumption on $v$ implies continuity, in which case a weak solution is equivalently a continuous vector field and pressure $(v,p)$ that satisfy, for any any region $\Om \subseteq \T^n$ with smooth boundary $\pr \Om$ and interior unit normal vector $\vec{n}$, the conservation laws
\ALI{
\fr{d}{dt} \int_\Om v(t,x) dx = \int_{\pr \Om} p(t,x) \vec{n} d\si + \int_{\pr \Om} v(t,x) v\cdot \vec{n} d\si, \qquad \int_{\pr \Om} v(t,x) \cdot \vec{n} d\si = 0,
}
which express the balance of momentum and balance of mass in $\Om$ at any time $t \in I$.% for any region $\Om \subseteq \T^n$ with smooth boundary $\pr \Om$ and interior unit normal vector $n$.

In this work and a subsequent paper \cite{isettTime2} we aim to prove the following theorem:
\begin{thm} \label{thm:trajectReg} Suppose $0 < \a < 1$ with $1/(1-\a) \notin \Z$.  If $v$ is a weak solution to Euler of class $v \in L_t^\infty C_x^\a$, then every particle trajectory of $v$ is of class $C_t^{1/(1-\a)}$ in time.
\end{thm}
This theorem improves on the classical result of Chemin \cite{cheminReg} that particle trajectories are smooth when the solution is $C^{1,\alpha}$.  The main difference is that the equations are not well-posed in the regime of regularity we consider.

We now explain the background and motivation for this result and for further results we obtain below that are of relevance to turbulent flow.

\subsection{Background and Motivation}
Theories and experimental observations of turbulence in fluids, including the phenomenon of anomalous dissipation of energy in the zero viscosity limit and scaling laws predicted by the foundational theory of Kolmogorov \cite{K41}, motivate the study of H\"{o}lder continuous weak solutions to the Euler equations that may arise in the inviscid limit.  
%The study of H\"{o}lder continuous weak solutions to the Euler equations is motivated by theories and experimental observations of  turbulence in fluids, including the phenomenon of anomalous dissipation of energy in the zero viscosity limit and scaling laws predicted by the foundational theory of Kolmogorov \cite{K41}.  
More precisely, turbulent flows are modeled as solutions to the $3D$ Navier Stokes equations at high Reynolds number, meaning that the viscosity parameter $\nu$ is small relative to the characteristic velocity $V$ and length scale $L$ of the fluid.

%relative to the characteristic velocity and length scale of the fluid.  One can take units of time and space in which these characteristic lengths and velocities are of unit size, in which case the viscosity $\nu$ is small.

A famous prediction of Kolmogorov's theory of hydrodynamic turbulence, sometimes called the Kolmogorov-Obukhov law, 
%A celebrated prediction of Kolmogorov's theory of hydrodynamic turbulence (``Kolmogorov and Obukhov's law'') 
states that the differences in velocity for nearby particles in turbulent flows obey, when suitably averaged, a universal scaling law corresponding to the H\"{o}lder exponent $1/3$:% when taken ``on average'' in some appropriate sense\footnote{Here the brackets $\langle \cdot \rangle$ indicate some relevant type of averaging.}:
\begin{align}
 {\langle} |v(x + \De x) - v(x)|^p {\rangle}^{\fr{1}{p}} \sim C_p ~\varep^{\fr{1}{3}} |\De x|^{\fr{1}{3}}.  \tag{KO} \label{law:oneThird}
\end{align}
Another celebrated prediction of the theory is that the energy spectrum $E(\la)$, defined so that the energy between wavenumbers $\la_1 < \la_2$ equals $\int_{\la_1}^{\la_2} E(\la) d\la$, scales on average as  $E(\la) \sim \varep^{2/3} \la^{-5/3}$.

These laws are both derived by dimensional analysis from the basic principles of Kolmogorov's theory.  The theory asserts that the statistical properties of turbulent flows in the ``inertial range'' of length scales are independent of viscosity and are governed by the 
%viscosity parameter $\nu$ and the 
rate of energy dissipation (averaged over the measured ensemble), which for freely decaying turbulence equals $\varep = - \fr{d}{dt} \fr{1}{2} \int |v|^2 dx$. %, and that the coarse scale properties of the flow should be independent of viscosity \cite{K41}.  
A central postulate in this theory --- the hypothesis of anomalous dissipation, also known as the ``zeroth law of turbulence'' --- states that the rate of energy dissipation $\varep$ remains uniformly strictly positive in the zero viscosity limit $\nu \to 0$ of Navier-Stokes.  
%A central postulate in this theory is the hypothesis of anomalous dissipation, otherwise known as the zeroth law of turbulence, which states in one form that the rate of energy dissipation $\varep$ remains uniformly strictly positive in the zero viscosity limit $\nu \to 0$ of Navier-Stokes.  
The inertial range of length scales in which the Euler equations are expected to govern the evolution and \eqref{law:oneThird} is purported to hold ranges between the characteristic length scale and the small, Kolmogorov length scale, 
%$L \geqc | \De x | \geqc \left(\nu^3/\varep \right)^4$.  
$L \gg | \De x | \gg \left(\nu^3/\varep \right)^4$.  
Similarly, the $5/3$rds law is predicted to hold in the inertial range of wavenumbers 
%$L^{-1} \lesssim \la \lesssim \left(\nu^3/\varep \right)^{-4}$.  
$L^{-1} \ll \la \ll \left(\nu^3/\varep \right)^{-4}$.  
Thus \eqref{law:oneThird} and the 5/3rds law extend to all small length scales and large wavenumbers in the limit $\nu \to 0$, or equivalently, if one takes nondimensionalized units of space and time in which large characteristic velocities and length scales have unit size.

If laws such as \eqref{law:oneThird} or the 5/3rds law %for the energy spectrum\footnote{The energy spectrum $E(\la)$ is defined so that the energy contained between wavenumbers $\la_1 < \la_2$ is $\int_{\la_1}^{\la_2}E(\la) d\la$.  Kolmogorov's theory \cite{K41} predicts $E(\la) \sim \varep^{2/3} \la^{-5/3}$ in the inertial range of $\la$, which extends from the characteristic inverse length of the flow to $\la \lesssim \left(\nu^3/\varep \right)^{-4}$.} 
hold even as upper bounds for velocity fluctuations of solutions, they imply the existence of weak solutions to the Euler equations that arise as limits of convergent subsequences in the zero viscosity limit \cite{chenGlimm,drivasEyinksingLeray}.  These solutions would be guaranteed to have only fractional smoothness
%\footnote{Since the initial data is not assumed to be fixed in the zero-viscosity limit in this context, the existence of singular subsequential limits is entirely unrelated to the blowup question for classical Euler flows.} 
if they satisfy estimates similar to \eqref{law:oneThird} or the 5/3rds law in the inertial range.
%and  they must dissipate rather than conserve kinetic energy if energy dissipation is indeed uniform in viscosity. 
(The singularity of these subsequential limits is unrelated to the problem of blowup for classical solutions, since the initial data is viscosity dependent, being drawn from different realizations of fully developed turbulence at high Reynolds number.)  Furthermore, the limits must dissipate rather than conserve kinetic energy if energy dissipation is indeed uniform in viscosity.  
%The limiting solutions must dissipate rather than conserve kinetic energy if energy dissipation is indeed uniform in viscosity, and they would be guaranteed to have only fractional smoothness if they satisfy estimates similar to \eqref{law:oneThird} in the inertial range.  
%when suitably interpreted as an upper bound on absolute structure functions, laws such as \eqref{law:oneThird} or Kolmogorov's 5/3 law for the energy spectrum imply convergence along subsequences in the zero viscosity limit to weak Euler flows \cite{chenGlimm,constantin2018remarks,drivasEyinksingLeray}, and the limits must dissipate rather than conserve kinetic energy if energy dissipation is indeed independent of viscosity. 
  %Thus, if these laws were to persist in the $0$ viscosity limit, the theory suggests the existence of energy-dissipating solutions to \eqref{eq:euSystem} obeying the $1/3$ law \eqref{law:oneThird}.  
Experimental measurements of turbulent fluid flows give strong evidence for the presence of anomalous dissipation\footnote{The evidence generally considers turbulence with an external force present to sustain the turbulence; see \cite{drivasEyinksingLeray,vassilicos2015dissipation} for recent reviews.}
 and suggest that the $1/3$ law \eqref{law:oneThird} for absolute structure functions may hold at least for %\footnote{For $p = 2$, the law \eqref{law:oneThird} carries a special significance as it gives a physical space expression of the Kolmogorov $5/3$-law for the energy spectrum. } 
$p = 3$, while moments of fourth order and higher %measurably 
tend to be even larger than (\ref{law:oneThird}) predicts due to a phenomenon known as intermittency \cite{frischIntm, van1972statistical}.  All these considerations give motivation to consider solutions to the Euler equations with fractional regularity, even if turbulent flows themselves are more exactly modeled by regular Navier-Stokes solutions with a small viscosity.  %Note that the initial data is not assumed to be fixed in the zero-viscosity limit in this context, but rather arises from different realizations of fully developed turbulence at high Reynolds number, so the existence of singular subsequential limits is unrelated to the question of singularity formation for classical Euler flows.  
%Although these remarks concern turbulence modeled by the Navier-Stokes equation, we note that the law \eqref{law:oneThird} (and similar predictions such as Kolmogorov's 5/3rds law for the energy spectrum of a turbulent flow) imply convergence along subsequences in the zero viscosity limit to weak Euler flows when suitably interpreted as an upper bound \cite{chenGlimm,constantin2018remarks,drivasEyinksingLeray}.  
%there is a phenomenon of intermittency leading to moments of fourth order and higher which are measurably larger than predicted by (\ref{law:oneThird}), see \cite{frischIntm, van1972statistical}.  
 For more general discussion of turbulence we refer to \cite{landau1959course,frisch}.

Our main results are related to an important and classical phenomenon in turbulence known as the Taylor hypothesis, which is the assumption that, in a turbulent flow, the small scale fluctuations in velocity are carried by the mean flow.  
%The hypothesis plays a role in experimental observations of turbulence since it implies that fluctuations in velocity in time at a fixed point in space are comparable to the size of spatial fluctuations in velocity because the time fluctuations capture small scale spatial fluctuations being carried by the mean flow through the point of measurement\footnote{See \cite{zaman1981taylor,moin2009revisiting} for empirical tests of the Taylor hypothesis.}.
This assumption has played an important role in many experimental studies\footnote{See \cite{zaman1981taylor,moin2009revisiting} for more recent empirical tests of the Taylor hypothesis.} of turbulence: It implies that a stationary measurement of variations in velocity will pick up ``frozen'' spatial oscillations in velocity that are carried through the measuring point by the mean velocity, implying that time variations in velocity at a fixed position are comparable to spatial variations.
%This assumption plays a role in experimental studies of turbulence since it implies that time variations in velocity at a fixed position are comparable to spatial variations -- namely, a stationary measurement of changes in velocity captures spatial fluctuations in velocity that are carried through the measurement point by the mean velocity\footnote{See \cite{zaman1981taylor,moin2009revisiting} for empirical tests of the Taylor hypothesis.}. 
Mathematically, this idea suggests that an Euler flow with a given H\"{o}lder regularity in space will have the same H\"{o}lder regularity in time, as pointed out by \cite{frischRide}.  The present paper gives the first proof of this result, namely that a solution that is $C^\a$ in space uniformly in time $v \in L_t^\infty C_x^\a$ is also of class $C^\a$ jointly in space and time $v \in C_{t,x}^\a$ (see Theorem~\ref{thm:timeRegBounds}).  This result turns out to be only the beginning of an extensive theory of time regularity with the Taylor hypothesis at the heart of the matter.  

The central observation driving our results is a type of improved regularity in the advective derivative $(\pr_t + v \cdot \nab)$, which in the classical picture would be the time derivative along the flow, as compared to the time derivative $\pr_t$.  In fact, we show in Theorem~\ref{thm:genThm} that solutions of class $v \in L_t^\infty C_x^\a$ possess continuous advective derivatives of a high order that approaches infinity as the H\"{o}lder exponent $\a$ increases to $1$.  In contrast, the pure time derivative $\pr_t v$ is not even continuous.  Prior to this work, improved regularity of the advective derivative has been obtained only for more regular (e.g. $C^{1,\a}$) solutions in connection to the celebrated theory of smoothness %and analyticity 
of particle trajectories initiated in \cite{chemin2d, cheminReg}, 
%,katoBdrySmooth,serfati2d, gamblin, sueur,serfatiAnalytic,gamblin, glassSueTak, sueur,shnAnalytic,frischRide,constantin2015analyticity}, 
which we review further below.  Moreover, even in the context of classical solutions, the existing results are far from obtaining sharp, quantitative bounds on the advective derivatives, which is a problem that we address in what follows.
%For classical solutions of class $C^{1,\a}$, the improved regularity of the advective derivative has been understood in connection the smoothness and analyticity of their particle trajectories, which is a celebrated subject that we review below.  

For solutions of class $C^\a$, the regularity theory is much more delicate due to the lack of well-posedness or even uniqueness in this regime and the lack of Lagrangian coordinates.  Our key insight is that every frequency level of the solution moves along local, coarse scale averages of the velocity field at a specific {\it time scale} that is dictated by the regularity of the velocity field.  Analytically, we capture this time scale using a Littlewood-Paley analysis that begins with a more robust proof of the commutator estimate of \cite{CET} in Section~\ref{sec:ReynStressMatDv1}, continues through a network of further commutator estimates, and culminates in the inductive algebraic framework of Section~\ref{sec:theGeneralCase}.  We apply our estimates to give the first results on the particle trajectories of $C^\a$ solutions (Theorem~\ref{thm:smoothTraject}), including the smoothness of particle trajectories in cases of borderline function spaces such as $C^1(\R \times \R^3)$ that were previously unknown.  
In a companion paper \cite{isettTime2}, we show that all trajectories of $L_t^\infty C_x^\a$ solutions are of class $C^{1/(1-\a)}$ whenever $1/(1-\a)$ is not an integer, regardless of possible nonuniqueness of the trajectories.

%we extend our methods to obtain $C^{1/(1-\a)}$ time regularity for all trajectories whenever $1/(1-\a)$ is not an integer, regardless of their possible nonuniqueness.

%Connected to anomalous dissipation, a part of our work relates also to a
%Related to anomalous dissipation, there is a 
Our work also connects to a 
well-known conjecture of Onsager regarding turbulent energy dissipation, which states that weak solutions to the Euler equations with spatial H\"{o}lder regularity below $1/3$ may exhibit decreasing energy \cite{onsag} despite the fact that regular solutions %to \eqref{eq:euSystem} 
must conserve energy.  %\footnote{Rather than being based on dimensional analysis, Onsager's derivation of the exponent $1/3$ was based on the dynamical notion of a ``frequency cascade'' in which the primary mechanism behind the energy dissipation is the movement of energy to arbitrarily high wavenumbers (or small scales), made possible by the nonlinear term in \eqref{eq:euSystem}.  We refer to \cite{eyinkSreen, deLSzeCtsSurv} for further discussion and a review of Onsager's computations.}.  
As we will discuss further below, significant progress towards this conjecture has been made involving the construction of $C^\a$ solutions that fail to conserve energy \cite{deLSzeHoldCts}, \cite{isett} and \cite{deLSzeBuck}.  These examples of H\"{o}lder continuous solutions exhibit special time regularity properties that were part of the initial motivation for several results in the present paper.  We show here that many of these properties are consequences of the Euler equations as opposed to biproducts of the constructions.  % significant progress towards this conjecture has been made recently using the method of convex integration to construct $C^\a$ solutions to Euler which fail to conserve energy, but Onsager's conjecture currently remains out of reach.  
%At the same time, part of the present work aims to clarify how this regularity in time poses an obstruction to improving the progress towards Onsager's conjecture for the current methods of construction.  This obstruction will be elaborated in the concluding remarks of the paper.  
Following the original preprint release of this paper, these methods most recently have culminated in a proof of Onsager's Conjecture in three dimensions \cite{isettOnsag} (see also \cite{BDLSVonsag,isettEndpt}), though there exist other variants of the conjecture such as the two dimensional case that remain unsolved.  %The proof in \cite{isettOnsag} uses analytical ideas developed in the present work.

%In the conclusion of the paper, we discuss a conjecture for the Euler equations related to the above ideas and to Theorem \ref{thm:energyReg} below, which offers an explanation as to why energy dissipation should be a nongeneric and unstable phenomenon for solutions with regularity below $1/3$.  This conjecture suggests that the slightest departure from the $1/3$ scaling law will generically lead to the failure of energy dissipation.

%\begin{comment}
Onsager also stated that an Euler flow with H\"{o}lder regularity $v \in L_t^\infty C_x^\a$ in space must conserve energy if $\a > 1/3$.  This statement has been proven in \cite{CET} after a slightly weaker result was obtained in \cite{eyink}.  In \cite{energyCons}, the proof of energy conservation was extended to the critical Besov space $L_t^3 B_{3, c(\N)}^{1/3}$.  The space $B_{3, c(\N)}^{1/3}$ corresponds to a regularity just slightly better than the law \eqref{law:oneThird} with $p = 3$, while the power law \eqref{law:oneThird} with $p = 3$ corresponds instead to the critical Besov space $B_{3, \infty}^{1/3}$, where the proof of energy conservation fails to go through.  An example of \cite{eyink} that is extended in \cite{energyCons} shows that the space $B_{3, \infty}^{1/3}$ admits divergence free vector fields whose instantaneous energy flux is nonzero.  For this reason, energy dissipating solutions to Euler with $B_{3,\infty}^{1/3}$ spatial regularity have been considered as a natural setting for exhibiting anomalous dissipation and for containing turbulent solutions that may arise in the $0$ viscosity limit (see for example \cite{cheskidov2014euler} for a recent Littlewood-Paley approach to the theory of intermittency; see also the proof of Proposition~\ref{prop:firstLPpieceBds} as well as Section~\ref{sec:timeRegEnergy} below for more discussion).  

In the conclusion of the paper, we discuss a conjecture for the Euler equations related to the above ideas and to Theorem \ref{thm:energyReg} below, which offers an explanation as to why energy dissipation should be a nongeneric and unstable phenomenon for solutions with regularity below $1/3$.  This conjecture suggests that the slightest departure from the $1/3$ scaling law \eqref{law:oneThird} (in an $L^3$ based topology) will generically lead to the failure of energy dissipation.  %In these concluding remarks, we also discuss a conjecture for the Euler equations related to the above ideas and Theorem \ref{thm:energyReg} below, which offers an explanation as to why energy dissipation should be an unstable phenomenon for solutions with regularity below $1/3$.

With this background and motivation in hand, we state the main results of the paper.

\subsection{Statement of Results}

%We state in this Section the main results of the paper, which concern the regularity in time of solutions to the incompressible Euler equations in the class $v \in L_t^\infty C_x^\a$, and the improved regularity of the advective derivative $\pr_t + v \cdot \nab$.  

Before stating the theorems, recall that a weak solution to Euler of class $v \in L_{t,x}^2$ has a uniquely defined restriction $v(t_0,\cdot) \in \DD'(\T^n)$ on any fixed time slice $t_0$, with $v(t_0)$ weakly continuous in $t_0$.  If $v \in L_t^\infty C_x^\a$, $0 < \a < 1$, the weak continuity implies that $v(t_0) \in C^\a$ not only for almost every $t_0$, but also for all $t_0$, and although we may not have continuity in time with values in $C^\a$, we obtain via weak continuity the equality of norms %and the restriction to every time slice belongs to $C^\a$ and satisfies the equality
\ALI{
\| v \|_{C_tC_x^\a} \equiv \sup_{t \in I} \| v(t,\cdot) \|_{C^\a} = \| v \|_{L_t^\infty C_x^\a},
}
Using compactness of the embedding $C^\a(\T^n) \hookrightarrow C^0(\T^n)$ and weak continuity again, one can show that $v$ is continuous jointly in $(t,x)$.  We will henceforth always assume $v$ is taken to equal its continuous representative.

Our first theorem states that a solution in the class $v \in L_t^\infty C_x^\a$ for $0 < \a < 1$ must also have the same H\"{o}lder regularity in time, and that the pressure must be essentially twice as regular in both space and time.   
%In the statement of the theorem, we use implicitly the fact that, by weak continuity in time, a solution of class $v \in L_t^\infty C_x^\a$ can be redefined on a set of times with measure $0$ to be continuous.
\begin{thm} \label{thm:timeRegBounds}
Let $0 < \b < \a < 1$ and suppose that $(v,p)$ solve the incompressible Euler equations \eqref{eq:euSystem} 
%\begin{align} \label{eq:euSystem}
%\left\{
% \begin{aligned} \pr_t v + \tx{div } v \otimes v + \nab p = 0  \\
% \tx{div } v = 0
% \end{aligned}
%\right.
%\end{align}
in the sense of distributions $I \times \T^n$ for some torus $\T^n, n \geq 2$ and some open interval $I$, with $v \in L_t^\infty C_x^\a$.  %Suppose also that the norm $\| v \|_{C_tC_x^\a} = \sup_{t \in I} \| v(t,\cdot) \|_{C^\a} = \| v \|_{L_t^\infty C_x^\a}$ is finite.  %Theorems \ref{thm:timeRegBounds}-\ref{thm:energyReg} 
%in any of the theorems of this paper and can just as well assume $v \in L_t^\infty C_x^\a$ in all of our estimates.} and that the pressure is normalized to have integral $0$.  
Then $v \in C_{t,x}^\a$ is jointly $C^\a$ in both space and time.  If $2\b < 2 \a < 1$,  we furthermore have
%Then if $\b \leq 1/2$, we have
\begin{align}
 p(t,x) \in C_{t,x}^{2 \b} \cap L_t^\infty C_x^{2\a},
\end{align}
while if $1 < 2\b < 2\a$, we have
\begin{align}
% v(t,x) \in C_{t,x}^\a, \qquad  (\pr_t p, \nab p) \in C_{t,x}^{2 \b - 1}
\pr_t p \in C_{t,x}^{2 \b - 1}, \qquad \nab p \in C_{t,x}^{2 \b - 1} \cap L_t^\infty C_x^{2\a - 1}.
\end{align}
%and $\nab p \in L_t^\infty C_x^{2\a - 1}$.
\end{thm}
The $C^{2\a}$ regularity of the pressure can be viewed as a rigorous version of a classical heuristic prediction of the Kolmogorov theory, which is that, based on dimensional analysis, fluctuations in the pressure should scale as the square of fluctuations in the velocity (see e.g. \cite{landau1959course}).  Meanwhile, the $C_{t,x}^\a$ regularity for $v$ verifies the prediction in \cite{frischRide} that, based on the Taylor hypothesis, H\"{o}lder continuity in space should imply the same H\"{o}lder continuity in time for the velocity field.

 %In the statement of the theorem, we implicitly use the fact that any Euler flow of class $v \in L_t^\infty C_x^\a$ has a canonical restriction to {\it every} time-slice $t_0$ that is of class $v(t_0, \cdot) \in C^\a$ and obeys the bound $\| v(t_0) \|_{C^\a} \leq \| v \|_{L_t^\infty C_x^\a}$, since $v$ is weakly continuous in time.  We do not need to assume that $v$ is continuous in $t$ with values in $C^\a$.

 %conjectured in \cite{frischRide}.

\begin{comment}
The statement that H\"{o}lder continuity in space implies the same H\"{o}lder continuity in time verifies an intuitive consequence of the Taylor hypothesis.  That is, if one assumes that small scale oscillations in space are carried along coarse scale averages of the flow, then one intuitively expects that the H\"{o}lder regularity in time should be the same as the H\"{o}lder regularity in space \cite{frischRide}.  %The fact that the pressure has twice as much regularity, both in space and time, may be regarded as less intuitive given that the pressure is initially determined only up to an arbitrary function of time.
\end{comment}

%The method of the paper can also be extended to higher regularity in a straightforward way (for example, one can show that $p \in C_{t,x}^{2,\b}$ for $v \in C_t C_x^{1,\a}$ and $0 < \b < \a < 1$) although we do not pursue such an extension here.
Assuming further regularity on the velocity field leads to further regularity of advective derivatives, as the following theorem illustrates.
\begin{thm} \label{thm:higherMatDv}

If $\a > 2/3$, then $- \fr{D^2}{\pr t^2}v = \fr{D }{\pr t} \nab p = \pr_t \nab p + \tx{{\normalfont div} }( v \otimes \nab p )$, which is well-defined as a distribution by Theorem (\ref{thm:timeRegBounds}), is also continuous.
\end{thm}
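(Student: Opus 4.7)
By Theorem~\ref{thm:timeRegBounds}, the hypothesis $\a>2/3$ yields $v \in C^\a_{t,x}$ and $(\pr_t p, \nab p) \in C^{2\a-1}_{t,x}$, so using $\tx{div}\,v = 0$ the expression $Q := \pr_t \nab p + \tx{div}(v \otimes \nab p) = (\pr_t + v\cdot\nab)\nab p$ is a well-defined distribution.  My plan is to show $Q$ is continuous by proving $\sum_N \|P_N Q\|_{C^0}<\infty$ along a dyadic Littlewood--Paley decomposition.

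The splitting $P_N Q = (\pr_t + v\cdot\nab)(P_N \nab p) + [P_N, v\cdot\nab]\nab p$ reduces the problem to two estimates.  A Bony-paraproduct commutator estimate applied to $v \in C^\a$ and $\nab p \in C^{2\a-1}$ gives
\[
\|[P_N, v\cdot\nab]\nab p\|_{C^0} \lesssim 2^{N(2-3\a)}\, \|v\|_{C^\a}\, \|\nab p\|_{C^{2\a-1}},
\]
which is summable in dyadic $N$ precisely when $\a>2/3$.  This identifies the numerical origin of the hypothesis.

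For the remaining term $(\pr_t + v\cdot\nab)(P_N \nab p)$, I would compute $\pr_t P_N \nab p$ by substituting $\pr_t v = -v\cdot\nab v - \nab p$ into the pressure formula $P_N \nab p = -\nab\De^{-1}\pr_l\pr_j P_N(v^l v^j)$.  The cubic-in-$v$ part symmetrizes to a triple divergence $\pr_l\pr_j\pr_k P_N(v^l v^j v^k)$ using $\tx{div}\,v=0$, and combining with $v\cdot\nab P_N\nab p$ one commutes $v\cdot\nab$ past the Riesz-type operator $\nab\De^{-1}\pr\pr$, producing a sum of Calder\'on-type commutators $[\nab\De^{-1}\pr\pr,\,v\cdot\nab]P_N(v\otimes v)$, an outer commutator $\mathcal R [P_N, v\cdot\nab](v\otimes v)$ with $\mathcal R$ a zero-order Riesz operator, and $\mathcal R P_N(v\otimes\nab p)$.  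Bernstein together with paraproduct estimates then controls each of these terms at the critical rate $\lesssim 2^{N(2-3\a)}$, matching the outer commutator.  Summing then yields $\sum_N\|P_NQ\|_{C^0}<\infty$, so $Q\in C^0_{t,x}$.

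The main obstacle is not any individual estimate but the organization of nested commutators.  Taken naively, $\mathcal R P_N(v\otimes v)$ at frequency $N$ only satisfies the bound $2^{N(1-\a)}$, which diverges for every $\a<1$; the cancellation that promotes this into a genuine Calder\'on commutator with the improved exponent $3\a-2$ is provided exactly by the Euler substitution for $\pr_t v$, together with the symmetrization of the cubic term.  This is closely tied to the Constantin--E--Titi commutator estimate highlighted in the paper's abstract and presumably appearing earlier in the paper's infrastructure.
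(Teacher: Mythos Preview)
Your outer commutator estimate $\|[P_N,v\cdot\nab]\nab p\|_{C^0}\lesssim 2^{N(2-3\a)}$ is fine and identifies the right threshold. The gap is in your treatment of the main term $(\pr_t+v\cdot\nab)P_N\nab p$. Writing $T=\nab\De^{-1}\pr_l\pr_j$ (an order-$1$ operator) and carrying out exactly the substitution you describe, one gets
\[
(\pr_t+v\cdot\nab)P_N\nab p \;=\; [TP_N,\,v\cdot\nab](v\otimes v)\;+\;TP_N\big(v\otimes\nab p\big)_{\mathrm{sym}}.
\]
Neither summand obeys the bound $2^{N(2-3\a)}$ on its own. In the second term the Low--High paraproduct $(P_{\ll N}v)\otimes(\nab P_{\sim N}p)$ has size $\|v\|_{C^0}\cdot 2^{N(1-2\a)}$, and after $T$ (which costs $2^N$ at frequency $N$) this is $2^{N(2-2\a)}$, not summable for any $\a<1$. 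The commutator term carries a compensating piece of the same size, but your write-up does not isolate this cancellation. Your further splitting into a Calder\'on commutator $[\nab\De^{-1}\pr\pr,\,v\cdot\nab]P_N(v\otimes v)$ plus a \emph{zero-order} $\mathcal R$ acting on $[P_N,v\cdot\nab](v\otimes v)$ and on $P_N(v\otimes\nab p)$ loses a derivative somewhere: $\nab\De^{-1}\pr\pr$ is order~$1$, and for $v\in C^\a$ the commutator $[T,v^k]$ with an order-$1$ singular operator is not even bounded on $C^0$ when applied to functions that are not frequency-localized.

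The paper obtains the correct $2^{(2-3\a)k}$ bound by a different mechanism: it works with the \emph{coarse-scale} advective derivative $(\pr_t+P_{\leq k}v\cdot\nab)$ and, for the High--High part of $P_k p=\sum_{I\geq k}\pr_j\pr_l\De^{-1}P_k(P_Iv^j P_{\approx I}v^l)$, it passes at each level $I$ to the \emph{frequency-matched} derivative $(\pr_t+P_{\leq I}v\cdot\nab)$ and then uses the basic bound $\|(\pr_t+P_{\leq I}v\cdot\nab)P_Iv\|\lesssim 2^{(1-2\a)I}$ together with a commutator of the convolution operator with $P_{\leq I}v\cdot\nab$ (Proposition~\ref{prop:DdtnabDPkp}). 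This is precisely the cancellation your single Euler substitution cannot see: the pressure term $TP_N(v\otimes\nab p)$ you produce still contains $\nab p$, which is itself quadratic in $v$, and one more round of the equation is needed before the High--High contributions organize correctly. To repair your argument you would either have to iterate the substitution (which reproduces the paper's inductive structure) or adopt the frequency-matched advective derivative directly.
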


The time regularity properties for the velocity field stated in Theorems \ref{thm:timeRegBounds} and \ref{thm:higherMatDv} are particular examples of the following, more general theorem.
\begin{thm} \label{thm:genThm}
%If $\a > 1/2$ and $r$ is an integer satisfying $0 \leq r < \fr{2 \a - 1}{1-\a}$, then for $\si = 2 \a - 1 - r(1-\a)$
If $0 < \a < 1$ and $r$ is an integer satisfying $0 \leq r < \fr{\a}{1-\a}$, then for $\si = \a - r(1-\a)$
%\ali{
%\fr{D^{r-1}}{\pr t^{r-1}} \nab p = \fr{D^{r} v}{\pr t^{r}} &\in C_{t,x}^\si,  \qquad 
%}
\ali{
 \fr{D^{r} }{\pr t^{r}}v = - \fr{D^{r-1}}{\pr t^{r-1}} \nab p  &\in C_{t,x}^\si,  \qquad 
}
where the material derivative of a continuous tensor field $T$ is defined to be the distribution given by $\fr{D}{\pr t} T = \pr_t T + \pr_j( v^j T)$.  
\end{thm}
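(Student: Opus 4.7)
The plan is to proceed by induction on $r$. The base case $r=0$ is precisely Theorem \ref{thm:timeRegBounds}. The case $r=1$ (which requires $\a > 1/2$) follows from Theorem \ref{thm:timeRegBounds} after observing that the incompressibility $\pr_j v^j = 0$ allows one to rewrite the Euler equation as the distributional identity $\fr{D}{\pr t} v = \pr_t v + \pr_j(v^j v) = -\nab p$, and then $\nab p \in C_{t,x}^{2\b - 1}$ for every $\b < \a$ gives the desired $\si = 2\a - 1$ exponent after sending $\b \to \a$.

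For the inductive step from $r-1$ to $r$, with $r \geq 2$, set
\[
W := \fr{D^{r-1}}{\pr t^{r-1}} v = \fr{D^{r-2}}{\pr t^{r-2}} \nab p,
\]
which by the inductive hypothesis belongs to $C_{t,x}^{\si'}$ with $\si' = \si + (1-\a)$. Since $v$ and $W$ are both continuous, the distribution $\fr{D W}{\pr t} = \pr_t W + \pr_j(v^j W)$ is well-defined, and it coincides with both $\fr{D^r}{\pr t^r} v$ and $\fr{D^{r-1}}{\pr t^{r-1}} \nab p$. The goal is therefore to upgrade this distribution to an element of $C_{t,x}^{\si}$. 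I would attack this with a Littlewood-Paley decomposition $W = \sum_k P_k W$, aiming to prove a frequency-localized bound of the form
\[
\left\| \left(\pr_t + P_{\leq k} v \cdot \nab\right) P_k W \right\|_{C^0} \lesssim \la^{-\si}, \qquad \la = 2^k,
\]
together with comparable estimates on time-increments along the coarse-scale flow of $P_{\leq k} v$. The exponent $\la^{-\si} = \la^{(1-\a) - \si'}$ is the one predicted by the Taylor-hypothesis heuristic that frequency-$\la$ features are transported along the coarse-scale flow on the natural time scale $\la^{-(1-\a)}$. Once such a bound is available, integrating along the (spatially Lipschitz) flow of $P_{\leq k} v$ produces the time-oscillation bound on $P_k W$, and summing dyadically via the standard Littlewood-Paley characterization of $C^\si$ gives $\fr{D W}{\pr t} \in C_{t,x}^\si$.

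To produce this bound I would rewrite
\[
P_k \fr{D W}{\pr t} = \left(\pr_t + P_{\leq k} v \cdot \nab\right) P_k W + [P_k, v \cdot \nab] W + P_k \pr_j\!\left((v - P_{\leq k} v)^j W\right),
\]
and estimate the two remainder terms on the right by a Constantin-E-Titi style commutator argument, using the spatial H\"older regularity of $v$ and $W$ to gain the required $\la^{-\si}$ decay. The only remaining term is then the coarse-scale material derivative of $P_k W$ on the left side of the display, which I would estimate using the equation that $W$ itself satisfies.

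The main obstacle is controlling this coarse-scale material derivative of $P_k W$, because for $r \geq 2$ the quantity $W$ is not itself a solution of the Euler equation but a derived object whose stationary time derivative $\pr_t W$ is, a priori, much worse than its spatial derivatives. The key is to exploit the structural identity $W = \fr{D^{r-2}}{\pr t^{r-2}} \nab p$: the improved regularity of $p$ granted by Theorem \ref{thm:timeRegBounds}, together with the inductive improvements $\fr{D^j}{\pr t^j} v \in C_{t,x}^{\a - j(1-\a)}$ for $j < r$, allow one to convert $\pr_t P_k W$ into a combination of spatial derivatives and coarse-scale advective derivatives of previously controlled quantities. Iterating this conversion along the induction yields the required $\la^{-\si}$ decay for $(\pr_t + P_{\leq k} v \cdot \nab) P_k W$, and thereby isolates the Taylor-hypothesis cancellation between $\pr_t$ and $v \cdot \nab$ at every level of material derivative as the real mechanism driving the theorem.
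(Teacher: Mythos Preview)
Your overall strategy---induction on $r$, Littlewood--Paley decomposition, and the heuristic that each coarse-scale advective derivative costs $2^{(1-\a)k}$---matches the paper's.  But the induction you propose is too weak to close, and this is a genuine gap rather than a detail.

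The inductive hypothesis you carry is only that $W = \fr{D^{r-1}}{\pr t^{r-1}} v \in C_{t,x}^{\si'}$.  From this you want to bound $\|(\pr_t + P_{\leq k} v \cdot \nab) P_k W\|_{C^0}$.  But $W$ satisfies no usable PDE: the identity $\fr{D W}{\pr t} = -\fr{D^{r-1}}{\pr t^{r-1}} \nab p$ just restates the quantity you are trying to control.  Your suggestion to ``convert $\pr_t P_k W$ into coarse-scale advective derivatives of previously controlled quantities'' is where the argument breaks down: the previously controlled quantities in your setup are only the \emph{limit} objects $\fr{D^j}{\pr t^j} v \in C^{\a - j(1-\a)}$, and H\"older regularity of a limit does not by itself give the frequency-localized material-derivative bounds you need.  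Concretely, $P_k$ does not commute well with the full material derivative $\fr{D}{\pr t} = \pr_t + v\cdot\nab$ (all frequencies of $v$ are present), so there is no mechanism to turn $(\pr_t + P_{\leq k} v\cdot\nab) P_k W$ into something controlled by your hypothesis.

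The paper resolves this by \emph{strengthening the induction}: rather than tracking only the regularity of $\fr{D^r}{\pr t^r} v$, it inducts on a package of frequency-localized estimates (the Main Lemma~\ref{lem:theMainLem}) for $\fr{D_{\leq k}^s}{\pr t^s}$ applied to $P_{\approx k} v$, $\nab P_{\leq k} v$, $P_k p$, $R_{\leq k}$, and iterated commutators with convolution operators.  The regularity of $\fr{D^r}{\pr t^r} v$ is then read off from the frequency increments $\de_{(k)} \fr{D_{\leq k}^r}{\pr t^r} P_{\leq k} v$ in Section~\ref{sec:timeRegOfVelocField}.  The hard part of the induction---the High-High interactions in $R_{\leq k}$ and $P_k p$ at frequencies $I \gg k$---requires the bandlimited trick of \eqref{eq:whatTheProductLooksLike}, which has no analogue in your outline.

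A smaller point: for $r=1$ your appeal to Theorem~\ref{thm:timeRegBounds} and ``sending $\b \to \a$'' misses the endpoint, since the pressure estimates there carry a logarithmic loss (only $\nab p \in C_{t,x}^{2\b-1}$ for $\b < \a$).  The sharp $\si = 2\a - 1$ comes instead from the velocity-increment argument via $\nab P_k p$ and $\nab R_{\leq k}$, which avoid the $\De^{-1}\nab^2 P_{\leq k}$ operator.
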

As a corollary, we obtain infinitely many advective derivatives when $v$ is borderline Lipschitz:
\begin{cor}  If $\ctdcxa{v}$ is bounded for every $\a < 1$, which includes the case where the vorticity is bounded, then $v$ has continuous advective derivatives of every order.
\end{cor}
%In particular, Theorem \ref{thm:genThm} shows that the velocity field has continuous advective derivatives of every order whenever the norm $\ctdcxa{v}$ is bounded for all $\a < 1$, which includes in particular the case where the vorticity is uniformly bounded.  
In contrast, $C^1$ regularity in space is apparently insufficient to even yield continuity of the first time derivative $\pr_tv$.  The proof of Theorem~\ref{thm:genThm} provides new and presumably sharp estimates on advective derivatives, which are of interest also in the case of classical solutions.

We prove furthermore that the pressure possesses better regularity in its advective derivatives, which turns out to be even more subtle than the results for the velocity field.  %To state our results, we use the notation $y_+ = \max \{y, 0\}$.
%\[ (1 - 2 \a)_+ = \begin{cases} (1 - 2 \a) &\tx{if } (1 - 2 \a) \geq 0 \\ 0 &\tx{if } (1 - 2 \a) < 0 \end{cases} \]
Regularity in time for the pressure may seem surprising given that, a priori, the pressure is only determined by the equations up to a scalar depending on time.  However, when the pressure is properly normalized to have integral $0$, we obtain Theorem~\ref{thm:matDvPCts} below.  %(Here we use the notation $(y)_+ = \max \{ y, 0\}$.)
\begin{comment}
\begin{thm} \label{thm:matDvPCts}
%, and if $\a > 1/2$, the distribution $\fr{D^2p}{\pr t^2} = \pr_t \fr{Dp}{\pr t} + \tx{{\normalfont div} } ( \fr{D p}{\pr t} v )$ is continuous.  
Under the same assumptions, for any non-negative integer $s$ satisfying $s(1-\a) - 1 + (1 - 2 \a)_+ < 0$ we have \[ \fr{D^s}{\pr t^s} p \in C_{t,x}^\b\] for all $\b < 1 - (1 - 2 \a)_+ - s(1-\a)$.  In particular, if $\a > 1/3$, the distribution $\fr{Dp}{\pr t} = \pr_t p + \tx{{\normalfont div} } ( p v )$ is continuous, and the pressure associated to a uniformly Lipschitz Euler flow will have continuous advective derivatives of all orders.
%
\end{thm}
\end{comment}
\begin{thm} \label{thm:matDvPCts}
%, and if $\a > 1/2$, the distribution $\fr{D^2p}{\pr t^2} = \pr_t \fr{Dp}{\pr t} + \tx{{\normalfont div} } ( \fr{D p}{\pr t} v )$ is continuous.  
Under the same assumptions, for any non-negative integer $s$ with $s < \fr{\min\{1,2\a\}}{(1-\a)}$
 \[ \fr{D^s}{\pr t^s} p \in C_{t,x}^\b\] for all $\b <\min\{2\a, 1\} - s(1-\a)$.  In particular, if $\a > 1/3$, the distribution $\fr{Dp}{\pr t} = \pr_t p + \tx{{\normalfont div} } ( p v )$ is continuous, and the pressure associated to a uniformly Lipschitz Euler flow will have continuous advective derivatives of all orders.
\end{thm}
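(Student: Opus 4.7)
The plan is to induct on $s$, using Theorem~\ref{thm:timeRegBounds} as the base case $s=0$. The strategy parallels the argument behind Theorem~\ref{thm:genThm}: prove Littlewood--Paley frequency-increment estimates for $p$ along the coarse scale flow, and then sum over frequencies.

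Concretely, using a Littlewood--Paley decomposition $p = \sum_\la P_\la p$, I would first establish the frequency-piece bound
\[ \bigl\| (\Dlkdt)^{s} P_\la p \bigr\|_{C^0} \;\lesssim\; \la^{-\si}, \qquad \si \;=\; 1 - (1-2\a)_+ - s(1-\a), \]
together with an associated time-increment version measured along the flow of $\plkv$. Here $\Dlkdt = \pr_t + \plkv \cdot \nab$ is the coarse-scale advective derivative at the cutoff $k_0(\la)$ already used in Theorems~\ref{thm:timeRegBounds}--\ref{thm:genThm}. The $s=0$ case reproduces the spatial regularity $2\a \wedge 1$ of $P_\la p$ obtained from $p = \De^{-1}\pr_j\pr_l(v^j v^l)$ together with the CET commutator estimate (which saturates at Lipschitz for $\a > 1/2$). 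Each subsequent application of $\Dlkdt$ is expected to cost a factor $\la^{1-\a}$, the Eulerian time scale at frequency $\la$. Summing the resulting geometric series requires $\si > 0$, which is exactly the hypothesis $s(1-\a) - 1 + (1-2\a)_+ < 0$, and yields the claimed $C^\b_{t,x}$ regularity of $\fr{D^s p}{\pr t^s}$ for every $\b < \si$.

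To derive the frequency-piece estimate, I would start from $p = \De^{-1}\pr_j\pr_l(v^j v^l)$ and commute $(\Dlkdt)^s$ past the pressure operator:
\[ (\Dlkdt)^s P_\la p \;=\; \De^{-1}\pr_j\pr_l P_\la (\Dlkdt)^s (v^j v^l) \;+\; \bigl[(\Dlkdt)^s,\, \De^{-1}\pr_j\pr_l P_\la\bigr](v^j v^l). \]
Since $\fr{Dv}{\pr t} = -\nab p$ and $\tx{div }v = 0$, a Leibniz-type expansion rewrites $(\Dlkdt)^s(v^j v^l)$ as a sum of products of $(\Dlkdt)^r v$ and $(\Dlkdt)^{r'}\nab p$ with $r + r' \leq s-1$, each controlled by Theorem~\ref{thm:genThm} and the inductive hypothesis; the difference $\fr{D}{\pr t} - \Dlkdt = P_{>k_0(\la)} v \cdot \nab$ contributes high-frequency error terms that should be absorbed. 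The remaining commutator is to be handled by a paraproduct decomposition, in the spirit of the CET commutator argument used for Theorem~\ref{thm:timeRegBounds}.

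The main obstacle will be the case $\a \leq 1/2$, which is the reason for the $(1-2\a)_+$ correction. In this regime $p$ has only $C^{2\a}$ spatial regularity and $\nab p$ is a mere distribution, so after one iteration of the scheme one encounters expressions like $\fr{D}{\pr t}(v^j \pr^l p)$ to which a pointwise Leibniz rule does not apply. The fix is to keep the operator $\De^{-1}\pr_j\pr_l$ intact and rely on the more flexible version of the CET commutator estimate alluded to in the introduction, which exploits cancellation within the nonlocal operator rather than treating the Riesz-type multipliers as bounded. Careful tracking of the relative sizes of $\la$ and $k_0(\la)$ against the sharp advective-derivative estimates for $v$ from Theorem~\ref{thm:genThm} should then deliver exactly the exponent $1 - (1-2\a)_+ - s(1-\a)$ claimed in the theorem.
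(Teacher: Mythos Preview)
Your proposal misidentifies both the source of the $(1-2\a)_+$ correction and the object that should be estimated.  First, the frequency-piece bound you aim for is not correct: one has $\co{P_\la p}\leqc \la^{-2\a}$ for \emph{every} $\a\in(0,1)$ (not $\la^{-(2\a\wedge 1)}$), and the paper's Main Lemma likewise gives $\co{(\Dlkdt)^r P_k p}\leqc 2^{(r(1-\a)-2\a)k}$ without any saturation at $\a=1/2$.  So if one could simply sum $\sum_\la (\Dlkdt)^s P_\la p$ to recover $\fr{D^s p}{\pr t^s}$, one would get a \emph{better} exponent than the theorem asserts for $\a>1/2$.  The reason this does not happen is that $\sum_\la (\Dlkdt)^s P_\la p$ is not $\fr{D^s p}{\pr t^s}$: the coarse-scale flow depends on $\la$, and the correct telescoping increments
\[
\de_{(k)}\fr{D_{\lk}^s}{\pr t^s} p_{(k)} \;=\; \fr{D_{\leq k+1}^s}{\pr t^{s}} p_{(k+1)} - \fr{D_{\lk}^s}{\pr t^{s}} p_{(k)}
\]
contain, in addition to the ``new frequency'' piece, a change-of-flow term $P_{k+1}v\cdot\nab\,(\cdots)\,p_{(k)}$.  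It is the accumulated low-frequency gradient $\nab p_{(k)}=\sum_{I\leq k}\nab\de p_{(I)}$ (or $\nab P_{\leq k}p$), which for $\a>1/2$ is merely bounded rather than decaying in $k$, that produces the $(1-2\a)_+$ correction; no individual Littlewood--Paley piece sees it.

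Second, the Leibniz expansion of $(\Dlkdt)^s(v^jv^l)$ via $\fr{Dv}{\pr t}=-\nab p$ cannot be carried out as written, because $v$ is not differentiable and $(\Dlkdt)^r v$ is not defined pointwise.  The paper never differentiates the full $v$: it decomposes the quadratic term relative to frequency $k$ into low-low, high-low, and high-high parts, and for the high-high part further writes $\sum_{I\geq k} P_I v\,P_{\approx I}v$ and applies the advective derivative at scale $I$ to each summand (this is where the restriction $s(1-\a)<2\a$ for $\a\leq 1/2$ arises).  In short, the argument runs through the coarse-scale Euler--Reynolds system for $P_{\leq k}v$, the associated pressure approximants $p_{(k)}$, and their change-of-flow increments; your scheme is missing exactly those ingredients that generate the correct exponent.
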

In relation to the ideas of anomalous dissipation and Onsager's conjecture, we are further motivated to consider the regularity in time of the energy profile
\[ e(t) = \int \fr{|v|^2}{2}(t,x) dx \]
The following theorem shows that the energy profile turns out to possesses much better regularity in time than both the pressure and the solution itself.
\begin{thm} \label{thm:energyReg} Under the conditions of Theorem \ref{thm:timeRegBounds}, the energy profile $e(t)$ satisfies %a H\"{o}lder condition
\ali{
|e(t + \De t) - e(t)| &\leq C |\De t|^{\fr{2 \a}{1 - \a}} \label{incl:energyRegularity}
}
for some $C$ depending on $v$ and $\alpha$.  Furthermore, if the solution $v$ is of class $L_t^p B_{3,\infty}^{1/3}$ for some $p \geq 3$, then the energy profile $e(t)$ is of class $e(t) \in W^{1,p/3}$ in time.
%\begin{align}
%e(t) = \int \fr{|v|^2}{2}(t,x) dx &\in C_t^\fr{2 \a}{1 - \a} \label{incl:energyRegularity}
%\end{align}
%for $\a < 1/3$, and is Lipschitz in time for $\a = 1/3$.
\end{thm}
\noindent Theorem~\ref{thm:energyReg} and its proof should be compared with the results of \cite{CET, energyCons} concerning the positive direction of Onsager's conjecture on the conservation of energy at regularity above $1/3$ (see Section~\ref{sec:timeRegEnergy} below).  %In particular, we remark that the same argument in \cite{energyCons} proves estimates for the rate of variation of energy $\| \fr{d}{dt} \int |v|^2(t,x) dx \|_{L_t^1}$ in terms of the $L_t^3 B_{3, \infty}^{1/3}$ norm of the velocity, even though the proof does not give conservation of energy.  %(This observation gives the $\a = 1/3$ case of Theorem~\ref{thm:energyReg}.)  

We conjecture that Theorem~\ref{thm:energyReg} is sharp, and that moreover the energy profile of generic solutions with H\"{o}lder regularity below $1/3$ will possess no better regularity than the H\"{o}lder estimate \eqref{incl:energyRegularity}.  In particular, the energy profile of such flows should fail to have bounded variation (and thus fail to be monotonic) on every open interval.  In this sense, we expect that the property of energy dissipation will generically fail upon the slightest departure from the $1/3$ law (or more specifically the regularity $B_{3,\infty}^{1/3}$).  We will resume discussion of this conjecture in the concluding remarks.

%perturbation in the $C_t C_x^{1/3 - \ep}$ topology of Euler flows, so that only function spaces which obey the $1/3$ law exactly (for instance, the space $L_t^\infty B_{3, \infty}^{1/3}$) could be considered as a setting in which energy dissipation exists as a stable phenomenon.

%as the only possible regularity at which energy dissipation may be stable under perturbation.  

Our last demonstration of time regularity concerns the smoothness of trajectories of incompressible Euler flows.  
%Theorem \ref{thm:genThm} 
Theorem~\ref{thm:smoothTraject} that follows  
recovers in the periodic setting the well known result that the particle trajectories of classical solutions to the Euler equations (more precisely, solutions in the class $L_t^\infty C_x^\a$ with $\a > 1$) are smooth curves.  This fact was first proven by Chemin in the setting of $\R^d$ \cite{chemin2d, cheminReg} and generalized to bounded domains in \cite{katoBdrySmooth}.  In dimension $2$, the smoothness of trajectories is known in spaces with bounded vorticity in which the Euler equations are well-posed \cite{serfati2d, gamblin, sueur}.  In fact, starting with the investigations of \cite{serfatiAnalytic}, it has been shown for classical, $C^{1,\a}$ solutions that the particle trajectories and even the motion of a rigid body immersed in an incompressible fluid are analytic in time \cite{gamblin, glassSueTak, sueur}.  We refer to \cite{sueur} for a summary of this activity, and also note the more recent proofs of \cite{shnAnalytic} and \cite{frischRide} along with the work of \cite{constantin2015analyticity} on more general, well-posed incompressible equations.

The above works on the smoothness of trajectories all rely on the existence and local well-posedness theory for the Euler equations and in many cases proceed in Lagrangian coordinates.  None of these ingredients are available in our setting, as we consider $v \in L_t^\infty C_x^\a$ with fractional regularity $\a < 1$.  For velocity fields with this regularity, particle trajectories may fail to be unique and it is not known whether a sensible flow map can be defined or whether uniqueness holds for the Euler initial value problem.  %or whether any kind of uniqueness can be established for the initial value problem.  
In dimensions $3$ and higher, the existence and local well-posedness theory for Euler is restricted to velocity fields that have regularity in space that is strictly better than $C^1$ (see \cite{pak2004existence} for a critical result in this direction).  In fact, an example of \cite{bardTiti} shows that solutions to the $3$D Euler equations can instantaneously lose regularity in $C^\a$ for any $\a < 1$ (even when the zero viscosity limit is unique), and nonuniqueness of solutions to the initial value problem is known for %$\a < 1/5$ \cite{isett} and more recently 
$\a < 1/3$ \cite{isettOnsag}.% by exhibiting a solution $v(t,x)$ with initial data $v(0,x) \in C^\a$ for which $v(t,\cdot)$ fails to belong to $C^{\a^2 + \ep}$ for $\ep > 0$ and $t \neq 0$.

Our approach to Theorem \ref{thm:matDvPCts}, in contrast, is completely from an Eulerian point of view, and entirely avoids  %independent of 
the existence and local well-posedness theories.  Furthermore, the proof reveals a specific physical phenomenon that is responsible for this improved regularity: Namely, there is a pattern in the time scales of motion for the high frequency components of the solution, and this time scale becomes uniform over frequency once one approaches Lipschitz regularity.   %of the advective derivative 
%by revealing a particular pattern in the time scales of motion for the high frequency components of the solution.  %: namely, the time scale of motion exhibited by the high frequencies fluctuations becomes more or less rapid at a specific rate that is dictated by the regularity of the fluid.  
As an application of our approach to Theorem \ref{thm:matDvPCts}, we establish the smoothness of trajectories in a case of borderline regularity that lies below the well-posedness and uniqueness thresholds.  
\begin{thm}\label{thm:smoothTraject} [Existence of High-Regularity Trajectories]
Suppose that that the solution satisfies $v \in L_t^\infty C_x^\a$ for all $\a < 1$.  Let $t_0 \in I$ and $x_0\in \T^n$ be given.  Then there exists $X(t) : I \to \T^n$ satisfying %the initial value problem
\ali{
\label{eq:particleTrajectory}
 \fr{d}{dt}X(t) = v(t, X(t)), \qquad  X(t_0) = x_0 
}
%has a unique solution $X(t,x_0) : I \to \T^n$ for the givien initial point $x_0 \in \T^n$.  
such that $X(t)$ is of class $C^\infty(I)$ and satisfies $\fr{d^{r+1}}{dt^{r+1}} X(t,x_0) = \fr{D^r}{\pr t^r}v(t, X(t,x_0))$ for any $r \geq 0$.  

If one assumes only that $v \in L_t^\infty C_x^\alpha$ for some $\alpha < 1$, then through any initial $(t_0, x_0)$ there exists a trajectory $X(t)$ passing through it that is of class $C^r$ for all integers $r < \frac{1}{1-\alpha}$.
\end{thm}
Note that the trajectory in Theorem~\ref{thm:smoothTraject} is not necessarily unique, even in the case that $v \in \bigcap_{\a < 1} L_t^\infty C_x^\alpha$.  However, if the velocity field is say log-Lipschitz (which includes the case of bounded vorticity), trajectories are unique and the theorem implies that every trajectory is of class $C^\infty$.  Thus we recover and improve on the result of Chemin \cite{chemin2d, cheminReg} in the periodic setting.  In particular, this result proves smoothness of trajectories for $C^1$ solutions, which had been an open problem in dimension three.

  %On the other hand, for solutions of class $L_t^\infty C_x^\alpha$ with $\alpha < 1$, the same proof shows that $C^r$ trajectories exist through any point for all integers $r < \frac{1}{1-\alpha}$.   %Theorem~\ref{thm:smoothTraject} can also be extended to construct particle trajectories $X(t,x_0)$ that have improved $C^r$ regularity in time when we only assume $v \in C_tC_x^\a$ for some positive $\a$ strictly less than $1$.  We believe it is an interesting question to determine whether there exist particle trajectories failing to exhibit this improved regularity.

In a companion paper \cite{isettTime2}, we will improve on Theorem~\ref{thm:smoothTraject} to prove Theorem~\ref{thm:trajectReg}, which is a more precise result on the time regularity of {\it arbitrary} trajectories of Euler flows in the $C^\a$ class, whether or not the trajectories are unique.

%In a companion paper \cite{isettTime2}, we will improve on Theorem~\ref{thm:smoothTraject} to give the following, more precise result on the time regularity of {\it arbitrary} trajectories of Euler flows in the $C^\a$ class, whether or not the trajectories are unique.
\begin{comment}
\begin{thm} \label{thm:trajectReg} Suppose $0 < \a < 1$ with $1/(1-\a) \notin \Z$.  If $v$ is a weak solution to Euler of class $v \in L_t^\infty C_x^\a$, then every particle trajectory of $v$ is of class $C_t^{1/(1-\a)}$ in time.
\end{thm}
\end{comment}

The estimates established in the present paper form the groundwork for the proof of Theorem~\ref{thm:trajectReg}.

%The proof of Theorem~\ref{thm:trajectReg} relies heavily on the estimates established in the present paper.

\subsection{Strategy of the Proof}

Theorems \ref{thm:timeRegBounds}-\ref{thm:smoothTraject} all relate to the phenomenon that the material derivative in general has better regularity than the stationary time derivative.  One basic example that lies at the heart of this phenomenon is the following bound on the material derivative of a Littlewood-Paley projection of the solution
\begin{lem} \label{lem:coarseScaleMatDv}
There is a universal constant $C$ such that if $(v, p)$ solve the Euler equations and are of class $v \in L_t^\infty C_x^\a$ then each Littlewood-Paley projection $P_k v$ satisfies
\begin{align}
\co{ (\pr_t + v \cdot \nab) P_k v } &\leq C 2^{(1 - 2 \a) k } \| v \|^2_{C_t{\dot C}_x^\a} \label{bd:matDvOfLpPiece}
\end{align}
where the homogeneous H\"{o}lder seminorm of a vector field on $\T^n$ is defined by
\[ \| v \|_{C_t{\dot C}_x^\a} = \sup_t \sup_{h \in \R^n \setminus \{0\} } \fr{|v(t, x + h) - v(t,x)|}{|h|^\a}  \]
\end{lem}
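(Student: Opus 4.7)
The plan is to use the Euler equation to split $(\pr_t + v \cdot \nab) P_k v$ into a commutator term and a pressure term, and to bound each by $C\, 2^{(1-2\a)k} \|v\|_{C_t \dot C_x^\a}^2$.  Rewriting $\pr_t v = -\tx{div}(v \otimes v) - \nab p$ as $(\pr_t + v \cdot \nab) v = -\nab p$ using $\tx{div } v = 0$, applying $P_k$, and adding/subtracting $v \cdot \nab P_k v$, one obtains
\ali{
(\pr_t + v \cdot \nab) P_k v = -P_k \nab p + [v \cdot \nab, P_k] v, \qquad [v \cdot \nab, P_k] v^i = \pr_j[v^j P_k v^i - P_k(v^j v^i)], \label{plan:identity}
}
where the second equality uses $\tx{div } v = \tx{div } P_k v = 0$.

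For the commutator, represent $P_k$ by a mean-zero convolution kernel $\phi_k$ (the finitely many low frequencies are trivially harmless).  A short computation, writing $v^j(x) v^i(x-y) - v^j(x-y) v^i(x-y) = (v^j(x) - v^j(x-y))\, v^i(x-y)$, symmetrizing by adding and subtracting $v^i(x)$, and using $\int \phi_k = 0$, yields
\[ v^j P_k v^i - P_k(v^j v^i) = -\int \phi_k(y)\, \De_y v^j\, \De_y v^i \, dy - v^i(x)\, P_k v^j(x), \qquad \De_y v := v(x) - v(x-y). \]
Differentiating in $x^j$ and invoking $\pr_j v^j = \pr_j P_k v^j = 0$, the two terms involving the uncontrolled quantity $\pr_j v^i$ cancel exactly, leaving the clean expression
\[ [v \cdot \nab, P_k] v^i = -\int (\pr_j \phi_k)(x - z)\,(v^j(x) - v^j(z))(v^i(x) - v^i(z))\, dz, \]
which is bounded in $C^0$ by $\|v\|_{\dot C^\a}^2 \int |\pr \phi_k|(w)|w|^{2\a}\, dw \lesssim 2^{(1-2\a)k} \|v\|_{\dot C^\a}^2$ by elementary rescaling of $\phi_k$.

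For the pressure term, starting from $p = -\De^{-1} \pr_l \pr_m(v^l v^m)$, the key observation is that for any fixed basepoint $x_0 \in \T^n$ the quantity $v^l(x_0) v^m(y) + v^l(y) v^m(x_0) - v^l(x_0) v^m(x_0)$ is annihilated by $\pr_l \pr_m$ in $y$ (the cross terms because $\pr_m v^m = 0$, the constant trivially).  Hence the pressure formula, evaluated at $x_0$, becomes
\[ P_k \nab p(x_0) = -\int \KK_k(x_0 - y)\,(v^l(y) - v^l(x_0))(v^m(y) - v^m(x_0))\, dy, \]
with $\KK_k$ the convolution kernel of $P_k \nab(-\De)^{-1} \pr_l \pr_m$, a Schwartz-type kernel localized at scale $2^{-k}$ satisfying $\int |\KK_k(w)||w|^{2\a}\, dw \lesssim 2^{(1-2\a)k}$ by standard Mikhlin-type estimates.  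Combining with $|v(y) - v(x_0)|^2 \leq \|v\|_{\dot C^\a}^2 |y - x_0|^{2\a}$ gives $\co{P_k \nab p} \lesssim 2^{(1-2\a)k} \|v\|_{\dot C^\a}^2$; substituting both bounds into \eqref{plan:identity} completes the proof.  The main obstacle is the pressure estimate: the naive bound relying only on $v \otimes v \in C^\a$ yields only the inferior factor $2^{(1-\a)k}$, and the sharp $2^{-2\a k}$ gain is recovered only through the double-subtraction trick above, which is an Eulerian incarnation of the Constantin--E--Titi commutator identity that also drives the cancellations in the commutator term.
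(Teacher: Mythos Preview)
Your proof is correct and takes a route that is a bit more direct than the paper's. You split $(\pr_t + v\cdot\nab)P_kv = -P_k\nab p + [v\cdot\nab,P_k]v$ and bound each piece by the same double-difference (CET-type) trick: for the commutator you reach the clean identity $[v\cdot\nab,P_k]v^i = -\int (\pr_j\phi_k)(x-z)(v^j(x)-v^j(z))(v^i(x)-v^i(z))\,dz$ (your sketch of the derivation is slightly roundabout, but the formula is right and follows in one step by differentiating $\int\phi_k(x-z)[v^j(x)-v^j(z)]v^i(z)\,dz$ in $x^j$, using $\pr_j v^j=0$, and then replacing $v^i(z)$ by $v^i(z)-v^i(x)$ at no cost since $\int\pr_j\phi_k=0$ and $\pr_j P_k v^j=0$); for the pressure you freeze a base point and exploit $\pr_m v^m=0$ to rewrite $P_k\nab p(x_0)$ as a Schwartz kernel integrated against $(v(y)-v(x_0))\otimes(v(y)-v(x_0))$.

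The paper instead passes through the Euler--Reynolds formulation: it studies the equation satisfied by $P_{\leq k}v$, isolates the Reynolds stress $R_{\leq k} = P_{\leq k}v\otimes P_{\leq k}v - P_{\leq k}(v\otimes v)$ (which is exactly your commutator in divergence form) and $P_{k+1}p$, and then bounds $(\pr_t + P_{\leq k}v\cdot\nab)P_{k+1}v$ via \eqref{eq:euLPpiece}; the equivalence with $(\pr_t + v\cdot\nab)P_kv$ comes from $\co{(v-P_{\leq k-1}v)\cdot\nab P_kv}\lesssim 2^{(1-2\a)k}\ctdcxa{v}^2$. For the pressure piece, instead of your pointwise freezing the paper uses the trichotomy $v = (v-P_{\leq k}v)+P_{\leq k}v$ to decompose $P_kp$ into High-High, High-Low, and Low-Low interactions. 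Both decompositions encode the same $2^{-2\a k}$ gain; yours is leaner for this one estimate, while the paper's sets up machinery (the Euler--Reynolds viewpoint and the frequency trichotomy) that it reuses throughout for the higher-order advective derivative bounds.
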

The constant $C$ does not depend on the torus and the bound holds on $\R^n$ as well.  Actually, the bound we establish directly below is
\begin{align}
\co{ (\pr_t + P_{\leq k} v \cdot \nab) P_{k+1} v } &\leq C 2^{(1 - 2 \a) k } \| v \|^2_{C_t{\dot C}_x^\a} \label{bd:matDvOfLpPiece2}
\end{align}
which is basically equivalent to (\ref{bd:matDvOfLpPiece}) but is more robust.  Here $C^0 = C^0_{t,x}$ is a supremum over time and space.  %The estimates in our paper hold also with the supremum in $C_t$-based norms replaced by the essential supremum in  $L_t^\infty$-based norms, and admit many generalizations to other function spaces.  %We remark here that stating the bounds with a supremum over $t \in I$ is slightly wasteful.  In fact the bounds are completely local in time in the sense that none of the constants in the estimates depend on the interval $I$.%, and we can therefore just as well replace the quantity $\ctdcxa{v}$ with $\| v(t,\cdot) \|_{{\dot C}_x^\a}$ at any given time $t$.

Observe that the estimate (\ref{bd:matDvOfLpPiece}) is consistent with dimensional analysis of the Euler equations (both sides having dimensions of $\fr{\tx{length}}{\tx{time}^2}$ where $2^k$ is an inverse length) and also remains invariant under Galilean transformations.  In contrast, from the identity
\[ \pr_t = (\pr_t + P_{\leq k} v \cdot \nab) - P_{\leq k} v \cdot \nab, \] 
we can only expect a weaker estimate
\ALI{
 \co{ \pr_t P_k v } &\leq C ( 2^{(1 - 2 \a) k } \| v \|^2_{C_t{\dot C}_x^\a} + 2^{(1 - \a)k} \co{ v } \| v \|_{C_t{\dot C}_x^\a}  ) \\
\co{ \pr_t P_k v } &\leq C_{\T^n} 2^{(1 - \a)k} \| v \|^2_{C_tC_x^\a} 
}
for the (stationary) time derivative $\pr_t P_k v$ of a Littlewood-Paley piece (which obviously fails to be Galilean invariant).  One can therefore interpret \eqref{bd:matDvOfLpPiece} and its proof (along with many other bounds in the present paper) as a demonstration that the fine scale features of an ideal incompressible flow must move along the coarse scale flow.  

The idea that the high frequency oscillations of the velocity field should move along the coarse scale average flow has played an important role in other parts of the analysis of fluids and the Euler equations.  This idea can be viewed as a more precise version of the classical Taylor hypothesis of frozen turbulence, which asserts that microstructures in turbulent fluids are convected along the large-scale average velocity of the fluid.  Besides serving as a key assumption in many experimental measurements of turbulent flows, the Taylor hypothesis 
%The Taylor hypothesis is a key assumption in many experimental measurements of turbulent flows and 
has also played a role in the applied literature, where the assumption of ``convected fluid microstructure'' forms the basic hypothesis underlying multiscale analysis approaches to modelling fluid turbulence.  We refer to \cite{holm2005lans, holmTronci} and the references therein for more on these topics.  The transport of high frequency waves by a low frequency velocity field has also been a key idea in the construction of energy-dissipating Euler flows used in progress towards Onsager's conjecture \cite{isett, deLSzeBuck} and turned out to play a key role in the resolution of the conjecture \cite{isettOnsag}.

%in the recent progress towards Onsager's conjecture \cite{isett}, \cite{deLSzeBuck}.  
	
%	has also appeared in the applied literature,  where the assumption of ``convected fluid microstructure'' forms the basic hypothesis underlying multiscale analysis approaches to modelling fluid turbulence.  We refer to \cite{holmTronci} and the references therein for more on these developments.

The estimates for coarse scale advective derivatives we obtain obey a pattern that indicates that the motion of each frequency component takes place at a time scale that is naturally dictated by the spatial regularity and dimensional analysis.  This time scale agrees with the Kolmogorov theory of turbulence when $\a = 1/3$.  Specifically, when we compare \eqref{bd:matDvOfLpPiece2} to the bound $\co{ P_{k+1} v } \leq C 2^{-\a k} \ctdcxa{v}$, we see that the coarse scale advective derivative $(\pr_t + P_{\leq k} v \cdot \nab)$ costs a factor $2^{(1- \a) k} \ctdcxa{v}$ in the estimate.  This cost is a general feature in the structure of the estimates in the paper when they do hold.  One can interpret these bounds informally as expressing that the oscillations at wavenumber $\la$ change within a natural time scale of $\la^{-(1-\a)} \ctdcxa{v}^{-1}$ when they are observed along the coarse scale flow.  Being consistent with dimensional analysis, this time scale agrees in the case $\a = 1/3$ with the time scale $\varep^{-1/3} \la^{-2/3}$ that is predicted for the turnover time of turbulent eddies with length scale $\la^{-1}$ by the scaling considerations in Kolmogorov's theory.  (See also Section~\ref{sec:timeRegEnergy} and in particular inequality \eqref{eq:energyVariationIneq} below for more explicit comparisons between the energy variation $\varep$ and the homogeneous norms $\ctdcxa{v}$.)  In contrast, the time scale corresponding to a stationary frame of reference, as reflected in the bounds for the stationary time derivative $\pr_t$, is much more rapid, being of the order $\la^{-1}$.  %We emphasize, however, that this time scale for the high frequencies is only observed along the coarse scale flow, and not from a stationary point of view.

One motivation for the regularity statements
%The regularity statements 
in Theorems \ref{thm:timeRegBounds} and \ref{thm:energyReg} as well as Lemma \ref{lem:coarseScaleMatDv} 
%are motivated by 
comes from 
the properties demonstrated for the weak solutions to Euler constructed in \cite{deLSzeHoldCts}, \cite{isett} and \cite{deLSzeBuck} as well as a conjecture studied in \cite{isett}.   These papers make progress towards Onsager's conjecture by exhibiting various, different constructions of H\"{o}lder continuous, periodic Euler flows that fail to conserve energy.  %, although the exponent $1/3$ remains out of reach.  
These solutions are shown to have many of the properties stated in Theorems \ref{thm:timeRegBounds} and \ref{thm:energyReg}; however, these properties are all proven using the explicit form of the building blocks of the constructions (see in particular Remark 1.2 of \cite{deLSzeHoldCts} regarding the improved regularity in space for the pressure).  Theorems \ref{thm:timeRegBounds}-\ref{thm:matDvPCts} and inequality \ref{bd:matDvOfLpPiece} show that despite allowing for a great amount of flexibility, the Euler equations impose nontrivial constraints (beyond simply the conservation of momentum) even on low regularity solutions.  In particular, these bounds impose constraints on what kind of scheme one can use to attack Onsager's conjecture, as they demonstrate that an improvement in spatial regularity of solutions must be accompanied by improvements in the regularity in time along the coarse scale flow within the construction.  %(see Section \ref{sec:conclusion} for further discussion).  On the other hand, the above results are still consistent with the conjectural ``ideal case scenario'' studied in \cite{isett}, which would imply Onsager's conjecture if the convex integration scheme for Euler could be sufficiently improved.
Such constraints remain relevant for the pursuit of still open variants of Onsager's conjecture such as the two dimensional case and Onsager's conjecture for Euler flows exhibiting local dissipation of energy.

% Ill-posedness
\begin{comment}
We remark that one cannot expect to obtain results such as Theorems (\ref{thm:timeRegBounds}-(\ref{thm:matDvPCts}) if one only imposes assumptions on the initial velocity $v(0,x)$ because the Euler equations are ill-posed in $C_t C_x^\a$ if $\a < 1$.  Bardos and Titi have demonstrated this ill-posedness in \cite{bardTiti} by exhibiting for any $\a < 1$ a solution to three-dimensional Euler which has initial data $v(0,\cdot) \in C_x^\a$ such that $v(t,\cdot)$ fails to be in $C_x^\b$ for any $\b > \a^2$ when $t \neq 0$.  It is also known in dimension $3$ that weak solutions in the class $C_t C_x^\a$ are nonunique for arbitrary, smooth initial data when $\a < 1/5$, \cite{isett}.\footnote{This result can be generalized to two dimensions as well using the main idea of \cite{deLSzeCts2d}.}  However, Theorem (\ref{thm:genThm}) does give the following corollary in two dimensions.
\begin{cor}  Suppose $v(0,x) : \T^2 \to \R^2$ is divergence free and has bounded vorticity.  Let $v(t,x)$ be the unique weak solution the $2D$ Euler equations obtained by the vanishing viscosity limit.  Then $v(t,x)$ has continuous material derivatives of all orders in the sense of Theorem (\ref{thm:genThm}).
\end{cor}
\begin{proof}
By the Calder\'{o}n-Zygmund Theory and the conservation of momentum and vorticity, the norms $\| v(t,\cdot) \|_{W^{1,p}}$ remain bounded uniformly in time for all $p < \infty$.  By Sobolev embedding, the H\"{o}lder norms $\| v \|_{C_t C_x^\a}$ are bounded for all $\a < 1$, so Theorem (\ref{thm:genThm}) applies.
\end{proof}
\end{comment}

A key ingredient in the proofs of Theorems \ref{thm:timeRegBounds}-\ref{thm:matDvPCts} and Lemma \ref{bd:matDvOfLpPiece} is the fundamental commutator estimate for the Reynolds stress used in the proof of energy conservation in \cite{CET}.  We give a new proof of this estimate below that is more robust in that it allows us to take advective derivatives as well as spatial derivatives.  Some techniques in the proof below draw inspiration from analogous estimates for solutions constructed by convex integration in \cite{isett}, and we will comment on these analogies in the course of the proof.

\subsection{ Organization of the paper}

The main results of the paper are Theorems \ref{thm:genThm} and \ref{thm:matDvPCts}, which are proven by an induction on the number of material derivatives.  Sections \ref{sec:theProof} - \ref{sec:firstRegTimePress} focus on estimates and applications for the first two material derivatives, which form a base case for the overall induction and provide the clearest setting to demonstrate many of the main ideas.  Along the way in Section \ref{sec:timeRegEnergy} we prove Theorem \ref{thm:energyReg} on the regularity of the energy profile.  The notation of the paper is summarized in Sections \ref{sec:notation1} and \ref{sec:notationPrelims}.  The remainder of Section \ref{sec:theGeneralCase} is devoted to the full proof of Theorems \ref{thm:genThm} and \ref{thm:matDvPCts}.  In Section \ref{sec:smoothTraject}, we use the results of Section \ref{sec:theGeneralCase} to establish Theorem \ref{thm:smoothTraject}.
  We conclude the paper in Section \ref{sec:conclusion} by discussing the relationship of the present results to convex integration and some questions which are motivated by the present work, including a conjecture related to Theorem \ref{thm:energyReg} and the idea of anomalous dissipation.

\subsection{Acknowledgements}

The author thanks P. Constantin, C. De Lellis and V. Vicol for conversations related to Theorem~\ref{thm:timeRegBounds} and for discussions regarding an earlier draft of the paper.  The author also thanks S.-J. Oh for discussions that motivated the proof of the endpoint regularity for $v$.  %The author is grateful to the anonymous referee for several helpful suggestions for improving the paper.  
%The author acknowledges the support of the NSF Graduate Research Fellowship Grant DGE-1148900.  
This material is based upon work supported by the NSF under Awards DGE-1148900, DMS-1402370 and DMS-2055019.

\section{The proof} \label{sec:theProof}

We consider solutions to the Euler equations, written here using the summation convention
\begin{equation} \label{eq:euEqns}
\left\{
 \begin{aligned} \pr_t v^l + \pr_j(v^j v^l) + \pr^l p = 0  \\
 \pr_j v^j = 0,
 \end{aligned}
\right.
\end{equation}
for any torus $\T^n = \R^n / \Ga$.  Many of the estimates proven below have universal constants that do not depend on the torus.  In some cases when low frequencies play a large role in the estimates, the bounds depend on the torus, generally through the size of the lowest frequency on the dual of $\T^n$ (which can be regarded as a characteristic inverse length for the flow).  For the first part of the argument, we state along the way which estimates depend on the torus.

The starting point for our proof is the fact that any solution to the Euler equations \ref{eq:euSystem} has Littlewood-Paley components obeying the bound \ref{lem:coarseScaleMatDv} for their material derivatives.  The material derivative estimate in Lemma \ref{lem:coarseScaleMatDv} follows from the commutator estimate extending the one of \cite{CET}  %Constantin, E and Titi %\footnote{ More precisely, we require a similar estimate for the derivative of the commutator, which appears in \cite{deLSzeC1iso}. } 
together with the following estimate for the Littlewood-Paley components of the pressure.

\begin{lem} If $(v,p)$ solve the incompressible Euler equations and $0 < \a \leq 1$, then the Littlewood-Paley projections of the pressure satisfy
\begin{align}
\co{ \nab P_k p } &\leq C 2^{(1 - 2 \a)k} \ctdcxa{v}^2 \label{ineq:pressBound}
\end{align}
\end{lem}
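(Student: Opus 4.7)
The plan is to prove the bound by viewing $\nabla P_k p$ as a single Fourier multiplier applied to the quadratic nonlinearity, and then exploiting two layers of cancellation (mean-zero kernel and divergence-free) in parallel with the Constantin--E--Titi commutator argument. From the pressure formula $p = -\Delta^{-1}\pr_j\pr_l(v^j v^l)$, I would write
\[
\nab P_k p(x) = -\sum_{j,l}\int K_k^{jl}(x-y)\,v^j(y)v^l(y)\,dy,
\]
where $K_k^{jl}$ is the kernel of the operator with Fourier symbol $i\xi\,\hat P_k(\xi)\,\xi_j\xi_l/|\xi|^2$. This symbol is supported in a shell $|\xi|\sim 2^k$ and is of order $1$, so by standard rescaling $K_k^{jl}(z) = 2^{(n+1)k}\widetilde K^{jl}(2^kz)$ for a Schwartz function $\widetilde K^{jl}$. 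In particular $\int K_k^{jl}\,dz = 0$ (the symbol vanishes at $\xi=0$) and $\int |K_k^{jl}(z)|\,|z|^{2\a}\,dz \lesssim 2^{(1-2\a)k}$.

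Next I would use the mean-zero property to subtract $v^j(x)v^l(x)$ under the integral and apply the Constantin--E--Titi-style algebraic identity
\[
v^j(y)v^l(y) - v^j(x)v^l(x) = [v^j(y)-v^j(x)][v^l(y)-v^l(x)] + v^j(x)[v^l(y)-v^l(x)] + v^l(x)[v^j(y)-v^j(x)].
\]
The crucial observation is that the two linear-in-$v$ pieces are annihilated by the divergence-free condition. Indeed, after summing over $l$ with $v^j(x)$ pulled out,
\[
\sum_{l}\int K_k^{jl}(x-y)v^l(y)\,dy
\]
has Fourier symbol proportional to $\xi_l\,\widehat{v^l}(\xi)$, which vanishes identically since $\sum_l\pr_l v^l = 0$; the companion piece with $v^l(x)$ drops out by $\int K_k^{jl} = 0$. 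A symmetric argument handles the term with $v^l(x)[v^j(y)-v^j(x)]$ using $\sum_j \xi_j\widehat{v^j} = 0$. Thus only the quadratic difference term survives.

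The remaining estimate is then routine: from the H\"older seminorm,
\[
|\nab P_k p(x)| \leq \sum_{j,l}\int |K_k^{jl}(x-y)|\,\|v\|_{\dot C^\a}^2|y-x|^{2\a}\,dy \lesssim 2^{(1-2\a)k}\,\ctdcxa{v}^2,
\]
using the scaling computation above. The main obstacle (and the reason the proof is not immediate) is the need to combine \emph{both} cancellations at once: the zero-mean property of the kernel alone would leave a linear term that forces the inhomogeneous norm $\|v\|_{C^0}$ into the bound and spoils the scaling; only the divergence-free identity $\xi_j\widehat{v^j}=0$, applied in each of the two tensor slots, restores the purely homogeneous estimate. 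This is precisely the structural analogue in the pressure equation of the CET commutator estimate for the Reynolds stress that underlies Lemma~\ref{lem:coarseScaleMatDv}.
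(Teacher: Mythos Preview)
Your argument is correct. It is a genuinely different (and for this particular lemma, more streamlined) organization than the paper's.

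The paper proves the bound by splitting each factor $v = P_{\leq k}v + (v - P_{\leq k}v)$ to obtain a High--High / High--Low / Low--Low trichotomy of $P_k(v^j v^l)$, then uses the divergence-free property to shift derivatives onto the smooth $P_{\leq k}v$ factors and applies the operator bound $\|\nab^D\De^{-1}P_k\|\leq C\,2^{(D-2)k}$ together with $\co{v-P_{\leq k}v}\leq 2^{-\a k}\ctdcxa{v}$ and $\co{\nab P_{\leq k}v}\leq 2^{(1-\a)k}\ctdcxa{v}$. Your approach instead works directly at the kernel level: you subtract the base-point value $v^j(x)v^l(x)$ via the mean-zero of $K_k^{jl}$, use the algebraic identity to isolate the quadratic difference, and kill the two linear-in-$v$ pieces by the Fourier-side cancellation $\sum_l \xi_l\hat v^l=0$. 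What you gain is brevity and a single moment estimate $\int|K_k^{jl}(z)|\,|z|^{2\a}\,dz\lesssim 2^{(1-2\a)k}$; what the paper's trichotomy buys is a decomposition that is reused verbatim later (Sections~\ref{sec:matDvLPp}--\ref{sec:ReynStressMatDv1}) to commute coarse-scale advective derivatives through $P_k p$, where one must track separately which factors live above versus below frequency $2^k$. Both are really the same ``quadratic structure plus divergence-free'' mechanism, packaged in physical space (yours) versus frequency space (the paper's).
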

Using that $P_k p = \De^{-1} \nab \cdot \nab P_k p$, this bound is equivalent to the $C^{2\a}$ spatial regularity of the pressure when $\a \neq 1/2$ (see Corollary~\ref{cor:pressReg}).

%% Give more description!

%Note that, by writing $P_k p = \Delta^{-1} \mbox{div } \nab P_k p$ and using the Littlewood-Paley characterization, this bound is equivalent to $\co{ P_k p } \leq C 2^{-2 \a k} \ctdcxa{v}^2$, which implies that the pressure is of class $p \in L_t^\infty C_x^{2 \a}$ when $\a \neq 1/2$.

We begin the proof of these bounds by fixing notation, much of which is standard.

\subsection{Notation} \label{sec:notation1}

In what follows we will always regard distributions on the torus as being periodic distributions on the whole space $\R^n$.  All convolutions will refer to convolutions in the spatial variables at a fixed time unless otherwise stated.

The norm $\| X \|_{C^0}$ refers to the $C^0 = C^0_{t,x}$ norm of $X$ in both time and space.  For an operator $T$ acting on $C^0(\T^n)$, we will denote by $\| T \|$ the operator norm as a bounded mapping on $C^0(\T^n)$.  Very often our operators will be of a convolution form $T v(x) = \int_{\R^n} v(x+h) K(h) dh$, in which case $\| T \| \leq \| K \|_{L^1(\R^n)}$.  

The $\dot{C}^\b$ H\"{o}lder semi-norm of a tensor $f$ is given by $\| f\|_{\dot{C}^\b} = \sup_{h \in \R^d} \fr{|f(x+h) - f(x)|}{|h|^\b}$.  Given a function space $X$ measuring integrability and/or regularity in space, the mixed norm of a tensor $f(t,x)$, written $\| f \|_{L_t^p X}$ or $\| f \|_{L_t^p X_x}$, refers to the $L^p$ in time norm of the function $\| f(t, \cdot) \|_X$.

%Rather than use the standard Littlewood-Paley projection operators which are defined via the Fourier transform, we define our Littlewood-Paley projections using mollifying kernels with spherical symmetry and compact support.  Using these projections allows our estimates to easily have a localized character; in particular, the bound for $P_k p(t,x)$ at a given point $(t,x)$ depends only on the bounds for relative velocities $|v(t,y) - v(t,y')|$ at a distance $|y - x|, |y' - x| \leqc 2^{-k}$ from the point $(t,x)$.  
 
We recall here the basics of Littlewood-Paley theory to fix notation.  Let $\eta : \R^n \to \R$ be a radially symmetric smooth function such that ${\hat \eta}$ is supported in the unit ball of ${\hat \R}^n$, and such that ${\hat \eta}(\xi) = 1$ for $|\xi| \leq \fr{1}{2}$.  In particular, ${\hat \eta}(0) = \int_{\R^n} \eta(h) dh = 1$.  Now set $\eta_{\leq k}(h) = 2^{nk} \eta(2^k h)$ so that ${\hat \eta}_{\leq k} \in C_c^\infty(B_{2^k}({\hat \R}^n) ) $ also has integral $\int_{\R^n} \eta_{\leq k}(h) dh = 1$ and define
\begin{defn}[Littlewood-Paley Projections]
For any continuous, vector valued function $v : \T^n \to \R^m$, set
\begin{align}
P_{\leq k} v(t,x) &= \int_{\R^n} v(t,x+h) \eta_{\leq k}(h) dh \quad \quad (t,x) \in \R \times \T^n \\
P_k v(t,x) &= P_{\leq k} v(t,x) - P_{\leq k-1} v(t,x) = \eta_{k} \ast v (t,x) \\
\eta_{k} &= \eta_{\leq k} - \eta_{\leq k-1}
\end{align}
We also define $P_{- \infty} v = \fr{1}{|\T^n|} \int v(h) dh$ to be the average value of $v$
\begin{align}
 P_{- \infty} v = \fr{1}{|\T^n|} \int_{\T^n} v dx, \qquad 
 P_{(- \infty, k]} v = P_{\leq k} v - P_{- \infty} v
\end{align}
For any integers $k_1 < k_2$, we likewise define
\begin{align}
P_{[k_1, k_2]} &= P_{\leq k_2} - P_{\leq k_1} = \eta_{[k_1, k_2]} \ast 
\end{align}
\end{defn}
Note that the Fourier transform of ${\hat \eta}_{k}$ has support in a band $2^{-(k-1)} \leq |\xi| \leq 2^{(k+1)}$.  Our analysis will use often the following (rather delicate) properties of Littlewood-Paley projections, which give motivation for using the terminology ``projection''
\ali{
 P_{\leq k} = P_{\leq k+2} P_{\leq k}, \quad P_k = P_{[k-2, k+2]} P_k .
}
As we are working in the context of periodic functions, observe that there exists a greatest $k_0 \in \Z$ depending on $\T^n = \R^n / \Ga$ such that
\[ P_k v = 0, \quad \quad \forall~k \leq k_0(\T^n) \]
We then have a decomposition
\begin{prop}[Littlewood-Paley decomposition]
For any continuous vector-valued function $v$ on $\T^n$, we have
\begin{align}
v &= \sum_{k =  k_0(\T^n)}^\infty P_k v + P_{- \infty} v \label{eq:LPDecomp}
%&= P_{\leq 0} v + \sum_{k =  0}^\infty P_k v \label{eq:LPafterHigh}
\end{align}
where the summation (\ref{eq:LPDecomp}) is interpreted in the sense of distributions.
\end{prop}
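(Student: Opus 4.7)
The plan is to write the partial sums of the series as a telescoping sum that simplifies to a single convolution, and then identify both the convolution and the "tail at $-\infty$" explicitly. For $K \geq k_0(\T^n)$, the partial sum
\[
S_K \;:=\; \sum_{k = k_0(\T^n)}^{K} P_k v \;=\; \sum_{k = k_0(\T^n)}^{K} (P_{\leq k} v - P_{\leq k-1} v) \;=\; P_{\leq K} v - P_{\leq k_0(\T^n) - 1} v
\]
by the very definition $P_k = P_{\leq k} - P_{\leq k-1}$. Thus proving the proposition reduces to the two claims (i) $P_{\leq k_0(\T^n) - 1} v = P_{-\infty} v$ and (ii) $P_{\leq K} v \to v$ as $K \to \infty$ (in any reasonable topology, certainly in $\mathcal D'$).

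First I would establish (i) via Fourier series. Expand $v(x) = \sum_{\xi \in \Ga^*} \hat v(\xi) e^{2 \pi i \xi \cdot x}$ over the dual lattice $\Ga^* \subset {\hat \R}^n$ of $\T^n = \R^n/\Ga$. Since $\eta_{\leq k}(h) = 2^{nk} \eta(2^k h)$, a direct computation (using the radial symmetry of $\eta$) gives
\[
P_{\leq k} v(x) \;=\; \sum_{\xi \in \Ga^*} \hat{\eta}\!\left( \tfrac{\xi}{2^k} \right) \hat v(\xi) \, e^{2 \pi i \xi \cdot x},
\]
and in particular the $\xi = 0$ coefficient is $\hat \eta(0) \hat v(0) = P_{-\infty} v$. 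The definition of $k_0(\T^n)$ as the largest integer with $P_k v \equiv 0$ for all $k \leq k_0(\T^n)$ implies, by inspection of the annular Fourier support of $\hat \eta_k$, that the ball $|\xi| \leq 2^{k_0(\T^n)}$ in ${\hat \R}^n$ contains no nonzero point of $\Ga^*$. Therefore in the formula above only the $\xi = 0$ term survives at level $k = k_0(\T^n) - 1$, giving (i).

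For (ii) I would appeal to the fact that $(\eta_{\leq K})_{K \geq 0}$ is a standard approximate identity on $\R^n$: each $\eta_{\leq K}$ is a smooth function of integral $1$, and $\eta_{\leq K}(h) = 2^{nK} \eta(2^K h)$ concentrates at the origin as $K \to \infty$. Since $v$ is continuous on the compact manifold $\T^n$, it is uniformly continuous, and a routine splitting of the convolution integral into the region $|h| \leq \delta$ (where $|v(x+h) - v(x)|$ is uniformly small) and $|h| > \delta$ (where the $L^1$ mass of $\eta_{\leq K}$ tends to zero) shows that $P_{\leq K} v \to v$ uniformly on $\T^n$. Combining (i) and (ii) gives $S_K + P_{-\infty} v = P_{\leq K} v \to v$ uniformly, which is much stronger than the distributional convergence claimed.

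I do not expect any genuine obstacle: the only point demanding a moment's care is lining up the Fourier-support conventions of the excerpt so that the annular supports of $\hat \eta_k$ and the lattice spacing of $\Ga^*$ interact correctly in the identification $P_{\leq k_0(\T^n) - 1} v = P_{-\infty} v$. Once that is unwound, the rest is the textbook telescoping-plus-approximate-identity argument.
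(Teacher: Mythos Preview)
Your proof is correct and follows the same approach as the paper's (commented-out) proof sketch: telescope the partial sums, show $P_{\leq K} v \to v$ uniformly via the approximate-identity property of $\eta_{\leq K}$, and identify the low-frequency tail with $P_{-\infty} v$. The only minor difference is in this last step: you read off $P_{\leq k_0(\T^n)-1} v = P_{-\infty} v$ directly from the Fourier series on the dual lattice $\Ga^*$, whereas the paper argues that $P_{\leq k} v \to P_{-\infty} v$ as $k \to -\infty$ from the gradient bound $\co{\nab P_{\leq k} v} \leq C\, 2^k \co{v}$, which forces the limit to be a constant equal to the (preserved) average.
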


\subsection{Preliminary Bounds on LP Pieces and the Pressure} \label{sec:boundsLPpieces}

Now, assuming that $(v^l, p)$ satisfy the Euler equations, let us study the equation obeyed by the Littlewood-Paley projections of $v^l$, which we write using the Einstein summation convention as
\begin{align}
\label{eq:euLPproj}
\left\{
\begin{aligned}
 \pr_t P_{\leq k}v^l + \pr_j(P_{\leq k}v^j P_{\leq k}v^l) + \pr^l P_{\leq k} p &= \pr_j R_{\leq k}^{jl}   \\
 \pr_j P_{\leq k} v^j &= 0
\end{aligned}
\right.
\\
R_{\leq k}^{jl} = P_{\leq k} v^j P_{\leq k} v^l &- P_{\leq k}(v^j v^l)
\end{align}
Using the fact that $P_{\leq k} v^j$ is divergence free, and subtracting (\ref{eq:euLPproj}) for $P_{\leq k+1} v^l$ and $P_{\leq k} v^l$ gives
\begin{align}
\label{eq:euLPpiece}
\left\{
\begin{aligned}
\pr_t P_{k+1}v^l + P_{\leq k}v^j \pr_j P_{k+1}v^l + P_{k+1} v^j \pr_j P_{\leq k + 1} v^l + \pr^l P_{k + 1} p &= \pr_j ( R_{\leq k + 1}^{jl}  - R_{\leq k}^{jl} )   \\
\pr_j P_{k + 1} v^j &= 0
\end{aligned}
\right.
\end{align}
From equations (\ref{eq:euLPproj}) and (\ref{eq:euLPpiece}) we will be able to deduce the following bounds
\begin{prop}[Bounds on Littlewood-Paley pieces, 1]\label{prop:firstLPpieceBds}
If $(v,p)$ solve Euler on $I \times \T^d$ for some open interval $I$ and $0 < \a \leq 1$, then their Littlewood Paley pieces satisfy the bounds
\begin{align}
\co{ P_k p } &\leq C 2^{- 2 \a k} \| v\|_{C_t {\dot C}_x^\a}^2 \label{ineq:pBound1}\\
\co{ \nab P_k p } &\leq C 2^{(1 - 2 \a) k} \| v\|_{C_t {\dot C}_x^\a}^2 \label{ineq:pBound2} \\
\co{ \nab^2 P_k p } &\leq C 2^{(2 - 2 \a) k} \| v\|_{C_t {\dot C}_x^\a}^2 \label{ineq:pBound3} \\
\co{ P_{k+1} v } &\leq C 2^{- \a k } \| v\|_{C_t {\dot C}_x^\a} \label{ineq:coBdPkv} \\
\co{ \nab P_{\leq k} v } + \co{ \nab P_{k + 1} v } &\leq C 2^{(1 - \a) k } \| v\|_{C_t {\dot C}_x^\a} \label{ineq:nabPkv} \\
\co{ (\pr_t + P_{\leq k} v \cdot \nab) P_{k+1} v } &\leq C 2^{(1 - 2\a)k} \| v\|_{C_t {\dot C}_x^\a}^2 \label{ineq:mtDvBound1} \\
\co{ \nab (\pr_t + P_{\leq k} v \cdot \nab) P_{k+1} v } &\leq C 2^{(2 - 2 \a)k} \| v \|_{C_t {\dot C}_x^\a}^2  \label{ineq:mtDvBound2}
\end{align}
where $\nab$ represents any spatial derivative.
\end{prop}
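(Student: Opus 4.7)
The seven bounds fall into three groups, each relying on a different mechanism. The velocity bounds (\ref{ineq:coBdPkv})--(\ref{ineq:nabPkv}) are purely kinematic: since $\int \eta_{k+1}=0$, I would write $P_{k+1}v(x)=\int \eta_{k+1}(h)[v(x+h)-v(x)]\,dh$ and combine $|v(x+h)-v(x)|\leq |h|^\a \ctdcxa{v}$ with the scaling $\int |\eta_{k+1}(h)||h|^\a\,dh\lesssim 2^{-\a k}$. The derivative bound on $P_{k+1}v$ uses the same identity with $\nab\eta_{k+1}$ (still mean zero, $L^1$-norm $\sim 2^k$), and the bound on $\nab P_{\leq k}v$ follows by the telescoping sum $\sum_{j\leq k}\nab P_jv$, which converges geometrically in $j$ since $\a<1$.

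The pressure bounds (\ref{ineq:pBound1})--(\ref{ineq:pBound3}) are the central content. Starting from $p=-\De^{-1}\pr_l\pr_j(v^j v^l)$, the plan is to use the Constantin--E--Titi commutator
\[
R_{\leq k}^{jl}= P_{\leq k}v^j P_{\leq k}v^l - P_{\leq k}(v^j v^l),
\]
for which the standard double-integral kernel representation yields $\co{\nab^D R_{\leq k}}\lesssim 2^{(D-2\a)k}\ctdcxa{v}^2$ for every $D\geq 0$. Writing $P_k=P_{\leq k}-P_{\leq k-1}$ and splitting
\[
P_k(v^j v^l) = \bigl[P_{\leq k}v^j P_{\leq k}v^l - P_{\leq k-1}v^j P_{\leq k-1}v^l\bigr] - \bigl[R_{\leq k}^{jl}-R_{\leq k-1}^{jl}\bigr]
\]
isolates a ``commutator piece'' and a ``bilinear piece''. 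For the commutator piece, the operator $\De^{-1}\pr_l\pr_j P_k$ is a frequency-localized, zero-order Fourier multiplier whose kernel has $L^1$-norm bounded uniformly in $k$, so the $2^{-2\a k}$ decay of $R$ passes straight through. For the bilinear piece, I use the divergence-free identity
\[
\pr_l\pr_j(P_{\leq m}v^j P_{\leq m}v^l) = \pr_l P_{\leq m}v^j \, \pr_j P_{\leq m}v^l,
\]
which turns products of velocities into contractions of gradients. Each gradient factor is bounded by $2^{(1-\a)k}\ctdcxa{v}$ via (\ref{ineq:nabPkv}), while $\De^{-1}P_k$ contributes a factor $2^{-2k}$ from its frequency localization; combining gives exactly $2^{-2\a k}\ctdcxa{v}^2$. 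The estimates (\ref{ineq:pBound2}) and (\ref{ineq:pBound3}) then come by multiplying by $2^k$ and $2^{2k}$, the operator norms of $\nab$ and $\nab^2$ on a frequency-localized piece.

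Finally, the material derivative bounds (\ref{ineq:mtDvBound1})--(\ref{ineq:mtDvBound2}) follow by rearranging equation (\ref{eq:euLPpiece}) as
\[
(\pr_t+P_{\leq k}v\cdot\nab)P_{k+1}v^l = -P_{k+1}v^j \pr_j P_{\leq k+1}v^l - \pr^l P_{k+1}p + \pr_j\bigl(R_{\leq k+1}^{jl}-R_{\leq k}^{jl}\bigr),
\]
and estimating each right-hand term: the transport remainder by (\ref{ineq:coBdPkv})$\times$(\ref{ineq:nabPkv}), namely $2^{-\a k}\cdot 2^{(1-\a)k}$; the pressure gradient by (\ref{ineq:pBound2}); and the stress divergence by the derivative CET bound. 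All three produce $C\,2^{(1-2\a)k}\ctdcxa{v}^2$. For (\ref{ineq:mtDvBound2}), I would apply $\nab$ to the same identity; each resulting term picks up an extra factor $2^k$ from the product rule acting on frequency-localized factors and from (\ref{ineq:pBound3}), yielding $2^{(2-2\a)k}\ctdcxa{v}^2$. The hardest step is securing the sharp $2^{-2\a k}$ scaling in the pressure: a naive paraproduct estimate of $\co{P_k(v^j v^l)}$ stumbles on low-high frequency interactions whose low factor is only controlled by $\co{v}$, and the correct exponent emerges only once one has jointly exploited the CET commutator and the divergence-free identity $\pr_l\pr_j(v^j v^l)=\pr_l v^j\,\pr_j v^l$. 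Once the pressure estimate is in place, the material-derivative bounds unravel as direct bilinear bookkeeping on the equation satisfied by $P_{k+1}v$.
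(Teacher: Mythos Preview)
Your plan is correct and will recover all seven bounds. The velocity and material-derivative parts match the paper's argument almost verbatim. For the pressure bounds (\ref{ineq:pBound1})--(\ref{ineq:pBound3}), however, you take a different route: you write $P_k(v^jv^l)$ as the difference of CET commutators $R_{\leq k}-R_{\leq k-1}$ plus a bilinear low-frequency piece, and invoke the CET estimate as a black box. The paper instead decomposes $v^jv^l$ directly as $(v-P_{\leq k}v)\otimes(v-P_{\leq k}v)$ plus cross terms plus $P_{\leq k}v\otimes P_{\leq k}v$, then uses the divergence-free condition to move derivatives onto the low-frequency factors---an HH/HL/LL trichotomy that never mentions CET. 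Both work; yours is more modular, while the paper's is self-contained and sets up the pressure-increment machinery used later.

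Two points to tighten. First, after your identity $P_k(v^jv^l)=(\text{bilinear})-(\text{commutator})$, the individual pieces are \emph{not} annularly frequency-localized, so $\De^{-1}\pr_l\pr_j$ applied to them separately is not bounded on $C^0$ (it is a Riesz transform). You need to reinsert a projection via $P_k=P_{[k-2,k+2]}P_k$ before splitting, so that the operator you apply to each piece is $\De^{-1}\pr_l\pr_j P_{[k-2,k+2]}$; then your estimates go through. Second, your telescoping argument $\nab P_{\leq k}v=\sum_{j\leq k}\nab P_jv$ loses a logarithm at the endpoint $\a=1$, since the series becomes $\sum_{j\leq k}1$. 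The paper obtains $\co{\nab P_{\leq k}v}$ directly from $\int\pr_i\eta_{\leq k}(h)\,dh=0$, which gives the sharp bound uniformly for $0<\a\leq 1$.
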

\begin{proof}[Proof of Proposition (\ref{prop:firstLPpieceBds})]
First note that (\ref{ineq:coBdPkv}), being a standard estimate, is clear from the expression
\ali{
 P_k v^j(x) &= \int v^j(x + h) (\eta_{\leq k}(h) - \eta_{\leq k - 1}(h) ) dh \\
&= \int ( v^j(x+h) - v^j(x + 2 h) ) \eta_{\leq k}(h) dh \label{eq:PkAsRelVelocAvg} \\
\co{ P_k v^j } &\leq \ctdcxa{v}\int |h|^\a \eta_{\leq k}(h) dh
}
The estimate (\ref{ineq:nabPkv}), which is also standard, follows from the formula
\ali{
 \pr_i P_{\leq k} v(x) &= \int ( v(x + h) - v(x) ) \pr_i \eta_{\leq k}(h) dh  \label{eq:derivative}
}
which uses the identity $\int_{\R^n} \pr_i \eta_{\leq k}(h) dh = 0$.  The same estimate follows as well for the individual projections $P_{k+1} v = P_{\leq k+1} v - P_{\leq k} v$.

We recall the well-known calculations above to emphasize that the Littlewood-Paley projections we are employing are simply particular examples of averages\footnote{The expression in line \eqref{eq:PkAsRelVelocAvg} can be viewed as a type of average in the sense that the weight function satisfies $\int \eta_{\leq k}(h) dh = 1$, even though the function $\eta_{\leq k}$ takes on both positive and negative values for the Littlewood-Paley projections we employ here.} or linear combinations of velocity differences.  In particular, line \eqref{eq:PkAsRelVelocAvg}, which expresses $P_k v$ as an average of relative velocities concentrated at distance $|h| \leq 2^{-k}$, shows how bounds on Littlewood-Paley projections can be viewed as estimates on the average size of velocity differences at scale $2^{-k}$.  This calculation also %provides a starting point for comparing statements about statistically averaged velocity differences to statements within the mathematical formalism of Littlewood-Paley theory, and 
explains the sense in which the scaling law \eqref{law:oneThird} relates to a Besov-type regularity for the velocity field.  

Now, observe that, since $v$ and all of its Littlewood-Paley projections are divergence free, one can routinely justify the following computation by passing from a smooth approximation
\begin{align}
P_k p &= \pr_j \pr_l \De^{-1} P_k (v^j v^l) \label{eq:nonLinForPkp} \\
&= \pr_j \pr_l \De^{-1} P_k \left[ ( v^j - P_{\leq k} v^j) (v^l - P_{\leq k} v^l) + ( v^j - P_{\leq k} v^j) P_{\leq k} v^l \right.  \notag \\
&\left.+ P_{\leq k} v^j ( v^l- P_{\leq k} v^l) + P_{\leq k} v^j P_{\leq k} v^l \right] \\
&= \pr_j \pr_l \De^{-1} P_k \left[ ( v^j - P_{\leq k} v^j) (v^l - P_{\leq k} v^l) \right]  + \pr_l \De^{-1} P_k \left[ ( v^j - P_{\leq k} v^j) \pr_j P_{\leq k} v^l \right] \notag \\
&+\pr_j  \De^{-1} P_k \left[ \pr_l P_{\leq k} v^j ( v^l- P_{\leq k} v^l) \right] +\De^{-1} P_k \left[ \pr_l P_{\leq k} v^j \pr_j P_{\leq k} v^l \right]
\end{align}
The bounds  (\ref{ineq:pBound1}), (\ref{ineq:pBound2}) and (\ref{ineq:pBound3}) follow when we apply the bounds
\ali{
\co{( v - P_{\leq k} v) } &\leq 2^{-\a k } \| v\|_{C_t {\dot C}_x^\a} \\
\co{\nab P_{\leq k} v } &\leq 2^{(1 - \a)k} \| v\|_{C_t {\dot C}_x^\a}
}
and we observe, by scaling considerations, that
\[ \| \nab^D \De^{-1} P_k \| \leq C_D 2^{(D-2)k} \]
as a bounded operator on $C^0$, since the right hand side bounds the $L^1$ norm of the kernel.

To obtain (\ref{ineq:mtDvBound1}) and (\ref{ineq:mtDvBound2}), we use equation (\ref{eq:euLPpiece}), the bounds (\ref{ineq:pBound2}) and (\ref{ineq:pBound3}) for the pressure, and the Constantin, E, Titi commutator estimate \cite{CET}, which we state in the form
\ali{
\co{ \nab^D R_{\leq k}^{jl} } &\leq C_D 2^{(D - 2 \a) k}  \| v\|_{C_t {\dot C}_x^\a}^2 \label{eq:theCETbound}
}
The statement (\ref{eq:theCETbound}), which includes bounds on higher derivatives, is proven in \cite{deLSzeC1iso}.  Later on we will give a different proof of (\ref{eq:theCETbound}) which will enable us to prove higher regularity in time.

\end{proof}

The estimate \eqref{ineq:pBound1} immediately implies the $C^{2\a}$ spatial regularity of the pressure
\begin{cor}\label{cor:pressReg} If $0 < \a < 1/2$, then $p \in L_t^\infty C_x^{2\a}$, while if $1/2 < \a < 1$, $\nab p \in L_t^\infty C_x^{2\a - 1}$.
\end{cor}
\begin{proof}  For $\a < 1/2$ and $\b = 2 \a$, the corollary follows from \eqref{ineq:pBound1} and the standard Littlewood-Paley characterization of the $C^\b$ semi-norm, valid for $0 < \b < 1$:
\ali{
\| f \|_{\dot{C}^\b(\T^n)} \sim \sup_{k \geq 0} 2^{\b k} \| P_k f \|_{L^\infty(\T^n)}. \label{LPchar:ca}
}
For $1/2 < \a < 1$, we use that $\co{ \nab P_k p } \leq C 2^k \co{P_k p}$ and \eqref{LPchar:ca} with $\b = 2 \a - 1$ and $f = \nab p$.  A proof of the standard estimate \eqref{LPchar:ca} in the case of $\T^n$ can be found, for example, in \cite[Appendix]{isettOnsag}.
\end{proof}

Let us emphasize again that the bounds in Proposition \ref{prop:firstLPpieceBds} are all consistent with the dimensional analysis of the Euler equations and remain invariant under Galilean transformations.  That is, the velocity carries units $\fr{ \tx{length} }{ \tx{time} }$ and the pressure carries units $\fr{ \tx{length}^2 }{ \tx{time}^2 }$ while the factor $2^{k}$ has the units of an inverse length scale.  Furthermore the bounds involve only velocity differences or derivatives, giving them a Galilean invariance.  In contrast, the bound available for the stationary time derivative
\ali{
\co{ \pr_t P_k v } &\leq \co{(\pr_t + P_{\leq k - 1} v \cdot \nab) P_k v } + \co{ P_{\leq k - 1} v \cdot \nab P_k v } \\
&\leq C (2^{(1 - 2\a)k} \ctdcxa{v}^2 + 2^{(1-\a)k} \co{v} \ctdcxa{v} ) \\
&\leq C_{\T^n} 2^{(1 - \a)k} \ctdcxa{v} \| v \|_{C_t C_x^\a}  \label{bd:prtPkv}
}
(which can also be proven directly without the commutator estimate) is weaker and clearly fails to be Galilean invariant.\footnote{Here it seems natural to define the norm $\| v \|_{C_t C_x^\a} = \| v \|_{C^0} + 2^{-\a \kotn} \ctdcxa{v} $ with $\kotn$ as defined in Proposition (\ref{prop:matDvLowFreqs}) in order to obtain a dimensionally consistent estimate in \eqref{bd:prtPkv} with a constant independent of $\T^n$. }  Nonetheless, (\ref{bd:prtPkv}) is enough to imply the H\"{o}lder regularity in time of $v \in C_{t,x}^\a$ asserted in Theorem \ref{thm:timeRegBounds}.

As a prelude to establishing Theorem~\ref{thm:timeRegBounds}, we first observe that the regularity $v \in C_{t,x}^\b$ for all $\b < \a$ can be obtained from the following interpolation argument.
\begin{align}
\| P_k v \|_{C_{t,x}^\b} &\leq C \co{ P_k v }^{1-\b} \co{ \nab_{t,x} P_k v }^{\b} \\
&\leq C_{\T^n} \left(2^{- \a k } \| v \|_{C_t {\dot C}_x^\a}\right)^{1-\b}\left(2^{(1 - \a)k} \ctdcxa{v}(1 + \| v \|_{C_t C_x^\a}) \right)^\b \\
&\leq C_{\T^n} 2^{(\b - \a) k } \ctdcxa{v} (1 + \| v \|_{C_t C_x^\a})^\b \label{ineq:gotTheHolder}
\end{align}
%The same interpolation argument using (\ref{ineq:pBound1}) and (\ref{ineq:pBound2}) implies that $p \in L_t^\infty C_x^{2\b}$ for all $\b < \a \leq 1$.  
Theorem~\ref{thm:timeRegBounds} states also that $v \in C_{t,x}^\a$, which will be proven in Section~\ref{sec:endpointCase} below.  

For later use, we record the following estimates, which are related to the bounds in Proposition~\ref{prop:firstLPpieceBds}
\begin{prop} \label{prop:matDvLowFreqs} Under the assumptions of Proposition (\ref{prop:firstLPpieceBds}) and $0 < \a < 1$, we have
\ali{
\co{ \nab^{2 + D} P_{\leq k} p } &\leq C_{D} 2^{(D + 2 - 2 \a)k} \| v \|_{C_t {\dot C}_x^\a}^2 \label{ineq:lowFrePlqkp1} \\
\co{ \nab^D (\pr_t + P_{\leq k} v \cdot \nab) \nab P_{\leq k} v } &\leq C_{D} 2^{(D + 2 - 2 \a)k} \| v \|_{C_t {\dot C}_x^\a}^2  \label{ineq:mtDvBound3}
}
If $\a = 1$, we have instead
\ali{
\co{ \nab^{2 + D} P_{\leq k} p } &\leq C_{D} (1 + |k - k_0(\T^n)|) 2^{Dk} \| v \|_{C_t {\dot C}_x^1}^2 \label{ineq:lowFrePlqkp1al1} \\
\co{ \nab^D (\pr_t + P_{\leq k} v \cdot \nab) \nab P_{\leq k} v } &\leq C_{D} (1 + |k - k_0(\T^n)|) 2^{Dk} \| v \|_{C_t {\dot C}_x^1}^2  \label{ineq:mtDvBound3al1}
}
The factor $(1 + |k - k_0(\T^n)|)$ can be omitted for $D > 0$.
\end{prop}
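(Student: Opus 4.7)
My plan is to handle the two estimates in turn, first proving the pressure bound and then using it to establish the material derivative bound via the Euler equation for $P_{\leq k}v$ together with a commutator identity.

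For (\ref{ineq:lowFrePlqkp1}) and (\ref{ineq:lowFrePlqkp1al1}), I will decompose $P_{\leq k}p = \sum_{j = k_0(\T^n)}^{k} P_j p$, which is valid because the mean-zero normalization gives $P_{-\infty} p = 0$. Combining the bound (\ref{ineq:pBound3}) from Proposition~\ref{prop:firstLPpieceBds} with Bernstein's inequality (the Fourier support of $P_j p$ lies in a band of size $\approx 2^j$) yields $\co{\nab^{D+2} P_j p} \leq C_D 2^{(D+2-2\a)j}\ctdcxa{v}^2$. Summing in $j$ from $k_0(\T^n)$ to $k$ gives the stated bounds: when $\a < 1$ the exponent $D+2-2\a > 0$ for every $D \geq 0$, so the geometric series is dominated by its top term; when $\a = 1$ and $D = 0$ the exponent vanishes and the sum contributes the factor $(1 + |k - k_0(\T^n)|)$; when $\a = 1$ and $D \geq 1$ the exponent is again strictly positive and no logarithmic factor appears, matching the proposition.

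For the advective derivative bounds I plan to derive a closed equation for $L\nab P_{\leq k}v$, where $L := \pr_t + P_{\leq k} v \cdot \nab$. Starting from the evolution equation (\ref{eq:euLPproj}) and using $\pr_j P_{\leq k} v^j = 0$, we have $L P_{\leq k}v^l = \pr_j R_{\leq k}^{jl} - \pr^l P_{\leq k} p$. Applying $\nab_i$ and using the commutator identity $[\nab_i, L] f = (\nab_i P_{\leq k} v^j)\,\pr_j f$ to pull $\nab_i$ past $L$ produces
\[ L \nab_i P_{\leq k} v^l \;=\; \nab_i \pr_j R_{\leq k}^{jl} \;-\; \nab_i \pr^l P_{\leq k} p \;-\; (\nab_i P_{\leq k} v^j)(\pr_j P_{\leq k} v^l). \]
Applying $\nab^D$ (which commutes with all remaining spatial derivatives) reduces the task to estimating three right-hand terms: the first is controlled by the CET commutator estimate (\ref{eq:theCETbound}) applied to $\nab^{D+2} R_{\leq k}$, the second by the pressure bound just proved, and the third by Leibniz expansion to distribute the $D$ derivatives between the two factors, each of which satisfies $\co{\nab^{a+1} P_{\leq k} v} \leq C_a 2^{(a+1-\a)k}\ctdcxa{v}$ by Bernstein applied to (\ref{ineq:nabPkv}). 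Adding these contributions yields the desired bound $C_D 2^{(D+2-2\a)k}\ctdcxa{v}^2$ for $\a < 1$, and $C_D 2^{Dk}\ctdcxa{v}^2$ for $\a = 1$ when $D \geq 1$.

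The main point to track carefully is the provenance of the logarithmic factor in the $\a = 1$ case: the CET term and the quadratic term both scale as pure powers of $2^k$, so the factor $(1 + |k - k_0(\T^n)|)$ can only enter through the pressure term. This is precisely why it appears when $D = 0$ and drops out for $D \geq 1$, exactly as stated.
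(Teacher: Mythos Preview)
Your proof is correct and follows essentially the same approach as the paper: the pressure bound is obtained by summing $\co{\nab^{2+D} P_I p}$ over $I \leq k$ using (\ref{ineq:pBound3}), and the advective derivative bound follows from differentiating the Euler--Reynolds equation (\ref{eq:euLPproj}) for $P_{\leq k}v$ (the paper later records this differentiated equation explicitly as (\ref{eq:euNabLPproj3})), then applying the CET commutator estimate (\ref{eq:theCETbound}), the pressure bound just proved, and the basic gradient estimate (\ref{ineq:nabPkv}) for the quadratic commutator term. Your observation that the logarithmic factor in the $\a=1$ case enters solely through the pressure term is exactly right and matches the paper's reasoning.
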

Here we recall that $k_0(\T^n)$ is an integer such that $2^{k_0(\T^n)}$ is comparable to the lowest frequency on the dual of the torus $\T^n = \R^n / \Ga$, the latter of which can be regarded as a characteristic inverse length for the flow.  In particular, the difference $|k - k_0(\T^n)|$, being the logarithm of a ratio of inverse lengths, is dimensionless.
\begin{proof}
The bound (\ref{ineq:lowFrePlqkp1}) for $0 < \a < 1$ and $D = 0$ follows by estimating
\ALI{
 \co{ \nab^2 P_{\leq k} p } &\leq \sum_{I = k_0(\T^n)}^k \co{ \nab^2 P_I p } 
\leq C \sum_{I = -\infty}^k 2^{(2 - 2 \a)I} \ctdcxa{v}^2
}
In the case $\a = 1 $ we obtain instead the bound
\ALI{ 
 \co{ \nab^2 P_{\leq k} p }  &\leq  C (1 + |k - k_0(\T^n)|) \| v \|_{C_t {\dot C}_x^\a}^2  
}
%The loss of the factor $|k - k_0(\T^n)|$ occurs here only in the case $\a = 1$.  
The proof for $D > 0$ is identical, but does not involve the loss of a logarithmic factor $|k - k_0(\T^n)|$.

The bound (\ref{ineq:mtDvBound3}) can now be obtained from equation (\ref{eq:euLPproj}) by using the bound (\ref{eq:theCETbound}) together with (\ref{ineq:lowFrePlqkp1}) and the basic estimates for $P_{k+1} v \cdot \nab P_{\leq k+1} v$.
\end{proof}

\subsection{On the endpoint regularity} \label{sec:endpointCase}

The argument of line \eqref{ineq:gotTheHolder} shows that the sequence $P_{\leq k} v$ is Cauchy in the space $C_{t,x}^\b$ for $\b < \a < 1$; thus $v \in C_{t,x}^\b$ for $\b < \a$.  Here we show that a more careful argument (adapting the technique in the Littlewood-Paley theory characterization of H\"{o}lder spaces) establishes the endpoint regularity $v \in C_{t,x}^\a$.  

Letting $\De t \in \R$ be fixed, the idea is to estimate $|v(t + \De t, x) - v(t,x)|$ by summing the bound
\ali{
| P_k v(t + \De t, x) - P_k v(t,x) | &\leq \min \{ \co{ \pr_t P_k v } |\De t|, 2 \co{ P_k v } \}
}
The bound $\co{ \pr_t P_k v } |\De t|$ is more useful for low frequencies, which vary less rapidly, while high frequencies have smaller amplitude and the $C^0$ estimate is more useful in this case.  Using \eqref{eq:LPDecomp}, we proceed to estimate $|v(t + \De t, x) - v(t,x)|$ by decomposing the velocity difference into
\ALI{
|v(t + \De t, x) - v(t,x)| &\leq \sum_{k = \kotn}^K \co{ \pr_t P_k v } |\De t| + 2 \sum_{k > K} \co{P_k v} \\
&\leq C \left( \sum_{k = - \infty}^K |\De t| 2^{(1 - \a) k} \| v \|_{C_t C_x^\a}\ctdcxa{v} + \sum_{ k > K } 2^{- \a k} \ctdcxa{v} \right) \\
&\leq C ( 2^{(1 - \a) K } \| v \|_{C_t C_x^\a} |\De t| + 2^{- \a K} ) \ctdcxa{v}
}
Choosing $K$ so that both terms are roughly equal gives
\ali{
|v(t + \De t, x) - v(t,x)| &\leq C |\De t|^\a \| v \|_{C_t C_x^\a}^{\a} \ctdcxa{v} \label{eq:endpointHolder}
}
The estimate \eqref{eq:endpointHolder} is dimensionally correct with a universal constant provided we normalize the inhomogeneous norm $\| v \|_{C_t C_x^\a}$ to have dimensions of velocity as in the footnote after \eqref{bd:prtPkv}.  The same idea applied to the difference $|v(t + \De t, x + \De x) - v(t,x)|$ establishes H\"{o}lder regularity in space and time.  

Several endpoint cases for the regularity results in this paper will be established using the above argument, but there are some exceptions.  Specifically, we will see that there are logarithmic losses in the estimates for the pressure that prohibit us from obtaining $p \in C_{t,x}^{2 \a}$ regularity in time, and the time regularity of the energy profile in the case $\a = 1/3$ is slightly more subtle.

\section{Regularity of the Energy Profile} \label{sec:timeRegEnergy}

In this Section, we establish Theorem~\ref{thm:energyReg} on the H\"{o}lder regularity for the energy profile as a function of time, and we conclude with some discussion of this theorem's relationship to the discussion of turbulence and anomalous dissipation in the introduction.  %We only give the proof in the periodic setting, however the method can probably be adjusted to give similar results in the whole space if one is willing to use $L^p$-based norms.

Let us define
\begin{defn} The {\bf energy increment from frequency $2^{k+1}$} is defined as
\begin{align}
\de e_{(k)}(t) &= \int_{\T^n} ( | P_{\leq k+1}  v |^2 - | P_{\leq k}  v |^2 )dx
\end{align}
\end{defn}

Then we have
\[ \int_{\T^n} |v|^2 dx = \int_{\T^n} | P_{- \infty} v |^2 dx + \sum_{k =  k(\T^n)}^\infty \de e_{(k)}(t) \]
where the first term, which is essentially the square of the total momentum, is conserved in time.

\begin{comment}
It is clear from
\begin{align}
 \pr_t P_{\leq k}v^l + \pr_j(P_{\leq k}v^j P_{\leq k}v^l) + \pr^l P_{\leq k} p &= \pr_j R_{\leq k}^{jl}  
\end{align}
that $\int | P_{\leq k} v |^2 dx$ is $C^1$ in time for each $k$.  
\end{comment}

For exponents $\a < 1/3$, the H\"{o}lder regularity of $\int |v|^2(t,x) dx$ in time follows using the argument of Section \eqref{sec:endpointCase} from the following estimates
\begin{prop}
\ali{
\| \de e_{(k)}(t) \|_{C^0_t} &\leq C_{\T^n} 2^{-2 \a k} \ctdcxa{v}^2 \label{eq:enEst1}\\
\| \fr{d}{dt} \de e_{(k)}(t) \|_{C^0_t} &\leq C_{\T^n} 2^{(1 - 3 \a) k } \ctdcxa{v}^3 \label{eq:enEst2}
}
\end{prop}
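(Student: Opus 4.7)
The plan is to prove the two inequalities by separate arguments. Estimate \eqref{eq:enEst1} will come from almost-orthogonality of Littlewood--Paley projections (a Parseval calculation), while \eqref{eq:enEst2} will come from a low-pass energy identity derived from \eqref{eq:euLPproj}, combined with the CET commutator bound \eqref{eq:theCETbound}. The overall structure parallels the Constantin--E--Titi proof of energy conservation, with \eqref{eq:enEst2} playing the role of their energy flux bound.

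For \eqref{eq:enEst1}, the plan is to pass to the Fourier side on $\T^n$. By Parseval,
\[
\delta e_{(k)}(t) \;=\; \sum_{\xi} m_k(\xi)\, |\hat v(t,\xi)|^2, \qquad m_k(\xi) \;:=\; |\hat \eta_{\leq k+1}(\xi)|^2 - |\hat \eta_{\leq k}(\xi)|^2.
\]
The symbol $m_k$ vanishes on $|\xi| \leq 2^{k-1}$ (where $\hat \eta_{\leq k+1}(\xi) = \hat \eta_{\leq k}(\xi) = 1$) and on $|\xi| \geq 2^{k+1}$ (where both vanish), so it is supported in the dyadic annulus $\{2^{k-1} \leq |\xi| \leq 2^{k+1}\}$ and bounded by a universal constant. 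Fixing the bandpass $\tilde P_k := P_{[k-2, k+2]}$, whose symbol equals $1$ on this annulus, I obtain
\[
|\delta e_{(k)}(t)| \;\leq\; C \| \tilde P_k v \|_{L^2(\T^n)}^2 \;\leq\; C |\T^n| \co{ \tilde P_k v }^2 \;\leq\; C_{\T^n}\, 2^{-2\a k}\, \ctdcxa{v}^2,
\]
where the last step combines \eqref{ineq:coBdPkv} across the few Littlewood--Paley layers making up $\tilde P_k$.

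For \eqref{eq:enEst2}, the plan is to test \eqref{eq:euLPproj} against $P_{\leq k} v^l$ and integrate. The transport term $\int P_{\leq k} v^l\, \pr_j(P_{\leq k} v^j P_{\leq k} v^l)\, dx$ and the pressure term $\int P_{\leq k} v^l\, \pr^l P_{\leq k} p\, dx$ both vanish after integration by parts, thanks to $\pr_j P_{\leq k} v^j = 0$. What remains is
\[
\fr{1}{2} \fr{d}{dt} \int_{\T^n} |P_{\leq k} v|^2\, dx \;=\; -\int_{\T^n} \pr_j P_{\leq k} v^l \cdot R_{\leq k}^{jl}\, dx.
\]
Subtracting this identity at levels $k+1$ and $k$, and applying $\co{\nab P_{\leq k} v} \leq C\, 2^{(1-\a)k} \ctdcxa{v}$ from \eqref{ineq:nabPkv} together with the CET bound $\co{R_{\leq k}^{jl}} \leq C\, 2^{-2\a k} \ctdcxa{v}^2$ from \eqref{eq:theCETbound} at $D=0$, I obtain $\bigl|\fr{d}{dt}\delta e_{(k)}(t)\bigr| \leq C_{\T^n}\, 2^{(1-3\a)k} \ctdcxa{v}^3$. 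The most delicate point is the almost-orthogonality step in \eqref{eq:enEst1}: a naive pointwise expansion $\delta e_{(k)} = 2\int P_{\leq k} v \cdot P_{k+1} v\, dx + \int |P_{k+1} v|^2\, dx$ only yields the rate $2^{-\a k} \co{v}\, \ctdcxa{v}$, which is insufficient for the summation argument driving Theorem~\ref{thm:energyReg}; recovering the full $2^{-2\a k}$ decay requires exploiting that the relevant Fourier multiplier is confined to a single dyadic annulus. Notice also that \eqref{eq:enEst2} saturates at the Onsager exponent $\a = 1/3$, consistent with the CET energy conservation proof, while \eqref{eq:enEst1} decays at the cheaper rate $2^{-2\a k}$ and remains summable for every $\a > 0$, so only the finer bound \eqref{eq:enEst1} drives the improved H\"older exponent $\fr{2\a}{1-\a}$ via the endpoint argument of Section~\ref{sec:endpointCase}.
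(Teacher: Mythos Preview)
Your argument is correct on both counts, but it differs in emphasis from the paper's proof.

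For \eqref{eq:enEst1} your Parseval calculation is the Fourier-side version of what the paper does in physical space: the paper writes $\de e_{(k)} = \int |P_{k+1}v|^2\,dx + 2\int P_{k+1}v\cdot P_{\leq k}v\,dx$ and then replaces $P_{\leq k}v$ by $P_{[k-3,k]}v$ in the cross term, using exactly the frequency-support restriction that your multiplier $m_k$ encodes. The two arguments are equivalent.

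For \eqref{eq:enEst2} you take a genuinely different route. The paper's main proof differentiates the localized expression $|P_{k+1}v|^2 + P_{k+1}v\cdot P_{[k-3,k]}v$ along the coarse-scale flow and invokes the material-derivative bound \eqref{ineq:mtDvBound1}, so that each advective derivative costs $2^{(1-\a)k}\ctdcxa{v}$. Your argument instead tests \eqref{eq:euLPproj} against $P_{\leq k}v$ to obtain the energy-flux identity $\tfrac{1}{2}\fr{d}{dt}\int |P_{\leq k}v|^2 = -\int \pr_j P_{\leq k}v_l\, R_{\leq k}^{jl}$ and then subtracts levels $k$ and $k+1$. This is precisely the Constantin--E--Titi computation, which the paper reserves for the endpoint $\a = 1/3$ at the end of Section~\ref{sec:timeRegEnergy}. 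Your route is shorter and uses only the $D=0$ case of \eqref{eq:theCETbound} and \eqref{ineq:nabPkv}; the paper's route instead ties the estimate into the material-derivative framework that drives the rest of the paper, at the cost of first establishing \eqref{ineq:mtDvBound1} (which itself needs the $D=1$ case of \eqref{eq:theCETbound} and the pressure bound \eqref{ineq:pBound2}).
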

We begin by proving (\ref{eq:enEst1}).
\begin{proof}[Proof of (\ref{eq:enEst1})]
The first term in
\ali{
\de e_{(k)}(t) &= \int_{\T^n} |P_{k+1} v|^2 dx + 2 \int_{\T^n} P_{k+1} v \cdot P_{\leq k} v dx \label{eq:eninc1} \\
&= \de e_{(k), 1} + 2 \de e_{(k), 2}
}
is easily bounded by
\ali{
\de e_{(k), 1} &\leq |\T^n| \co{ P_{k+1} v }^2 \\
&\leq C_{\T^n} 2^{-2 \a k} \ctdcxa{v}^2
}
For $\de e_{(k), 2}$, we have no control over the size of $P_{\leq k} v$, so we exploit the fact that the interaction between $P_{k+1} v$ and $P_{\leq k} v$ takes place on the common frequency $\sim 2^k$.
\ali{
\de e_{(k),2} &= \int_{\T^n} P_{k+1} v \cdot P_{[k-3, k]} v dx \label{eq:enIncLoc}
}
at which point (\ref{eq:enEst1}) is clear.

A natural approach to estimating $\fr{d}{dt} \de e_{(k)}(t)$ would be to take the time derivative of \eqref{eq:eninc1} and to observe that (since $P_{\leq k} v$ is divergence free) we have
\begin{align}
\fr{d}{dt} \de e_{(k)}(t) &= \int_{\T^n} (\pr_t + P_{\leq k} v \cdot \nab) [|P_{k+1} v|^2 + P_{k+1} v \cdot P_{\leq k} v ] dx
\end{align}
Ideally, the cost for a material derivative should always be $2^{(1 - \a) k} \ctdcxa{v}$, which would give the bound (\ref{eq:enEst2}).  However, one quickly sees that there is no control over the size of the term $ P_{\leq k} v $ that remains when the material derivative hits $P_{k+1} v$ (which is related to the fact that $ P_{\leq k} v $ is an average of absolute velocities rather than relative velocities).  Thus, before differentiating it is useful to first express this interaction term in terms of relative velocities, as in \eqref{eq:enIncLoc}% the proof of (\ref{eq:enEst1}).  

On the other hand, when we express the derivative of the energy increment as
 \begin{align}
\fr{d}{dt} \de e_{(k)}(t) &= \int_{\T^n} (\pr_t + P_{\leq k} v \cdot \nab) [|P_{k+1} v|^2 + P_{k+1} v \cdot P_{[k-3, k]} v ] dx
\end{align}
it follows from (\ref{ineq:mtDvBound1}) and (\ref{ineq:coBdPkv}) that the material derivative costs $2^{(1 - \a) k} \ctdcxa{v}$ in the estimate for each term, which is the desired bound and gives (\ref{eq:enEst2}).

We remark here that the divergence free property of $P_{\leq k} v$ is not important in this estimate.  Namely, even without incompressibility, the other term that could arise would take the form
\[ \int \pr_i  P_{\leq k} v^i [ |P_{k+1} v|^2 + P_{k+1} v \cdot P_{[k-3, k]} v ] dx  \]
after an integration by parts, and the cost of introducing the term $\tx{ div } P_{\leq k} v$ is exactly the factor
\[\| \pr_i  P_{\leq k} v^i \|_{C^0} \leqc 2^{(1 - \a)k} \ctdcxa{v} \]
that we desire.  

For the case $\a = 1/3$, we recall that \cite{CET} proves the following bounds at any fixed time:
\ALI{
\| \nab P_{\leq k} v \|_{L^3} \leq C 2^{(1-\a)k} \|v \|_{\dot{B}_{3,\infty}^{\a}}, \qquad  \| R_{\leq k} \|_{L^{3/2}} \leq C 2^{-2\a k} \| v \|_{\dot{B}_{3,\infty}^{\a}}^2, \qquad 0 < \a < 1.
}
Using the embedding of $L^\infty$ into $L^3$ on the torus and H\"{o}lder with $\fr{1}{3} + \fr{2}{3} = 1$, we have that
\ALI{
\fr{d}{dt} e_{\leq k}(t) \equiv  \fr{d}{dt} \int_{\T^n} |P_{\leq k} v|^2(t,x) dx  &= \int_{\T^n} (\pr_t + P_{\leq k} v \cdot \nab) |P_{\leq k} v|^2 dx = - \int_{\T^n} \pr_j (P_{\leq k} v)_l R_{\leq k}^{jl} dx
}
is bounded uniformly in $k$ and $t$ by
\ali{
\left|\fr{d}{dt} e_{\leq k}(t) \right| &\leq C 2^{(1 - 3 \a) k} \| v\|_{L_t^\infty \dot{B}_{3,\infty}^{\a}}^3 \leq C_{\T^n} 2^{(1 - 3 \a) k} \ctdcxa{v}^3.
}
Setting $\a = 1/3$ and using the weak convergence $\fr{d}{dt} e_{\leq k}(t) \rightharpoonup \fr{d}{dt} e(t)$ in $\DD'(I)$, we have that $\fr{d}{dt} e(t) \in L^\infty$, so that the energy profile is Lipschitz for $v \in L_t^\infty B_{3,\infty}^{1/3}$ on either $\T^n$ or $\R^n$, and also for $v \in L_t^\infty C_x^{1/3}$ if $v$ is on $I \times \T^n$.
% in fact the same holds for $v \in L_t^\inft B_{3,\infty}^{1/3}$ whether $v$ is on $\T^n$ or $\R^n$.  

We conclude the proof by addressing the case where the velocity field has lower integrability in time and Onsager critical regularity of Besov-type, i.e. $v \in L_t^p B_{3, \infty}^{1/3}$.  % and 
%some comments related to anomalous dissipation.  
First, observe that the same argument for $p \geq 3$ shows that $\| \fr{d}{dt} e_{\leq k} \|_{L^{p/3}_t}$ remains uniformly bounded in $k$ when $v \in L_t^p B_{3, \infty}^{1/3}$.  Assume $p > 3$ and let $1 \leq r < \infty$ satisfy $\fr{1}{r} + \fr{3}{p} = 1$.  Using this bound, the weak convergence $\fr{d}{dt} e_{\leq k} \rightharpoonup \fr{d}{dt} e_{\leq k}$ in $\DD'(I)$ and the density of test functions in $L^r$, we obtain that $\fr{d}{dt} e$ belongs to the dual of $L^r(I)$, which is $L^{p/3}(I)$.  That is, $e(t) \in W^{1,p/3}$.

For $p = 3$ this argument shows that $\fr{d}{dt} e$ is in the dual of $C^0$ and is thus a finite, signed Radon measure by the Riesz Representation Theorem.  To obtain that $\fr{d}{dt} e \in L^1(I)$ is absolutely continuous, note that for any open set $O \subseteq I \subseteq \R$, we can use duality to bound the total variation on $O$ by 
\ALI{
\left\| \fr{d}{dt} e(t) \right\|_{TV(O)} &\leq C \| v \|_{L_t^3(O; \dot{B}_{3,\infty}^{1/3})}^3 = C \int_O \| v(t) \|_{\dot{B}_{3,\infty}^{1/3}}^3 dt
}
since the left hand side can be realized as a supremum of integration against test functions supported in $O$ with sup norm bounded by $1$.  For any given $\ep > 0$, since $\| v(t) \|_{\dot{B}_{3,\infty}^{1/3}}^3 \in L^1(I)$, we can choose $\de > 0$ so that the rightmost integral is less then $\ep$ whenever the measure of $O$ is less than $\de$.  That is, this inequality shows that $\fr{d}{dt} e(t)$ must be absolutely continuous with respect to Lebesgue measure and hence that $e(t) \in W^{1,1}$ as desired.  This statement concludes the proof of Theorem~\ref{thm:energyReg}.
\end{proof}
Let us now highlight the case $p = \infty$ where the velocity field is of class $L_t^\infty \dot{B}_{3, \infty}^{1/3}$ and the stability of anomalous dissipation may be possible.  In this case, the energy profile is Lipschitz and the bound
\ali{
 \left\| \fr{d}{dt} e(t) \right\|_{L_t^\infty} &\leq C \| v \|_{L_t^\infty \dot{B}_{3, \infty}^{1/3}}^3 \label{eq:energyVariationIneq}
}
holds with a constant $C$ that is universal.  Note that inequality \eqref{eq:energyVariationIneq} bears a close formal resemblance to the $p = 3$ case of \eqref{law:oneThird}.  It is in the above class that the quality of energy dissipation for Euler flows could conceivably be stable under perturbation, as a simple generalization of the above proof shows that the energy variation $\fr{d}{dt} e(t)$ varies continuously in $L^\infty$ as $v$ varies among Euler flows in the $L_t^\infty B_{3, \infty}^{1/3}$ topology.  Consequently an inequality of the form $\fr{d}{dt} e(t) \leq - \varep < 0$ is an open condition in this space.    %, making the quality of strict energy dissipation an open condition.  
In contrast, we conjecture that energy dissipation or more generally the quality of having an energy profile with bounded variation should be unstable and nongeneric for solutions with lesser regularity (see Section~\ref{sec:conclusion} below).

\section{Material derivative estimates for the pressure increments and commutators} \label{sec:timeRegPress}

One might hope to prove that the pressure also has H\"{o}lder regularity $p \in C_{t,x}^{2a}$ in time by establishing the following bound
\begin{ques}  For $0 < \a \leq 1$, is there an estimate of the form
\begin{align}
\co{ (\pr_t + P_{\leq k} v \cdot \nab) P_k p } &\leq C 2^{(1 - 3 \a) k} \| v \|_{C_t {\dot C}_x^\a}^3 ? \quad \quad  \label{eq:wantBdForPressure}
\end{align}
\end{ques}
Such a bound would be consistent with (\ref{ineq:pBound1}), (\ref{ineq:coBdPkv}) and (\ref{ineq:mtDvBound1}), since the cost of a material derivative should be $2^{(1 - \a)k} \|v \|_{C_t {\dot C}_x^\a}$ (which has the dimensions of an inverse time).  However, it is not clear whether (\ref{eq:wantBdForPressure}) would be true when $\a \leq 1/3$.  In this case, the high frequencies $2^I > 2^k$ that contribute to the pressure through the nonlinearity move along the more violent flow of $(\pr_t + P_{\leq I} v \cdot \nab)$ rather than that of $(\pr_t + P_{\leq k} v \cdot \nab)$.  After using the Littlewood-Paley calculus to expand
\ali{
P_k [ ( v^j - P_{\leq k} v^j) (v^l - P_{\leq k} v^l) ] &\approx \sum_{I > k} P_k [ P_I v^j P_I v^l ] \label{eq:paradiff}
}
it appears that the most optimistic attempt to bound the derivative \[ (\pr_t + P_{\leq k} v \cdot \nab) = (\pr_t + P_{\leq I} v \cdot \nab) - P_{[k, I] } v \cdot \nab \]
of (\ref{eq:paradiff}) gives rise to a divergence when $\a \leq 1/3$.  

%%%%  Add a discussion of \a \leq 1/3  

We circumvent around this difficulty for $\a \leq 1/3$ by decomposing the pressure
\[ p = \sum_{k =  k_0(\T^n)}^\infty \de p_{(k)} \]
into increments $\de p_{(k)}$ that only involve interactions of velocity components with frequencies below $2^{k}$.  %The philosophy of assembling the solution one frequency shell at a time is also the guiding philosophy in the construction of weak solutions by convex integration, and the analysis here closely mirrors the estimates in the construction \cite{isett}.  This is also essentially the same method used to prove regularity for the energy in Section (\ref{sec:timeRegEnergy}) above.

To define our pressure increments, we start by defining
\ali{
 p_{(k)} &= \De^{-1} \pr_j \pr_l P_{(- \infty, k]}( P_{(- \infty, k]} v^j P_{(- \infty, k]} v^\ell ) \\
 &= \De^{-1} \pr_j \pr_l P_{\leq k}( P_{\leq k} v^j P_{\leq k} v^\ell )
}
Note in particular that $p = \lim_{k \to \infty} p_{(k)}$ as a distribution.
\begin{defn}\label{def:pressInc} We define {\bf the pressure increment from frequency $2^{k}$} to be
\begin{align}
\de p_{(k)} = p_{(k+1)} - p_{(k)}
\end{align}
\end{defn}
The decomposition above is motivated by analogy with the construction and estimation of the pressure that arises in the construction of solutions in \cite{isett}.  The underlying philosophy there is to build the solution starting with the low frequencies and incrementally adding in the higher frequencies.

Since we can write the pressure as
\[ p = \sum_{k =  k_0(\T^n)}^\infty \de p_{(k)} \]
the $C_{t,x}^\b$ H\"{o}lder regularity for $p$ when $\b \leq 1/2$ follows (following the argument of line \eqref{ineq:gotTheHolder}) by interpolating the following bounds
\begin{prop}[Bounds for the pressure increments]\label{prop:pressIncBds1} For every $0 < \a \leq 1$, there is a constant $C = C_\a$ such that
\begin{align}
\co{ \de p_{(k)} } &\leq C_\a (1 + |k - k_0(\T^n)|) 2^{-2 \a k } \ctdcxa{v}^2 \label{bd:pressIncCo} \\
%\co{ \nab \de p_{(k)} } &\leq C_\a (1 + |k - k_0(\T^n)|) 2^{(1 -2 \a) k } \ctdcxa{v}^2 \label{bd:nabPressIncCo} \\
\co{ \nab^D \de p_{(k)} } &\leq C_{\a,D} (1 + |k - k_0(\T^n)|) 2^{(D -2 \a) k } \ctdcxa{v}^2 \label{bd:nab2PressIncCo} \\
% \co{ \pr_t \de p_{(k)} } &\leq C_{\a,\T^n} (1 + |k - k_0(\T^n)|) 2^{(1 - 2 \a) k } \| v \|_{C_t C_x^\a}^3 \label{bd:dtPressInc} \\
%\co{ \nab \pr_t \de p_{(k)} } &\leq C_{\a,\T^n} (1 + |k - k_0(\T^n)|) 2^{(2 - 2 \a) k } \| v \|_{C_t C_x^\a}^3
\co{( \pr_t + P_{\leq k} v \cdot \nab) \de p_{(k)}} &\leq C_\a (1 + |k - k_0(\T^n)|) 2^{(1 - 3\a) k } \ctdcxa{v}^3 \label{bd:matDvPress} \\
\co{\nab^D ( \pr_t + P_{\leq k} v \cdot \nab) \de p_{(k)}} &\leq C_{\a,D} (1 + |k - k_0(\T^n)|) 2^{(D + 1 - 3\a) k } \ctdcxa{v}^3 \label{bd:nabMatDvPress}
\end{align}
%where the third and fourth constants depend also on $\T^n$.
\end{prop}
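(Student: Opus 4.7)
The plan is to decompose the increment $\de p_{(k)} = p_{(k+1)} - p_{(k)}$ by writing $P_{\leq k+1} = P_{\leq k} + P_{k+1}$ on each velocity factor as well as on the outer projection. Expanding the quadratic one obtains
\begin{align*}
\de p_{(k)} &= \De^{-1}\pr_j\pr_l\, P_{k+1}\bigl(P_{\leq k}v^j P_{\leq k}v^\ell\bigr) \\
&\quad + \De^{-1}\pr_j\pr_l\, P_{\leq k+1}\bigl(P_{\leq k}v^j P_{k+1}v^\ell + P_{k+1}v^j P_{\leq k}v^\ell + P_{k+1}v^j P_{k+1}v^\ell\bigr).
\end{align*}
By construction every term involves only velocity components of frequency $\lesssim 2^{k+1}$, which is the key structural gain of working with $\de p_{(k)}$ rather than $P_k p$: the natural coarse-scale transport for this quantity really is $(\pr_t + P_{\leq k}v\cdot\nab)$, circumventing the divergence flagged in the Question opening the section.

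For the $C^0$-type bounds (\ref{bd:pressIncCo})--(\ref{bd:nab2PressIncCo}), the first piece is handled by a Constantin--E--Titi rewriting. Using $\int\eta_{k+1}(h)\,dh=0$ together with $P_{k+1}P_{\leq k} = 0$, one has
$$P_{k+1}(P_{\leq k}v^j P_{\leq k}v^\ell)(x) = \int \eta_{k+1}(h)\,\de_h P_{\leq k}v^j(x)\,\de_h P_{\leq k}v^\ell(x)\,dh,$$
where $\de_h f(x) = f(x+h) - f(x)$. The bound $\co{\de_h P_{\leq k}v} \leq \ctdcxa{v}|h|^\a$ together with the fact that $\De^{-1}\pr_j\pr_l$ is uniformly bounded on the frequency band $\sim 2^{k+1}$ yields $2^{-2\a k}\ctdcxa{v}^2$ with no logarithmic loss. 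The remaining pieces each carry at least one factor $P_{k+1}v$, which is bounded by $\co{P_{k+1}v} \leq C 2^{-\a k}\ctdcxa{v}$; the operator $\De^{-1}\pr_j\pr_l P_{\leq k+1}$ acting on $C^0$ is bounded by $C(1 + |k - k_0(\T^n)|)$ via the dyadic summation used in (\ref{ineq:lowFrePlqkp1}), which is exactly the origin of the logarithmic factor in the statement. Spatial derivatives $\nab^D$ cost $2^{Dk}$ from frequency localization.

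The material derivative bounds (\ref{bd:matDvPress})--(\ref{bd:nabMatDvPress}) are then obtained by applying $(\pr_t + P_{\leq k}v\cdot\nab)$ to each piece of the decomposition and distributing the derivative by the product rule. When the derivative lands on a factor of $P_{\leq k}v$, substitute the evolution equation \eqref{eq:euLPproj}, giving $(\pr_t + P_{\leq k}v\cdot\nab)P_{\leq k}v^l = -\pr^l P_{\leq k}p + \pr_j R_{\leq k}^{jl}$, and apply the low-frequency pressure bounds of Proposition~\ref{prop:matDvLowFreqs} together with the CET commutator estimate (\ref{eq:theCETbound}). When it lands on a factor $P_{k+1}v$, apply (\ref{ineq:mtDvBound1}). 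Each such substitution costs exactly one factor of $2^{(1-\a)k}\ctdcxa{v}$ beyond the $C^0$ estimate, producing the claimed $2^{(1-3\a)k}\ctdcxa{v}^3$, with the logarithmic factor propagating unchanged.

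The hard part will be controlling the commutator $[\De^{-1}\pr_j\pr_l P_I,\, P_{\leq k}v\cdot\nab]$ that appears when $(\pr_t + P_{\leq k}v\cdot\nab)$ is passed through the nonlocal pressure operator. I would estimate it following the strategy used for $R_{\leq k}^{jl}$: express the multiplier as a convolution with a kernel of the appropriate scaling, use the exact cancellation provided by the commutator to rewrite it as an integral of $\de_h P_{\leq k}v$ paired against a derivative of the kernel, and bound the result using $\co{\nab P_{\leq k}v} \leqc 2^{(1-\a)k}\ctdcxa{v}$ together with the scaling of the kernel. For the piece with outer projection $P_{k+1}$ the kernel is localized at a single scale and the estimate is clean; for the pieces with outer projection $P_{\leq k+1}$ a dyadic summation over kernel scales below $2^{-k}$ must be performed, and this is again where the logarithmic factor $(1 + |k - k_0(\T^n)|)$ is absorbed. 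Gradient versions follow by differentiating the kernel representation and tracking an additional factor $2^{Dk}$ throughout.
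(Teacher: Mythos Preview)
Your overall strategy matches the paper's: decompose $\de p_{(k)}$ into pieces involving only frequencies $\lesssim 2^k$, then show that both spatial and advective derivatives cost the expected factors, the logarithm arising from the operator $\De^{-1}\nab^2 P_{\leq k+1}$. The commutator treatment you sketch for the material derivative is exactly what the paper does (see the formulas \eqref{eq:commuteFundThm}--\eqref{eq:commuteintByParts2} and their application to \eqref{eq:nonLocCommutator}).

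There is, however, a genuine gap in your $C^0$ argument for the cross terms. You write that the pieces
\[
\De^{-1}\pr_j\pr_l\, P_{\leq k+1}\bigl(P_{\leq k}v^j\, P_{k+1}v^\ell + P_{k+1}v^j\, P_{\leq k}v^\ell\bigr)
\]
``each carry at least one factor $P_{k+1}v$'' and bound the operator by $(1+|k-k_0|)$. But this only accounts for one factor of $2^{-\a k}\ctdcxa{v}$; the remaining factor $P_{\leq k}v$ is bounded only by $\co{v}$, not by anything involving $\ctdcxa{v}$, so as written you obtain $(1+|k-k_0|)\,2^{-\a k}\co{v}\,\ctdcxa{v}$ rather than the claimed $(1+|k-k_0|)\,2^{-2\a k}\ctdcxa{v}^2$. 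Passing one of the divergences inside via $\pr_l P_{k+1}v^\ell=0$ does not rescue this directly, because $\De^{-1}\nab P_{\leq k+1}$ has norm $\sim 2^{-k_0(\T^n)}$, not $\sim 2^{-k}$.

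The missing idea is a frequency--support observation. The paper splits $P_{\leq k}v = P_{\leq k-4}v + P_{[k-3,k]}v$. The $P_{[k-3,k]}v$ part is itself $\lesssim 2^{-\a k}\ctdcxa{v}$ and is grouped with the High--High terms (this is $\de p_{(k),3HH}$ in \eqref{eq:highHighTerm}). For the $P_{\leq k-4}v$ part, the product $P_{\leq k-4}v\cdot P_{k+1}v$ has Fourier support in $\{|\xi|\gtrsim 2^k\}$, so the outer $P_{\leq k+1}$ may be replaced by $P_{[k-3,k]}$; then one uses $\pr_l P_{k+1}v^\ell=0$ to move a derivative onto $P_{\leq k-4}v$, leaving the operator $\De^{-1}\pr_j P_{[k-3,k]}$ of norm $\sim 2^{-k}$ acting on $\pr_l P_{\leq k-4}v^j\, P_{k+1}v^\ell$, which now gives the correct $2^{-2\a k}\ctdcxa{v}^2$ without any logarithm. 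With this fix in place the rest of your outline goes through and coincides with the paper's argument.
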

%The number $2^{k_0(\T^n)}$ is basically the absolute value of the lowest frequency in the dual of the torus, so the expression $k - k_0(\T^n)$ is dimensionless, being the logarithm of a ratio of inverse lengths.  
The factors of $(1 + |k - k_0(\T^n)|)$ make no difference in summing the series
\ALI{
 \sum_{k = k_0(\T^n)}^\infty \| \de p_{(k)} \|_{C_{t,x}^\b} &\leq C \sum_{k = k_0(\T^n)}^\infty \| \de p_{(k)} \|_{C^0}^{(1-\b)} \| \nab_{t,x} \de p_{(k)} \|_{C^0}^{\b}  \\
&\leq C_{\T^n} \sum_{k = k_0(\T^n)}^\infty (1 + |k - k_0(\T^n)|) 2^{(\b - 2 \a)k} \ctdcxa{v}^2(1 + \| v \|_{C_tC_x^\a})^\b
}
because the convergence for $\b < 2 \a$, $\b \leq 1$ is exponential, so in particular Proposition (\ref{prop:pressIncBds1}) implies the $\a \leq 1/2$ case of Theorem (\ref{thm:timeRegBounds}).  The logarithmic loss in the estimates prevents us from obtaining the endpoint regularity $p \in C_{t,x}^{2 \a}$, but we remark that the argument of Section \ref{sec:endpointCase} can be adapted to give some endpoint type regularity in time with a logarithmic loss.
 
We now begin the proof of Proposition~\ref{prop:pressIncBds1} by establishing the bounds (\ref{bd:pressIncCo})-(\ref{bd:nab2PressIncCo}).%, which hold with a completely universal constant $C$.
\begin{proof}[Proof of (\ref{bd:pressIncCo})-(\ref{bd:nab2PressIncCo})]
We start by expressing
\ali{
\de p_{(k)} &= \De^{-1} \pr_j \pr_l \left[ P_{\leq  k+1} ( P_{\leq  k+1} v^j P_{\leq  k+1} v^l ) - P_{\leq  k} ( P_{\leq  k} v^j P_{\leq  k} v^l ) \right] \notag \\
&= \De^{-1} \pr_j \pr_l P_{k+1} [ P_{\leq  k+1} v^j P_{\leq  k+1} v^l ] \notag \\
&+ \De^{-1} \pr_j \pr_l P_{\leq  k} \left[ P_{\leq  k+1} v^j P_{\leq  k+1} v^l - P_{\leq  k} v^j P_{\leq  k} v^l  \right] \notag \\
&= \De^{-1} \pr_j \pr_l P_{k+1} [ P_{\leq  k+1} v^j P_{\leq  k+1} v^l ] + \De^{-1} \pr_j \pr_l P_{\leq k} [ P_{k+1} v^j P_{k+1} v^l ] \notag \\
&+  \De^{-1} \pr_j \pr_l P_{\leq k} \left[ P_{\leq k} v^j P_{k+1} v^l + P_{k+1} v^j P_{\leq k} v^l \right] \notag \\
&= \de p_{(k), 1} + \de p_{(k), 2} + \de p_{(k), 3} \\
\de p_{(k), 1} &= \De^{-1} \pr_j \pr_l P_{k+1} [ P_{\leq  k+1} v^j P_{\leq  k+1} v^l ] \\
\de p_{(k), 2} &= \De^{-1} \pr_j \pr_l P_{\leq k} [ P_{k+1} v^j P_{k+1} v^l ] \\
\de p_{(k), 3} &= \De^{-1} \pr_j \pr_l P_{\leq k} \left[ P_{\leq k} v^j P_{k+1} v^l + P_{k+1} v^j P_{\leq k} v^l \right]
}
We then use the basic properties of Littlewood-Paley pieces to  further decompose $\de p_{(k),3}$ into High-High and High-Low interactions.
\ali{
\de p_{(k), 3} &= \De^{-1} \pr_j \pr_l P_{\leq k} \left[ P_{\leq k} v^j P_{k+1} v^l + P_{k+1} v^j P_{\leq k} v^l \right] \\
&= \de p_{(k), 3HH} + \de p_{(k), 3HL} \\
\de p_{(k), 3HH} &= \De^{-1} \pr_j \pr_l P_{\leq k} \left[ P_{[ k - 3, k]} v^j P_{k+1} v^l + P_{k+1} v^j P_{[k-3, k]} v^l \right] \label{eq:highHighTerm} \\
\de p_{(k), 3HL} &= \De^{-1} \pr_j \pr_l P_{[k-3, k]} \left[ P_{\leq k-4} v^j  P_{k+1} v^l + P_{k+1} v^j  P_{\leq k-4} v^l \right]
}
Here we have taken advantage of the representation of the product as a convolution in frequency space, which ensures a lower bound on the frequency support of the product $P_{\leq k-4} v^j  P_{k+1} v^l$.

Finally, we use the fact that the Littlewood-Paley pieces of $v^j$ are all divergence free to write
\ali{
\de p_{(k), 3HL} &= \De^{-1} \pr_j P_{[k-3, k]} [ \pr_l P_{\leq k-4} v^j  P_{k+1} v^l ] + \De^{-1}  \pr_l P_{[k-3, k]} [ P_{k+1} v^j  \pr_j P_{\leq k-4} v^l ] \\
&=  \De^{-1} \pr_j P_{[k-3, k]} [ \pr_l P_{\leq k-4} v^j  P_{k+1} v^l ] + \De^{-1}  \pr_l P_{[k-3, k]} [ P_{k+1} v^j  \pr_j P_{\leq k-4} v^l ]
}
and
\ali{
\de p_{(k), 1} &= \De^{-1}  P_{k+1} [ \pr_l P_{\leq k+1} v^j \pr_j P_{\leq k+1} v^l ]
}

The estimate (\ref{bd:pressIncCo}) almost follows from the elementary bounds on $\co{ P_k v }$ and $\co{\nab P_{\leq k} v }$, and the bound on the operator norm
\[ \| \nab^D \De^{-1} P_k \| \leq C 2^{(D - 2)k} \]
just as in the proof of the estimates for $P_k p$ in Proposition (\ref{prop:firstLPpieceBds}).  The only exceptions are the terms $\de p_{(k), 3HH}$ and $\de p_{(k), 2}$, which both involve the operator $\De^{-1} \pr_j \pr_l P_{\leq k}$.  For these terms we lose a logarithmic factor by estimating the $L^1$ norm of the kernel by
\ali{
\| \De^{-1} \pr_j \pr_l P_{\leq k} \| &\leq \sum_{I = k_0(\T^n)}^k \| \De^{-1} \pr_j \pr_l P_{I} \| \\
&\leq C ( 1 +  |k - k_0(\T^n)|)
}
It is straightforward to see that all of the above estimates worsen by a factor of $2^k$  upon taking a spatial derivative (since they are frequency-localized), leading to (\ref{bd:nab2PressIncCo}).
\end{proof}

% For simpler bounds, look at dtdeltapk.tex

The main task in the proof of (\ref{bd:matDvPress})-(\ref{bd:nabMatDvPress}) is to compute the commutator of the material derivative $\pr_t + P_{\leq k} v \cdot \nab$ with the convolution operators appearing in the expression for $\de p_{(k)}$.  In general, the commutator of a vector field and a convolution operator can be expressed nicely using the fundamental theorem of calculus:
\ali{
[ P_{\leq k} v \cdot \nab, K \ast] f &= -\int_{\R^n} ( P_{\leq k} v^i(x + h) - P_{\leq k} v^i(x) )\fr{\pr}{\pr x^i} f(x+h) K(h) dh \label{eq:commuteBeforeFundThm} \\
&= -\int_0^1 \int_{\R^n} \pr_a P_{\leq k} v^i(x + s h) \fr{\pr}{\pr x^i} f(x+h) K(h) h^a dh ds \label{eq:commuteFundThm}
}
By observing that $\fr{\pr}{\pr x^i} f(x+h) = \fr{\pr}{\pr h^i} f(x+h)$, one can obtain an alternative expression that does not involve the derivative of $f$ by integrating by parts in the $h$ variables, giving %In our case, matters improve because the vector fields at hand are always divergence free in the $h$ variables, so integration by parts in (\ref{eq:commuteBeforeFundThm}) leaves only
\ali{
[ P_{\leq k} v \cdot \nab, K \ast] f &= \int_{\R^n} ( P_{\leq k} v^i(x + h) - P_{\leq k} v^i(x) ) f(x+h)\pr_i K(h) dh \notag \\
&+ \int_{\R^n} \pr_i P_{\leq k} v^i(x + h) f(x+h) K(h) dh  \label{eq:commuteintByParts1long} \\
&= \int_0^1 \int_{\R^n} \pr_a P_{\leq k} v^i(x + s h)  f(x+h) \pr_i K(h) h^a dh ds \notag \\
&+ \int_{\R^n} \pr_i P_{\leq k} v^i(x + h) f(x+h) K(h) dh \label{eq:commuteintByParts2long}
}
We remark that since the vector fields involved are always divergence free, the latter term in the commutator is actually $0$, and we are left with only one of these terms.
\ali{
[ P_{\leq k} v \cdot \nab, K \ast] f &= \int_{\R^n} ( P_{\leq k} v^i(x + h) - P_{\leq k} v^i(x) ) f(x+h)\pr_i K(h) dh\label{eq:commuteintByParts1} \\
&= \int_0^1 \int_{\R^n} \pr_a P_{\leq k} v^i(x + s h)  f(x+h) \pr_i K(h) h^a dh ds \label{eq:commuteintByParts2}
}
However we will never actually have a need for this extra cancellation, as the other term would obey the same bounds even if the vector field were not divergence free.
\begin{proof}[Proof of (\ref{bd:matDvPress})-(\ref{bd:nabMatDvPress})]
From the proof of (\ref{bd:pressIncCo})-(\ref{bd:nab2PressIncCo}), we can express the pressure increment from frequency $2^{k}$ in terms of Low-Low, High-Low and High-High interactions as
\ali{
\de p_{(k)} &= \de p_{(k), LL} + \de p_{(k), HL} + \de p_{(k), HH} \label{eq:dePLLHLHH}\\
\de p_{(k), LL} &= \De^{-1} P_{k+1} ( \pr_l P_{\leq k+1} v^j \pr_j P_{\leq k+1} v^l ) \notag \\%\label{eq:dePLL}\\
&= \De^{-1} P_{k+1} f_{(k),LL} \label{de:dePLLshort} \\
\de p_{(k), HL}&= \De^{-1} \pr_j P_{[k-3, k]} \left[ \pr_l P_{\leq k-4} v^j  P_{k+1} v^l \right] + \De^{-1} \pr_l P_{[k-3, k]} \left[ P_{k+1} v^j  \pr_j P_{\leq k-4} v^l \right] \notag \\%\\
&= 2 \De^{-1} \pr_j P_{[k-3, k]} \left[ \pr_l P_{\leq k-4} v^j  P_{k+1} v^l \right] \notag \\%\label{eq:dePHL} \\
&= \De^{-1} \pr_j P_{[k-3,k]} f_{(k),HL}^j \label{eq:dePHLshort} \\
\de p_{(k), HH} &= \De^{-1} \pr_j \pr_l P_{\leq k} \left[ P_{k+1} v^j P_{k+1} v^l + P_{[ k - 3, k]} v^j P_{k+1} v^l + P_{k+1} v^j P_{[k-3, k]} v^l \right] \notag \\%\label{eq:dePHH} \\
&= \De^{-1} \pr_j \pr_l P_{\leq k} f_{(k),HH}^{jl} \label{eq:dePHHshort}
}
We now apply the operator $\pr_t + P_{\leq k} v \cdot \nab = \pr_t + P_{\leq k} v^i \pr_i $ to the expression (\ref{eq:dePLLHLHH}).  This differentiation generates several commutator terms, the most subtle of which is the commutator $[ P_{\leq k} v \cdot \nab, \De^{-1} \pr_j \pr_l P_{\leq k} ]$ in (\ref{eq:dePHHshort}).  The content of the bounds (\ref{bd:matDvPress})-(\ref{bd:nabMatDvPress}) is simply that this differentiation costs a factor $2^{(1- \a) k} \ctdcxa{v}$ in all the estimates (possibly with a logarithmic loss in some cases).    In what follows we will always neglect the difference between $\pr_t + P_{\leq k} v \cdot \nab$ and $\pr_t + P_{\leq k + 1} v \cdot \nab$ and similar terms which give rise to some harmless factors of the form $P_{ k + 1 } v \cdot \nab$, since we already know from the bounds on spatial derivatives that the operator $P_{ k + 1 } v \cdot \nab$ incurs the desired cost of $2^{(1- \a) k} \ctdcxa{v}$.

The low frequency terms such as $\de p_{(k), LL}$ are treated as follows. 
\ali{
( \pr_t + P_{\leq k} v \cdot \nab ) \de p_{(k), LL} &= \De^{-1} P_{k+1} ( \pr_t + P_{\leq k} v \cdot \nab ) f_{(k),LL} + [( \pr_t + P_{\leq k} v \cdot \nab ) , \De^{-1} P_{k+1} ] f_{(k),LL} 
}
For the function $f_{(k), LL}$, we have the bounds
\ali{
\co{\nab^D f_{(k), LL} } &\leq C_D 2^{(D + 2 - 2\a) k} \ctdcxa{v}^2 \\
\co{(\pr_t + P_{\leq k} v \cdot \nab)f_{(k), LL} } &\leq C_{\T^n} (1 + | k - k_0(\T^n) | ) 2^{(3 - 3 \a) k} \ctdcxa{v}^2
}
which come from Proposition (\ref{prop:firstLPpieceBds}) and the bound (\ref{ineq:mtDvBound3}) for $\co{ \nab (\pr_t + P_{\leq k} v \cdot \nab)P_{\leq k} v }$.  

From (\ref{eq:commuteFundThm}), the commutator can be written in the form
\ali{
[( \pr_t + P_{\leq k} v \cdot \nab ) , \De^{-1} P_{k+1} ] f_{(k),LL} &= \int_0^1 \int_{\R^n} \pr_a P_{\leq k} v^i(x + s h) \fr{\pr}{\pr x^i} f_{(k),LL}(x+h) \De^{-1} \eta_{k + 1}(h) h^a dh ds
}
which obeys the desired estimate without losing a factor $(1 + |k - k_0(\T^n)|)$, as we have the scaling bound
\[ \| \De^{-1} \eta_{k+1}(h) |h| \|_{L^1} \leq C 2^{-3k} \]

The term $\de p_{(k), HL}$ is treated similarly, with only a few differences such as the appearance of high frequency terms such as $( \pr_t + P_{\leq k} v \cdot \nab ) P_{k + 1} v$ and a different scaling for the operator. %the only difference being the appearance of terms such as $( \pr_t + P_{\leq k} v \cdot \nab ) P_{k + 1} v$ which obey the correct bounds and cosmetic differences such as the $k-4$ in $( \pr_t + P_{\leq k} v \cdot \nab ) P_{\leq k - 4} v$.

The term that requires a more subtle analysis is the term $\de p_{(k), HH}$, which contains the highly nonlocal operator 
\ali{
\De^{-1} \pr_j \pr_l P_{\leq k} &= \sum_{I = k_0(\T^n)}^k \De^{-1} \pr_j \pr_l P_{I} 
}
In this case, we use the expression (\ref{eq:commuteintByParts2}) to write the commutator term as
\ali{
[( \pr_t + P_{\leq k} v \cdot \nab ) , &\De^{-1} \pr_j \pr_l P_{\leq k} ] f_{(k), HH}^{jl}(x+h) = \notag \\
&= \sum_{I = k_0(\T^n)}^k  \int_0^1 \int_{\R^n} \pr_a P_{\leq k} v^i(x + s h)  f_{(k), HH}^{jl}(x+h) \pr_i \pr_j \pr_l \De^{-1} \eta_I(h) h^a dh ds \label{eq:nonLocCommutator}
}
which immediately gives an estimate on the operator norm
\ali{
\sup_t \| [( \pr_t + P_{\leq k} v \cdot \nab ) , \De^{-1} \pr_j \pr_l P_{\leq k} ] \| &\leq \sum_{I = k_0(\T^n)}^k \co{ \nab P_{\leq k} v } \| |h| \nab^3 \De^{-1} \eta_I(h) \|_{L^1_h} \notag \\
&\leq C \sum_{I = k_0(\T^n)}^k \co{ \nab P_{\leq k} v } \cdot 1 \notag \\
&\leq C_{\T^n} ( 1 + |k - k_0(\T^n)|) 2^{(1 - \a) k} \ctdcxa{v}
}
This bound is worse than the estimate we used for the operator 
\[ \| \De^{-1} \pr_j \pr_l P_{\leq k} \| \leq C_{\T^n} ( 1 + |k - k_0(\T^n)|) \]
by exactly the factor
\[ \Big| [ ( \pr_t + P_{\leq k} v \cdot \nab ), ~\cdot~ ] \Big| \leq C 2^{(1-\a)k} \ctdcxa{v} \]
we desire, and the estimates (\ref{bd:matDvPress})-(\ref{bd:nabMatDvPress}) follow from differentiating the above formulas in the $x$ variables at a cost of $2^k$ per derivative.

With the expression (\ref{eq:nonLocCommutator}) for the commutator in hand, the bounds (\ref{bd:matDvPress})-(\ref{bd:nabMatDvPress}) follow quickly from the estimates in Proposition~\ref{prop:firstLPpieceBds}.
\end{proof}

We are now able to prove Theorem~\ref{thm:matDvPCts}.
\begin{cor}\label{cor:thm13}

If $\a > 1/3$, the distribution $\pr_t p + \pr_j( p v^j)$ is continuous when $p$ is normalized to have integral $0$ at every time $t$.

If $\a > 2/3$, the distribution $\pr_t \pr^l p + \pr_j( v^j \pr^l p )$ is continuous.
\end{cor}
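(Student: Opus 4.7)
The plan is to combine the pressure decomposition $p = \sum_{k \geq \kotn} \de p_{(k)}$ from Definition~\ref{def:pressInc} with the material derivative estimates of Proposition~\ref{prop:pressIncBds1}, computing $\fr{Dp}{\pr t}$ one increment at a time. Since $\tx{div}\,v = 0$, we have $\pr_t p + \pr_j(p v^j) = (\pr_t + v \cdot \nab) p$, and on each increment I would use the split $v \cdot \nab = P_{\leq k} v \cdot \nab + (v - P_{\leq k} v) \cdot \nab$ to write
\begin{align*}
\fr{D}{\pr t} \de p_{(k)} = (\pr_t + P_{\leq k} v \cdot \nab) \de p_{(k)} + (v - P_{\leq k} v) \cdot \nab \de p_{(k)}.
\end{align*}

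For the first claim ($\a > 1/3$), the first piece is controlled directly by \eqref{bd:matDvPress}, while the second is handled by the elementary estimate $\co{v - P_{\leq k} v} \leq C 2^{-\a k} \ctdcxa{v}$ combined with the $D = 1$ case of \eqref{bd:nab2PressIncCo}. Both contributions obey $C(1 + |k - \kotn|) 2^{(1 - 3 \a) k} \ctdcxa{v}^3$, so the series converges absolutely in $C^0$ whenever $\a > 1/3$, since the exponential decay dominates the logarithmic loss.

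For the second claim ($\a > 2/3$), I would run the same argument on $\pr^l p = \sum_k \pr^l \de p_{(k)}$. The new feature is a commutator, since
\begin{align*}
(\pr_t + P_{\leq k} v \cdot \nab) \pr^l \de p_{(k)} = \pr^l \bigl[ (\pr_t + P_{\leq k} v \cdot \nab) \de p_{(k)} \bigr] - (\pr^l P_{\leq k} v^j) \pr_j \de p_{(k)}.
\end{align*}
The first term is handled by \eqref{bd:nabMatDvPress}; the commutator by $\co{\nab P_{\leq k} v} \cdot \co{\nab \de p_{(k)}} \lesssim 2^{(1 - \a) k} (1 + |k - \kotn|) 2^{(1 - 2 \a) k} \ctdcxa{v}^3$; and the low-frequency remainder $(v - P_{\leq k} v) \cdot \nab^2 \de p_{(k)}$ by the $D = 2$ case of \eqref{bd:nab2PressIncCo} together with $\co{v - P_{\leq k} v} \leq C 2^{-\a k} \ctdcxa{v}$. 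All three pieces are of the same order $C(1 + |k - \kotn|) 2^{(2 - 3 \a) k} \ctdcxa{v}^3$, summable in $k$ precisely when $\a > 2/3$.

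To close the argument I need to confirm that this $C^0$-summable series really represents the distribution $\fr{Dp}{\pr t}$ (respectively $\fr{D}{\pr t} \pr^l p$). I would approximate $p$ by its truncation $p^{(N)} = \sum_{k = \kotn}^{N-1} \de p_{(k)}$, which by Proposition~\ref{prop:pressIncBds1} converges to $p$ in $C^0_{t,x}$; continuity of distributional differentiation and of multiplication by $v \in C^0$ then yields $\pr_t p^{(N)} + \pr_j(v^j p^{(N)}) \to \pr_t p + \pr_j(v^j p)$ in the distributional sense, while the same partial sum converges in $C^0$ to the continuous function produced by our bounds, so the two limits must agree. The main obstacle is largely organizational: the Leibniz decomposition produces several commutator and product terms, and one must verify that each obeys the sharp scaling $2^{(1 - 3\a) k}$ (respectively $2^{(2 - 3\a) k}$) so that the logarithmic $(1 + |k - \kotn|)$ losses are absorbed into the exponential decay, after which the criticality of the thresholds $\a = 1/3$ and $\a = 2/3$ emerges automatically.
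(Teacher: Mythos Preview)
Your proposal is correct and follows essentially the same route as the paper's own proof: decompose $p$ into the increments $\de p_{(k)}$, split $v\cdot\nab = P_{\leq k}v\cdot\nab + (v-P_{\leq k}v)\cdot\nab$ on each increment, apply \eqref{bd:matDvPress}--\eqref{bd:nabMatDvPress} together with the elementary $\co{v-P_{\leq k}v}$ bound to get the $2^{(1-3\a)k}$ (resp.\ $2^{(2-3\a)k}$) decay, and identify the $C^0$ limit with the distribution via uniform convergence of the partial sums. The only cosmetic difference is that the paper phrases the approximation in terms of the partial sums $p_{(k)}$ rather than summing the increments directly, and it notes that the identity $\pr_j(p_{(k)}v^j)=v^j\pr_j p_{(k)}$ can be justified by regularizing in time; your distributional argument handles this equally well.
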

\begin{proof}
Set $p_{(k)} = \sum_{I = k_0(\T^n)}^k \de p_{(I)}$.  Then $p_{(k)} \to p$ uniformly in space and time, and as a consequence 
\ali{
 \pr_t p_{(k)} + \pr_j( p_{(k)} v^j ) &\rightharpoonup \pr_t p + \pr_j( p v^j ) \label{weakConvergeMatDvP}
}
weakly as distributions from the uniform continuity of $v$.  By regularizing in time, it can be shown that for each $k$, we have the identity
\ali{
\pr_t p_{(k)} + \pr_j( p_{(k)} v^j ) &= \pr_t p_{(k)} + v^j \pr_j p_{(k)} \label{eq:sillyDivIdentity}
}
using the fact that $\pr_j v^j = 0$ as a distribution.  From this identity we conclude that the convergence of (\ref{weakConvergeMatDvP}) is actually uniform in $(t,x)$ when $\a > 1/3$, because we have proven the bound
\ALI{
\co{ (\pr_t + v \cdot \nab) \de p_{(k)} } &\leq \co{ (\pr_t + P_{\leq k} v \cdot \nab) \de p_{(k)} } + \co{ (v - P_{\leq k} v) \cdot \nab \de p_{(k)} } \\
&\leq C_{\T^n} ( 1 + |k - k_0(\T^n)| ) 2^{(1- 3 \a) k } \ctdcxa{v}^3
}
which decays exponentially as $k \to \infty$ whenever $\a > 1/3$.

Assuming that $\a > 2/3$, the continuity of the distribution $\pr_t \pr^l p + \pr_j( v^j \pr^l p )$ follows similarly.  Namely, we see that 
\ali{
 \pr_t \pr^l p_{(k)} + \pr_j(  v^j \pr^l p_{(k)}) &\rightharpoonup \pr_t \pr^l p + \pr_j(  v^j \pr^l p ) \label{weakConvergeMatDvNabP}
}
as distributions, since $\nab p_{(k)} \to \nab p$ uniformly for $\a > 1/2$.  The uniform convergence for $\a > 2/3$ then follows from the bounds 
\ALI{
\co{ (\pr_t + v \cdot \nab) \nab \de p_{(k)} } &\leq \co{ (\pr_t + P_{\leq k} v \cdot \nab) [ \nab \de p_{(k)} ] } + \co{ (v - P_{\leq k} v) \cdot \nab [ \nab \de p_{(k)} ] } \\
&\leq \co{ \nab (\pr_t + P_{\leq k} v \cdot \nab) \de p_{(k)} } + \co{ \nab P_{\leq k} v } \co{\nab \de p_{(k)} } \notag \\
&+ \co{ (v - P_{\leq k} v) \cdot \nab [ \nab \de p_{(k)} ] } \\
&\leq C_{\T^n} ( 1 + |k - k_0(\T^n)| ) 2^{(2- 3 \a) k } \ctdcxa{v}^3
}
or alternatively by repeating the commutator estimates above for $\nab \de p_{(k)}$ directly.
\end{proof}

The proof above is not the most robust proof of Corollary (\ref{cor:thm13}), in particular because it does not give any H\"{o}lder regularity.  As we will see when we have built up the relevant preliminary estimates, it is better to use the approximation
\ALI{
 \pr_t p_{(k)} + \pr_j( p_{(k)} P_{\leq k} v^j ) &\rightharpoonup \pr_t p + \pr_j( p v^j ) 
}
which avoids regularizing in time as in (\ref{eq:sillyDivIdentity}) and actually converges in the appropriate H\"{o}lder spaces.

For now we record the following bounds to accompany Proposition~\ref{prop:pressIncBds1} for use in the later applications of Section~\ref{sec:secondMatDvPressIncs}.%which will be used in later applications that require further bounds on low frequencies.
\begin{prop}\label{cor:morePressIncEstimates} Under the assumptions of Proposition (\ref{prop:pressIncBds1}),
\ali{
\co{\nab^{2 + D} p_{(k)} } &\leq C_{D, \T^n} (1 + |k - k_0(\T^n)|) 2^{(D+2(1-\a)) k} \ctdcxa{v}^2 \label{ineq:nab2Dpk} \\
\co{ \nab^D (\pr_t + P_{\leq k} v \cdot \nab) \nab^2 p_{(k)} } &\leq C_{D,\T^n} (1 + |k - k_0(\T^n)|) 2^{(D+ 3(1-\a))k} \ctdcxa{v}^3 \label{ineq:matDvNab2Dpk}
}
Furthermore, if $\a < 2/3$, we have
\ali{
\co{ \nab^D (\pr_t + P_{\leq k} v \cdot \nab) \nab p_{(k)} } &\leq C_{D,\T^n} (1 + |k - k_0(\T^n)|) 2^{(D + 2 - 3 \a)k} \ctdcxa{v}^3 \label{eq:goodMatDvPressGrad}
}
\end{prop}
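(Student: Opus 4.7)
The proof strategy is to write $p_{(k)} = \sum_{I = k_0(\T^n)}^k \de p_{(I)}$ and sum the per-increment bounds established in Proposition~\ref{prop:pressIncBds1}. In every case the exponent of $2^I$ in the summand will turn out to be strictly positive, so the geometric series in $I$ is controlled by its top term at the cost of only an additional factor $(1+|k-k_0(\T^n)|)$.

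The spatial estimate (\ref{ineq:nab2Dpk}) follows immediately by applying (\ref{bd:nab2PressIncCo}) with $D$ replaced by $D+2$ to each increment, giving $\co{\nab^{D+2} \de p_{(I)}} \leq C_{D,\a} (1+|I-k_0(\T^n)|) 2^{(D+2-2\a)I} \ctdcxa{v}^2$, and summing; the exponent $D+2-2\a$ is positive for all $D \geq 0$ and $0 < \a < 1$.

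For the material derivative estimates the central device is to split, for each $I \leq k$,
\begin{equation*}
(\pr_t + P_{\leq k} v \cdot \nab) = (\pr_t + P_{\leq I} v \cdot \nab) + P_{(I,k]} v \cdot \nab,
\end{equation*}
so that each piece can be handled by tools already in hand. Commuting $\nab^D$ past the first operator reduces it to $(\pr_t + P_{\leq I} v \cdot \nab) \nab^{D+j} \de p_{(I)}$ plus commutator terms of the form $\nab^a P_{\leq I} v \cdot \nab^{D-a+j+1} \de p_{(I)}$ with $a \geq 1$; both are controlled by a straightforward extension of (\ref{bd:nabMatDvPress}) together with the derivative bounds of Proposition~\ref{prop:firstLPpieceBds} applied to $P_{\leq I} v$ and the spatial bound (\ref{bd:nab2PressIncCo}) applied to $\de p_{(I)}$, yielding $C (1+|I-k_0(\T^n)|) 2^{(D+j+1-3\a)I} \ctdcxa{v}^3$. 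For the transport piece I use $\co{P_{(I,k]} v} \leq C 2^{-\a I} \ctdcxa{v}$, obtained by writing $P_{(I,k]} v = (v - P_{\leq I} v) - (v - P_{\leq k} v)$, together with $\co{\nab^a P_{(I,k]} v} \leq C 2^{(a-\a)k} \ctdcxa{v}$ for $a \geq 1$ (summing the standard bounds on $\nab^a P_J v$ over $J \in (I, k]$); distributing $\nab^D$ by Leibniz and combining with (\ref{bd:nab2PressIncCo}) recovers the same total bound, where the factor $2^{(a-\a)k}$ coming from the transport factor is compensated by a residual positive exponent $2^{(D-a+j+1-2\a)I}$.

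The main obstacle is the bookkeeping needed to verify that in every Leibniz branch the residual $I$-exponent is strictly positive so that the geometric series in $I$ reassembles to the claimed power $2^{(D+j+1-3\a)k}$. This positivity reduces to $j+1-2\a > 0$ in the extremal branch $a = D$, which holds whenever $j \geq 1$ and $\a < 1$. The restriction $\a < 2/3$ in (\ref{eq:goodMatDvPressGrad}) enters not from this side calculation but from the principal contribution itself, whose final exponent $D + 2 - 3\a$ in $k$ must be nonnegative to justify the geometric summation; positivity at $D = 0$ is exactly $\a < 2/3$. For the $\nab^2 p_{(k)}$ version (\ref{ineq:matDvNab2Dpk}) the analogous principal exponent $D+3-3\a$ is positive for every $\a < 1$, so no additional hypothesis on $\a$ is required.
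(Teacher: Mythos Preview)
Your argument is correct, but it is organized differently from the paper's.  For the first two estimates \eqref{ineq:nab2Dpk} and \eqref{ineq:matDvNab2Dpk} the paper does \emph{not} sum the increment bounds from Proposition~\ref{prop:pressIncBds1}.  Instead it uses the closed-form representation
\[
\nab^2 p_{(k)} = \nab^2 \De^{-1} P_{\leq k}\bigl[\pr_l P_{\leq k} v^j \, \pr_j P_{\leq k} v^l\bigr],
\]
bounds the operator $\nab^2 \De^{-1} P_{\leq k}$ by $C(1+|k-\kotn|)$, and for the advective derivative commutes $(\pr_t + P_{\leq k} v\cdot\nab)$ past this operator via the integral formula~\eqref{eq:commuteintByParts2}, then applies the already-known bound \eqref{ineq:mtDvBound3} on $(\pr_t + P_{\leq k} v\cdot\nab)\nab P_{\leq k} v$.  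This bypasses all of your Leibniz bookkeeping for these two inequalities.

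For \eqref{eq:goodMatDvPressGrad} both proofs sum over $I$, but the paper telescopes the \emph{whole} quantity: it sets
\[
\de_{(I)}\Bigl[\tfrac{D_{\leq I}}{\pr t}\nab p_{(I)}\Bigr]
= \tfrac{D_{\leq I}}{\pr t}\nab \de p_{(I-1)} + P_{I} v \cdot \nab\nab p_{(I-1)},
\]
so that each increment is controlled by \eqref{bd:nabMatDvPress} together with the already-established \eqref{ineq:nab2Dpk}.  Your device of splitting $(\pr_t + P_{\leq k} v\cdot\nab) = (\pr_t + P_{\leq I} v\cdot\nab) + P_{(I,k]} v\cdot\nab$ for each $I$ achieves the same end, but the paper's telescoping avoids having to track the mixed $2^{(a-\a)k}\,2^{(\cdots)I}$ terms you needed in the transport piece.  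Either way, the positivity of $2-3\a$ at $D=0$ is precisely where the restriction $\a<2/3$ enters, as you correctly identified.
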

\begin{proof}
The bound (\ref{ineq:nab2Dpk}) is immediate from the representation
\ali{
\nab^2 p_{(k)} &= \nab^2 \pr_j \pr_l \De^{-1} P_{\leq k} ( \plkv^j \plkv^l ) \\
&= \nab^2 \De^{-1} P_{\leq k} \pr_l \plkv^j \pr_j \plkv^l
}
and the estimate
\[ \| \nab^2 \De^{-1} P_{\leq k} \| \leq C (1 + |k - k_0(\T^n)|) \]

The estimate for the material derivative of $\nab^2 p_{(k)}$ follows by commuting using the formula (\ref{eq:commuteintByParts2}).

To obtain the bound (\ref{eq:goodMatDvPressGrad}), set
\ali{ 
\fr{D_{\lk}}{\pr t} &= ( \pr_t + \plkv \cdot \nab)
}
and define
\ali{
 \de_{(k)}\left[\fr{D_{\lk}}{\pr t} \nab p_{(k)}\right] &=  \fr{ D_{\lk}}{\pr t} \nab \de p_{(k-1)} + P_{k} v \cdot \nab \nab p_{(k-1)} \label{eq:incOfDdtNabpk} \\
&= \fr{ D_{\lk}}{\pr t} \nab p_{(k)} - \fr{ D_{\leq k-1}}{\pr t} \nab p_{(k-1)} 
}
Then
\ali{
\fr{D_{\lk}}{\pr t} \nab p_{(k)} &= \sum_{I = k_0(\T^n)}^k \de_{(I)}\left[ \fr{D_{\leq I}}{\pr t} \nab p_{(I)}\right] 
}
so (\ref{eq:goodMatDvPressGrad}) follows from the formula (\ref{eq:incOfDdtNabpk}) with the bounds (\ref{ineq:nab2Dpk})-(\ref{ineq:matDvNab2Dpk}) as in
\ali{
\co{ (\pr_t + P_{\leq k} v \cdot \nab) \nab p_{(k)} } &\leq C_{\T^n} \sum_{I = k_0(\T^n)}^k (1 + | I - k_0(\T^n)| ) 2^{(2 - 3\a)I} \ctdcxa{v}^3 \\
&\leq C_{\T^n} (1 + | k - k_0(\T^n)| ) 2^{(2 - 3\a)k} \ctdcxa{v}^3 \label{eq:sumByParts}
}
Here we have used summation by parts in $I$ and the condition $\a < 2/3$ to bound the sum.  The bound on higher spatial derivatives follows similarly.
\end{proof}

\section{Material derivative estimates for the forcing terms} \label{sec:forcingTerms1}

So far we have established Theorems (\ref{thm:matDvPCts}) and (\ref{thm:energyReg}) as well as the case $\b \leq 1/2$ of Theorem (\ref{thm:timeRegBounds}) by drawing on the basic estimates on Littlewood Paley pieces of Proposition (\ref{prop:firstLPpieceBds}).  Proving the full strength of Theorem (\ref{thm:timeRegBounds}) requires going beyond the first time derivative of the pressure, so we will be interested in developing further estimates on second material derivatives for $P_k v$ and $\nab P_{\leq k} v$ as a preliminary step in this direction.

\subsection{Material derivative estimates on LP pieces for the pressure} \label{sec:matDvLPp}

As a first step, we will prove the following bounds for coarse scale material derivatives of Littlewood Paley pieces of the pressure.
\begin{prop}\label{prop:DdtnabDPkp}  Assume that $v \in C_t C_x^\a$ for some $1/3 < \a < 1$ is a solution to incompressible Euler with pressure $p$.  Then for any integer $D \geq 0$ we have the bound
\ali{
\co{ \nab^D (\pr_t + P_{\leq k} v \cdot \nab) P_k p } &\leq C_{D, \a} 2^{(D + 1 - 3 \a) k} \ctdcxa{v}^3 \label{ineq:bdForDdtPkp}
}
\end{prop}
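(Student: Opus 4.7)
The bound (\ref{ineq:bdForDdtPkp}) is precisely the estimate (\ref{eq:wantBdForPressure}) restricted to the range $\a > 1/3$ where the relevant series will converge, so the proof must overcome the difficulty described just after that question. The strategy is to adapt the analysis of Section~\ref{sec:timeRegPress} (used for $\de p_{(k)}$) directly to $P_k p$.

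Starting from $P_k p = \De^{-1}\pr_j\pr_l P_k(v^j v^l)$, decompose $v = P_{\leq k}v + (v - P_{\leq k}v)$ and integrate by parts using $\pr_j P_{\leq k}v^j = 0$, obtaining LL, HL/LH and HH pieces of exactly the form (\ref{eq:dePLLHLHH})--(\ref{eq:dePHHshort}) but with the outer $P_{\leq k}$ replaced by $P_k$. Apply $\pr_t + P_{\leq k} v\cdot\nab$ and push it through the convolution operators $\De^{-1}\nab^a P_k$ using the commutator identity (\ref{eq:commuteintByParts2}); the cost is $2^{(1-\a)k}\ctdcxa{v}$ per commutator, since by the scaling of the kernel $\||h|\nab K_k(h)\|_{L^1} \leqc 1$. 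The LL and HL/LH pieces are then handled exactly as in the proof of (\ref{bd:matDvPress})--(\ref{bd:nabMatDvPress}), using (\ref{ineq:mtDvBound3}) and (\ref{ineq:mtDvBound1}).

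The subtle piece is the HH term. Writing $v - P_{\leq k}v = \sum_{I > k} P_{I+1}v$, the main contribution comes from aligned frequencies, so the key estimate is
\begin{align*}
(\pr_t + P_{\leq k} v \cdot \nab)\De^{-1}\pr_j \pr_l P_k [P_{I+1} v^j P_{I+1} v^l], \qquad I > k.
\end{align*}
The commutator contribution is $\leqc 2^{(1-\a)k}\cdot 2^{-2 \a I}\ctdcxa{v}^3$, which sums to $2^{(1-3\a)k}\ctdcxa{v}^3$. For the interior term, split $(\pr_t + P_{\leq k} v \cdot \nab) = (\pr_t + P_{\leq I} v \cdot \nab) - P_{(k,I]} v \cdot \nab$. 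The natural material derivative along $P_{\leq I} v$ yields $2^{(1-3 \a)I}\ctdcxa{v}^3$ by (\ref{ineq:mtDvBound1}) and the product rule, which sums in $I > k$ to $2^{(1-3\a)k}\ctdcxa{v}^3$ \emph{precisely because} $\a > 1/3$.

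The transport error $-P_{(k,I]}v \cdot \nab P_{I+1}v$ would naively give $2^{-\a k} 2^{(1-2\a)I}\ctdcxa{v}^3$ after applying $\De^{-1}\pr_j\pr_l P_k$, diverging in the sum over $I$ whenever $\a \leq 1/2$. The way around this is to use $\pr_j P_J v^j = 0$ for each $J \in (k, I]$ to integrate by parts,
\begin{align*}
P_{(k,I]} v^i \pr_i [P_{I+1} v^j P_{I+1} v^l] = \pr_i [ P_{(k,I]} v^i \cdot P_{I+1} v^j P_{I+1} v^l],
\end{align*}
and absorb the extra $\pr_i$ into the convolution operator, producing $\De^{-1}\pr_i\pr_j\pr_l P_k$ with $L^\infty$ operator norm $\leqc 2^k$. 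Combined with the triple product bound $\leqc 2^{-\a k} 2^{-2\a I}\ctdcxa{v}^3$, the transport error contributes $\leqc 2^{(1-\a)k} 2^{-2\a I}\ctdcxa{v}^3$, which sums geometrically in $I > k$ to the desired $2^{(1-3\a)k}\ctdcxa{v}^3$. Higher spatial derivatives $\nab^D$ cost $2^{Dk}$ each, since all quantities are effectively localized at frequency $\leqc 2^{k+1}$ due to the outer $P_k$. The main obstacle is the transport error: without the integration by parts (which exploits the incompressibility of each individual $P_J v$), the estimate fails for $\a \in (1/3, 1/2]$.
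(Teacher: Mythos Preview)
Your proof is correct and follows the same overall strategy as the paper: decompose $P_k p$ into LL, HL, and HH pieces, handle LL and HL by commutator estimates together with (\ref{ineq:mtDvBound1})--(\ref{ineq:mtDvBound3}), and expand the HH piece as $\sum_{I \geq k}$ with summability coming precisely from $\a > 1/3$. The differences from the paper's argument are in the bookkeeping, not the substance.

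The one genuine tactical difference is your treatment of the HH transport error. The paper splits $(\pr_t + P_{\leq k} v \cdot \nab) = (\pr_t + P_{\leq I} v \cdot \nab) - P_{[k,I]} v \cdot \nab$ \emph{outside} the operator $\De^{-1}\pr_j\pr_l P_k$, so that $P_{[k,I]} v \cdot \nab$ acts on a function already localized at frequency $\sim 2^k$; the $\nab$ then costs $2^k$ directly, yielding $2^{(1-\a)k}2^{-2\a I}$ without any appeal to incompressibility. You instead commute through first and split inside, which forces the derivative onto frequency-$I$ factors; you then recover the same bound by writing $P_{(k,I]} v^i \pr_i(\cdot) = \pr_i(P_{(k,I]} v^i \cdot)$ and absorbing $\pr_i$ into the outer kernel. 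Both routes give the identical estimate; the paper's is marginally more economical in that it does not invoke $\tx{div}\, P_{(k,I]} v = 0$ at this step, while yours makes the mechanism (derivative must ultimately land on the frequency-$k$ kernel) more explicit.

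Your handling of $D > 0$ is in fact cleaner than the paper's. The paper first proves the bound for $(\pr_t + P_{\leq k} v \cdot \nab)\nab^D P_k p$ by repeating the whole argument with the spatial derivatives placed on the outer $P_k$, and then commutes $\nab^D$ past the advective derivative. Your observation that $(\pr_t + P_{\leq k} v \cdot \nab) P_k p = P_{\leq k+2}\bigl[(\pr_t + P_{\leq k} v \cdot \nab) P_k p\bigr]$ (since $P_{\leq k} v$ has frequency $\leq 2^k$) reduces the $D > 0$ case to the $D = 0$ case in one line. One small caveat: your phrase ``exactly the form (\ref{eq:dePLLHLHH})--(\ref{eq:dePHHshort})'' is a bit loose, since the HH piece of $\de p_{(k)}$ involves only $P_{k+1} v$ and $P_{[k-3,k]} v$, whereas the HH piece of $P_k p$ genuinely contains $(v - P_{\leq k} v)$ and requires the full expansion over $I$ --- but you clearly understand this, since your subsequent analysis performs exactly that expansion.
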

\begin{proof}
We first consider the case $D = 0$.

In the proof of Proposition (\ref{prop:firstLPpieceBds}), we used the incompressibility of $v$ to obtain a decomposition
\begin{align}
P_k p &= HH_k p + HL_k p + LL_k p \label{eq:Pkplong} \\
HH_k p &= \pr_j \pr_l \De^{-1} P_k \left[ ( v^j - P_{\leq k} v^j) (v^l - P_{\leq k} v^l) \right]  \\
HL_k p &= \pr_l \De^{-1} P_k \left[ ( v^j - P_{\leq k} v^j) \pr_j P_{\leq k} v^l \right] + \pr_j  \De^{-1} P_k \left[ \pr_l P_{\leq k} v^j ( v^l- P_{\leq k} v^l) \right] \\
LL_k p &= \De^{-1} P_k \left[ \pr_l P_{\leq k} v^j \pr_j P_{\leq k} v^l \right] 
\end{align}
We would like to estimate $(\pr_t + P_{\leq k} v \cdot \nab) P_k p$ and its derivatives by commuting the advective derivative $(\pr_t + P_{\leq k} v \cdot \nab)$ with the various convolution operators appearing on the right hand side of (\ref{eq:Pkplong}).  The difficulty which restricts us to $\a > 1/3$ is that the high frequency components of $(v - P_{\leq k} v)$ do not have a good estimate for the material derivative at the scale $2^{-k}$.  For the High-Low terms in (\ref{eq:Pkplong}), we can escape this difficulty with the higher frequencies using the bandlimited property of Littlewood-Paley projections to write
\begin{align}
HL_k p &= 2 \pr_j  \De^{-1} P_k \left[ \pr_l P_{\leq k} v^j ( v^l- P_{\leq k} v^l) \right] \notag \\
&= 2 \pr_j  \De^{-1} P_k \left[ \pr_l P_{\leq k} v^j P_{[k, k+2]} v^l) \right].
\end{align}
However, it seems that the best we can do for the High-High interactions is to write
\ali{
HH_k p &= \pr_j \pr_l \De^{-1} P_k \left[ ( v^j - P_{\leq k} v^j) (v^l - P_{\leq k} v^l) \right] \notag \\
&= \sum_{I = k}^\infty \sum_{ \substack{J \geq k \\ |I - J| \leq 2}} \pr_j \pr_l \De^{-1} P_k \left[ P_I v^j P_J v^l \right] \label{eq:expandParaDiff}
}
and to bound the material derivative of this term by first writing
\ali{
(\pr_t + P_{\leq k} v \cdot \nab) HH_k p &= \sum_{I=k}^\infty [ (\pr_t + P_{\leq I} v \cdot \nab) - P_{[k, I]} v \cdot \nab ] \pr_j \pr_l \De^{-1} P_k\left[ \sum_{ \substack{J \geq k \\ |I - J| \leq 2}} P_I v^j P_J v^l \right] \label{eq:expandDdtHighHigh}\\
&= \sum_{I = k}^\infty (A_{(I)} - B_{(I)})
}
We can estimate the latter term by
\ali{
\co{ B_{(I)} } &\leq \sum_{ \substack{J \geq k \\ |I - J| \leq 2}} \co{  P_{[k, I]} v } \cdot \| \nab \pr_j \pr_l \De^{-1} P_k \| \co{ P_I v^j P_J v^l  }\notag\\
&\leq C 2^{(1-\a) k} 2^{-2 \a I} \ctdcxa{v}^3
}
which is acceptable for (\ref{ineq:bdForDdtPkp}) upon summing over $I \geq k$.

The term $A_{(I)}$ is more dangerous, and involves a commutator,
\ali{
A_{(I)} &= \sum_{\substack{J \geq k \\ |I - J| \leq 2}} \pr_j \pr_l \De^{-1} P_k \left[ (\pr_t + P_{\leq I} v \cdot \nab) \left[ P_I v^j P_J v^l \right] \right] \notag \\
&+ \sum_{\substack{J \geq k \\ |I - J| \leq 2}} [(\pr_t + P_{\leq I} v \cdot \nab) , \pr_j \pr_l \De^{-1} P_k] \left[ P_I v^j P_J v^l \right] \\
&= A_{(I), 1} + A_{(I), 2}
}
For the term $A_{(I), 1}$ we use the bounds
\ALI{
\| \pr_j \pr_l \De^{-1} P_k  \| &\leq C \\
\co{ (\pr_t + P_{\leq I} v \cdot \nab) \left[ P_I v^j P_J v^l \right] } &\leq C 2^{(1-3 \a) I} \ctdcxa{v}^3
}
For the commutator term $A_{(I), 2}$, we have an estimate for the operator
\ALI{
\| [(\pr_t + P_{\leq I} v \cdot \nab) , \pr_j \pr_l \De^{-1} P_k] \| &\leq C 2^{(1-\a) I} \ctdcxa{v}
}
which is proven by the same integration by parts used to bound the commutator (\ref{eq:nonLocCommutator}) in Section (\ref{sec:timeRegPress}).  This bound together with the estimate $\co{ P_I v^j P_J v^l } \leq C 2^{-2\a I}$ gives (\ref{ineq:bdForDdtPkp}) for $D = 0$ after summing over $I$.

To obtain the estimate (\ref{ineq:bdForDdtPkp}) for $D > 0$, it is not safe to differentiate the terms in (\ref{eq:expandDdtHighHigh}) since the sum over $I$ will diverge.  Instead, one can first differentiate $\nab^D P_k p$ in space, letting the derivatives fall on the Littlewood Paley projections.  The estimate (\ref{ineq:bdForDdtPkp}) follows by first repeating the proof above to obtain the desired bound for $\co{(\pr_t + P_{\leq k} v \cdot \nab) \nab^D P_k p }$, and then commuting the material and spatial derivatives to obtain (\ref{ineq:bdForDdtPkp}).
\end{proof}

The estimate (\ref{ineq:bdForDdtPkp}) can be used to give another proof of Theorem (\ref{thm:matDvPCts}) along the same lines as the proof in Corollary (\ref{cor:thm13}).  

The same method also gives the following estimate 
\begin{prop} Under the assumptions of Proposition (\ref{prop:DdtnabDPkp}) and $0 < \a < 1$,
\ali{
\co{ \nab^D (\pr_t + \pleqkvcn) \nab^2 P_{\leq k} p } &\leq C_{D, \a} 2^{(D + 3 - 3 \a)k} \ctdcxa{v}^3 \label{ineq:matdvnab2plqkp}
}
\end{prop}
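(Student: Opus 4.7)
The plan is to treat $\nab^2 P_{\leq k} p$ as a Littlewood--Paley sum $\sum_{I \leq k} \nab^2 P_I p$ and apply the coarse scale material derivative one frequency at a time, using Proposition~\ref{prop:DdtnabDPkp} at each level. Because $D + 3 - 3\a > 0$ for $\a < 1$, the resulting series in $I$ is geometric and dominated by the top frequency $I = k$, which yields the desired bound $C_{D,\a} 2^{(D+3-3\a)k}\ctdcxa{v}^3$.

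For each $I \leq k$, I would first commute spatial and material derivatives to relate the $P_{\leq k}$ coarse scale flow to the $P_{\leq I}$ flow that is natural to the frequency $2^I$ piece:
\ali{
(\pr_t + \pleqkvcn)\nab^{D+2} P_I p &= \nab^{D+2}(\pr_t + P_{\leq I} v\cdot\nab) P_I p \notag \\
&\quad + [P_{\leq I} v\cdot\nab,\nab^{D+2}]P_I p + P_{(I,k]} v\cdot\nab\nab^{D+2}P_I p. \notag
}
The first term is estimated directly by Proposition~\ref{prop:DdtnabDPkp} at order $D+2$, giving a bound of $C 2^{(D+3-3\a)I}\ctdcxa{v}^3$. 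The Leibniz commutator in the middle expands into a sum of products $\nab^a P_{\leq I} v \cdot \nab^b P_I p$ with $a\geq 1$ and $a+b = D+3$, each bounded by $\co{\nab^a P_{\leq I} v}\co{\nab^b P_I p}\leq C\, 2^{(a-\a)I}\cdot 2^{(b-2\a)I} = C\, 2^{(D+3-3\a)I}\ctdcxa{v}^3$ using the standard estimates of Proposition~\ref{prop:firstLPpieceBds}. The transport correction is handled by the geometric bound $\co{P_{(I,k]} v}\leq \co{v - P_{\leq I} v}\leq C\, 2^{-\a I}\ctdcxa{v}$ together with $\co{\nab^{D+3} P_I p}\leq C\, 2^{(D+3-2\a)I}\ctdcxa{v}^2$, again producing the common rate $2^{(D+3-3\a)I}\ctdcxa{v}^3$.

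Summing over $I$ from $k_0(\T^n)+1$ to $k$ and using $D+3-3\a > 0$ bounds $\co{(\pr_t + \pleqkvcn)\nab^{D+2} P_{\leq k} p}$ by $C_{D,\a} 2^{(D+3-3\a)k}\ctdcxa{v}^3$. To recover the exact form of \eqref{ineq:matdvnab2plqkp}, I would pass from $(\pr_t + \pleqkvcn)\nab^{D+2} P_{\leq k} p$ to $\nab^D(\pr_t + \pleqkvcn)\nab^2 P_{\leq k} p$ by the identity $\nab^D(\pr_t + \pleqkvcn) - (\pr_t + \pleqkvcn)\nab^D = [\nab^D, P_{\leq k} v\cdot\nab]$; the resulting Leibniz remainder, acting on $\nab^2 P_{\leq k} p$, is again a sum of products $\nab^a P_{\leq k} v\cdot\nab^{D+3-a} P_{\leq k} p$ with $a\geq 1$, and each factor is controlled by \eqref{ineq:nabPkv} and \eqref{ineq:lowFrePlqkp1} to give the same rate $2^{(D+3-3\a)k}\ctdcxa{v}^3$.

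The hard part is really already packaged into Proposition~\ref{prop:DdtnabDPkp}: the high-high interactions inside each $P_I p$ force us to use the commutator trick from the proof of \eqref{ineq:bdForDdtPkp}, which is what confines us to $\a > 1/3$. Once that estimate is in hand, everything in the present proposition is routine bookkeeping: the only check that needs care is that each of the three pieces in the commutation identity above, as well as the outer $[\nab^D,\pleqkvcn]$ remainder, produces exactly the exponent $D+3-3\a$, so that the geometric summation in $I$ (which converges thanks to $\a < 1$) yields the stated dimensionally correct bound.
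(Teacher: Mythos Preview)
Your proof is correct and follows essentially the same strategy as the paper: decompose $\nab^2 P_{\leq k} p$ as a sum over frequencies $I \leq k$, estimate each summand by $C\,2^{(D+3-3\a)I}\ctdcxa{v}^3$ using Proposition~\ref{prop:DdtnabDPkp}, and sum the resulting geometric series (convergent since $\a<1$). The only organizational difference is that the paper telescopes both the transport velocity level and the pressure projection level together, writing the increment as $(\pr_t + P_{\leq I+1} v\cdot\nab)\nab^2 P_{I+1}p + P_{I+1}v\cdot\nab\nab^2 P_{\leq I+1}p$, whereas you keep the velocity at level $k$ throughout and absorb the mismatch via the explicit term $P_{(I,k]}v\cdot\nab$ and the commutator $[\nab^{D+2},P_{\leq I}v\cdot\nab]$; both routes yield the same exponent and avoid the logarithmic loss that a direct treatment via $\De^{-1}\nab^2 P_{\leq k}$ would incur.
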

Applying the method of proof from Proposition (\ref{prop:DdtnabDPkp}) leads to an extra loss of $(1 + |k - k_0(\T^n)|)$ that we prefer to avoid.
\begin{proof}
We write
\ali{
(\pr_t + P_{\lk} v \cdot \nab) \nab^2 P_{\leq k} p &= \sum_{I = \kotn}^k [ (\pr_t + P_{\leq I + 1} v \cdot \nab) \nab^2 P_{\leq I + 1} p - (\pr_t + P_{\leq I } v \cdot \nab) \nab^2 P_{\leq I } p ] \\
&= \sum_{I = \kotn}^k (\pr_t + P_{\leq I + 1} v \cdot \nab)\nab^2 P_{I + 1} p + P_{I+1} v \cdot \nab (\nab^2 P_{\leq I + 1} p )
}
The estimate (\ref{ineq:matdvnab2plqkp}) now follows from Propositions (\ref{prop:DdtnabDPkp}) and (\ref{prop:matDvLowFreqs}) by differentiating and summing.  
\ALI{
\co{ \nab^D (\pr_t + \pleqkvcn) \nab^2 P_{\leq k} p } &\leq C \sum_{I = - \infty}^k 2^{(D + 3 - 3 \a)I} \ctdcxa{v}^3
}
Since $\a < 1$, the last term controls the geometric series.
\end{proof}

%The bound (\ref{ineq:matdvnab2plqkp}) follows by commuting from the same estimate for $\co{ (\pr_t + \pleqkvcn) \nab^{D + 2} P_{\leq k} p }$.  To establish the estimate, we repeat the proof of Proposition (\ref{prop:DdtnabDPkp}), but the kernels involved all have the form
%\[ \nab^{D + i} \De^{-1} \nab^2 P_{\leq k} \]
%for $i = 0, 1, 2$, and therefore satisfy the bounds
%\ALI{
%\| \nab^{D + i} \De^{-1} \nab^2 P_{\leq k} \| &\leq C (1 + | k - k_0(\T^n)| ) 2^{(D + i)k} \\
%\| [ \pr_t + \pleqkvcn, \nab^{D + i} \De^{-1} \nab^2 P_{\leq k} ] \| &\leq C (1 + | k - k_0(\T^n)| ) 2^{(D + i + 1 - \a)k}\ctdcxa{v}
%}
%using the same methods we have now applied several times in Sections (\ref{sec:timeRegPress}) and (\ref{sec:matDvLPp}).

In Section (\ref{sec:ReynStressMatDv1}) below, we establish analogous estimates for the Reynolds stress.  The proofs are very similar, but we will improve on the treatment of the High-High terms to give a proof that turns out to be more robust for our later applications.

\subsection{Estimates for the Reynolds stress} \label{sec:ReynStressMatDv1}

Here we collect all the necessary bounds on the Reynolds stress
\ali{
R_{\leq k}^{jl} &= P_{\leq k} v^j P_{\leq k} v^l - P_{\leq k}(v^j v^l) \label{eq:Rleqkdef}
}
and its derivatives which are used in the proofs of Theorems~\ref{thm:timeRegBounds}-\ref{thm:higherMatDv}.  In the process, we also illustrate all the main technical ideas necessary for estimating $R_{\leq k}$ that will be used in the remainder of the paper.

We start by giving an alternative proof of the commutator estimate from \cite{CET} and the generalization in \cite{deLSzeC1iso} which includes bounds on spatial derivatives $\co{ \nab^D R_{\leq k} }$.  %This commutator estimate was also used to bound certain advective derivatives in the construction of energy dissipating $C_{t,x}^{1/5-\ep}$ Euler flows in \cite{deLSzeBuck}.  The analogous terms in \cite{isett} were estimated by studying a a different type of commutator, but the related estimates in \cite{isett} can also be obtained from the bilinear formulation of (\ref{ineq:generalCETbound}) stated in \cite{deLSzeC1iso}.  
The proof we give here will be more flexible in that we will be able to obtain the necessary bounds on the material derivative $(\pr_t + \pleqkvcn)R_{\leq k}^{jl}$ using the same method.

\begin{prop}
If $0 < \a \leq 1$, then for any $D \geq 0$
\ali{
\co{ \nab^D R_{\leq k} } &\leq C_D 2^{(D - 2 \a)k} \| v \|_{{\dot C}_x^\a}^2 \label{ineq:generalCETbound}
}
\end{prop}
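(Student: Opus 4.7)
My approach will be to derive a symmetric double-integral representation of $R_{\leq k}^{jl}$ from which the estimate follows by commuting $x$-derivatives with $h$-derivatives and then integrating by parts onto the kernels. The starting point is the identity
\begin{align*}
R_{\leq k}^{jl}(x) = -\frac{1}{2}\iint \eta_{\leq k}(h_1)\eta_{\leq k}(h_2) \bigl[v^j(x+h_1) - v^j(x+h_2)\bigr]\bigl[v^l(x+h_1) - v^l(x+h_2)\bigr]\, dh_1\, dh_2,
\end{align*}
which one verifies by expanding the product, using $\int \eta_{\leq k}(h)\, dh = 1$, and recognizing the four resulting integrals in terms of $P_{\leq k}(v^j v^l)$ and $P_{\leq k} v^j \cdot P_{\leq k} v^l$. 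The integrand depends only on velocity differences, which is the crucial structural feature.

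For $D = 0$, the bound is immediate from $|v^j(x+h_1) - v^j(x+h_2)| \leq |h_1 - h_2|^\alpha \| v \|_{\dot C^\alpha}$ combined with the scaling estimate $\iint |\eta_{\leq k}(h_1) \eta_{\leq k}(h_2)| |h_1-h_2|^{2\alpha}\, dh_1\, dh_2 \leq C 2^{-2\alpha k}$, obtained by the substitution $h_j = 2^{-k} \tilde h_j$. For $D \geq 1$, the crucial observation is that $\partial_{x^i} v(x+h_j) = \partial_{h_j^i} v(x+h_j)$. Writing $\Phi^{jl}(x, h_1, h_2)$ for the integrand divided by $-\tfrac{1}{2}$, Leibniz yields $\partial_{x^i} \Phi^{jl} = (\partial_{h_1^i} + \partial_{h_2^i}) \Phi^{jl}$, and iterating gives $\nabla_x^D \Phi^{jl} = (\nabla_{h_1} + \nabla_{h_2})^D \Phi^{jl}$. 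Differentiating under the integral sign and integrating by parts in $h_1$ and $h_2$ (no boundary terms arise since $\eta_{\leq k}$ is Schwartz) then transfers all $D$ derivatives onto the kernels, producing a finite sum of terms of the form
\begin{align*}
\iint \partial^{\beta_1}\eta_{\leq k}(h_1)\, \partial^{\beta_2}\eta_{\leq k}(h_2)\, \Phi^{jl}(x, h_1, h_2)\, dh_1\, dh_2, \qquad |\beta_1|+|\beta_2| = D,
\end{align*}
each bounded by $C_D 2^{(D-2\alpha)k} \| v \|_{\dot C^\alpha}^2$ via the same scaling argument applied to $\partial^\beta \eta_{\leq k}$, yielding \eqref{ineq:generalCETbound}.

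The main technical work is really combinatorial bookkeeping for the iterated differentiation and integration by parts, which is routine. The real advantage of this representation, as the author announces, is that it is tailored for subsequent material derivative estimates: the operator $(\partial_t + P_{\leq k} v \cdot \nabla)$ can be commuted through the double convolution in $(h_1, h_2)$ using the commutator formula \eqref{eq:commuteintByParts2}, with any $\partial_t$ remaining on a velocity difference handled via the Euler equation itself. In this way the same method will extend to give the material-derivative bounds on $R_{\leq k}$ needed later, whereas the direct product-rule approach of \cite{CET} (based on $\nabla P_{\leq k} v$) is harder to adapt to coarse scale advective derivatives.
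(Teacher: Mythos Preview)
Your proof is correct. The symmetric double-integral identity
\[
R_{\leq k}^{jl}(x) = -\tfrac{1}{2}\iint \eta_{\leq k}(h_1)\eta_{\leq k}(h_2)\bigl[v^j(x+h_1)-v^j(x+h_2)\bigr]\bigl[v^l(x+h_1)-v^l(x+h_2)\bigr]\,dh_1\,dh_2
\]
is exactly right, and the observation $\partial_{x^i}\Phi = (\partial_{h_1^i}+\partial_{h_2^i})\Phi$ followed by integration by parts onto the kernels gives \eqref{ineq:generalCETbound} cleanly. (In fact the quickest justification is to substitute $y_j = x+h_j$ first, so that all $x$-dependence sits in the smooth kernels $\eta_{\leq k}(y_j-x)$ and differentiation under the integral is immediate without any approximation argument.)

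The paper takes a different route. Starting from the same ``variance'' structure, it subtracts the mean $P_{\leq k} v(x)$ rather than symmetrizing, arriving at the single-integral form \eqref{eq:nowRleqIsAVariance}, and then splits into the trichotomy $R_{\leq k,HH}+R_{\leq k,HL}+R_{\leq k,LL}$ by inserting $P_{\leq k} v(x+h)$. For the pure spatial estimate your symmetric representation is shorter and arguably more elegant. What the paper's decomposition buys is the setup for the material derivative bound in Proposition~\ref{prop:firstMatdvRlk}: the $HL$ and $LL$ pieces involve only frequencies $\lesssim 2^k$, so $(\partial_t + P_{\leq k} v\cdot\nabla)$ acts with the natural cost $2^{(1-\alpha)k}\ctdcxa{v}$, while the $HH$ piece isolates all the high-frequency interactions in the paraproduct form $\sum_{I\geq k} P_{\leq k}(P_I v \otimes P_{\approx I} v)$, which can then be handled one $I$ at a time with the finer advective derivative $(\partial_t + P_{\leq I} v\cdot\nabla)$. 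Your final paragraph is a bit optimistic on this point: commuting $(\partial_t + P_{\leq k} v\cdot\nabla)$ through your double convolution is indeed routine, but what remains is $(\partial_t + P_{\leq k} v\cdot\nabla)$ acting on the raw difference $v(x+h_1)-v(x+h_2)$, and the Euler equation converts this into quadratic terms containing arbitrarily high frequencies. Handling those is exactly where the $HH$ analysis and the restriction $\alpha>1/3$ enter, and your representation does not by itself circumvent that step.
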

The proof given here does not use any special properties of Littlewood-Paley projections and generalizes to other mollifiers in the Schwartz class as well.
\begin{proof}
We start by observing a ``Galilean invariance'' of the commutator (\ref{eq:Rleqkdef}).  Namely, the expression 
\ali{
R_{\leq k}^{jl} &= \int \int v^j(x+h_1) v^l(x+h_2) \eta_{\leq k}(h_1) \eta_{\leq k}(h_2) dh_1 dh_2 - \int v^j(x+h) v^l(x+h) \eta_{\leq k}(h) dh \label{eq:lookAtCommutator1}
}
for $x \in \T^n$ has a schematic form similar to a variance 
\[- R_{\leq k} (t,x) = \bbE[v^2](t,x) - \bbE[v]^2(t,x) \]
since $\int \eta_{\leq k}(h) dh = 1$.  For example, $R_{\leq k}^{jl}$ would be negative definite if $\eta_{\leq k}$ were positive definite, although this is not the case for Littlewood-Paley projections.

Just as the variance of a random variable remains invariant under the addition of a constant, we can observe that (\ref{eq:lookAtCommutator1}) remains invariant when we subtract from $v^j$ any vector $A^j(x)$ at each point
\begin{align}
R_{\leq k}^{jl} &= \int \int(  v^j(x+h_1) - A^j(x) ) (v^l(x+h_2) - A^l(x)) \eta_{\leq k}(h_1) \eta_{\leq k}(h_2) dh_1 dh_2  \label{eq:canShiftByConst1} \\
&- \int ( (v^j(x+h) - A^j(x) )(v^l(x+h) - A^l(x)) \eta_{\leq k}(h) dh \label{eq:canShiftByConst2}
\end{align}
By choosing $A^j(x) = v^j(x)$ and $A^l(x) = v^l(x)$ we immediately obtain the $C^0$ bound in \eqref{ineq:generalCETbound} (this choice leads also to the decomposition in the \cite{CET} argument).  However, it is even more natural to choose $A^j(x) = P_{\leq k} v^j(x)$ and $A^l(x) = P_{\leq k} v^l(x)$ in analogy with the expression $\bbE[( v - \bbE[v])^2]$ for a variance.

With this latter choice, the first term (\ref{eq:canShiftByConst1}) disappears, leaving the expression
\ali{
R_{\leq k}^{jl} &= \int ( v^j (x + h) - P_{\leq k} v^j(x) ) ( v^l(x+h) - P_{\leq k} v^l(x) ) \eta_{\leq k}(h) dh  \label{eq:nowRleqIsAVariance}
}
We now expand \eqref{eq:nowRleqIsAVariance} by adding and subtracting $P_{\leq k} v(x+h)$ to each term and obtain
\ali{
R_{\leq k}^{jl} &= R_{\leq k, HH}^{jl} + R_{\leq k, HL}^{jl} + R_{\leq k, LL}^{jl} \label{eq:RleqkTrichotomy} \\
R_{\leq k, HH}^{jl} &= \int ( v^j(x+h) - P_{\leq k} v^j(x+h) ) (v^l(x+h) - P_{\leq k}v^l(x+h) ) \eta_{\leq k}(h) dh \\
&= P_{\leq k} [( v^j - P_{\leq k} v^j ) (v^l - P_{\leq k}v^l ) ] \\
R_{\leq k, HL}^{jl} &= \int ( v^j(x+h) - P_{\leq k} v^j(x+h) ) (P_{\leq k} v^l(x+h) - P_{\leq k}v^l(x) ) \eta_{\leq k}(h) dh \notag \\
&+ \int ( P_{\leq k} v^j(x+h) - P_{\leq k} v^j(x) ) (v^l(x+h) - P_{\leq k}v^l(x+h) ) \eta_{\leq k}(h) dh \\
R_{\leq k, LL}^{jl} &= \int ( P_{\leq k} v^j(x+h) - P_{\leq k} v^j(x) ) (P_{\leq k} v^l(x+h) - P_{\leq k}v^l(x) ) \eta_{\leq k}(h) dh
}
The bound (\ref{ineq:generalCETbound}) now follows quickly from the expanded form (\ref{eq:RleqkTrichotomy}).  Namely, it is easy to see that 
\ALI{
\co{ \nab_x^D ( P_{\leq k} v^j(x+h) - P_{\leq k} v^j(x) ) } &\leq C_D 2^{(D - \a) k} \ctdcxa{v}
}
For example, every low frequency term can be written as 
\ali{ 
( P_{\leq k} v^j(x+h) - P_{\leq k} v^j(x) ) &= \int_0^1 \pr_a P_{\leq k} v^j(x+s h) h^a ds
}
and the factor of $h^a$ can be incorporated into the mollifier $\eta_{\leq k}(h)$, which we estimate in $L^1_h$.

For high frequency terms, it is always possible to integrate by parts to estimate the derivatives in (\ref{ineq:generalCETbound}).  For example, we have
\ali{
\fr{\pr}{\pr x^i} R_{\leq k, HH}^{jl}(x) &= - \int  ( v^j (x + h) - P_{\leq k} v^j(x+h) ) ( v^l(x+h) - P_{\leq k} v^l(x+h) ) \fr{\pr}{\pr h^i}[\eta_{\leq k}(h)] dh
}
and one can similarly integrate by parts when the derivative hits the high-frequency factor in the High-Low terms
\ali{
\int \fr{\pr}{\pr x^i}( v^j(x+h) - &P_{\leq k} v^j(x+h) ) (P_{\leq k} v^l(x+h) - P_{\leq k}v^l(x) ) \eta_{\leq k}(h) dh =\\
&= \int \fr{\pr}{\pr h^i}( v^j(x+h) - P_{\leq k} v^j(x+h) ) (P_{\leq k} v^l(x+h) - P_{\leq k}v^l(x) ) \eta_{\leq k}(h) dh \\
&= - \int ( v^j(x+h) - P_{\leq k} v^j(x+h) ) \fr{\pr}{\pr h^i}\left[ (P_{\leq k} v^l(x+h) - P_{\leq k}v^l(x) ) \eta_{\leq k}(h) \right] dh
}
In every case, each spatial derivative in the $x$ variable costs a factor $2^k$ in the estimate, so combining these observations gives (\ref{ineq:generalCETbound}).
\end{proof}

The proof above allows us to estimate the material derivative of the Reynolds stress arising from mollifying a solution to the Euler equations provided we assume sufficient regularity on $v$.
\begin{prop}\label{prop:firstMatdvRlk} As in Section (\ref{sec:matDvLPp}), assume that $v \in C_t C_x^\a$ for some $1/3 < \a < 1$ is a solution to incompressible Euler with pressure $p$.  Then for any integer $D \geq 0$ we have the bound
\ali{
\co{ \nab^D (\pr_t + P_{\leq k} v \cdot \nab) R_{\leq k} } &\leq C_{D, \a} 2^{(D + 1 - 3 \a) k} \ctdcxa{v}^3 \label{ineq:bdForDdtRleqk}
}
\end{prop}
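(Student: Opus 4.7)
The plan is to parallel the treatment of $P_k p$ in Proposition~\ref{prop:DdtnabDPkp}, using the CET decomposition $R_{\leq k} = R_{\leq k, HH} + R_{\leq k, HL} + R_{\leq k, LL}$ obtained in the proof of~\eqref{ineq:generalCETbound}. Applying $\fr{D_{\leq k}}{\pr t} = \pr_t + P_{\leq k}v \cdot \nab$ separately to each piece, the guiding principle is that every application of the coarse-scale advective derivative should cost a factor of $2^{(1-\a)k}\ctdcxa{v}$, so the target scaling $2^{(1-3\a)k}\ctdcxa{v}^3$ emerges from the baseline bound $\co{R_{\leq k}} \leq C 2^{-2\a k}\ctdcxa{v}^2$ already established in~\eqref{ineq:generalCETbound}.

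For $R_{LL}$, which involves only differences of $P_{\leq k}v$ convolved against $\eta_{\leq k}$, each convolution commutator $[\fr{D_{\leq k}}{\pr t}, \eta_{\leq k}\ast]$ has operator norm bounded by $C 2^{(1-\a)k}\ctdcxa{v}$ via the integration-by-parts representation in~\eqref{eq:commuteintByParts1}--\eqref{eq:commuteintByParts2}, and the interior material derivative that falls on $\nab P_{\leq k} v$ is controlled directly by Proposition~\ref{prop:matDvLowFreqs}. For $R_{HL}$, the same treatment handles everything except the remaining high-frequency factor $v - P_{\leq k}v$, which I will resolve via the paradifferential expansion $v - P_{\leq k}v = \sum_{I>k} P_I v$ together with the identity $\fr{D_{\leq k}}{\pr t} = \fr{D_{\leq I-1}}{\pr t} - P_{(k, I-1]}v \cdot \nab$, so that at each shell $I$ the leading material derivative $\fr{D_{\leq I-1}}{\pr t} P_I v$ is controlled by Lemma~\ref{lem:coarseScaleMatDv}. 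For the most delicate piece $R_{HH} = P_{\leq k}[(v^j - P_{\leq k}v^j)(v^l - P_{\leq k}v^l)]$, the outer commutator $[\fr{D_{\leq k}}{\pr t}, P_{\leq k}]$ has operator norm $C 2^{(1-\a)k}\ctdcxa{v}$ by the same argument, which combined with $\co{|v - P_{\leq k}v|^2} \leq C 2^{-2\a k}\ctdcxa{v}^2$ already delivers the desired bound; the main interior term is expanded into dyadic shells $P_I v \otimes P_J v$ with $|I - J| \leq C$, and at each shell Lemma~\ref{lem:coarseScaleMatDv} yields $\co{\fr{D_{\leq I-1}}{\pr t}P_I v \cdot P_J v} \leq C 2^{(1-3\a)I}\ctdcxa{v}^3$, whose geometric series $\sum_{I > k}$ sums to $C 2^{(1-3\a)k}\ctdcxa{v}^3$ precisely because $\a > 1/3$.

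The main obstacle will be controlling the correction terms $P_{(k, I-1]}v \cdot \nab[P_I v \otimes P_J v]$ arising when the coarse-scale derivative is replaced by the shell-appropriate one in the paradifferential expansion of $R_{HH}$: in the range $1/3 < \a \leq 1/2$ the crude product estimate $\co{P_{(k,I-1]}v} \cdot \co{\nab P_I v} \cdot \co{P_J v}$ summed over $I > k$ is borderline divergent, and one must exploit the divergence-free property of $P_{(k,I-1]}v$ to put these commutators in divergence form and integrate by parts against the outer $P_{\leq k}$, or alternatively reorganize the double sum over frequency pairs so that the dangerous low-frequency piece of $P_{(k,I-1]}v$ is telescoped against neighboring higher-frequency contributions. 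Once this sum is tamed, the bounds on $\nab^D$ follow by differentiating all of the above expressions in the spatial variables, each derivative landing on one of the kernels $\eta_{\leq k}$, $\eta_{\leq k} - \eta_{\leq k-1}$, or on a Littlewood--Paley projection $P_I v$ at a cost of $2^k$ (or $2^I$, suitably absorbed), without disrupting the convergence of the paradifferential sums.
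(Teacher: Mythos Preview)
Your overall strategy matches the paper's: decompose $R_{\leq k}$ into the trichotomy~\eqref{eq:RleqkTrichotomy}, handle $R_{\leq k,LL}$ and the commutator pieces by the convolution-commutator calculus, and treat $R_{\leq k,HH}$ by a paradifferential expansion summed via $\a > 1/3$.  Your proposed fix for the dangerous correction $P_{\leq k}\bigl[P_{(k,I-1]}v\cdot\nab(P_Iv\otimes P_Jv)\bigr]$---put it in divergence form and throw the derivative onto the outer $P_{\leq k}$---is correct and is equivalent to what the paper does: the paper observes that the outer bandlimited projection $P_{\leq k+2}$ forces the offending factor $P_{[k,I]}v$ to collapse to $P_{[k,k+3]}v$ (see \eqref{eq:formWeUseHighHigh}), which yields the identical $2^{(1-\a)k}2^{-2\a I}$ bound your integration by parts produces.

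There is, however, a gap in your treatment of $R_{\leq k,HL}$.  You propose to expand the high-frequency factor $(v-P_{\leq k}v)=\sum_{I>k}P_Iv$ and bound the leading piece $\fr{D_{\leq I-1}}{\pr t}P_Iv$ shell by shell, but you do not address the correction $P_{(k,I-1]}v\cdot\nab P_Iv$ for this term, and the ``integrate by parts against the outer $P_{\leq k}$'' mechanism you invoke for $R_{HH}$ does not apply directly here, since the $R_{HL}$ integrand is not a pure convolution in $x$.  Worse, even your leading term, after multiplication by the low-frequency difference factor and integration against $\eta_{\leq k}$, contributes $2^{-\a k}2^{(1-2\a)I}$, whose sum over $I>k$ diverges for $\a\leq 1/2$.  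The paper avoids this entirely by observing \emph{before} differentiating that the bandlimited structure of the $R_{HL}$ integral (the other factor and the kernel both live at frequency $\lesssim 2^k$) forces the paradifferential sum to be finite: only $I\in[k,k+2]$ survives, so one may simply replace $(v-P_{\leq k}v)$ by $P_{[k,k+2]}v$ in $R_{HL}$ (see \eqref{eq:RHLgood}) and then differentiate a finite expression directly.  Once you insert this observation, your proof goes through.
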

We start the proof by considering the case $D = 0$.  The case $D > 0$ can be deduced by establishing the same bound for $\co{ (\pr_t + P_{\leq k} v \cdot \nab) \nab^D R_{\leq k}}$, and this latter estimate can be obtained by modifying the argument below.
%We will give the proof for $D = 0$.  The case $D > 0$ will follow from establishing the same bound for $\co{ (\pr_t + P_{\leq k} v \cdot \nab) \nab^D R_{\leq k}}$, which can be obtained by modifying the proof below.
\begin{proof}
The proof closely mirrors that of Proposition (\ref{prop:DdtnabDPkp}), but involves different types of commutator terms.  As in the proof of Proposition (\ref{prop:DdtnabDPkp}), the most dangerous terms arise from taking material derivatives of high frequency terms.  As in the proof of Proposition (\ref{prop:DdtnabDPkp}), we can also show that $R_{\leq k, HH}$ is the only term to which frequencies much larger than $2^k$ contribute.  Namely, due to the bandlimited property of Littlewood-Paley projections, we have that the High-Low term can be expressed as
\ali{
R_{\leq k, HL}^{jl} &= \int ( v^j(x+h) - P_{\leq k} v^j(x+h) ) (P_{\leq k} v^l(x+h) - P_{\leq k}v^l(x) ) \eta_{\leq k}(h) dh \notag \\
&+ \int ( P_{\leq k} v^j(x+h) - P_{\leq k} v^j(x) ) (v^l(x+h) - P_{\leq k}v^l(x+h) ) \eta_{\leq k}(h) dh \\
&= \int ( P_{\leq k+2} v^j(x+h) - P_{\leq k} v^j(x+h) ) (P_{\leq k} v^l(x+h) - P_{\leq k}v^l(x) ) \eta_{\leq k}(h) dh \notag \\
&+ \int ( P_{\leq k} v^j(x+h) - P_{\leq k} v^j(x) ) (P_{\leq k+2} v^l(x+h) - P_{\leq k}v^l(x+h) ) \eta_{\leq k}(h) dh \\
R_{\leq k, HL}^{jl} &= \int P_{[k, k+2]} v^j(x+h) (P_{\leq k} v^l(x+h) - P_{\leq k}v^l(x) ) \eta_{\leq k}(h) dh \notag \\
&+ \int ( P_{\leq k} v^j(x+h) - P_{\leq k} v^j(x) ) P_{[k, k+2]}v^l(x+h) \eta_{\leq k}(h) dh \label{eq:RHLgood}\\
&= R_{\leq k, HL1}^{jl} + R_{\leq k, HL2}^{jl} 
}
We now wish to compute and estimate the material derivatives of these terms, so to begin we compute
\ali{
(\pr_t + P_{\leq k} v^i(x) &\fr{\pr}{\pr x^i})R_{\leq k, HL1}^{jl}(x) =\\ 
&= \int  (\pr_t + P_{\leq k} v^i(x) \fr{\pr}{\pr x^i})P_{[k, k+2]} v^j(x+h) (P_{\leq k} v^l(x+h) - P_{\leq k}v^l(x) ) \eta_{\leq k}(h) dh \notag \\
&+ \int P_{[k, k+2]} v^j(x+h) (\pr_t + P_{\leq k} v^i(x) \fr{\pr}{\pr x^i})[ P_{\leq k} v^l(x+h) - P_{\leq k}v^l(x) ] \eta_{\leq k}(h) dh \\
&= A_{HL,I}^{jl} + A_{HL,II}^{jl}
}
The first of these terms can be expressed as
\ali{
A_{HL,I}^{jl} &= \int [ (\pr_t + P_{\leq k} v \cdot \nab) P_{[k, k+2]} ]v^j(x+h) (P_{\leq k} v^l(x+h) - P_{\leq k}v^l(x) ) \eta_{\leq k}(h) dh \notag \\
&- \int ( P_{\leq k} v^i(x + h) - P_{\leq k} v^i(x) ) \pr_i P_{[k, k+2]} v^j(x+h) (P_{\leq k} v^l(x+h) - P_{\leq k}v^l(x) ) \eta_{\leq k}(h) dh \\
&= \int_0^1 \int [ (\pr_t + P_{\leq k} v \cdot \nab) P_{[k, k+2]} ]v^j(x+h) \pr_a P_{\leq k} v^l(x+ sh) h^a \eta_{\leq k}(h) dh \notag \\
&- \int_0^1 \int_0^1 \int \pr_{a_1} P_{\leq k} v^i(x + s_1 h) \pr_i P_{[k, k+2]} v^j(x+h) \pr_{a_2} P_{\leq k} v^l(x+ s_2 h) h^{a_1} h^{a_2} \eta_{\leq k}(h) dh ds_1 ds_2
}
so that the bound (\ref{ineq:bdForDdtRleqk}) is visible for any $0 < \a \leq 1$.  The second term can likewise be written as
\ali{
A_{HL,II}^{jl} &= \int_0^1 \int P_{[k, k+2]} v^j(x+h) (\pr_t + P_{\leq k} v^i(x) \fr{\pr}{\pr x^i})[ \pr_b P_{\leq k}v^l(x + s h) ] h^b \eta_{\leq k}(h) dh ds \\
&= \int_0^1 \int P_{[k, k+2]} v^j(x+h) [ (\pr_t + P_{\leq k} v \cdot \nab)\pr_b P_{\leq k} ] v^l(x+sh) h^b \eta_{\leq k}(h) dh ds \notag \\
&- \int_0^1 \int P_{[k, k+2]} v^j(x+h) [ P_{\leq k} v^i(x+sh) - P_{\leq k} v^i(x) ] \pr_i\pr_b P_{\leq k} v^l(x+sh) h^b \eta_{\leq k}(h) dh ds \\
&= \int_0^1 \int P_{[k, k+2]} v^j(x+h) [ (\pr_t + P_{\leq k} v \cdot \nab)\pr_b P_{\leq k} ] v^l(x+sh) h^b \eta_{\leq k}(h) dh ds \notag \\
&- \int_0^1 \int_0^1 \int P_{[k, k+2]} v^j(x+h) \pr_a P_{\leq k} v^i(x+s_1 s h) \pr_i\pr_b P_{\leq k} v^l(x+sh) (s h^a) h^b \eta_{\leq k}(h) dh ds ds_1 
}
and can quickly be seen to obey (\ref{ineq:bdForDdtRleqk}) as well thanks to Proposition (\ref{prop:matDvLowFreqs}).

The Low-Low term $R_{\leq k, LL}^{jl}$ can be treated similarly after it has been represented in the form
\ali{
R_{\leq k, LL}^{jl} &= \int ( P_{\leq k} v^j(x+h) - P_{\leq k} v^j(x) ) (P_{\leq k} v^l(x+h) - P_{\leq k}v^l(x) ) \eta_{\leq k}(h) dh \label{eq:RLLgood}\\
&= \int_0^1 \int_0^1 \int \pr_{a_1} P_{\leq k} v^j(x+s_1 h) \pr_{a_2} P_{\leq k} v^l(x+s_2 h) h^{a_1} h^{a_2} \eta_{\leq k}(h) dh ds_1 ds_2
}

The only term remaining for the proof of (\ref{ineq:bdForDdtRleqk}) is the term
\ali{
R_{\leq k, HH}^{jl} &= P_{\leq k} [( v^j - P_{\leq k} v^j ) (v^l - P_{\leq k}v^l ) ]
}
which also limits the present method to $\a > 1/3$.  This term can be handled by the exact same technique as in the proof of Proposition (\ref{prop:DdtnabDPkp}), namely by expanding in Littlewood-Paley pieces as in (\ref{eq:expandParaDiff}).  

Here we improve on the approach in the proof of Proposition (\ref{prop:DdtnabDPkp}) to obtain the bound for the derivatives $D > 0$ in (\ref{ineq:bdForDdtRleqk}).  Namely, following the proof of Proposition (\ref{prop:DdtnabDPkp}) we could first differentiate in space to
\ali{
 \nab^D R_{\leq k, HH}^{jl} = \nab^D P_{\leq k} [( v^j - P_{\leq k} v^j ) (v^l - P_{\leq k}v^l ) ] 
}
and then expand the nonlinearity into pieces $P_I v^j P_{\approx I} v^l$ and commute with $(\pr_t + P_{\leq k} v \cdot \nab)$ to obtain (\ref{ineq:bdForDdtRleqk}).

Instead, we proceed by observing that we can write the operator $(\pr_t + P_{\leq k} v \cdot \nab) P_{\leq k}$ appearing in $(\pr_t + P_{\leq k} v \cdot \nab) R_{\leq k, HH}^{jl}$ as
\ali{ 
(\pr_t + P_{\leq k} v \cdot \nab) P_{\leq k} &= P_{\leq k + 2} (\pr_t + P_{\leq k} v \cdot \nab) P_{\leq k}
}
%for $I \geq k$ 
using the bandlimited property of Littlewood-Paley projections.  With this representation the bounds for the derivatives $D > 0$ in (\ref{ineq:bdForDdtRleqk}) follow from the $D = 0$ case as the spatial derivatives fall on the operator $P_{\leq k+2}$.  

In fact, as we will see later, it is helpful to observe some further cancellation using the bandlimited property.  Namely, following (\ref{eq:expandDdtHighHigh}) we expand
\ali{
(\pr_t + P_{\leq k} v \cdot \nab) R_{\leq k, HH}^{jl} &= P_{\leq k+2} (\pr_t + P_{\leq k} v \cdot \nab) P_{\leq k} [( v^j - P_{\leq k} v^j ) (v^l - P_{\leq k}v^l ) ] \\
&= \sum_{I \geq k} P_{\leq k+2} [ (\pr_t + P_{\leq I} v \cdot \nab) - P_{[k,I]} v \cdot \nab) ] P_{\leq k} ( P_I v^j P_{\approx I} v^l )
}
The bandlimited property of the Littlewood-Paley projections allows us to throw out the high frequency components in the term $P_{[k,I]} v \cdot \nab$ and obtain
\ali{
(\pr_t + P_{\leq k} v \cdot \nab) R_{\leq k, HH}^{jl} &= P_{\leq k+2} (\pr_t + P_{\leq k} v \cdot \nab) P_{\leq k} [( v^j - P_{\leq k} v^j ) (v^l - P_{\leq k}v^l ) ] \\
&= \sum_{I \geq k} P_{\leq k+2} [ (\pr_t + P_{\leq I} v \cdot \nab) - P_{[k,k+3]} v \cdot \nab) ] P_{\leq k} ( P_I v^j P_{\approx I} v^l ) \label{eq:formWeUseHighHigh}
}
The technique we have employed here to eliminate the high frequency components in the term $P_{[k,I]} v \cdot \nab$ is fairly subtle, but this cancellation will be crucial when estimating further advective derivatives of this term later on in Section~\ref{sec:highOrderAdvecForcing}. 

The bounds in (\ref{ineq:bdForDdtRleqk}) are now immediate from the form (\ref{eq:formWeUseHighHigh}).  As in the proof of Proposition (\ref{prop:DdtnabDPkp}), the most dangerous terms come from the operator $(\pr_t + P_{\leq I} v \cdot \nab)$, which always costs a factor of $2^{(1 - \a) I} \ctdcxa{v}$ in the estimate regardless of whether it falls on $P_{\approx I} v$, or whether it falls on the operator $P_{\leq k}$, giving rise to a commutator $[(\pr_t + P_{\leq I} v \cdot \nab), P_{\leq k}]$ of norm
\[ \sup_t || [(\pr_t + P_{\leq I} v \cdot \nab), P_{\leq k}] || \leq C 2^{(1 - \a) I} \ctdcxa{v}. \]
  The main advantage of the form (\ref{eq:formWeUseHighHigh}) is that this form facilitates commutator estimates for taking higher order material derivatives. 
\end{proof}

As an immediate corollary to the estimates (\ref{ineq:bdForDdtPkp}), (\ref{ineq:matdvnab2plqkp}) and (\ref{ineq:bdForDdtRleqk}), we are able to estimate second order material derivatives of Littlewood-Paley projections of the velocity.

\begin{prop} \label{prop:canTake2matDvsNow} If $1/3 < \a < 1$ and $(v,p)$ solve the incompressible Euler equations, then
\ali{
\co{ \nab^D (\pr_t + P_{\leq k} v \cdot \nab) P_{k+1} v } &\leq C_D 2^{(D + (1-\a) - \a) k} \ctdcxa{v}^2 \label{eq:basicFirstMatDvPkv}\\
\co{ \nab^D (\pr_t + P_{\leq k} v \cdot \nab)^2 P_{k+1} v } &\leq C_D 2^{(D + 2(1-\a) - \a) k} \ctdcxa{v}^3 \label{ineq:secondMatDvPkv} \\
\co{ \nab^D (\pr_t + P_{\leq k} v \cdot \nab) \nab P_{\leq k} v } &\leq C_{D,\T^n} 2^{(D + 2 (1-\a)) k} \ctdcxa{v}^2 \label{eq:basicFirstMatDvNabPlkv}\\
\co{ \nab^D (\pr_t + P_{\leq k} v \cdot \nab)^2 \nab P_{\leq k} v } &\leq C_{D,\T^n} 2^{(D + 3 (1-\a)) k} \ctdcxa{v}^3 \label{ineq:secondMatDvNabPlkv} \\
\co{ \nab^{D+1} (\pr_t + P_{\leq k} v \cdot \nab) P_{\leq k} v } &\leq C_{D,\T^n} 2^{(D + 2 (1-\a)) k} \ctdcxa{v}^2 \label{eq:basicFirstMatDvNabPlkvPrime}\\
\co{ \nab^{D+1} (\pr_t + P_{\leq k} v \cdot \nab)^2 P_{\leq k} v } &\leq C_{D,\T^n} 2^{(D + 3 (1-\a)) k} \ctdcxa{v}^3 \label{ineq:secondMatDvNabPlkvPrime}
}
\end{prop}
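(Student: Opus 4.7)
The plan is to derive all six bounds from the Littlewood--Paley form of the Euler equations combined with the estimates already established in Sections \ref{sec:matDvLPp}--\ref{sec:ReynStressMatDv1}. Since $P_{\leq k} v$ is divergence free, equation \eqref{eq:euLPproj} rewrites as
\[ (\pr_t + P_{\leq k} v \cdot \nab) P_{\leq k} v^l = -\pr^l P_{\leq k} p + \pr_j R_{\leq k}^{jl}, \]
and \eqref{eq:euLPpiece} takes the form
\[ (\pr_t + P_{\leq k} v \cdot \nab) P_{k+1} v^l = -P_{k+1} v^j \pr_j P_{\leq k+1} v^l - \pr^l P_{k+1} p + \pr_j(R_{\leq k+1}^{jl} - R_{\leq k}^{jl}). \]
Applying $\nab^D$ (respectively $\nab^{D+1}$) to these identities and inserting the pointwise estimates of Propositions \ref{prop:firstLPpieceBds} and \ref{prop:matDvLowFreqs} together with the Reynolds stress bound \eqref{ineq:generalCETbound} immediately yields the first-order estimates \eqref{eq:basicFirstMatDvPkv}, \eqref{eq:basicFirstMatDvNabPlkv}, and \eqref{eq:basicFirstMatDvNabPlkvPrime}; these essentially reproduce estimates already recorded in Section \ref{sec:boundsLPpieces}, with an extra Leibniz commutator term $-(\nab P_{\leq k} v^j)\pr_j P_{\leq k} v^l$ appearing when $\nab$ is pulled through the advective derivative for \eqref{eq:basicFirstMatDvNabPlkv}.

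For the three second-order estimates I would apply $(\pr_t + P_{\leq k} v \cdot \nab)$ once more to the two identities above. The resulting right-hand sides split into three kinds of contributions: (i) second material derivatives of the pressure and Reynolds stress, controlled by \eqref{ineq:bdForDdtPkp}, \eqref{ineq:matdvnab2plqkp} and \eqref{ineq:bdForDdtRleqk}; (ii) products generated by the Leibniz rule applied to $P_{k+1} v^j \pr_j P_{\leq k+1} v^l$ (or to the analogous Leibniz term that already appeared at first order), each of which combines a factor governed by a first-order estimate with an undifferentiated factor bounded by \eqref{ineq:coBdPkv} or \eqref{ineq:nabPkv}; and (iii) commutators produced by exchanging $\nab^D$ with $(\pr_t + P_{\leq k} v \cdot \nab)$, schematically of the form $(\nab^m P_{\leq k} v^i)\,\pr_i \nab^{D-m}$ acting on a lower-order quantity, with each factor $\nab^m P_{\leq k} v$ contributing $2^{(1-\a)k}\ctdcxa{v}$ by \eqref{ineq:nabPkv}. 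In each case the effect of the second advective derivative is to multiply the first-order estimate by exactly one extra factor $2^{(1-\a)k}\ctdcxa{v}$, matching the claimed scaling.

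The step I expect to take the most care is simply the bookkeeping: one must check that every commutator and every Leibniz term carries precisely the announced power of $2^k$ and an overall weight $\ctdcxa{v}^3$. This is routine because a material derivative always costs $2^{(1-\a)k}\ctdcxa{v}$ and each additional spatial derivative costs $2^k$, but care is needed when derivatives in a commutator fall on a low-frequency factor such as $P_{\leq k} v$ rather than a high-frequency factor such as $P_{k+1} v$, since the correct estimate \eqref{ineq:nabPkv} gives $2^{(1-\a)k}$ rather than $2^k$. The hypothesis $\a > 1/3$ is inherited from Propositions \ref{prop:DdtnabDPkp} and \ref{prop:firstMatdvRlk} through the inputs \eqref{ineq:bdForDdtPkp}, \eqref{ineq:matdvnab2plqkp} and \eqref{ineq:bdForDdtRleqk}, and enters only to guarantee convergence of the series defining those material derivatives; no further restriction on $\a$ is needed for the present proposition.
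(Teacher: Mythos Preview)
Your proposal is correct and follows essentially the same route as the paper: rewrite \eqref{eq:euLPproj} and \eqref{eq:euLPpiece} in advective-derivative form, read off the first-order bounds from Propositions~\ref{prop:firstLPpieceBds}--\ref{prop:matDvLowFreqs} and \eqref{ineq:generalCETbound} (with no restriction on $\a$), and then apply $(\pr_t+P_{\leq k}v\cdot\nab)$ once more, using \eqref{ineq:bdForDdtPkp}, \eqref{ineq:matdvnab2plqkp}, \eqref{ineq:bdForDdtRleqk} to control the forcing terms and absorbing the Leibniz and $[\nab^D,\cdot]$ commutator terms exactly as you describe. The paper's own proof is the same outline with less detail, and your remark that the hypothesis $\a>1/3$ enters only through Propositions~\ref{prop:DdtnabDPkp} and~\ref{prop:firstMatdvRlk} matches the paper's observation.
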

\begin{proof}
The bounds (\ref{eq:basicFirstMatDvPkv}), (\ref{eq:basicFirstMatDvNabPlkv}) and (\ref{eq:basicFirstMatDvNabPlkvPrime}) were already proven with no restriction on $\a$ by applying the basic estimates for Littlewood-Paley pieces of the pressure and the Reynolds stress to the equations
\begin{align}
\label{eq:euLPproj3}
 (\pr_t + P_{\leq k}v^j \pr_j)[P_{\leq k}v^l] + \pr^l P_{\leq k} p =& \pr_j R_{\leq k}^{jl}   \\
(\pr_t + P_{\leq k}v^j \pr_j)[\pr_a P_{\leq k}v^l] + \pr_a P_{\leq k}v^j \pr_j P_{\leq k}v^l + \pr_a \pr^l P_{\leq k} p =& \pr_a \pr_j R_{\leq k}^{jl} \label{eq:euNabLPproj3} \\
\label{eq:euLPpiece3}
(\pr_t + P_{\leq k}v^j \pr_j)[P_{k+1}v^l] + P_{k+1} v^j \pr_j P_{\leq k + 1} v^l + \pr^l P_{k + 1} p =& \pr_j ( R_{\leq k + 1}^{jl}  - R_{\leq k}^{jl} )   
\end{align}

The remaining estimates (\ref{ineq:secondMatDvPkv}) and (\ref{ineq:secondMatDvNabPlkv}) and the equivalent bounds (\ref{ineq:secondMatDvNabPlkv}) and (\ref{ineq:secondMatDvNabPlkvPrime}) follow by applying the operators $(\pr_t + P_{\leq k}v^j \pr_j)$ and $\nab^D$ to equations (\ref{eq:euLPproj3}) - (\ref{eq:euLPpiece3}).  The bounds (\ref{ineq:bdForDdtPkp}), (\ref{ineq:matdvnab2plqkp}) and (\ref{ineq:bdForDdtRleqk}) together with the basic estimates of Proposition (\ref{prop:firstLPpieceBds}) imply that the derivative $(\pr_t + P_{\leq k}v^j \pr_j)$ always costs a factor
\[ | (\pr_t + P_{\leq k} v \cdot \nab) | \leq C 2^{(1-\a)k} \ctdcxa{v} \]
in the estimates provided $\a > 1/3$.
\end{proof}

Later on we will generalize the estimates (\ref{ineq:bdForDdtPkp}), (\ref{ineq:matdvnab2plqkp}), (\ref{ineq:bdForDdtRleqk}) and Proposition (\ref{prop:canTake2matDvsNow}) to higher order material derivatives, but first we will study how the estimates obtained so far can be used to prove some higher order regularity in time for the pressure and the velocity.

\section{Second material derivatives of the pressure increments and applications} \label{sec:secondMatDvPressIncs}

Here we show that the estimates of Section (\ref{sec:forcingTerms1}) can be applied to give higher regularity in time for the pressure and the pressure gradient.  Our focus will be on proving the following theorems

\begin{thm} \label{thm:theMatRegWeProveNow}
For $1/2 < \a < 1$, we have that 
\ali{
 (\pr_t p, \nab p) &\in C_{t,x}^\b  \label{in:nabPHoldReg}
}
for all $\b < 2 \a - 1$.

If $2/3 < \a < 1$, then $\fr{D }{\pr t} \nab p = \pr_t \nab p + \tx{{\normalfont div} }( v \otimes \nab p )$, which is well-defined as a distribution by (\ref{in:nabPHoldReg}), is also H\"{o}lder continuous, with
\ali{
 \fr{D }{\pr t} \nab p &\in C_{t,x}^\b  \label{eq:DdtNbpctb}
}
for all $\b < 3 \a - 2$.  If $3/4 < \a < 1$, we have furthermore that
\ali{
 \fr{D^2}{\pr t^2} \nab p = \pr_t \fr{D }{\pr t} \nab p + \pr_j( v^j \fr{D }{\pr t} \nab p )   &\in C^0 \label{eq:Ddt2nbpc0}
}
\end{thm}
The methods we exhibit can be extended to establish \eqref{eq:DdtNbpctb} and \eqref{eq:Ddt2nbpc0} with $\pr_t p$ in place of $\nab p$, although this extension requires control over material derivative estimates of third order that are not proven in this section.

To provide a further application of the second order material derivative estimates, we will prove a regularity theorem for $\fr{D}{\pr t} p$ and $\fr{D^2}{\pr t^2} p$ in Theorem~(\ref{thm:2timeRegp}) below.

The proof of Theorem (\ref{thm:theMatRegWeProveNow}), which is contained in Section (\ref{sec:pressIncPressGrad}) below, proceeds by estimating first and second material derivatives of the pressure increments defined in Section (\ref{sec:timeRegPress}).  In order to establish the necessary estimates, we will start by proving some preliminary bounds for higher order commutators between material derivatives and the relevant convolution operators.

At this point we will no longer keep track of which constants depend on the torus $\T^n$. %, so for simplicity we will no longer keep track of this dependence.
%At this point many of the constants depend on the torus $\T^n$, so for simplicity we will no longer keep track of this dependence.

\subsection{Second order commutator estimates for material derivatives and convolution operators} \label{sec:commuteWithConv}

In this Section, we show how our bounds from the Euler equations can be used to bound the operator norms of second and third order commutators between coarse scale material derivatives and convolution operators.  

We start by introducing some further notation.  We will continue to use the notation
\[ \fr{D_{\lk}}{\pr t}  = (\pr_t + P_{\leq k} v \cdot \nab) \]
and
\[ \fr{D^2_{\lk}}{\pr t^2}  = (\pr_t + P_{\leq k} v \cdot \nab)^2. \]
The expression $[X,]^rT$ will denote the operator obtained by commuting $T$ with $X$ repeatedly $r$ times (e.g. $[X,]^2T =[X,[X,T]]$).

The following proposition describes the general estimate one has available for commutators of coarse scale material derivatives with operators of convolution form.  A main example to keep in mind is the operator $T = \De^{-1} \nab^2 P_{\leq k}$ which appears in the definition of the pressure increments introduced in Section (\ref{sec:timeRegPress}).

\begin{prop}[Commuting material derivatives and smoothing operators]\label{prop:commutingWithOperators}
Suppose that $(v,p)$ solve the incompressible Euler equations, $0 < \a < 1$.  Suppose that $T$ takes the form
\[ T f = \int_{\R^n} f(x+h) K(h) dh \]
and that the kernel $K$ satisfies the estimates
\ali{
\| \nab^D K \|_{L^1_h} + \| |h| \nab^{1+D} K \|_{L^1_h} + \| |h|^2 \nab^{2+D} K \|_{L^1_h} &\leq 2^{D k} \label{ineq:assumedKBounds}
}
for all $0 \leq D \leq M$.
Then we have the estimates
\ali{
\sup_t \left\| \nab^D \left[\fr{D_{\leq I}}{\pr t},\right]^r T \right\| &\leq C_D 2^{D \max \{ k, I \} } \cdot 2^{r(1-\a) I} \ctdcxa{v}^r \label{ineq:genCommuteOperator1}
}
for all $0 \leq D \leq M$ and all $0 \leq r \leq 2$.

If $\a > 1/3$ and we also assume for $0 \leq D \leq M$ that
\ali{
 \| |h|^3 \nab^{3+D} K \|_{L^1_h} &\leq 2^{D k}, \label{ineq:thirdMomentK}
}
then (\ref{ineq:genCommuteOperator1}) holds as well for $r = 3$.
\end{prop}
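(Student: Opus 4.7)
The plan is to induct on $r$, using that $\pr_t$ commutes with the spatial convolution $T$ so $[\tfrac{D_{\leq I}}{\pr t}, T] = [P_{\leq I} v \cdot \nab, T]$, and repeatedly applying the commutator formulas \eqref{eq:commuteintByParts1}--\eqref{eq:commuteintByParts2} from Section~\ref{sec:timeRegPress}. The base case $r = 0$ is immediate: $\| \nab^D T f \|_{C^0} \leq \| \nab^D K \|_{L^1_h} \| f \|_{C^0} \leq 2^{Dk} \| f \|_{C^0}$ by \eqref{ineq:assumedKBounds}, and $k \leq \max\{k,I\}$.

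For $r = 1$, I would use \eqref{eq:commuteintByParts2} to write
\[
[\tfrac{D_{\leq I}}{\pr t}, T] f(x) = \int_0^1 \int \pr_a P_{\leq I} v^i(x+sh) \, f(x+h) \, \pr_i K(h) \, h^a \, dh \, ds .
\]
For $D = 0$ the operator norm is at most $\| \nab P_{\leq I} v \|_{C^0} \cdot \| |h| \nab K \|_{L^1_h} \leq C 2^{(1-\a) I} \ctdcxa{v}$ using \eqref{ineq:nabPkv} and the hypothesis \eqref{ineq:assumedKBounds}. For $D > 0$, I would distribute $\nab_x^D$ via $\pr_{x^j}[g(x+h)] = \pr_{h^j}[g(x+h)]$ and integrate by parts in $h$, placing each derivative either on the kernel (at cost $2^k$ via \eqref{ineq:assumedKBounds}) or on a $P_{\leq I} v$ factor (at cost $2^I$ by the Bernstein-type inequality implicit in \eqref{ineq:nabPkv}), yielding the factor $2^{D \max\{k, I\}}$ per derivative.

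For $r = 2$, I would iterate: with $X = \tfrac{D_{\leq I}}{\pr t}$ and $T_1 = [X, T]$, I compute $[X, T_1]$ by the same fundamental theorem of calculus trick, exploiting the cancellation between the $\pr_t f$ and $P_{\leq I} v(x+h) \cdot \nab f(x+h)$ contributions. This produces two kinds of integrands: those with two velocity-gradient factors $\pr_{a_1} P_{\leq I} v \cdot \pr_{a_2} P_{\leq I} v$ paired with $|h|^2 \nab^2 K$, which are bounded by $\| \nab P_{\leq I} v \|_{C^0}^2 \cdot \| |h|^2 \nab^2 K \|_{L^1} \leq C (2^{(1-\a) I} \ctdcxa{v})^2$ via \eqref{ineq:assumedKBounds}; and those where the inner $X$ lands on a $\pr_a P_{\leq I} v$ factor, producing $\pr_a (\pr_t + P_{\leq I} v \cdot \nab) P_{\leq I} v$, bounded by $C 2^{2(1-\a) I} \ctdcxa{v}^2$ via \eqref{ineq:mtDvBound3} of Proposition~\ref{prop:matDvLowFreqs}. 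The $\nab^D$ bound follows by the same redistribution as in the $r = 1$ step. For $r = 3$ under $\a > 1/3$, I would iterate once more: the three-velocity-gradient term is controlled using $\| |h|^3 \nab^3 K \|_{L^1_h}$ from the additional hypothesis \eqref{ineq:thirdMomentK}, while the terms in which $X$ falls on an already-differentiated $\nab P_{\leq I} v$ produce second advective derivatives of the coarse velocity, controlled by \eqref{ineq:secondMatDvNabPlkv} in Proposition~\ref{prop:canTake2matDvsNow}, which is precisely where the restriction $\a > 1/3$ enters.

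The main obstacle is the bookkeeping of the iterated commutator expansion: at each step, applying $X$ to a previously derived formula either introduces a new $\nab P_{\leq I} v$ factor (matched by an extra $|h|$ on the kernel side, as permitted by \eqref{ineq:assumedKBounds} and \eqref{ineq:thirdMomentK}) or shifts an $X$ onto an existing gradient factor, producing higher advective derivatives of $P_{\leq I} v$. Organizing the resulting terms so that the latter type are always matched to the Euler-based estimates in Propositions~\ref{prop:matDvLowFreqs} and~\ref{prop:canTake2matDvsNow} is the key technical point, and it is the second material derivative estimate \eqref{ineq:secondMatDvNabPlkv} that forces the restriction $\a > 1/3$ at level $r = 3$.
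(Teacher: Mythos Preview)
Your proposal is correct and follows essentially the same approach as the paper: iterate the commutator formula \eqref{eq:commuteintByParts1}--\eqref{eq:commuteintByParts2}, integrate by parts in $h$ whenever a derivative would land on $f$, and feed in the Euler-based estimates from Propositions~\ref{prop:matDvLowFreqs} and~\ref{prop:canTake2matDvsNow}, with the restriction $\a > 1/3$ entering exactly through \eqref{ineq:secondMatDvNabPlkv}.

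One point the paper makes explicit that your outline leaves implicit: at $r = 3$ a naive product-rule expansion followed by integration by parts can produce terms carrying the weight $|h|^2 \nab K$ (two $\de_h$-factors but only one kernel derivative), and the hypotheses \eqref{ineq:assumedKBounds}--\eqref{ineq:thirdMomentK} give no control of $\||h|^2 \nab K\|_{L^1_h}$, only of $\||h|^m \nab^m K\|_{L^1_h}$ with matched exponents. The paper avoids this by keeping certain product factors such as $\pr_{i_2} P_{\leq I} v^{i_1}(x+h)\, f(x+h)$ intact and integrating by parts on that product as a whole (see the treatment of $T_{(II,B)}$ around \eqref{eq:nowCanIntByParts}), so that every surviving term has matched weights $|h|^m \nab^m K$. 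Your final paragraph correctly flags this bookkeeping as the main obstacle; the paper's contribution is to exhibit the specific maneuver that resolves it.
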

We remark that there is actually no need to invoke the Euler equations for the cases $r = 0,1$ in Proposition (\ref{prop:commutingWithOperators}).
\begin{proof}
%First note that the operators $T = \De^{-1} \nab^A P_{\approx k}$ and $\De^{-1} \nab^{2+A} P_{\leqc k}$ are all of convolution form
%\[ T f(x) = K \ast f(x) = \int_{\R^n} f(x - h) K(h) dh \]
%for the appropriate kernel $K$.  In every case we have considered, $K(h) = K(-h)$ is also even, so the minus can be changed to a plus.  In this case, we have shown the basic commutator formula
First recall the basic commutator formulas
\ali{
\left[\fr{D_{\leq I}}{\pr t}, K \ast \right] f(x) &= \int_{\R^n} ( P_{\leq I} v^i(x + h) - P_{\leq I} v^i(x) ) f(x+h)\pr_i K(h) dh\label{eq:commuteintByParts1prime} \\
&= \int_0^1 \int_{\R^n} \pr_a P_{\leq I} v^i(x + s h)  f(x+h) \pr_i K(h) h^a dh ds \label{eq:commuteintByParts2prime}
}
from (\ref{eq:commuteintByParts2}) obtained using integration by parts.  Also recall that these formulas have been simplified using the fact that $P_{\leq I} v$ is divergence free, but the extra term that would arise otherwise would in any case have a similar form and obey the same bounds.\footnote{However, this remark does allow us to remove the term $\|\nab^{D} K\|_{L^1_h}$ in the assumption (\ref{ineq:assumedKBounds}) when $r \geq 1$.}

The bounds (\ref{ineq:commutForPk})-(\ref{ineq:commutForPlqk}) for $r = 0,1$ are then immediate from the form (\ref{eq:commuteintByParts2prime}) without using the Euler equations.  When estimating $\| \nab^D \left[\fr{D_{\leq I}}{\pr t}, K \ast \right] \|$ we do not let the derivatives hit the function $f$, but rather integrate by parts using the form (\ref{eq:commuteintByParts2prime}).  %The point here is that the distance $h$ is essentially bounded for the kernels associated to (\ref{ineq:commutForPk})-(\ref{ineq:commutForPlqk}) acting on the torus.

%The most difficult case is the case $K = \sum_{I = k_0(\T^n)}^k $  (\ref{ineq:commutForPlqk})

We will now commute again with the operator $\fr{D_{\leq I}}{\pr t} = (\pr_t + P_{\leq I} v \cdot \nab)$.  Here it is important to use the form (\ref{eq:commuteintByParts1prime}) rather than (\ref{eq:commuteintByParts2prime}), since we do not always have the control we would require over $\| |h|^2 \nab K \|_{L^1}$ when $K$ is a long range kernel such as $\De^{-1} \nab^2 \eta_{\leq k}$.  On the other hand, it is safe to absorb extra powers of $h$ onto the kernel when $K$ is short range, such as $\De^{-1} \nab^2 \eta_{k}$.

We express the second commutator with $\fr{D_{\leq I}}{\pr t} = (\pr_t + P_{\leq I} v \cdot \nab)$ as follows
\ali{
\left[\fr{D_{\leq I}}{\pr t},  \right]^2 &K \ast[f](x) = (\pr_t + P_{\leq I} v^{i_2}(t,x) \fr{\pr~}{\pr x^{i_2}} ) \int_{\R^n} ( P_{\leq I} v^{i_1}(x + h) - P_{\leq I} v^{i_1}(x) ) f(t,x+h) \pr_{i_1} K(h) dh \label{eq:firstStageOfDdtComm2} \\
&- \int_{\R^n} ( P_{\leq I} v^{i_1}(x + h) - P_{\leq I} v^{i_1}(x) )  \left(\pr_t + P_{\leq I} v^{i_2}(t,x+h) \fr{\pr~}{\pr x^{i_2}}\right) f(t,x+h) \pr_{i_1} K(h) dh \label{eq:firstStageOfDdtComm3}
}
The resulting operator acts on $f$ only in the spatial variables, and only on a fixed time slice.  %There is, however, a term of the form $\fr{D_{\leq I}}{\pr t} \nab P_{\leq I}$ coming from (\ref{eq:firstStageOfDdtComm2}) which we will control using the estimates from the Euler equations.  
A full expansion of (\ref{eq:firstStageOfDdtComm2})-(\ref{eq:firstStageOfDdtComm3}) gives
\ali{
\left[\fr{D_{\leq I}}{\pr t},  \right]^2 &K \ast[f](x) = T_{(I)}[f] - T_{(II)}[f] - T_{(III)}[f] \label{eq:goodDecomposition} \\
T_{(I)}[f] &= \int_{\R^n} \left(\fr{D_{\leq I}}{\pr t}P_{\leq I} v^{i_1}(t,x + h) - \fr{D_{\leq I}}{\pr t}P_{\leq I} v^{i_1}(t,x)\right)   f(t,x+h) \pr_{i_1} K(h) dh \label{eq:TIOfThisCommutator}\\
T_{(II)}[f] &= \int_{\R^n} \left(P_{\leq I} v^{i_2}(t,x + h) -  P_{\leq I} v^{i_2}(t,x)\right)\pr_{i_2} P_{\leq I} v^{i_1}(t,x + h)  f(t,x+h) \pr_{i_1} K(h) dh \label{eq:TIIOfThisCommutator} \\
T_{(III)}[f] &= {\int_{\R^n}}\left( P_{\leq I} v^{i_1}(x + h) {-} P_{\leq I} v^{i_1}(x) \right)  \left(P_{\leq I} v^{i_2}(t,x+h) {-} P_{\leq I} v^{i_2}(t,x)\right) \pr_{i_2}f(t,x+h) \pr_{i_1} K(h) dh \label{eq:thirdLineSecondCommutator}
}
As we have seen before, the main observation here which confirms that the commutator is indeed a smoothing operator is that we can integrate by parts in the $h$ variables when the derivative hits the $f$ in (\ref{eq:thirdLineSecondCommutator}) by noticing that $\pr_{i_2}f(t,x+h) = \fr{\pr f}{\pr x^{i_2} } = \fr{\pr f}{\pr h^{i_2} }$.  The resulting expression can then be simplified by observing that
\[\fr{\pr ~}{\pr h^{i_2} } \left(P_{\leq I} v^{i_2}(t,x+h) - P_{\leq I} v^{i_2}(t,x)\right) = 0 \]
from the fact that $v$ is divergence free, but this observation is not important for the estimates, since a nonzero term of the same type will appear.  Performing this integration by parts gives
\ali{
T_{(III)}[f] &= - T_{(III, 1)}[f] - T_{(III,2)}[f] \label{eq:expandTIIIpieces}\\
T_{(III, 1)}[f] &= \int_{\R^n} \pr_{i_2}P_{\leq I} v^{i_1}(x + h)  \left(P_{\leq I} v^{i_2}(t,x+h) - P_{\leq I} v^{i_2}(t,x)\right) f(t,x+h) \pr_{i_1} K(h) dh \\
T_{(III,2)}[f] &= {\int_{\R^n}}\left( P_{\leq I} v^{i_1}(x + h) {-} P_{\leq I} v^{i_1}(x) \right)  \left(P_{\leq I} v^{i_2}(t,x+h) {-} P_{\leq I} v^{i_2}(t,x)\right) f(t,x+h) \pr_{i_2} \pr_{i_1} K(h) dh \label{eq:theQuadraticTermInCommut2}
}
We are now able to read off the bound (\ref{ineq:genCommuteOperator1}) using the estimates of Proposition (\ref{prop:canTake2matDvsNow}).  The main step here is to apply the fundamental theorem of calculus to every term that has the difference form
\ali{
 \de_h F(x) &= F(x+h) - F(x)  \label{eq:dehform} \\
&= \int_0^1 \pr_a F(x + s h) h^a ds \notag
}
and then absorb the factor of $h$ into the kernel.  For example, we have
\ali{
T_{(I)}[f] &= \int_{\R^n} \de_h \fr{D_{\leq I}}{\pr t}P_{\leq I} v^{i_1}(t,x) f(t,x+h) \pr_i K(h) dh \\
&= \int_0^1 \int_{\R^n} \pr_{a_1} \fr{D_{\leq I}}{\pr t}P_{\leq I} v^{i_1}(t,x + sh) f(t,x+h) \pr_i K(h) h^{a_1} dh ds
}
and
\ali{
T_{(III,2)}[f] &= {\int_{\R^n}}\de_h P_{\leq I} v^{i_1}(t,x) \de_h P_{\leq I} v^{i_2}(t,x) f(t,x+h) \pr_{i_2} \pr_{i_1} K(h) dh \\
&= {\int_0^1 \int_0^1 \int_{\R^n}} \pr_{a_1} P_{\leq I} v^{i_1}(t,x{+}s_1 h) \pr_{a_2} P_{\leq I} v^{i_2}(t,x{+}s_2 h) f(t,x{+}h) \pr_{i_2} \pr_{i_1} K(h) h^{a_1} h^{a_2} {dh ds_1 ds_2} \label{eq:alreadyHasTwoDeh}
}
The bounds (\ref{ineq:genCommuteOperator1}) now follow from Proposition (\ref{prop:canTake2matDvsNow}) and the bounds (\ref{ineq:assumedKBounds}) assumed for $K$.  As one would expect, to bound derivatives $\nab^D\left[\fr{D_{\leq I}}{\pr t},  \right]^2 K \ast[f]$ one must always integrate by parts in the $h$ variable when the derivative hits the function $f(t,x+h)$.

Now that we have obtained a good expansion
\ali{
\left[\fr{D_{\leq I}}{\pr t},  \right]^2 &K \ast[f](x) = T_{(I)} - T_{(II)} + T_{(III, 1)} + T_{(III,2)}
}
from (\ref{eq:TIOfThisCommutator}), (\ref{eq:TIIOfThisCommutator}) and (\ref{eq:expandTIIIpieces}), to prepare for further estimates it is worthwhile to observe that the structure of the commutator survives to allow estimates for higher order commutators after introducing one more trick.  %Perhaps surprisingly, we will actually need one more trick to handle the third commutator, but once this trick is illustrated it will be clear how these commutator estimates work in general.  

In what follows, we will often suppress the dependence in $t$ of all the terms; however, every tensor field that appears besides the kernel $K$ depends on time.

Most of the terms that arise in the expansion of $\left[\fr{D_{\leq I}}{\pr t},  \right]^3 K \ast[f](x)$ are estimated by techniques we have already used for the first two commutators.  We will focus on the term 
\[ T_{(II)} = \int_{\R^n} \de_h P_{\leq I} v^{i_2}(t,x)\pr_{i_2} P_{\leq I} v^{i_1}(t,x + h)  f(t,x+h) \pr_{i_1} K(h) dh \] from (\ref{eq:TIIOfThisCommutator}) since this term requires one additional trick to estimate, while the other terms are treated similarly.

To begin, we expand
\ali{
\left[\fr{D_{\leq I}}{\pr t},\right]&T_{(I)}[f](x){=} \left(\pr_t {+} P_{\leq I} v^{i_3}(t,x) \fr{\pr~}{\pr x^{i_3}} \right) \int_{\R^n} \de_h P_{\leq I} v^{i_2}(t,x)\pr_{i_2} P_{\leq I} v^{i_1}(t,x {+} h)  f(t,x{+}h) \pr_{i_1} K(h) dh \notag \\
&- \int_{\R^n} \de_h P_{\leq I} v^{i_2}(t,x)\pr_{i_2} P_{\leq I} v^{i_1}(t,x + h)  \fr{D_{\leq I}}{\pr t}f(t,x+h) \pr_{i_1} K(h) dh \\
&= T_{(II, A)} + T_{(II, B)} \\
T_{(II, A)} &= \int_{\R^n} \left[ \left(\pr_t {+} P_{\leq I} v^{i_3}(t,x) \fr{\pr~}{\pr x^{i_3}} \right)\de_h P_{\leq I} v^{i_2}(t,x) \right]\pr_{i_2} P_{\leq I} v^{i_1}(t,x {+} h)  f(t,x{+}h) \pr_{i_1} K(h) dh \label{eq:termTIIA}\\
T_{(II, B)} &= \int_{\R^n} \de_h P_{\leq I} v^{i_2}(t,x)(\pr_t + P_{\leq I} v^{i_3}(t,x) \fr{\pr~}{\pr x^{i_3}} ) \left[\pr_{i_2} P_{\leq I} v^{i_1}(t,x + h)  f(t,x+h)\right] \pr_{i_1} K(h) dh \notag \\
&- \int_{\R^n} \de_h P_{\leq I} v^{i_2}(t,x)\pr_{i_2} P_{\leq I} v^{i_1}(t,x + h)  \fr{D_{\leq I}}{\pr t}f(t,x+h) \pr_{i_1} K(h) dh  \label{eq:termTIIB}
}

Whenever we encounter a term of the form $\de_h F(x) = F(x+h) - F(x)$ as in (\ref{eq:termTIIA}), we always commute the material derivative and the difference operator $\de_h$ as in
\ali{
(\pr_t + P_{\leq I} v^i(t,x) \pr_i) \de_h F(x) &= \de_h[(\pr_t + P_{\leq I} v \cdot \nab) F](x) - \de_h P_{\leq I} v^i(x) \pr_i F(x+h)
}
For term (\ref{eq:termTIIA}), this operation gives rise to two more terms
\ali{
T_{(II, A)} &= T_{(II, A1)} - T_{(II, A2)} \\
T_{(II, A1)} &= \int_{\R^n} \de_h \fr{D_{\leq I}}{\pr t}P_{\leq I} v^{i_2}(t,x) \pr_{i_2} P_{\leq I} v^{i_1}(t,x {+} h)  f(t,x{+}h) \pr_{i_1} K(h) dh \\
T_{(II, A2)} &= \int_{\R^n} \de_h P_{\leq I} v^{i_3}(t,x) \pr_{i_3} P_{\leq I}v^{i_2}(t,x{+}h)  \pr_{i_2} P_{\leq I} v^{i_1}(t,x {+} h)  f(t,x{+}h) \pr_{i_1} K(h) dh
}
which are controlled by Proposition (\ref{prop:canTake2matDvsNow}).

For the term $T_{(II, B)}$ there are derivatives on the function $f(t,x+h)$, so it will be necessary to integrate by parts in order to control this term.  It is important to be careful how this integration by parts is executed, since a naive application of the product rule for the advective derivative in (\ref{eq:termTIIB}) will lead to terms such as
\[ \int_{\R^n} \de_h v^{i_2}(t,x) \de_h v^{i_3}(t,x) \pr_{i_3} \pr_{i_2} P_{\leq I} v^{i_1}(t,x{+}h)  f(t,x{+}h) \pr_{i_1} K(h) dh  \] 
which cannot be controlled for long-range kernels, since we have not assumed control of $\| |h|^2 \nab K \|_{L^1_h}$.

To avoid seeing such terms, %we first apply the observation that
%\[\pr_{i_2} P_{\leq I} v^{i_1}(t,x {+} h) \fr{D_{\leq I}}{\pr t} f(t,x{+}h) = \fr{D_{\leq I}}{\pr t}[\pr_{i_2} P_{\leq I} v^{i_1} f](t,x{+}h) -  \fr{D_{\leq I}}{\pr t}\pr_{i_2} P_{\leq I} v^{i_1}(t,x {+} h) f(t,x{+}h)  \]
we expand $T_{(II, B)}[f]$ in a way that keeps the product term \[ \pr_{i_2} P_{\leq I} v^{i_1}(t,x {+} h) f(t,x{+}h)\] in tact
\ali{
T_{(II,B)} &= - T_{(II,B1)} + T_{(II,B2)} \\
T_{(II,B1)} &= \int_{\R^n} \de_h P_{\leq I} v^{i_2}(x) \de_h P_{\leq I} v^{i_3}(x) \pr_{i_3}\left[\pr_{i_2} P_{\leq I} v^{i_1}(x + h)  f(x+h)\right] \pr_{i_1} K(h) dh  \label{eq:nowCanIntByParts}\\
T_{(II,B2)} &= \int_{\R^n} \de_h P_{\leq I} v^{i_2}(x) \left[ \fr{D_{\leq I}}{\pr t}[\pr_{i_2}P_{\leq I} v^{i_1}f](x{+}h) -  \pr_{i_2} P_{\leq I} v^{i_1}(x{+} h)  \fr{D_{\leq I}}{\pr t} f(x{+}h) \right] \pr_{i_1} K(h) dh  \\
&= \int_{\R^n} \de_h P_{\leq I} v^{i_2}(x) \left[\fr{D_{\leq I}}{\pr t}\pr_{i_2}P_{\leq I} v^{i_1}(x{+}h) f(x{+}h) \right] \pr_{i_1} K(h) dh 
\label{eq:termTIIB2OK}
}
The term (\ref{eq:termTIIB2OK}) is under control by the bounds of Proposition (\ref{prop:canTake2matDvsNow}).  For the term (\ref{eq:nowCanIntByParts}), we observe that the derivative $\pr_{i_3} = \fr{\pr~}{\pr x^{i_3}} = \fr{\pr~}{\pr h^{i_3}}$ can be viewed as a derivative in the $h$ variables, which allows us to integrate by parts to expand
\ali{
T_{(II,B1)} &= - T_{(II,B1a)} - T_{(II,B1b)} \\
T_{(II,B1a)} &= \int_{\R^n} \de_h P_{\leq I} v^{i_2}(x)\left[  \pr_{i_3} P_{\leq I} v^{i_3}(x+h) \pr_{i_2} P_{\leq I} v^{i_1}(x + h)  f(x+h)\right] \pr_{i_1} K(h) dh \label{eq:termIsActually0}\\
&+ \int_{\R^n} \de_h P_{\leq I} v^{i_3}(x) \left[ \pr_{i_3} P_{\leq I} v^{i_2}(x+h) \pr_{i_2} P_{\leq I} v^{i_1}(x + h)  f(x+h)\right] \pr_{i_1} K(h) dh \\
T_{(II,B1b)} &= \int_{\R^n} \de_h P_{\leq I} v^{i_2}(x) \de_h P_{\leq I} v^{i_3}(x) \left[\pr_{i_2} P_{\leq I} v^{i_1}(x + h)  f(x+h)\right] \pr_{i_3} \pr_{i_1} K(h) dh
}
These terms are under control by Proposition (\ref{prop:canTake2matDvsNow}) after the factors of $\de_h P_{\leq I} v$ are expressed using the Fundamental Theorem of Calculus.  (Of course, the term (\ref{eq:termIsActually0}) actually vanishes.)

The estimates for the terms $\left[\fr{D_{\leq I}}{\pr t}, T_{(I)}\right][f]$ and $\left[\fr{D_{\leq I}}{\pr t}, T_{(III)}\right][f]$ from the decomposition (\ref{eq:goodDecomposition}) involve the same techniques, but also use additional assumptions.  The assumption that $\a > 1/3$ comes into play in order to estimate the second material derivative in the term
\[ 
\int_{\R^n} \de_h \fr{D_{\leq I}^2}{\pr t^2}P_{\leq I} v^{i_1}(x) f(x+h) \pr_{i_1} K(h) dh
 \]
which arises in the expansion of $\left[\fr{D_{\leq I}}{\pr t}, \right]T_{(I)}[f]$ after commuting the advective derivative with $\de_h$.  The assumption (\ref{ineq:thirdMomentK}) on $\| |h|^3 \nab^3 K \|_{L^1}$ comes into play in order to estimate the term %third derivative of the kernel that appears in the term 
\[ \int_{\R^n} \de_h P_{\leq I} v^{i_1}(x) \de_h P_{\leq I} v^{i_2}(x) \de_h P_{\leq I} v^{i_3}(x) f(x{+}h) \pr_{i_3} \pr_{i_2} \pr_{i_1} K(h) dh, \]
which arises in the expansion of $\left[\fr{D_{\leq I}}{\pr t}, \right]T_{(III,2)}[f]$ in (\ref{eq:alreadyHasTwoDeh}) after integrating by parts.
\end{proof}

As a corollary of Proposition~\ref{prop:commutingWithOperators}, we have the following commutator estimates:
\begin{lem}\label{lem:weNeedTheseCommutators2}
Under the assumptions and notation of Proposition (\ref{prop:commutingWithOperators}), we have the estimates
\ali{
\sup_t \left\| \nab^D \left[\fr{D_{\leq I}}{\pr t},\right]^r (\De^{-1} \nab^A P_k) \right\| &\leq C_{D,A} 2^{(D + r(1-\a)) I} \cdot 2^{(A - 2)k} \ctdcxa{v}^r \label{ineq:commutForPk} \\
\sup_t \left\| \nab^D \left[\fr{D_{\leq I}}{\pr t},\right]^r (\nab^A P_{\lk}) \right\| &\leq C_{D,A} 2^{(D + r(1-\a)) I} \cdot 2^{A k} \ctdcxa{v}^r \label{ineq:commutForPlk}
}
for $r = 0, 1, 2$, all integers $k$ and all $I \geq k$.  Also, for $k \geq k_0(\T^n)$ we have
\ali{
\sup_t \left\| \nab^D \left[\fr{D_{\leq I}}{\pr t},\right]^r (\De^{-1} \nab^{2+A} P_{\leq k}) \right\| &\leq C_{D,A} (1 + |k - k_0(\T^n)|) 2^{(D + r(1-\a)) I} \cdot 2^{A k} \ctdcxa{v}^r \label{ineq:commutForPlqk}
}
If $1/3 < \a < 1$, all the above estimates hold as well for $r = 3$.  The estimate also holds with a different constant if $P_{\leq k}$ is replaced by a comparable projection $P_{\leqc k} = P_{\leq k + a}$ provided that $|a|$ is bounded.  Similarly, one can replace $P_k$ in (\ref{ineq:commutForPk}) with any comparable operator
\[ P_{\approx k} = P_{[k_1, k_2]} \]
provided $|k_1 - k|$ and $|k_2 - k|$ are bounded.
\end{lem}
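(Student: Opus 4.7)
The proof reduces each estimate to an application of Proposition~\ref{prop:commutingWithOperators} after identifying the relevant convolution kernel and verifying its scaling.

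For the short-range bounds (\ref{ineq:commutForPk}) and (\ref{ineq:commutForPlk}), the kernels of $\De^{-1} \nab^A P_k$ and $\nab^A P_{\leq k}$ are $\De^{-1} \nab^A \eta_k$ and $\nab^A \eta_{\leq k}$ respectively, both of which are Schwartz functions concentrated at scale $2^{-k}$.  A direct scaling computation (using the fact that $\eta_k$ lives at Fourier scale $2^k$ and thus $\De^{-1} \eta_k$ obeys the scaling $\De^{-1}\eta_k(h) = 2^{-2k}(2^{nk} \De^{-1}\eta_0(2^k h))$) gives
\[ \| |h|^j \nab^{D+j} (\De^{-1} \nab^A \eta_k) \|_{L^1_h} \leq C_{D, j, A} \, 2^{(A-2)k} \cdot 2^{Dk}, \qquad \| |h|^j \nab^{D+j} (\nab^A \eta_{\leq k}) \|_{L^1_h} \leq C_{D, j, A} \, 2^{Ak} \cdot 2^{Dk}, \]
for $0 \leq j \leq 3$.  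After dividing by the overall scale factor ($2^{(A-2)k}$ or $2^{Ak}$), both kernels satisfy the hypotheses (\ref{ineq:assumedKBounds}) and (\ref{ineq:thirdMomentK}) of Proposition~\ref{prop:commutingWithOperators}.  The bounds (\ref{ineq:commutForPk}) and (\ref{ineq:commutForPlk}) then follow immediately on observing $\max\{k, I\} = I$ when $I \geq k$.  The restriction to $r \leq 2$ is unconditional, while $r = 3$ requires $\a > 1/3$ exactly as in Proposition~\ref{prop:commutingWithOperators}.

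For the long-range bound (\ref{ineq:commutForPlqk}), Proposition~\ref{prop:commutingWithOperators} is not directly applicable: the kernel $\De^{-1} \nab^{2+A} \eta_{\leq k}$ fails to be localized at scale $2^{-k}$, and its higher moments $\| |h|^j \nab^{j+D} K \|_{L^1}$ are not controlled by the uniform scaling $2^{Dk}$ (as is already visible in the logarithmic loss appearing in the proof of Proposition~\ref{prop:pressIncBds1}).  The fix is to decompose
\[ \De^{-1} \nab^{2+A} P_{\leq k} = \sum_{I' = k_0(\T^n)}^{k} \De^{-1} \nab^{2+A} P_{I'} \]
and apply the already-established short-range bound (\ref{ineq:commutForPk}) (with $A$ there replaced by $2 + A$) to each summand, obtaining an estimate of size $C_{D,A} \, 2^{(D + r(1-\a)) I} \cdot 2^{A I'} \ctdcxa{v}^r$ per piece.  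Summing the geometric series over $k_0(\T^n) \leq I' \leq k$ produces the desired factor $(1 + |k - k_0(\T^n)|) 2^{A k}$, the logarithmic loss appearing precisely in the borderline case $A = 0$.

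The final assertion about replacing $P_{\leq k}$ by $P_{\leqc k} = P_{\leq k + a}$ or $P_k$ by $P_{\approx k} = P_{[k_1, k_2]}$ is then automatic, since the corresponding kernels satisfy identical scaling estimates up to constants depending only on the fixed shifts $|a|$, $|k_1 - k|$, $|k_2 - k|$.  I do not anticipate any serious obstacle beyond Proposition~\ref{prop:commutingWithOperators} itself; the content of this lemma is essentially the bookkeeping needed to read off the correct scaling from the three operator types, together with the Littlewood-Paley decomposition that converts the long-range case into a sum of short-range ones.
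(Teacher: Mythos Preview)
Your argument is correct and follows essentially the same route as the paper. For the short-range operators you normalize by the appropriate power of $2^k$ and invoke Proposition~\ref{prop:commutingWithOperators}, exactly as the paper does. For the long-range operator you decompose $\De^{-1}\nab^{2+A}P_{\leq k} = \sum_{I'=\kotn}^{k}\De^{-1}\nab^{2+A}P_{I'}$ and sum the short-range bounds, which is the same Littlewood--Paley decomposition the paper relies on.

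One small correction: your assertion that Proposition~\ref{prop:commutingWithOperators} is \emph{not directly applicable} to the long-range kernel is inaccurate. The paper's own proof simply applies Proposition~\ref{prop:commutingWithOperators} to the normalized operator $T = C^{-1}(1+|k-\kotn|)^{-1}\De^{-1}\nab^{2}P_{\leq k}$, after checking (via the very same decomposition into the pieces $\De^{-1}\nab^{2}\eta_{I'}$) that the kernel moment bounds \eqref{ineq:assumedKBounds} hold for this $T$; the logarithmic factor is absorbed into the normalization rather than emerging from a sum after the fact. Your decomposition-then-sum packaging reaches the identical conclusion and is a perfectly valid alternative, but the claim that the proposition fails to apply directly is not quite right.
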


These estimates all follow from Proposition (\ref{prop:commutingWithOperators}) after multiplying the operators by the appropriate constant.  For example, Proposition (\ref{prop:commutingWithOperators}) applies to $T = C^{-1} (1 + | k - \kotn|)^{-1} \De^{-1}  \nab^2 P_{\leq k} )$ and $T = C^{-1} 2^{(2 - A)k}\De^{-1} \nab^A P_{k} )$ if $C$ is sufficiently large.

\subsection{Second order material derivatives of pressure increments and regularity in time for the pressure gradient } \label{sec:pressIncPressGrad}

With the necessary commutator estimates in hand, we now begin the proof of Theorem (\ref{thm:theMatRegWeProveNow}).  As in the proof of Corollary (\ref{cor:thm13}), the proof will proceed by considering the pressure increments defined in Section (\ref{sec:timeRegPress}).  From this point onward we will no longer record the dependence of the constants on $\T^n$ and the fixed $\a < 1$.

The main Lemma that enables us to access higher order advective derivatives despite being unable to differentiate the velocity field itself is the following fact.
\begin{lem} \label{lem:weakConverge} Suppose $T$ is a continuous tensor field on $I \times \T^n$ and $v$ is a continuous vector field on $I \times \T^n$.  Then if $v_{(k)} \to v$ and $T_{(k)} \to T$ uniformly as $k \to \infty$, then we have weak convergence
\ali{
 \pr_t T_{(k)} + \pr_j( v_{(k)}^j T_{(k)} ) \rightharpoonup \pr_t T + \pr_j( v^j T ) \label{weakConvT}
}
in the sense of distributions.
\end{lem}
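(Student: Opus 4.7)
The plan is to unfold the weak convergence statement \eqref{weakConvT} directly against a test function $\phi \in C_c^\infty(I \times \T^n)$ using integration by parts. Since $T_{(k)}$ and $v_{(k)}^j T_{(k)}$ are continuous (in particular, they are locally integrable), their distributional derivatives are computed via
\[
\left\langle \pr_t T_{(k)} + \pr_j(v_{(k)}^j T_{(k)}),\ \phi \right\rangle = -\int_{I \times \T^n} T_{(k)}\, \pr_t \phi \, dt\, dx - \int_{I \times \T^n} v_{(k)}^j T_{(k)}\, \pr_j \phi \, dt\, dx,
\]
and the analogous identity holds for the limiting distribution $\pr_t T + \pr_j(v^j T)$. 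The task therefore reduces to passing the limit $k \to \infty$ inside each of these two integrals.

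The first integral is immediate: uniform convergence $T_{(k)} \to T$ on the compact support of $\phi$ allows one to interchange limit and integration. For the second integral, the key observation is that uniform convergence of two bounded sequences implies uniform convergence of their product. Explicitly, on $\tx{supp}(\phi)$ one writes
\[
v_{(k)}^j T_{(k)} - v^j T = (v_{(k)}^j - v^j) T_{(k)} + v^j (T_{(k)} - T),
\]
and since $T_{(k)}$ is uniformly bounded on $\tx{supp}(\phi)$ (being uniformly convergent to the continuous function $T$) and $v$ is bounded on $\tx{supp}(\phi)$, the right-hand side tends to zero uniformly. Hence $\int v_{(k)}^j T_{(k)}\, \pr_j \phi \to \int v^j T\, \pr_j \phi$, which combined with the first convergence yields \eqref{weakConvT}.

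I do not anticipate any real obstacle here: the lemma is an elementary continuity statement for the bilinear map $(v, T) \mapsto \pr_t T + \pr_j(v^j T)$ from $C^0 \times C^0$ into $\mathcal{D}'$. The only mild care needed is to avoid assuming more regularity than is given — one must not try to move derivatives onto $v_{(k)}$ or $T_{(k)}$, only onto the smooth test function $\phi$. With that caveat, the proof amounts to a two-line application of dominated (or simply uniform) convergence on the compact set $\tx{supp}(\phi)$.
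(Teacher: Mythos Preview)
Your proof is correct and is exactly the standard argument one would expect. The paper itself does not supply a proof of this lemma; it states the result as a basic fact and moves on, so your two-line argument via integration by parts against a test function and uniform convergence of the product $v_{(k)}^j T_{(k)} \to v^j T$ is precisely what is needed to fill the gap.
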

Our strategy for proving regularity for higher order material derivatives such as $\pr_t \nab p + \pr_j(v^j \nab p) = \fr{D^2 v}{\pr t}$ will be to first apply Lemma (\ref{lem:weakConverge}) with $v_{(k)} = P_{\lk} v$ and, say, $T_{(k)} = \nab p_{(k)} \to \nab p$, and then to upgrade the weak convergence in (\ref{weakConvT}) to convergence in H\"{o}lder spaces.  

We start with some preliminary estimates for the pressure increments.
\begin{prop} \label{prop:pressIncSummary2matdv} Let $\de p_{(k)}$ and $p_{(k)} = \sum_{I = k_0(\T^n)}^k \de p_{(I)}$ be defined as in Section (\ref{sec:timeRegPress}).  If $1/3 < \a < 1$ and $0 \leq r \leq 2$, the following bounds hold
\ali{
\co{ \nab^D  \fr{D_{\lk}^r}{\pr t^r} \de p_{(k)} } &\leq C_D (1 + | k - k_0(\T^n)|) 2^{(D + r(1-\a) - 2 \a) k} \ctdcxa{v}^{2+r} \label{ineq:2matdvdepk} \\
\co{ \nab^D  \fr{D_{\lk}^r}{\pr t^r} \nab^2 p_{(k)} } &\leq C_D (1 + | k - k_0(\T^n)|) 2^{(D + 2 + r(1-\a) - 2 \a) k} \ctdcxa{v}^{2+r} \label{ineq:2matdvnab2pk} \\
\co{ \nab^{D+1}  \fr{D_{\lk}^r}{\pr t^r} \nab p_{(k)} } &\leq C_D (1 + | k - k_0(\T^n)|) 2^{(D + 2 + r(1-\a) - 2 \a) k} \ctdcxa{v}^{2+r} \label{ineq:2matdvnabpk}
}
\end{prop}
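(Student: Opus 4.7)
The plan is to induct on $r$; the cases $r = 0, 1$ are already in hand from Propositions \ref{prop:firstLPpieceBds}, \ref{prop:pressIncBds1} and \ref{cor:morePressIncEstimates} (with the observation that the bound \eqref{ineq:2matdvnabpk} for $r \leq 1$ is essentially \eqref{eq:goodMatDvPressGrad} together with the bound on $\nab^2 p_{(k)}$ summed as in \eqref{eq:sumByParts}).  The main content is the case $r = 2$.  The strategy is to combine the Leibniz rule
\[
\fr{D_{\lk}^2}{\pr t^2} T_k[f] = T_k\!\left[\fr{D_{\lk}^2}{\pr t^2} f\right] + 2\left[\fr{D_{\lk}}{\pr t}, T_k\right]\!\left[\fr{D_{\lk}}{\pr t} f\right] + \left[\fr{D_{\lk}}{\pr t},\right]^2 T_k [f]
\]
with the explicit trichotomy $\de p_{(k)} = \de p_{(k),LL} + \de p_{(k), HL} + \de p_{(k), HH}$ from \eqref{eq:dePLLHLHH}-\eqref{eq:dePHHshort}, which represents each summand in the form $T_k[f_{(k)}]$ with $T_k$ a convolution operator and $f_{(k)}$ a quadratic nonlinearity in Littlewood-Paley projections of $v$.

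For the convolution operators $T_k \in \{ \De^{-1} P_{k+1}, ~\De^{-1} \pr_j P_{[k-3, k]}, ~\De^{-1} \pr_j \pr_l P_{\leq k} \}$, Lemma \ref{lem:weNeedTheseCommutators2} gives clean commutator bounds of the form $\|\nab^D [\Dlkdt,]^r T_k \| \leq C_D (1+|k-\kotn|) 2^{(D + r(1-\a))k} \cdot 2^{-\sigma_T k} \ctdcxa{v}^r$ where $\si_T \in \{2, 1, 0\}$ is the order of smoothing, with the logarithmic factor only for the High-High operator.  For the nonlinearities $f_{(k)}$, Proposition \ref{prop:canTake2matDvsNow} (which crucially requires $\a > 1/3$) combined with the Leibniz rule yields the analogous estimate $\co{\nab^D (\Dlkdt)^s f_{(k)}} \lesssim 2^{(D + s(1-\a) - 2 \a)k} 2^{\si_T k} \ctdcxa{v}^{2 + s}$ for $0 \leq s \leq 2$.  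Plugging these into the Leibniz expansion, the exponents add correctly in each of the three terms, producing the bound \eqref{ineq:2matdvdepk} with the same $(1 + |k - \kotn|)$ factor as in $r = 0, 1$.

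The bound \eqref{ineq:2matdvnab2pk} for $\nab^2 p_{(k)}$ follows by telescoping $\fr{D_{\lk}^2}{\pr t^2} \nab^2 p_{(k)} = \sum_{I = \kotn}^k \de_{(I)}[\fr{D_{\leq I}^2}{\pr t^2} \nab^2 p_{(I)}]$ analogously to \eqref{eq:incOfDdtNabpk}, inserting the bound for the increment together with the necessary commutators between $\Dlkdt$ and $P_I v \cdot \nab$, and summing (the sum converges because the ``bad'' exponent is $D + 2 + 2(1-\a) - 2\a = D + 2 - 4\a + 2$, which is dominated by the top scale up to the logarithmic factor, since $\a < 1$).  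The bound \eqref{ineq:2matdvnabpk} is proved by the same telescoping together with the gradient increment formula \eqref{eq:incOfDdtNabpk}; the one minor point is that when $D = 0$ one gains through the extra cancellation that $\nab p_{(k)}$ is one derivative smoother than $\nab^2 p_{(k)}$ but the analogous summation by parts as in \eqref{eq:sumByParts} requires no restriction on $\a$ because the relevant exponent $2 + r(1-\a) - 2\a$ is positive for the range under consideration.

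The main obstacle is the High-High term $\de p_{(k), HH}$.  The operator $\De^{-1} \pr_j \pr_l P_{\leq k}$ is non-local and carries a logarithmic loss in its operator norm, and the nonlinearity $P_{\leq k}[(v - \plkv)^{\otimes 2}]$ involves arbitrarily high frequencies whose material-derivative estimates degrade with increasing frequency.  Both difficulties are already overcome in the proof of Proposition \ref{prop:firstMatdvRlk}: the bandlimited representation \eqref{eq:formWeUseHighHigh} allows the second material derivative to be absorbed onto individual dyadic pieces with the cost $2^{(1-\a) I}$ per derivative, and the resulting series in $I \geq k$ is geometric with ratio $2^{-(3\a - 1)}$, converging precisely because $\a > 1/3$.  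The commutator contributions $[\Dlkdt,]^r (\De^{-1} \pr_j \pr_l P_{\leq k})$ are controlled by Lemma \ref{lem:weNeedTheseCommutators2} (whose $r = 2,3$ cases were proven via the careful integration-by-parts expansion of Proposition \ref{prop:commutingWithOperators}), and these do not accumulate additional logarithmic losses past the single factor already present in the operator norm.
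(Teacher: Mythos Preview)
Your approach is essentially the paper's: Leibniz--commutator expansion, Lemma~\ref{lem:weNeedTheseCommutators2} for the operator pieces, Proposition~\ref{prop:canTake2matDvsNow} for the quadratic nonlinearities. Two differences are worth noting.

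First, you reverse the order. The paper treats \eqref{ineq:2matdvnab2pk} first and directly, because $\nab^2 p_{(k)} = \De^{-1}\nab^2 P_{\leq k}[\pr_l P_{\leq k} v^j \pr_j P_{\leq k} v^l]$ is a single operator applied to a single low-frequency quadratic, so the three-term Leibniz expansion is immediate. You instead prove \eqref{ineq:2matdvdepk} first via the trichotomy and then recover \eqref{ineq:2matdvnab2pk} and \eqref{ineq:2matdvnabpk} by telescoping in $I$. Both routes are valid; the paper's is shorter and avoids the summation step. For \eqref{ineq:2matdvnabpk} the paper simply commutes the outer $\nab$ through $\Dlkdt^r$ to reduce to \eqref{ineq:2matdvnab2pk} plus lower-order commutators, rather than telescoping again.

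Second, your final paragraph misidentifies the difficulty in the High-High term. By \eqref{eq:dePHHshort} the nonlinearity is $f_{(k),HH}^{jl} = P_{k+1}v^j P_{k+1}v^l + P_{[k-3,k]}v^j P_{k+1}v^l + P_{k+1}v^j P_{[k-3,k]}v^l$; every factor lives at frequency $\approx 2^k$. There are no arbitrarily high frequencies here, and the bandlimited machinery of \eqref{eq:formWeUseHighHigh} with its geometric sum over $I \geq k$ is unnecessary --- Proposition~\ref{prop:canTake2matDvsNow} applies directly to each factor. You have confused $\de p_{(k),HH}$ with the High-High part of $P_k p$ in Section~\ref{sec:matDvLPp}, which \emph{does} receive contributions from all $I \geq k$. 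Avoiding that high-frequency cascade was precisely the purpose of introducing the pressure increments $\de p_{(k)}$; the only genuine complication in $\de p_{(k),HH}$ is the nonlocal operator $\De^{-1}\pr_j\pr_l P_{\leq k}$ with its logarithmic loss, and that is handled by Lemma~\ref{lem:weNeedTheseCommutators2} alone. Your argument still goes through, but only because the term you flag as hard is in fact easier than you claim.
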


\begin{proof}
The bounds (\ref{ineq:2matdvdepk}) and (\ref{ineq:2matdvnab2pk}) stated in Proposition (\ref{prop:pressIncSummary2matdv}) for $r = 0, 1$ were already established in Section (\ref{sec:timeRegPress}) without any assumptions on $\a$.  The estimate (\ref{ineq:2matdvnabpk}) is equivalent to (\ref{ineq:2matdvnab2pk}) when $r = 0$ and follows from (\ref{ineq:2matdvnab2pk}) and (\ref{ineq:2matdvnabpk}) by induction from the cases $r = 0, 1$ after commuting the spatial and material derivatives.  It therefore suffices to prove (\ref{ineq:2matdvdepk}) and (\ref{ineq:2matdvnab2pk}) for $r = 2$.

We start with (\ref{ineq:2matdvnab2pk}) since this quantity involves the least number of terms and suffices to illustrate all the main ideas.  We begin by writing
\ali{
\nab^2 p_{(k)} &= \De^{-1} \nab^2 P_{\leq k}[ \pr_l P_{\lk} v^j \pr_j P_{\lk} v^l ] \\
\fr{D_{\lk}}{\pr t} \nab^2 p_{(k)} &= \left[\Dlkdt, \De^{-1} \nab^2 P_{\leq k} \right] [ \pr_l P_{\lk} v^j \pr_j P_{\lk} v^l ] \label{eq:commutermDlkprtnb2pk}\\
&+ \De^{-1} \nab^2 P_{\leq k}\left[ \Dlkdt[\pr_l P_{\lk} v^j] \pr_j P_{\lk} v^l + \pr_l P_{\lk} v^j \Dlkdt[\pr_j P_{\lk} v^l] \right] 
}
One should regard the differentiation above as an application of the ``product rule'' for three terms, where the commutator term (\ref{eq:commutermDlkprtnb2pk}) is what arises when $\Dlkdt$ ``hits'' the operator $\De^{-1} \nab^2 P_{\leq k}$.  Taking a second material derivative gives a representation
\ali{
\fr{D_{\lk}^2}{\pr t^2} \nab^2 p_{(k)} &= \left[\Dlkdt, \left[\Dlkdt, \De^{-1} \nab^2 P_{\leq k} \right]\right] \left( \pr_l P_{\lk} v^j \pr_j P_{\lk} v^l \right) \\
&+ 2 \left[\Dlkdt, \De^{-1} \nab^2 P_{\leq k} \right]\left( \Dlkdt[\pr_l P_{\lk} v^j \pr_j P_{\lk} v^l]\right) \\
&+  \De^{-1} \nab^2 P_{\leq k} \left(\fr{D_{\lk}^2}{\pr t^2}[\pr_l P_{\lk} v^j \pr_j P_{\lk} v^l]\right) \label{eq:termWithMostMatdv2}
}
Since we have assumed $\a > 1/3$, Proposition (\ref{prop:canTake2matDvsNow}) now guarantees that the terms
\ali{
\left\co{  \fr{D_{\lk}^r}{\pr t^r}[\pr_l P_{\lk} v^j \pr_j P_{\lk} v^l ] \right} &\leq C_D 2^{(r(1-\a) + 2(1-\a))k} \ctdcxa{v}^{2+r}
}
obey the expected bounds for $r = 0, 1, 2$.  The bound (\ref{ineq:2matdvnab2pk}) follows from the Lemma (\ref{lem:weNeedTheseCommutators2}), which guarantees estimates for the smoothing operators
\ali{
\sup_t \left\| \nab^D \left[\Dlkdt,\right]^r \De^{-1} \nab^2 P_{\leq k} \right\| &\leq C_{D, \T^n} (1 + |k - k_0(\T^n)|) 2^{(D + r(1-\a))k} \ctdcxa{v}^r
}
for $r = 0, 1, 2$.  Here we have used the notation $[X,]^rT$ from Section (\ref{sec:commuteWithConv}) to denote operator obtained by commuting $T$ with $X$ repeatedly $r$ times.

The proof of estimate (\ref{ineq:2matdvdepk}) is essentially the same, drawing from the decomposition
\ali{
\de p_{(k)} &= \de p_{(k), LL} + \de p_{(k), HL} + \de p_{(k), HH} 
}  
from (\ref{de:dePLLshort})-(\ref{eq:dePHHshort}), but with two new features.  First, there are high frequency terms similar to $\fr{D_{\lk}^r}{\pr t^r}P_{k + 1} v$, which are bounded using Proposition (\ref{prop:canTake2matDvsNow}).  There are also operators which project to high frequencies of the type
\[ \nab^D \left[\Dlkdt,\right]^r (\De^{-1} \nab^A P_{\approx k}) \] 
with $r = 0, 1, 2$ and $A = 0, 1$, which are bounded by
\[ \sup_t \left\| \nab^D \left[\Dlkdt,\right]^r (\De^{-1} \nab^A P_{\approx k}) \right\| \leq C_D 2^{(D + r(1-\a) + A)k} \ctdcxa{v}^r \] 
according to Lemma (\ref{lem:weNeedTheseCommutators2}).
\end{proof}

We now define the frequency increments for the material derivative of the pressure gradient.
\ali{
\de_{(k)} \fr{D_{\lk}}{\pr t} \nab p_{(k)} &= \fr{D_{\leq k+1}}{\pr t} \nab p_{(k+1)} - \fr{D_{\leq k}}{\pr t} \nab p_{(k)} \\
&= P_{k+1} v \cdot \nab \nab p_{(k+1)} + \fr{D_{\leq k}}{\pr t} \nab \de p_{(k)}  \label{eq:freqIncMatdvNabp}
}
According to Lemma (\ref{lem:weakConverge}), we have
\ali{
\pr_t \nab p + \pr_j( v^j \nab p) &= \sum_{k = \kotn}^\infty \de_{(k)} \fr{D_{\lk}}{\pr t} \nab p_{(k)}
}
as distributions whenever $\a > 1/2$.  Our aim is to prove that the summation converges in the appropriate H\"{o}lder norms when $\a > 2/3$.  This convergence will follow from the following estimates, which are an immediate consequence of Proposition (\ref{prop:pressIncSummary2matdv}) and the formula (\ref{eq:freqIncMatdvNabp}).  The important point to observe is that the low frequency parts of $p_{(k)}$ always appear with at least two derivatives so that the bounds of Proposition (\ref{prop:pressIncSummary2matdv}) apply.
\begin{cor}[Bounds for frequency increments of $\nab p$ and $\fr{D}{\pr t} \nab p$]\label{cor:freqIncNabpMatNabp}  If $1/3 < \a < 1$, then
\ali{
\co{ \nab^D \fr{D_{\lk}}{\pr t} \nab \de p_{(k)}} &\leq C_D (1 + |k - \kotn| )2^{(D + (1 - \a) + (1 - 2 \a )) k} \ctdcxa{v}^3 \\
\co{ \nab^D \de_{(k)} \fr{D_{\lk}}{\pr t} \nab p_{(k)} } &\leq C_D (1 + |k - \kotn| )2^{(D + (1 - \a) + (1 - 2 \a )) k} \ctdcxa{v}^3 \\ 
\co{ \nab^D \fr{D_{\lk}}{\pr t} \de_{(k)} \fr{D_{\lk}}{\pr t} \nab p_{(k)} } &\leq C_D (1 + |k - \kotn| )2^{(D + 2(1 - \a) + (1 - 2 \a) ) k} \ctdcxa{v}^4
}
\end{cor}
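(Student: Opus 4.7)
The plan is to deduce all three inequalities from Proposition~\ref{prop:pressIncSummary2matdv} combined with the defining identity \eqref{eq:freqIncMatdvNabp} and a few routine commutations of spatial and advective derivatives. The key structural observation is that although $p_{(k)}$ itself carries no useful $C^0$ bound, every occurrence of the low-frequency pressure in the three target quantities appears with at least two spatial derivatives; in particular $\nab \de p_{(k)}$ and $\nab^2 p_{(k)}$ fall directly under the scope of Proposition~\ref{prop:pressIncSummary2matdv}.

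For the first bound, I would interchange the spatial gradient and the advective derivative:
\[ \fr{D_{\lk}}{\pr t} \nab \de p_{(k)} = \nab \left[\fr{D_{\lk}}{\pr t} \de p_{(k)}\right] - \nab P_{\lk} v^i \, \pr_i \de p_{(k)}. \]
The first piece is controlled by Proposition~\ref{prop:pressIncSummary2matdv} at $r=1$ with $D$ increased by one, yielding exactly the factor $(1+|k-\kotn|)2^{(D+2-3\a)k}\ctdcxa{v}^3$. After distributing $\nab^D$ by Leibniz, the second piece is bounded using the standard estimate $\co{\nab^{1+A} P_{\lk} v} \leq C_A 2^{(1+A-\a)k}\ctdcxa{v}$ from Proposition~\ref{prop:firstLPpieceBds} together with $\co{\nab^{1+B}\de p_{(k)}} \leq C_B(1+|k-\kotn|)2^{(1+B-2\a)k}\ctdcxa{v}^2$ from Proposition~\ref{prop:pressIncBds1}, and the products over $A+B=D$ combine to the same scaling.

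For the second bound, I would substitute the identity \eqref{eq:freqIncMatdvNabp}. The piece $\fr{D_{\lk}}{\pr t}\nab \de p_{(k)}$ has just been handled. For the remaining piece $P_{k+1} v^j \pr_j \nab p_{(k+1)}$, Leibniz-distributing $\nab^D$ and applying $\co{\nab^A P_{k+1} v}\leq C_A 2^{(A-\a)k}\ctdcxa{v}$ together with the $r=0$ case of Proposition~\ref{prop:pressIncSummary2matdv} applied to $\co{\nab^{2+B}p_{(k+1)}} \leq C_B(1+|k-\kotn|)2^{(B+2-2\a)k}\ctdcxa{v}^2$ produces the desired scaling, using $-\a+2(1-\a) = 2-3\a = (1-\a)+(1-2\a)$.

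For the third bound, I would apply $\fr{D_{\lk}}{\pr t}$ once more to the decomposition used in the second bound. The term $\fr{D_{\lk}^2}{\pr t^2}\nab \de p_{(k)}$ is reduced by commuting $\nab$ past the two advective derivatives: Proposition~\ref{prop:pressIncSummary2matdv} at $r=2$ controls the main piece $\nab \fr{D_{\lk}^2}{\pr t^2}\de p_{(k)}$, while the commutator remainders involve factors of $\nab P_{\lk} v$ and $\fr{D_{\lk}}{\pr t}\nab P_{\lk} v$ and are handled by Proposition~\ref{prop:pressIncSummary2matdv} at $r=0,1$ combined with Proposition~\ref{prop:canTake2matDvsNow}. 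For the term $\fr{D_{\lk}}{\pr t}[P_{k+1} v^j \pr_j \nab p_{(k+1)}]$, the product rule produces a factor $\fr{D_{\lk}}{\pr t}P_{k+1} v$ bounded by Proposition~\ref{prop:canTake2matDvsNow} and a factor $\fr{D_{\lk}}{\pr t}\nab^2 p_{(k+1)}$ bounded by Proposition~\ref{prop:pressIncSummary2matdv} at $r=1$; the discrepancy between $\fr{D_{\lk}}{\pr t}$ and $\fr{D_{\leq k+1}}{\pr t}$ in these estimates costs only a harmless $P_{k+1} v\cdot \nab$ term. The only bookkeeping subtlety is tracking the logarithmic prefactor $(1+|k-\kotn|)$ through these products, which contributes at most a single such factor to each bound since Proposition~\ref{prop:pressIncSummary2matdv} carries the same logarithmic loss regardless of how many material derivatives are taken.
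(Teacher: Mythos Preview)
Your proof is correct and follows exactly the approach the paper intends: the paper states only that the corollary is ``an immediate consequence of Proposition~\ref{prop:pressIncSummary2matdv} and the formula~\eqref{eq:freqIncMatdvNabp}'' together with the observation that every appearance of the low-frequency pressure $p_{(k)}$ carries at least two spatial derivatives, and your argument is precisely the detailed execution of this sketch, including the commutation of $\nab$ with $\fr{D_{\lk}}{\pr t}$ and the handling of the $\fr{D_{\lk}}{\pr t}$ versus $\fr{D_{\leq k+1}}{\pr t}$ discrepancy via a $P_{k+1} v \cdot \nab$ correction.
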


We can now prove Theorem (\ref{thm:theMatRegWeProveNow}).
\begin{proof}[Proof of Theorem (\ref{thm:theMatRegWeProveNow})]
From Corollary (\ref{cor:freqIncNabpMatNabp}) we can interpolate with the estimate 
\ali{
 \pr_t \nab \de p_{(k)} &= (\pr_t + P_{\leq k} v \cdot \nab) \nab \de p_{(k)} - P_{\leq k} v \cdot \nab \nab \de p_{(k)} \\
\Rightarrow \co{\pr_t \nab \de p_{(k)} } &\leq C  (1 + |k - \kotn| )2^{2(1-\a) k} \| v \|_{C_tC_x^\a} \ctdcxa{v}^2 \label{ineq:prtnabdepk}
} 
to obtain
\ali{
\| \nab \de p_{(k)} \|_{C_{t,x}^\b} &\leq C (1 + |k - \kotn| ) 2^{\b k} ( 1 + \| v \|_{C_t C_x^\a})^\b \cdot 2^{(1 - 2 \a) k} \ctdcxa{v}^2
}
which implies that $\nab p \in C_{t,x}^\b$ for $\b < 2 \a - 1$ whenever $1/2 < \a < 1$.

Similarly, interpolating the bounds in Corollary~\ref{cor:freqIncNabpMatNabp} for $\a > 2/3$ also yields
\ali{
\| \de_{(k)} \fr{D_{\lk}}{\pr t} \nab p_{(k)} \|_{C_{t,x}^\b} &\leq C (1 + |k - \kotn| ) 2^{\b k} (1 + \| v \|_{C_tC_x^\a})^\b 2^{(2 - 3 \a) k} \ctdcxa{v}^3
}
from which it follows that $\pr_t \nab p + \pr_j ( v^j \nab p) \in C_{t,x}^\b$ for all $\b < 3\a - 2$ provided $\a > 2/3$.  

To establish H\"{o}lder regularity for $\pr_t p$, we can use the formula
\ali{
\pr_t^2 \de p_{(k)} &= \pr_t[ (\pr_t + P_{\lk} v \cdot \nab) \de p_{(k)} - P_{\leq k} v \cdot \nab \de p_{(k)} \\
&= (\pr_t + P_{\lk} v \cdot \nab)^2 \de p_{(k)} - P_{\lk} v \cdot \nab (\pr_t + P_{\lk} v \cdot \nab) \de p_{(k)} \\
&- (\pr_t + P_{\lk} v \cdot \nab)[P_{\leq k} v \cdot \nab \de p_{(k)}] + P_{\leq k} v \cdot \nab [P_{\leq k} v \cdot \nab \de p_{(k)}]
}
which implies
\ali{
\co{ \pr_t^2 \de p_{(k)} } &\leq C (1 + |k - \kotn| ) \|v\|_{C_t C_x^\a}^2 2^{(2 - 2 \a) k} \ctdcxa{v}^2
}
Together with the estimate (\ref{ineq:prtnabdepk}), we have
\ali{
 \| \pr_t \de p_{(k)} \|_{C_{t,x}^\b} \leq C (1 + |k - \kotn| ) (1 + \|v\|_{C_t C_x^\a})^\b 2^{\b k} \cdot 2^{(1-2\a)k} \|v\|_{C_t C_x^\a} \ctdcxa{v}^2 
}
and hence $\pr_t p \in C_{t,x}^\b$ for all $\b < 2 \a - 1$ when $\a > 1/2$.

The method above also applies to the frequency increments for the second order material derivative of $\nab p$, which are defined as
\ali{
\de_{(k)} \fr{D_{\leq k}^2}{\pr t^2} \nab p_{(k)} &= \fr{D_{\leq k+1}^2}{\pr t^2} \nab p_{(k+1)}-\fr{D_{\leq k}^2}{\pr t^2} \nab p_{(k)} \\
&= \fr{D_{\leq k+1}^2}{\pr t^2} \nab \de p_{(k)} + \left(\fr{D_{\leq k+1}^2}{\pr t^2} - \fr{D_{\leq k}^2}{\pr t^2}\right) \nab p_{(k)} \\
&= \fr{D_{\leq k+1}^2}{\pr t^2} \nab \de p_{(k)} + \left(\fr{D_{\leq k+1}}{\pr t} - \fr{D_{\leq k}}{\pr t}\right)\fr{D_{\leq k+1}}{\pr t} \nab p_{(k)} \notag \\
&+ \fr{D_{\lk}}{\pr t}(\fr{D_{\leq k+1}}{\pr t} - \fr{D_{\leq k}}{\pr t}) \nab p_{(k)} \label{eq:usedProductRuleink} \\
&= \fr{D_{\leq k+1}^2}{\pr t^2} \nab \de p_{(k)} + P_{k+1} v \cdot \nab \fr{D_{\leq k+1}}{\pr t} \nab p_{(k)}+ \fr{D_{\lk}}{\pr t}[ P_{k+1} v \cdot \nab \nab p_{(k)} ] \label{eq:gooddekDDdtnabpk}
}
From the bounds 
\[ \co{ \de_{(k)} \fr{D_{\leq k}^2}{\pr t^2} \nab p_{(k)}} \leq C (1 + |k - \kotn| ) 2^{( 2(1-\a) + 1-2\a)k} \ctdcxa{v}^2, \] 
we see that $\fr{D^2}{\pr t^2} \nab p \in C^0$ if $3/4 < \a$.  These observations together suffice for the proof of Theorem (\ref{thm:theMatRegWeProveNow}).  The bounds also give some H\"{o}lder regularity in space for $\fr{D^2}{\pr t^2} \nab p$, but proving convergence in $C_{t,x}^\b$ will require higher order estimates.
\end{proof}

Before moving on to establish the general higher order estimates for material derivatives, we examine the regularity that can be established already for the pressure itself.  The regularity for the pressure appears to be slightly more subtle than the regularity for the velocity field and pressure gradient stated in Theorem~\ref{thm:theMatRegWeProveNow}.

\subsection{Regularity in time for the pressure} \label{sec:firstRegTimePress} 

Using the methods in Section (\ref{sec:pressIncPressGrad}) and a few additional bounds, we can also establish the following regularity results for the pressure and its material derivatives.

Here we use the notation
\[ (x)_+ = \begin{cases} x &\tx{if } x \geq 0 \\ 0 &\tx{if } x < 0 \end{cases} \]

\begin{thm}\label{thm:2timeRegp}
If $1/3 < \a < 1$, then the distribution $\fr{D}{\pr t} p = \pr_t p + \pr_j( p v^j)$ belongs to
\ali{
\fr{D}{\pr t} p &\in C_{t,x}^\b 
}
for all $0 \leq \b < (1 - \a) + (1 - 2 \a)_+$.

If $1/2 < \a < 1$, we also have $\fr{D^2}{\pr t^2} p = \pr_t \fr{D}{\pr t}p + \pr_j(v^j  \fr{D}{\pr t}p) \in C^0$.

\end{thm}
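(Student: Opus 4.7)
The plan is to adapt the frequency–increment technique of Section~\ref{sec:pressIncPressGrad} to the pressure itself, treating $\fr{D}{\pr t}p$ as the limit of frequency-localized approximants. Setting $p_{(k)}=\sum_{I=k_0(\T^n)}^{k}\de p_{(I)}$, the uniform convergence $p_{(k)}\to p$ and $P_{\leq k}v\to v$ together with Lemma~\ref{lem:weakConverge} give the distributional convergence
\[
 \fr{D_{\leq k}}{\pr t}p_{(k)} \;=\; \pr_t p_{(k)}+\pr_j(P_{\leq k}v^j\,p_{(k)}) \;\rightharpoonup\; \fr{D}{\pr t}p,
\]
and hence the telescoping representation
\[
 \fr{D}{\pr t}p \;=\; \fr{D_{\leq k_0(\T^n)}}{\pr t}p_{(k_0(\T^n))} + \sum_{k\geq k_0(\T^n)} \de_{(k)}\!\left[\fr{D_{\leq k}}{\pr t}p_{(k)}\right],
\]
with frequency increment
\[
 \de_{(k)}\!\left[\fr{D_{\leq k}}{\pr t}p_{(k)}\right] \;=\; \fr{D_{\leq k+1}}{\pr t}\de p_{(k)} \;+\; P_{k+1}v\cdot\nab p_{(k)}.
\]
To obtain $\fr{D}{\pr t}p\in C_{t,x}^\b$ it then suffices to estimate each increment in $C^0$, spatial gradient, and $\pr_t$, and to sum in the $C^\b_{t,x}$ topology.

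The first summand is controlled by Proposition~\ref{prop:pressIncBds1} and its $\nab$-version. The second summand is handled by combining $\co{P_{k+1}v}\leq C\,2^{-\a k}\|v\|_{C_t\dot C_x^\a}$ with the bound on $\co{\nab p_{(k)}}$ obtained by summing \eqref{bd:nab2PressIncCo}, which is uniformly bounded for $\a>1/2$ but grows like $(1+|k-k_0(\T^n)|)\,2^{(1-2\a)k}$ for $\a\leq 1/2$; this growth is precisely the origin of the $(1-2\a)_+$ contribution in the claimed exponent. For the time derivative I would write $\pr_t=\fr{D_{\leq k+1}}{\pr t}-P_{\leq k+1}v\cdot\nab$: the material-derivative part amounts to taking a \emph{second} advective derivative of the pressure increment, controlled by Proposition~\ref{prop:pressIncSummary2matdv} with $r=2$ together with the commutator bounds of Lemma~\ref{lem:weNeedTheseCommutators2}, while the remainder involves only $\co v$ times the spatial-gradient bound. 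With these three $C^0$-type estimates in hand, I would interpolate as in \eqref{ineq:gotTheHolder} via $\|f\|_{C_{t,x}^\b}\leq C\co f^{1-\b}\co{\nab_{t,x}f}^\b$ and sum the resulting geometric series over $k$; the $(1-\a)$ piece of the exponent reflects the cost of one material derivative on a function of spatial H\"older regularity $\a$, while $(1-2\a)_+$ accounts for the growth of $\co{\nab p_{(k)}}$ below the threshold $\a=1/2$.

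For the second assertion I would iterate the construction. Applying Lemma~\ref{lem:weakConverge} again with $T_{(k)}=\fr{D_{\leq k}}{\pr t}p_{(k)}$ and $v_{(k)}=P_{\leq k}v$, the divergence-free property of $P_{\leq k}v$ gives the identity
\[
 \fr{D_{\leq k}^2}{\pr t^2}p_{(k)} \;=\; \pr_t T_{(k)} + \pr_j(P_{\leq k}v^j\,T_{(k)}) \;\rightharpoonup\; \fr{D^2}{\pr t^2}p,
\]
so continuity of $\fr{D^2}{\pr t^2}p$ reduces to uniform convergence of $\fr{D_{\leq k}^2}{\pr t^2}p_{(k)}$. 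Expanding the frequency increment in the style of \eqref{eq:gooddekDDdtnabpk},
\[
 \de_{(k)}\!\left[\fr{D_{\leq k}^2}{\pr t^2}p_{(k)}\right] \;=\; \fr{D_{\leq k+1}^2}{\pr t^2}\de p_{(k)} \;+\; P_{k+1}v\cdot\nab\!\left(\fr{D_{\leq k+1}}{\pr t}p_{(k)}\right) \;+\; \fr{D_{\leq k}}{\pr t}\!\bigl[P_{k+1}v\cdot\nab p_{(k)}\bigr],
\]
each of the three terms can be estimated via Proposition~\ref{prop:pressIncSummary2matdv} with $r=2$ and Lemma~\ref{lem:weNeedTheseCommutators2}; under $\a>1/2$ the resulting exponents are all strictly negative, yielding a geometrically convergent series in $C^0_{t,x}$.

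The main difficulty is the regime $\a\leq 1/2$ of the first assertion: there $\nab p_{(k)}$ genuinely grows with $k$, so one must track the leading power of $2^k$, the logarithmic factors $(1+|k-k_0(\T^n)|)$, and every commutator expansion generated by the product rule and by Lemma~\ref{lem:weNeedTheseCommutators2} with enough precision to see that the $\pr_t$ estimate on the increment does not decay worse than the spatial gradient estimate, so that interpolation lands the threshold at $(1-\a)+(1-2\a)_+$. The analysis essentially plays the same role here that the High-High pressure analysis in Section~\ref{sec:matDvLPp} played for $\nab p$, only now the extra $1-2\a$ factor intrudes because the operator $\De^{-1}\pr_j\pr_l P_{\leq k}$ is long-range and produces a logarithmic loss that must be absorbed against the geometric decay of all other factors.
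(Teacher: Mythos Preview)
Your approach is essentially the paper's: decompose $\fr{D}{\pr t}p$ and $\fr{D^2}{\pr t^2}p$ as telescoping sums of the frequency increments $\de_{(k)}\fr{D_{\leq k}^s}{\pr t^s}p_{(k)}$, bound each increment together with one spatial and one temporal derivative, and interpolate. Your identification of the $(1-2\a)_+$ contribution as coming from the growth of $\co{\nab p_{(k)}}$ when $\a\leq 1/2$ is exactly right, and matches Lemma~\ref{lem:justForDdtp}.

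There is one concrete omission. You assert that the second and third terms in your expansion of $\de_{(k)}\fr{D_{\leq k}^2}{\pr t^2}p_{(k)}$ (and, in the first assertion, the material derivative of $P_{k+1}v\cdot\nab p_{(k)}$) are controlled by Proposition~\ref{prop:pressIncSummary2matdv} and Lemma~\ref{lem:weNeedTheseCommutators2}. They are not: Proposition~\ref{prop:pressIncSummary2matdv} only bounds $\fr{D_{\leq k}^r}{\pr t^r}\nab p_{(k)}$ when it carries at least one \emph{extra} spatial derivative (see \eqref{ineq:2matdvnabpk}), and Lemma~\ref{lem:weNeedTheseCommutators2} concerns commutators with convolution operators, not the undifferentiated pressure gradient. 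What you actually need is the bound
\[
\co{\fr{D_{\leq k}}{\pr t}\nab p_{(k)}}\leq C(1+|k-\kotn|)\,2^{(2-3\a)_+ k}\ctdcxa{v}^3,
\]
which is the second estimate in the paper's Lemma~\ref{lem:justForDdtp}. This is obtained by the same telescoping-and-summing trick you already use for $\co{\nab p_{(k)}}$, applied now to $\de_{(I)}\fr{D_{\leq I}}{\pr t}\nab p_{(I)}$ (cf.\ Proposition~\ref{cor:morePressIncEstimates}); the exponent $(2-3\a)_+$ encodes whether the dominant contribution comes from high or low frequencies, with a threshold at $\a=2/3$. Once you include this lemma the proof goes through exactly as you outline, and the dominant terms in Lemma~\ref{lem:DdtpandDDdtp} are indeed those where an undifferentiated $\nab p_{(k)}$ appears.
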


Observe that here we are unable to show that $\nab \fr{D}{\pr t} p \in C^0$ even when $v \in C_tC_x^1$, whereas Theorem (\ref{thm:theMatRegWeProveNow}) guarantees that $\fr{D}{\pr t} \nab p \in C^0$ whenever $\a > 2/3$.

We begin the proof of Theorem (\ref{thm:2timeRegp}) by stating a few extra preliminary estimates.
\begin{lem}\label{lem:justForDdtp} If $\a \neq 2/3$, then
\ali{
\co{\nab p_{(k)}} &\leq C (1 + |k - k_0(\T^n)|) 2^{(1-2 \a)_+ k} \ctdcxa{v}^2 \label{eq:basicPressBoundpk} \\
\co{ (\pr_t + P_{\leq k} v \cdot \nab) \nab p_{(k)} } &\leq C (1 + |k - k_0(\T^n)|) 2^{(2 - 3 \a)_+k} \ctdcxa{v}^3 \label{eq:secondBasicDdtNabpk}
}
\end{lem}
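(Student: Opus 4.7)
The plan is to prove both bounds by telescoping
\[ p_{(k)} = \sum_{I = \kotn}^k \de p_{(I)} \]
and summing the pressure-increment estimates already established in Proposition~\ref{prop:pressIncBds1}, together with the increment decomposition for the material derivative that appeared in the proof of Proposition~\ref{cor:morePressIncEstimates}.  In both cases the argument reduces to a one-parameter case analysis on the sign of the exponent of $2^I$ inside the sum: either the terms grow geometrically and the bound is controlled by the last term, or they decay geometrically and the sum contributes only a $k$-independent constant.

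For \eqref{eq:basicPressBoundpk}, I will apply \eqref{bd:nab2PressIncCo} with $D = 1$ termwise to obtain
\[ \co{\nab p_{(k)}} \leq \sum_{I=\kotn}^k \co{\nab \de p_{(I)}} \leq C \sum_{I=\kotn}^k (1 + |I-\kotn|)\, 2^{(1-2\a)I}\ctdcxa{v}^2. \]
For $\a < 1/2$ this is dominated by the term at $I = k$, giving the claimed $C(1+|k-\kotn|)2^{(1-2\a)k}\ctdcxa{v}^2$; for $\a > 1/2$ the series converges geometrically to a constant in $k$, which is trivially absorbed into the right-hand side since $(1-2\a)_+ = 0$ and the $(1+|k-\kotn|)$ factor only weakens the upper bound.

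For \eqref{eq:secondBasicDdtNabpk}, I will reuse the increment structure from the proof of Proposition~\ref{cor:morePressIncEstimates}, writing
\[ \fr{D_{\leq k}}{\pr t}\nab p_{(k)} = \sum_{I=\kotn}^k \de_{(I)}\!\left[\fr{D_{\leq I}}{\pr t}\nab p_{(I)}\right], \quad \de_{(I)}\!\left[\fr{D_{\leq I}}{\pr t}\nab p_{(I)}\right] = \fr{D_{\leq I}}{\pr t}\nab \de p_{(I-1)} + P_I v \cdot \nab^2 p_{(I-1)}. \]
Each increment is bounded by $C(1+|I-\kotn|)\, 2^{(2-3\a)I}\ctdcxa{v}^3$: for the first summand, \eqref{bd:nabMatDvPress} handles the contribution with matching material derivative $\fr{D_{\leq I-1}}{\pr t}$, and the discrepancy $P_I v\cdot \nab^2 \de p_{(I-1)}$ obeys the same bound via $\co{P_I v}\lesssim 2^{-\a I}\ctdcxa{v}$ paired with \eqref{bd:nab2PressIncCo} at $D = 2$; the second summand is controlled identically by combining the same estimate for $\co{P_I v}$ with \eqref{ineq:nab2Dpk} at $D = 0$.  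Summing gives
\[ \co{\fr{D_{\leq k}}{\pr t}\nab p_{(k)}} \leq C\sum_{I=\kotn}^k (1+|I-\kotn|)\, 2^{(2-3\a)I}\ctdcxa{v}^3, \]
which is dominated by its last term for $\a < 2/3$ and converges to a uniform constant for $\a > 2/3$, yielding the claim in both regimes.

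The only real obstacle is the endpoint $\a = 2/3$, which is excluded by hypothesis: at this value the sum becomes $\sum_I (1+|I-\kotn|) \sim (1+|k-\kotn|)^2$, producing an extra logarithmic factor that would exceed the single $(1+|k-\kotn|)$ allowed by the claim.  The exclusion $\a \neq 2/3$ is used nowhere else in the argument, and indeed a parallel exclusion $\a \neq 1/2$ is \emph{not} required for \eqref{eq:basicPressBoundpk} because the logarithmic loss in \eqref{bd:nab2PressIncCo} for $D \geq 1$ can be dispensed with, the relevant operator $\nab\De^{-1}\pr_j\pr_l P_{\leq k}$ being bounded on $C^0$ with norm $\lesssim 2^k$ without any sum over Littlewood--Paley shells.
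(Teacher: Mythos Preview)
Your proposal is correct and follows essentially the same approach as the paper: telescope $p_{(k)}$ and $\fr{D_{\leq k}}{\pr t}\nab p_{(k)}$ into frequency increments, bound each increment using Propositions~\ref{prop:pressIncBds1} and~\ref{cor:morePressIncEstimates}, and perform the case analysis on the sign of the exponent of $2^I$. Your closing remark that the logarithmic loss in \eqref{bd:nab2PressIncCo} can be dropped for $D \geq 1$ (since $\|\nab^3 \De^{-1} P_{\leq k}\| \leq C \sum_{I=\kotn}^k 2^I \leq C\,2^k$), and that this is precisely why no exclusion $\a \neq 1/2$ is needed for \eqref{eq:basicPressBoundpk}, is a correct and worthwhile clarification that the paper leaves implicit.
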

The estimates follow from the arguments of Proposition \ref{cor:morePressIncEstimates}.  The bound (\ref{eq:basicPressBoundpk}) is obtained by summing by parts the bounds for the pressure increments
\[ \co{\nab^D \de p_{(I)}} \leq C (1 + |I - k_0(\T^n)|) 2^{(D - 2 \a)I}\ctdcxa{v}^2 \]
from $I = \kotn$ to $k$.  The new point here is that when $\a > 1/2$, the most we can say is that $\nab p_{(k)}$ is bounded, rather than decaying at the rate of $2^{(1-2 \a) k} \ctdcxa{v}^2$ that dimensional analysis would suggest.

The same technique above was used to establish (\ref{eq:secondBasicDdtNabpk}) in Proposition~\ref{cor:morePressIncEstimates} by summing the bounds for 
\ALI{
\de_{(I)} \fr{D_{\leq I}}{\pr t} \nab p_{(I)} &= (\pr_t + P_{\leq I+1} v \cdot \nab) \nab p_{(I+1)} - (\pr_t + P_{\leq I} v \cdot \nab) \nab p_{(I)} \\
&= P_{I+1} v \cdot \nab \nab p_{(I+1)} + (\pr_t + P_{\leq I} v \cdot \nab) \nab \de p_{(I)} 
} 
from $I = \kotn$ to $k$.  There we used an extra summation by parts in $I$ when $\a < 2/3$ -- when $\a = 2/3$ the method leads
to an extra factor of $(1 + |k - k_0(\T^n)|)$ in the estimate.  When $\a > 2/3$, we have a decaying geometric series, so the main term is the first term, which is bounded (in particular, the factor $(1 + |k - k_0(\T^n)|)$ does not actually appear in this case).  The bound (\ref{eq:secondBasicDdtNabpk}) has not been used to establish any of the results proven so far, but we will need it for Theorem \ref{thm:2timeRegp}.

With these bounds in hand we can estimate the frequency increments for $\fr{D}{\pr t} p$ and $\fr{D^2}{\pr t^2} p$.
\begin{lem} \label{lem:DdtpandDDdtp}
Define the frequency increments
\ali{
\de_{(k)} \fr{D_{\lk}}{\pr t} p_{(k)} &= \fr{D_{\leq k+1}}{\pr t} p_{(k+1)} - \fr{D_{\leq k}}{\pr t} p_{(k)} \\
&=  \fr{D_{\leq k}}{\pr t} \de p_{(k)} + P_{k+1} v \cdot \nab p_{(k+1)}  \label{form:dekDkpk}
}
and, following (\ref{eq:gooddekDDdtnabpk}),	
\ali{
\de_{(k)} \fr{D_{\lk}^2}{\pr t^2} p_{(k)} &= \fr{D_{\leq k+1}^2}{\pr t^2} p_{(k+1)} - \fr{D_{\leq k}^2}{\pr t^2} p_{(k)} \\
&= \fr{D_{\leq k+1}^2}{\pr t^2} \de p_{(k)} + P_{k+1} v \cdot \nab \fr{D_{\leq k+1}}{\pr t} p_{(k+1)}+ \fr{D_{\lk}}{\pr t}\left[ P_{k+1} v \cdot \nab p_{(k+1)} \right]  \notag \\
&= \fr{D_{\leq k+1}^2}{\pr t^2} \de p_{(k)} + P_{k+1} v^i \pr_i P_{\leq k+1} v^j \pr_j p_{(k+1)} + P_{k+1} v^i \fr{D_{\leq k+1}}{\pr t} \pr_i p_{(k+1)}  \notag \\
&+ (\fr{D_{\lk}}{\pr t}P_{k+1} v) \cdot \nab p_{(k+1)} + P_{k+1} v^i \fr{D_{\lk}}{\pr t}\pr_i p_{(k+1)} \label{form:dekDDktpk}
}
Then we have the estimates
\ali{
\co{ \nab^D \de_{(k)} \fr{D_{\lk}}{\pr t} p_{(k)} } &\leq C (1 + |k - \kotn|) 2^{(D + (1-\a) - 1 + (1-2 \a)_+)k } \ctdcxa{v}^3 \\
\co{ \nab^D \fr{D_{\lk}}{\pr t} \de_{(k)} \fr{D_{\lk}}{\pr t} p_{(k)} } &\leq C (1 + |k - \kotn|) 2^{(D + 2(1-\a) - 1 + (1-2 \a)_+)k } \ctdcxa{v}^4 \\
\co{ \nab^D \de_{(k)} \fr{D_{\lk}^2}{\pr t^2} p_{(k)} } &\leq C (1 + |k - \kotn|) 2^{(D + 2(1-\a) - 1 + (1-2 \a)_+)k } \ctdcxa{v}^4 
}
\end{lem}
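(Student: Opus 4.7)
The plan is to verify the three bounds by substituting the expressions (\ref{form:dekDkpk}) and (\ref{form:dekDDktpk}) for the frequency increments and controlling each resulting piece using the estimates already established in the paper. Every term produced is a product of an advective derivative (of order at most two) of either $\de p_{(k)}$, $P_{k+1}v$, or $p_{(k+1)}$ with a low-order factor such as $\nab p_{(k+1)}$, $\nab P_{\leq k+1}v$, or $P_{k+1} v$; no commutator estimates with convolution operators beyond those of Section~\ref{sec:commuteWithConv} are needed. The only real bookkeeping issue --- and the main obstacle --- is tracking a single factor of $(1-2\a)_+$ that arises because $\nab p_{(k+1)}$ only obeys the weaker bound $\co{\nab p_{(k+1)}} \leq C(1+|k-\kotn|) 2^{(1-2\a)_+ k}\ctdcxa{v}^2$ from Lemma~\ref{lem:justForDdtp}, rather than the scaling $2^{(1-2\a)k}\ctdcxa{v}^2$ that dimensional analysis would suggest.

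For the first bound I would estimate $\nab^D \fr{D_{\leq k}}{\pr t}\de p_{(k)}$ directly by (\ref{bd:nabMatDvPress}), which gives $2^{(D+1-3\a)k}\ctdcxa{v}^3$ with the claimed logarithmic factor; this coincides with the target when $\a\le 1/2$ and is strictly smaller when $\a>1/2$, in which case the target becomes $2^{(D-\a)k}$. For the product $\nab^D[P_{k+1}v\cdot \nab p_{(k+1)}]$, the Leibniz rule distributes derivatives between the two factors: the portion of $\nab^D$ landing on $P_{k+1}v$ contributes $2^{(A-\a)k}\ctdcxa{v}$ by (\ref{ineq:coBdPkv})--(\ref{ineq:nabPkv}), while the remaining derivatives on $p_{(k+1)}$ give either $2^{(1-2\a)_+ k}\ctdcxa{v}^2$ from Lemma~\ref{lem:justForDdtp} (if they all land on a single $\nab p_{(k+1)}$) or $2^{(D-A+1-2\a)k}\ctdcxa{v}^2$ from (\ref{ineq:nab2Dpk}) (once at least two spatial derivatives hit $p_{(k+1)}$). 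In every subterm the contributions assemble to the target exponent $2^{(D-\a + (1-2\a)_+)k}\ctdcxa{v}^3$.

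For the second bound, applying $\fr{D_{\lk}}{\pr t}$ to (\ref{form:dekDkpk}) produces $\fr{D_{\lk}^2}{\pr t^2}\de p_{(k)}$ together with $(\fr{D_{\lk}}{\pr t}P_{k+1}v)\cdot\nab p_{(k+1)}$ and $P_{k+1}v\cdot \fr{D_{\lk}}{\pr t}\nab p_{(k+1)}$, up to a harmless mismatch between $\fr{D_{\leq k}}{\pr t}$ and $\fr{D_{\leq k+1}}{\pr t}$ that contributes only a factor $P_{k+1}v\cdot\nab$ acting on the previous term, whose cost $2^{(1-\a)k}\ctdcxa{v}$ is compatible with the target. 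The first term is Proposition~\ref{prop:pressIncSummary2matdv}. The second is the product of (\ref{ineq:mtDvBound1})--(\ref{ineq:mtDvBound2}) with (\ref{eq:basicPressBoundpk}). For the third, the case $D=0$ is (\ref{ineq:coBdPkv}) times (\ref{eq:secondBasicDdtNabpk}); for $D\ge 1$ one commutes $\nab^D$ past $\fr{D_{\lk}}{\pr t}$ at a cost of $2^{(1-\a)k}\ctdcxa{v}$ per derivative (handled by Lemma~\ref{lem:weNeedTheseCommutators2}) and invokes (\ref{ineq:matDvNab2Dpk}) on $\fr{D_{\lk}}{\pr t}\nab^{D+1}p_{(k+1)}$, which now carries at least two spatial derivatives on $p_{(k+1)}$.

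For the third bound I use the fully expanded form of (\ref{form:dekDDktpk}). The leading term $\fr{D_{\leq k+1}^2}{\pr t^2}\de p_{(k)}$ is again Proposition~\ref{prop:pressIncSummary2matdv}. Each of the remaining four terms is a product whose individual factors are bounded by Proposition~\ref{prop:firstLPpieceBds} (for $P_{k+1}v$ and $\nab P_{\leq k+1}v$), Proposition~\ref{prop:canTake2matDvsNow} (for $\fr{D_{\lk}}{\pr t}P_{k+1}v$), Proposition~\ref{cor:morePressIncEstimates} (for $\fr{D_{\lk}}{\pr t}\nab^A p_{(k+1)}$ with $A\ge 2$), and Lemma~\ref{lem:justForDdtp} (for $\nab p_{(k+1)}$ and $\fr{D_{\lk}}{\pr t}\nab p_{(k+1)}$). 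Each product is verified to produce the target exponent $2^{(D + 2(1-\a) - 1 + (1-2\a)_+)k}\ctdcxa{v}^4$, the decisive observation being that each term contains at most one factor in which $p_{(k+1)}$ carries only a single spatial derivative, so the $(1-2\a)_+$ correction from Lemma~\ref{lem:justForDdtp} appears at most once in the exponent.
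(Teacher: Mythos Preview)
Your proposal is correct and follows essentially the same approach as the paper's own proof, which simply says that the lemma ``follows by applying the bounds in Proposition~\ref{prop:pressIncSummary2matdv} and Lemma~\ref{lem:justForDdtp} to the formulas (\ref{form:dekDkpk}) and (\ref{form:dekDDktpk}),'' noting that ``in every case, the dominant terms are the ones with pressure gradients that are not differentiated.'' Your write-up is a more explicit unpacking of exactly this strategy, with the same key observation that the $(1-2\a)_+$ correction enters precisely through the single undifferentiated factor $\nab p_{(k+1)}$ controlled by Lemma~\ref{lem:justForDdtp}. One minor point: your reference to Lemma~\ref{lem:weNeedTheseCommutators2} for commuting $\nab^D$ past $\fr{D_{\lk}}{\pr t}$ is slightly off---that lemma handles commutators with convolution operators, whereas here you only need the elementary identity $[\nab,\fr{D_{\lk}}{\pr t}]=\nab P_{\leq k}v\cdot\nab$---but this does not affect the argument.
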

Lemma \ref{lem:DdtpandDDdtp} follows by applying the bounds in Proposition \ref{prop:pressIncSummary2matdv} and Lemma \ref{lem:justForDdtp} to the formulas (\ref{form:dekDkpk}) and (\ref{form:dekDDktpk}).  In every case, the dominant terms are the ones with pressure gradients that are not differentiated, where we apply the bound $\co{\nab p_{(k)} } \leq (1+ |k - \kotn|) 2^{(1-2\a)_+ k} \ctdcxa{v}^2$.

Theorem \ref{thm:2timeRegp} now follows from Lemma \ref{lem:DdtpandDDdtp} by interpolation as in the arguments of Section \ref{sec:pressIncPressGrad}.

Having proven Theorems \ref{thm:2timeRegp} and \ref{thm:theMatRegWeProveNow} we now move on to the proof of the general Theorems \ref{thm:genThm} and \ref{thm:matDvPCts}, which require estimates for higher order material derivatives.

\section{Higher order material derivatives} \label{sec:theGeneralCase}

We now begin the proof of Theorems (\ref{thm:genThm}) and (\ref{thm:matDvPCts}), which summarize the H\"{o}lder regularity of all material derivatives $\fr{D^r}{\pr t^r} v$ and $\fr{D^r}{\pr t^r} p$ in space and time given that $v \in L_t^\infty C_x^\a$.  The proof proceeds by generalizing the proof of Theorem (\ref{thm:theMatRegWeProveNow}) to allow for higher order advective derivative estimates within a framework that is well-suited for induction.

%Compared to the proof of Theorem (\ref{thm:theMatRegWeProveNow}), there is essentially only one new idea necessary to establish the estimates for Theorem (\ref{thm:genThm}) which is needed to obtain bounds for higher order material derivatives of $R_{\leq k}^{jl}$, $P_k p$ and $\nab^2 P_{\leq k} p$.  We will therefore concentrate on the idea that goes into estimating the forcing terms.

We start by summarizing the notation we will be using in the rest of the proof (much of which has already been introduced), and by stating some preliminary lemmas.

\subsection{Notation and Preliminaries} \label{sec:notationPrelims}

In this Section we recall some notation that has been introduced during the course of the proof and will be used more heavily in what follows.  We also state some algebraic lemmas and conventions that we will follow in the remainder of the proof.

\subsection{Algebraic Conventions and Commutator Identities}

Let $X, Y$ and $Z$ belong to a noncommutative ring of operators.  For operators $Y_1, Y_2, \ldots, Y_n$, we use the notation
\[ \prod_{i=1}^n Y_i = Y_1 Y_2 \cdots Y_{n-1} Y_n \]
to denote the product of the operators taken from left to right.  An empty product is equal to $1$.

We use the notation
\[ \left[ X, \right] Y = [X,Y] = X Y - YX \]
to denote the commutator of $Y$ with $X$, and we let
\[ \left[ X, \right]^r Y \]
denote the operator obtained by commuting $Y$ with $X$ repeatedly $r$ times.  For example, $[X,]^2Y = [X,[X,Y]] = X(XY - YX) - (XY-YX)X$.

We will often use the following product rule for the commutator
\ali{
[X,](YZ) &= [X,]YZ + Y[X,]Z \label{eq:prodRule}\\
&= (XYZ - YXZ) + (YXZ - YZX)
}
Here and in what follows, we employ the convention that the commutator $[X,]$ precedes the operator multiplication in the order of operations.

We record here the formula
\ali{
\left[X^r,\right]Y &= \sum_{s = 1}^r \binom{r}{s} \left[X,\right]^s Y X^{r - s} \label{eq:commuteWithPowers}
}
Formula (\ref{eq:commuteWithPowers}) can be obtained from (\ref{eq:prodRule}) by induction on $r$ using Pascal's rule and the expression
\ALI{
\left[X^{r+1},\right]Y &= - [Y,](X X^r) = - [Y,X] X^r - X [Y,X^r] \\
&= [X,]Y X^r + [X,][X^r,]Y + [X^r,]Y X
}
or by comparing the coefficients of $t^r$ in the generating function
\[ e^{t X} Y = e^{t [X,]} Y e^{tX}. \]
The identity $e^{tX} Y e^{-tX} = e^{t [X,]} Y$ used above follows from uniqueness of solutions for ODEs.

As a consequence of (\ref{eq:commuteWithPowers}), it is possible to express the power of a sum of noncommutative operators in the form
\ali{
(X + Y)^n &= \sum_{\ell =0}^n \sum_{r_1, \ldots, r_\ell, m} C_{r_1, \ldots, r_\ell} \left( \prod_{i=1}^\ell \left[X,\right]^{r_i} Y \right) X^m \label{eq:sumExpand}
}
for some non-negative integers $C_{r_1, \ldots, r_\ell}$, where the sum runs over non-negative indices satisfying $r_1 + \ldots + r_\ell + \ell + m = n$.

In the applications below, we will always take the operator $X$ in the formulas (\ref{eq:commuteWithPowers}), (\ref{eq:sumExpand}) to be an operator of the form $X = \fr{D_{\leqc k}}{\pr t}$ as defined in (\ref{eq:dlqckDef}) below.

\subsection{Coarse scale material derivatives and notation} 

We denote by $\fr{D_{\leq k}^r}{\pr t^r}$ the $r$-times repeated, coarse scale advective derivative
\ali{
 \fr{D_{\leq k}^r}{\pr t^r} &= (\pr_t + P_{\leq k} v \cdot \nab)^r \label{def:theCoarseScaleMatdv}
}
We denote by $P_{\approx k}$ any operator of the form
\[ P_{\approx k} = P_{[k_1, k_2]} \]
for which the differences $|k_1 - k|$ and $|k_2 - k|$ are bounded.  Thus, operators of the form $P_{\approx k}$ are supported on a frequency shell $C^{-1} 2^k \leq |\xi| \leq C 2^k$, $\xi \in {\hat \R}^n$ with $C$ a constant which will depend only on the number $\a < 1$, which is fixed in the remainder of the proof.

Similarly, we denote by $P_{\leqc k}$ any operator of the form $P_{\leq k + a}$ where $|a| \leq C$ for some constant $C$.  Thus, ``projections'' $P_{\leqc k}$ restrict to frequencies $|\xi| \leq C 2^k$, and the difference between any two such operators has the form
\[ P_{\leqc k} - P_{\leqc k} = P_{\approx k} \]

Generalizing (\ref{def:theCoarseScaleMatdv}) we denote by $\fr{D_{\leqc k}}{\pr t}$ any operator of the form
\ali{
\fr{D_{\leqc k}}{\pr t} = (\pr_t + P_{\leqc k} v \cdot \nab) \label{eq:dlqckDef}
}

\subsection{The Main Lemma}

The Main Lemma used to establish Theorem (\ref{thm:genThm}) is the following

\begin{lem}[Main Lemma I]\label{lem:theMainLem} Suppose that $(v,p)$ are solutions to the incompressible Euler equations and fix $0 < \a < 1$.  Then for all $r(1-\a) - 2 \a < 0$, we have the estimates
\ali{
\co{ \nab^A \Dkdt{r} R_{\lk}} + \co{ \nab^A \Dkdt{r} P_{k} p } &\leq C_A 2^{(A + r (1-\a) - 2 \a )k} \ctdcxa{v}^{2+r} \label{ineq:stressAndLPpressbds} \\
\co{ \nab^A \Dkdt{r} \nab^2 P_{\leq k} p } &\leq C_A 2^{(A + r(1-\a) + (2 - 2 \a) ) k} \ctdcxa{v}^{2+r} \label{ineq:Plkpressbds}
}
and for $0 \leq s \leq r+1$,
\ali{
\co{ \nab^A \Dkdt{s} P_{\approx k} v } &\leq C_A 2^{(A + s(1-\a) - \a )k} \ctdcxa{v}^{1+s} \label{ineq:pApproxKboundsGen} \\
\co{ \nab^A \Dkdt{s} \nab P_{\leqc k} v } &\leq C_A 2^{(A + s(1-\a)+ (1 - \a) )k} \ctdcxa{v}^{1+s} \label{ineq:pleqcKboundsGen}
}
Furthermore, the vector fields $Z_s(t,\cdot) : \T^n \to \R^n$ obtained by commuting
\ali{
 \left[ \Dkdt{}, \right]^s ( P_{\approx k} v \cdot \nab ) = Z_s \cdot \nab \label{eq:commuteTermDefs}
}
have coefficients $Z_s(t,x)$ obeying the bounds
\ali{
\co{  \nab^A \Dkdt{q} Z_s } &\leq C_A 2^{(A + (q + s)(1- \a) - \a)k} \ctdcxa{v}^{q+s+1} \label{eq:boundForCommuteTermsDif}
}
provided $q + s \leq r+1$.

Also, for any operator of the form $Tf(x) = \int f(x+h) K(h) dh$ whose kernel $K(h)$ satisfies
\ali{
\| \nab^A K \|_{L^1_h} + \| |h| \nab^{1+A} K \|_{L^1_h} + \| |h|^2 \nab^{2+A} K \|_{L^1_h} + \ldots + \| |h|^{r+2} \nab^{r+2+A} K \|_{L^1_h} &\leq 2^{A k} \label{eq:kernelBounds}
}
for $A = 0, 1, \ldots, M$, we have the commutator estimates 
\ali{
\sup_t \left \| \nab^A \left[ \Dkdt{}, \right]^s T \right \| &\leq C_A 2^{(A + s(1-\a))k} \ctdcxa{v}^s  \label{ineq:genCommutEstimates}
}
for $0 \leq s \leq r+2$ and $A = 0,1,\ldots, M$.
\end{lem}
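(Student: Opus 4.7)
The plan is strong induction on $r$, proving all six families of estimates simultaneously at each level. The base case $r=0$ is a compilation of what has been established in Sections 2--5: the Reynolds stress and Littlewood--Paley pressure bounds in \eqref{ineq:stressAndLPpressbds} follow from the Constantin--E--Titi commutator estimate together with Proposition \ref{prop:firstLPpieceBds}; the low-frequency pressure bound \eqref{ineq:Plkpressbds} is Proposition \ref{prop:matDvLowFreqs}; the velocity bounds \eqref{ineq:pApproxKboundsGen}--\eqref{ineq:pleqcKboundsGen} for $s \leq 1$ are in Proposition \ref{prop:firstLPpieceBds}; the $Z_s$ bound \eqref{eq:boundForCommuteTermsDif} for $q + s \leq 1$ reduces to estimates for derivatives of $P_{\approx k} v$; and the commutator estimate \eqref{ineq:genCommutEstimates} for $s \leq 2$ is Lemma \ref{lem:weNeedTheseCommutators2}. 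Combined with Sections 4--5 the assertion also holds at $r = 1$.

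For the inductive step from $r - 1$ to $r$, the estimates are established in the following order so that each one depends only on those already verified. First, the $Z_s$ bound at level $r - 1$ furnishes the data $\Dkdt{q} Z_s$ for $q + s \leq r$ needed to iterate the commutator identity \eqref{eq:commuteintByParts2prime} as in Proposition \ref{prop:commutingWithOperators}, yielding \eqref{ineq:genCommutEstimates} at all orders $s \leq r + 1$. Second, I prove \eqref{ineq:stressAndLPpressbds} and \eqref{ineq:Plkpressbds} at level $r$ by expanding $\Dkdt{r} R_{\lk}$ (and analogously $\Dkdt{r} P_k p$ and $\Dkdt{r} \nab^2 P_{\leq k} p$) by the product rule \eqref{eq:sumExpand} along the trichotomy $R_{\leq k} = R_{HH} + R_{HL} + R_{LL}$ from Section \ref{sec:ReynStressMatDv1}. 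The Low--Low and High--Low pieces produce only local kernel factors multiplied by velocity expressions $\Dkdt{q} \nab^A P_{\approx k} v$ and $\Dkdt{q} \nab^A P_{\leqc k} v$ of total order $\leq r + 1$, all controlled by the inductive velocity bounds. The pressure bounds are handled in parallel using the representations $P_k p = \De^{-1} \pr_j \pr_l P_k(v^j v^l)$ and $\nab^2 P_{\leq k} p = \De^{-1} \nab^2 \pr_j \pr_l P_{\leq k}(v^j v^l)$ together with the nonlocal commutator estimates supplied by step one.

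Third, with \eqref{ineq:stressAndLPpressbds}--\eqref{ineq:Plkpressbds} at level $r$ in hand, the velocity estimates \eqref{ineq:pApproxKboundsGen}--\eqref{ineq:pleqcKboundsGen} at the new order $s = r + 1$ follow by applying $\Dkdt{r}$ to the Littlewood--Paley projected Euler equation
\ali{
 \Dlkdt P_{k+1} v^l + P_{k+1} v^j \pr_j P_{\leq k+1} v^l + \pr^l P_{k+1} p &= \pr_j(R_{\leq k+1}^{jl} - R_{\leq k}^{jl})
}
and its divergence-form analogue for $\nab P_{\leqc k} v$, expanding the $r$-fold commutator by \eqref{eq:sumExpand}, and estimating each product with the bounds already in hand. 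The $Z_s$ bound \eqref{eq:boundForCommuteTermsDif} for $q + s \leq r + 1$ then follows by expanding $[\Dlkdt,]^s(P_{\approx k} v \cdot \nab) = Z_s \cdot \nab$ as a polynomial in derivatives of $P_{\approx k} v$ and $P_{\leq k} v$, all controlled by the fresh velocity bounds. Finally, \eqref{ineq:genCommutEstimates} in its full range $s \leq r + 2$ follows from the new $Z_s$ data in exactly the same way as in step one.

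The main obstacle is the High--High term of the Reynolds stress (and, analogously, of the pressure) in step two. When $\Dlkdt$ acts on a factor $P_I v$ with $I \gg k$, dimensional analysis requires estimation along the fine-scale flow $\fr{D_{\leq I}}{\pr t}$, and the discrepancy $P_{(k, I]} v \cdot \nab$ is not summable in $I$ as $r$ approaches the threshold $2\a/(1 - \a)$. The saving cancellation, first identified in line \eqref{eq:formWeUseHighHigh} of Section \ref{sec:ReynStressMatDv1}, must be iterated $r$ times: at each stage of commutation one exploits the outer projection $P_{\leq k + 2}$ in the bandlimited representation
\ali{
R_{\leq k, HH} &= \sum_{I \geq k} P_{\leq k + 2}\, P_{\leq k}\bigl(P_I v^j P_{\approx I} v^l\bigr)
}
to truncate the dangerous tail $P_{(k, I]} v \cdot \nab$ to its bounded-frequency remnant $P_{[k, k + 3]} v \cdot \nab$. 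Keeping track of this cancellation structure through all $r$ stages of commutation, and verifying that the newly generated terms are controlled by the inductive $Z_s$ and velocity bounds, is the central technical point of the induction.
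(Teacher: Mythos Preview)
Your proposal is essentially correct and follows the paper's approach: induction on $r$, with the Reynolds stress and $P_k p$ bounds \eqref{ineq:stressAndLPpressbds} as the key step, the velocity and $Z_s$ bounds deduced from the projected Euler equations, and the High--High term handled via the bandlimited cancellation that truncates $P_{(k,I]} v \cdot \nab$ to $P_{\approx k} v \cdot \nab$ at each commutation. You have correctly identified this last point as the central technical obstacle, and your description matches the paper's formula \eqref{eq:whatTheProductLooksLike}.

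The one place where your route diverges from the paper is the treatment of \eqref{ineq:Plkpressbds}. Your direct approach via $\nab^2 P_{\leq k} p = \De^{-1} \nab^2 \pr_j \pr_l P_{\leq k}(v^j v^l)$ together with the nonlocal commutator estimate is too quick: the quadratic form $v^j v^l$ still carries all high frequencies, so a trichotomy and the full HH machinery would be needed here as well, not just the commutator bound; and even after that, the long-range operator $\De^{-1}\nab^2 P_{\leq k}$ only satisfies the kernel hypothesis \eqref{eq:kernelBounds} with an extra factor $(1+|k-\kotn|)$, so the resulting bound carries a logarithmic loss not present in the stated estimate. The paper avoids both issues by a telescoping frequency-increment decomposition
\[
\fr{D_{\leq k}^{n+1}}{\pr t^{n+1}} \nab^2 P_{\leq k} p = \sum_{I=\kotn}^{k-1} \de_{(I)} \fr{D_{\leq I}^{n+1}}{\pr t^{n+1}} \nab^2 P_{\leq I} p,
\]
each increment splitting into a bandlimited piece $\fr{D_{\leq I}^{n+1}}{\pr t^{n+1}}\nab^2 P_{I+1} p$ (controlled by the freshly proved \eqref{ineq:stressAndLPpressbds}) and terms of the form $\fr{D_{\leq I}^j}{\pr t^j}(P_{I+1} v\cdot\nab)\fr{D_{\leq I+1}^{n-j}}{\pr t^{n-j}}\nab^2 P_{\leq I+1} p$ involving at most $n$ advective derivatives of $\nab^2 P_{\leq I+1} p$, hence controlled by the inductive hypothesis without logarithmic loss. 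The geometric sum then gives \eqref{ineq:Plkpressbds} cleanly. This is a minor repair rather than a structural gap, since the log-factor version would suffice for all downstream applications.
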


In the Sections~\ref{sec:introMainLem}-\ref{sec:highOrderAdvecForcing} below, we will give the proof of Lemma \ref{lem:theMainLem}.  Here we will outline how Lemma \ref{lem:theMainLem} implies Theorems \ref{thm:genThm} and \ref{thm:matDvPCts}, starting with the proof of Theorem \ref{thm:genThm}.

\subsection{Proof of Theorem~\ref{thm:genThm} on the time regularity of the velocity field} \label{sec:timeRegOfVelocField}

Here we show how Lemma~\ref{lem:theMainLem} can be used to establish Theorem~\ref{thm:genThm}.  The key idea is to prove estimates for the following frequency increments for the velocity field and its higher advective derivatives
\ali{
\de_{(k)} \fr{D_{\leq k}^r}{\pr t^r} P_{\leq k} v &= \fr{D_{\leq k+1}^r}{\pr t^r} P_{\leq k+1} v - \fr{D_{\leq k}^r}{\pr t^r} P_{\leq k} v \label{eq:defOfVelocFreqInc}
}
These frequency increments are defined so that 
\ali{
 \sum_{k = \kotn}^\infty \de_{(k)} \fr{D_{\lk}^r}{\pr t^r} P_{\leq k} v = \fr{D^r}{\pr t^r} v  \label{eq:sumVelocFreqInc}
}
when the summation converges uniformly.  %For $r = 0$, these frequency increments are simply the Littlewood-Paley projections of $v$, and the uniform convergence is clear.

Using Lemma~\ref{lem:theMainLem} we obtain the following bounds on the frequency increments defined in \eqref{eq:defOfVelocFreqInc}
\begin{lem}[Velocity increment bounds]\label{lem:velocIncBounds2} For all $r(1-\a) - 2 \a < 0$ and all $q + s \leq r+1$, the frequency increments defined in \eqref{eq:defOfVelocFreqInc} satisfy the bounds
\ali{
\co{\nab^A \fr{D_{\leq k}^q}{\pr t^q} \de_{(k)}\fr{D_{\leq k}^s}{\pr t^s} P_{\leq k} v  } &\leq C_A 2^{(A + (q+s)(1-\a) - \a)k} \ctdcxa{v}^{q+s+1} \label{eq:velocIncBoundsGood}
}
\end{lem}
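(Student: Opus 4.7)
The plan is to decompose the frequency increment into a few tractable pieces and then use the noncommutative binomial formula to reduce everything to terms directly controlled by the Main Lemma~\ref{lem:theMainLem}. I would set $X = \fr{D_{\leq k}}{\pr t}$ and $Y = P_{k+1} v \cdot \nab$, so that $\fr{D_{\leq k+1}}{\pr t} = X + Y$, and combine this with $P_{\leq k+1} = P_{\leq k} + P_{k+1}$ to obtain via an elementary algebraic rearrangement
\ali{
\fr{D_{\leq k}^q}{\pr t^q}\, \de_{(k)}\, \fr{D_{\leq k}^s}{\pr t^s} P_{\leq k} v &= (X+Y)^{q+s} P_{k+1} v \notag \\
&\quad + \left((X+Y)^{q+s} - X^{q+s}\right) P_{\leq k} v \notag \\
&\quad - \left((X+Y)^q - X^q\right)(X+Y)^s P_{\leq k+1} v. \notag
}
The first piece, after applying $\nab^A$, would be controlled directly by \eqref{ineq:pApproxKboundsGen} at the comparable frequency $k+1$ and would immediately deliver the bound claimed in \eqref{eq:velocIncBoundsGood}.

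For the remaining two pieces I would invoke the noncommutative binomial expansion \eqref{eq:sumExpand} on $(X+Y)^n$ with $n \in \{q+s,\,q,\,s\}$. Each such expansion writes $(X+Y)^n$ as a finite sum, parametrized by $\ell \geq 0$ and nonnegative indices with $\sum r_i + \ell + m = n$, of compositions $\prod_{i=1}^{\ell}(Z_{r_i}\cdot\nab)\, X^m$ in which each $[X,]^{r_i} Y = Z_{r_i}\cdot\nab$ is precisely the commutator vector field from \eqref{eq:commuteTermDefs}. Subtracting the $\ell = 0$ contribution produces the differences appearing in the second and third pieces as sums with $\ell \geq 1$. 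For the third piece I would additionally expand the remaining factor $(X+Y)^s$ in the same way and use formula \eqref{eq:commuteWithPowers} to commute the intermediate $X^m$ past each $Z_{r'_j}\cdot\nab$ factor; this produces only further commutator vector fields of the same kind. The upshot is that every model term output by this procedure has the canonical form
\ali{
\nab^A\prod_{i=1}^{L}\left(Z^{(i)}_{\tilde r_i}\cdot\nab\right)\, X^{\tilde m}\, \widetilde{P}\, v \notag
}
where $\widetilde{P}$ is $P_{\leq k}$ or $P_{\leq k+1}$, the indices satisfy $\sum\tilde r_i + L + \tilde m \leq q+s$ and $L \geq 1$, and each $Z^{(i)}_{\tilde r_i}$ is a commutator vector field obeying \eqref{eq:boundForCommuteTermsDif}.

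To estimate such a canonical term I would distribute $\nab^A$ and the $L$ internal spatial derivatives via the product rule. The key structural point is that the derivative from the innermost factor $Z^{(L)}_{\tilde r_L}\cdot\nab$ necessarily acts to the right of $Z^{(L)}_{\tilde r_L}$, so after full distribution the rightmost factor $X^{\tilde m}\widetilde{P}v$ always retains at least one spatial derivative. I would rewrite that derivative as $\nab^{A'} = \nab^{A'-1}\nab$ and apply \eqref{ineq:pleqcKboundsGen}, which controls the final factor by $C\, 2^{(A' + \tilde m(1-\a) - \a)k}\ctdcxa{v}^{\tilde m+1}$, identical in scaling to the bound $P_{\approx k} v$ would enjoy under \eqref{ineq:pApproxKboundsGen}. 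Combining with \eqref{eq:boundForCommuteTermsDif} on each internal factor $\nab^{a_i} Z^{(i)}_{\tilde r_i}$, the $2^k$ exponents aggregate as $A + L + (\sum \tilde r_i + \tilde m)(1-\a) - (L+1)\a = A + (q+s)(1-\a) - \a$ in the dominant case, and the $\ctdcxa{v}$ factors multiply to $\ctdcxa{v}^{q+s+1}$. All commutator orders $\tilde r_i$ and material-derivative powers $\tilde m$ are bounded by $q+s \leq r+1$, the regime in which the Main Lemma supplies the estimates.

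The principal obstacle is combinatorial rather than analytic. I would need to verify carefully that the innermost $\nab$ always lands outside every $Z^{(i)}$-factor, so that the weaker bound on $X^{\tilde m}\widetilde{P}v$ at zero spatial derivatives, which Lemma~\ref{lem:theMainLem} does not directly supply, is never invoked; and to ensure the nested expansions used for the third piece never push any commutator order past $r+1$. Once this bookkeeping is carried out, every individual factor is directly controlled by Lemma~\ref{lem:theMainLem}.
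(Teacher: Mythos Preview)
Your approach is correct and would work, but it takes a more laborious route than the paper. The paper proves the lemma by a short induction on $s$: the base case $s=0$ is just $\de_{(k)} P_{\leq k} v = P_{k+1} v$, controlled directly by \eqref{ineq:pApproxKboundsGen}; for the inductive step one uses the simple recursion
\[
\de_{(k)}\fr{D_{\leq k}^{s+1}}{\pr t^{s+1}} P_{\leq k} v \;=\; P_{k+1} v \cdot \nab\, \fr{D_{\leq k+1}^{s}}{\pr t^{s}} P_{\leq k+1} v \;+\; \fr{D_{\leq k}}{\pr t}\,\de_{(k)} \fr{D_{\leq k}^{s}}{\pr t^{s}} P_{\leq k} v,
\]
so the second term is handled by the inductive hypothesis and only the first term needs direct estimation via Lemma~\ref{lem:theMainLem}. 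Your three-piece algebraic identity is correct, and your canonical-term reduction via \eqref{eq:sumExpand} and \eqref{eq:commuteWithPowers} is valid; the index constraints you worry about do hold (every commutator order and every power of $X$ stays $\leq r+1$, since $\sum \tilde r_i + L + \tilde m \leq q+s \leq r+1$ with $L \geq 1$, and in the nested expansion for the third piece one has $r'_j + p_j \leq (s-\ell') + (q-\ell) \leq q+s-1 \leq r$). What your approach buys is an explicit description of every term at once; what the paper's induction buys is that almost all of the combinatorics is absorbed into the inductive hypothesis, leaving only a single product term to estimate at each step.

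One small slip, easily repaired: you invoke \eqref{ineq:pleqcKboundsGen} on the final factor by writing $\nab^{A'} = \nab^{A'-1}\nab$, but \eqref{ineq:pleqcKboundsGen} bounds $\nab^{A} X^{\tilde m}\nab P_{\leqc k} v$ with the single $\nab$ sitting \emph{inside} the material derivatives, whereas your final factor is $\nab^{A'} X^{\tilde m} P_{\leqc k} v$ with all spatial derivatives outside. You must commute one $\nab$ past $X^{\tilde m}$ via \eqref{eq:commuteWithPowers} (the commutator $[\nab, X]$ being $(\nab P_{\leqc k} v)\cdot\nab$); this generates only lower-order terms of the same canonical shape with strictly smaller $\tilde m$, so a secondary induction on $\tilde m$ closes the estimate. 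This is precisely the commutation the paper performs when it says the first term in the recursion is handled ``by commuting the spatial derivative in $P_{k+1} v\cdot\nab$ onto the term $P_{\leq k} v$ using the formula \eqref{eq:commuteWithPowers}.''
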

\begin{proof}
Letting $r$ be fixed, we proceed by induction on $s$.  For $s = 0$, the frequency increments defined in \eqref{eq:defOfVelocFreqInc} are simply the Littlewood-Paley projections of $v$, and the estimate \eqref{eq:velocIncBoundsGood} follows for all $q \leq r + 1$ from the bound \eqref{ineq:pApproxKboundsGen}.  

We assume now by induction that \eqref{eq:velocIncBoundsGood} has been established for some $s \geq 0$ and consider the bound \eqref{eq:velocIncBoundsGood} for $s + 1$.  Observe that we can write each frequency increment for the $s+1$'st advective derivative in the form
\ali{
\de_{(k)} \fr{D_{\leq k}^{s+1}}{\pr t^{s+1}} P_{\leq k} v &= \fr{D_{\leq k+1}}{\pr t} \left[ \fr{D_{\leq k+1}^s}{\pr t^s} P_{\leq k+1} v \right] - \fr{D_{\leq k}}{\pr t} \left[ \fr{D_{\leq k}^s}{\pr t^s} P_{\leq k} v \right] \\
&= P_{k+1} v \cdot \nab \fr{D_{\leq k+1}^s}{\pr t^s} P_{\leq k+1} v + \fr{D_{\leq k}}{\pr t} \de_{(k)}\fr{D_{\leq k}^{s}}{\pr t^{s}} P_{\leq k} v \label{eq:inductStructureVeloc}
}
The first term in \eqref{eq:inductStructureVeloc} and its first $q \leq r + 1 - s$ advective derivatives can be estimated using Lemma~\ref{lem:theMainLem}.  These estimates follow by commuting the spatial derivative in $P_{k+1} v \cdot \nab$ onto the term $P_{\leq k} v$ using the formula \eqref{eq:commuteWithPowers}.  All of the terms generated by this commutation obey bounds of the form \eqref{eq:velocIncBoundsGood} by \eqref{ineq:pApproxKboundsGen}-\eqref{ineq:pleqcKboundsGen}.  %One can alternatively commute with the operator $P_{k+1} v \cdot \nab$ and use \eqref{eq:commuteWithPowers} and \eqref{eq:boundForCommuteTermsDif} to estimate the commutator terms.  

The second term in \eqref{eq:inductStructureVeloc} %and its advective derivatives of order $q \leq r + 1 - s$ 
obeys an estimate of the form \eqref{eq:velocIncBoundsGood} by our induction hypothesis, which concludes the proof of Lemma~\ref{lem:velocIncBounds2}.
\end{proof}

We can now conclude that the series \eqref{eq:sumVelocFreqInc} converges in $C^0$ for all $0 \leq r < \fr{\a}{1 - \a}$ by induction on $r$ using Lemma~\ref{lem:weakConverge} and the case $q = 0$ of the estimate \eqref{eq:velocIncBoundsGood}.  Theorem~\ref{thm:genThm} now follows as in the argument of Section~\ref{sec:endpointCase} by using the cases $A = 1, q = 0$ and $A= 0, q = 1$ of inequality~(\ref{eq:velocIncBoundsGood}) to bound the first spatial and temporal derivatives of the frequency increments.

\subsection{Proof of Theorem~\ref{thm:matDvPCts} on the time regularity of the pressure}

Our proof of Theorem~\ref{thm:matDvPCts} on the regularity in time of the pressure will require an analysis of frequency increments for the pressure which generalizes the analysis of Section~\ref{sec:pressIncPressGrad}.

The most basic estimates on the pressure increments are provided by Lemma~\ref{prop:pressIncBoundsGeneral} below, which is deduced from Lemma \ref{lem:theMainLem}. %by generalizing the arguments of Proposition \ref{prop:pressIncBds1} and Section \ref{sec:pressIncPressGrad}.
\begin{lem}[Pressure Increment bounds]\label{prop:pressIncBoundsGeneral}
Let $\de p_{(k)}$ be defined as in Definition \ref{def:pressInc}.  Then for all $r(1-\a) - 2 \a < 0$ and all $s + q \leq r+1$
\ali{
\co{ \nab^A \Dkdt{s} \de p_{(k)} } &\leq C_A (1 + | k - \kotn|) 2^{(A + s(1-\a) -2\a) k} \ctdcxa{v}^{2+s} \label{ineq:genBdDdtdepk} \\
\co{ \nab^A \Dkdt{s} \nab^2 p_{(k)} } &\leq C_A (1 + | k - \kotn|) 2^{(A + s(1-\a) + (2 - 2\a) ) k} \ctdcxa{v}^{2+s} \label{ineq:genDkdtNab2p}\\
\co{ \nab^{A} \nab \Dkdt{s} \nab p_{(k)} } &\leq C_A (1 + | k - \kotn|) 2^{(A + s(1-\a) + (2 - 2\a) ) k} \ctdcxa{v}^{2+s} \label{ineq:freqIntBoundNabNabpk} \\
\co{ \nab^{A} \Dkdt{q} \nab \Dkdt{s} \nab p_{(k)} } &\leq C_A (1 + | k - \kotn|) 2^{(A + (q+s)(1-\a) + (2 - 2\a) ) k} \ctdcxa{v}^{2+q+s} \label{eq:commuteOneNabnbpk}
}
\end{lem}
\begin{proof}

The proof of (\ref{ineq:genBdDdtdepk}) is a simple modification of the proof of (\ref{ineq:genDkdtNab2p}) given below, which generalizes the arguments of Proposition \ref{prop:pressIncBds1} and Section \ref{sec:pressIncPressGrad}.  The difference is that one uses formulas (\ref{de:dePLLshort})-(\ref{eq:dePHHshort}) for the pressure increments.

To prove (\ref{ineq:genDkdtNab2p}), we use the formula
\ali{
\Dkdt{s} \nab^2 p_{(k)} &= \Dkdt{s} \De^{-1} \nab^2 P_{\leq k} [ \pr_j P_{\leq k} v^l \pr_l P_{\leq k} v^j ] \\
&= \sum_{q = 0}^s C_q \left[ \Dkdt{}, \right]^q \De^{-1} \nab^2 P_{\leq k} \Dkdt{s-q} (\pr_j P_{\leq k} v^l \pr_l P_{\leq k} v^j)
}
which is deduced from the rule (\ref{eq:commuteWithPowers}).  Lemma (\ref{lem:theMainLem}) guarantees that $s \leq r+1$ material derivatives of $\nab P_{\leq k} v$ obey the desired estimates.  The operator $\left[ \Dkdt{}, \right]^q \De^{-1} \nab^2 P_{\leq k}$ is estimated as in the proof of Lemma~\ref{lem:weNeedTheseCommutators2} by applying the commutator estimates in \eqref{ineq:genCommutEstimates} to the operator
\[C^{-1} (1 + |k - \kotn|)^{-1} \De^{-1} \nab^2 P_{\leq k} = C^{-1} (1 + |k - \kotn|)^{-1} \sum_{I = \kotn}^k \De^{-1} \nab^2 P_{I} \] 
Here $C$ is a universal constant chosen sufficiently large so that the estimate \eqref{eq:kernelBounds} holds for this operator.  Such a choice of $C$ is possible because, by scaling, each term $C^{-1} \De^{-1} \nab^2 P_{I}$ in this decomposition satisfies \eqref{eq:kernelBounds} with $2^k$ replaced by $2^I$ once $C$ is chosen appropriately.

Inequality (\ref{ineq:freqIntBoundNabNabpk}) is the special case $q = 0$ of Inequality (\ref{eq:commuteOneNabnbpk}).

We obtain Inequality (\ref{eq:commuteOneNabnbpk}) by induction on $s \leq r+1$.  The base case $s = 0$ is exactly Inequality (\ref{ineq:genDkdtNab2p}), which has been proven.  Assuming Inequality (\ref{eq:commuteOneNabnbpk}) for $s$, if $\nab_i$ is any spatial derivative, we have
\ali{
\nab_i \Dkdt{s+1} \nab p&= \Dkdt{}[ \nab_i \Dkdt{s} \nab p ] - \nab_i P_{\leqc k} v^j \nab_j \Dkdt{s} \nab p
}
By Inequality (\ref{eq:commuteOneNabnbpk}) for $s$ and (\ref{ineq:pleqcKboundsGen}), we have the desired estimates for $\Dkdt{q}$ for both terms provided $q + (s+1) \leq r+1$.
\end{proof}

In order to deduce Theorem (\ref{thm:matDvPCts}) from Lemma (\ref{lem:theMainLem}), we must also supplement the bounds in Lemma (\ref{prop:pressIncBoundsGeneral}) with the estimate
\begin{lem} For all $r(1-\a) - 2 \a < 0$ with $\a \neq 1/2$ and all $q \leq r+1$, we have
\ali{
\co{ \nab^A \Dkdt{q} \nab p_{(k)} } &\leq C_A (1 + |k - \kotn|) 2^{(A + q(1-\a) + (1- 2 \a)_+)k}\ctdcxa{v}^{q + 2} \label{ineq:theAnnoyingBound} \\
\co{ \nab^A \nab \Dkdt{q} p_{(k)} } &\leq C_A (1 + |k - \kotn|) 2^{(A + q(1-\a) + (1- 2 \a)_+)k}\ctdcxa{v}^{q + 2} \label{ineq:theAnnoyingBound2}
}
\end{lem}
\begin{proof}
We proceed by induction on $q$.  For $q = 0$, we have
\ali{
\nab p_{(k)} &= \sum_{I = \kotn}^k \nab \de p_{(I)}
}
The estimate (\ref{ineq:theAnnoyingBound}) is obtained for $\a < 1/2$ by summing by parts the estimate \eqref{ineq:genBdDdtdepk}; the largest contribution comes from the last terms in the series.  For $\a > 1/2$ and $A = 0$, the main terms come from the beginning of the series, giving (\ref{ineq:theAnnoyingBound}).  The estimate for larger values of $A$ is already recorded in Proposition~\ref{prop:pressIncBoundsGeneral}.  Assuming the bound (\ref{ineq:theAnnoyingBound}) for $q$, we prove the bound for $q+1$ by writing
\ali{
\de_{(k)} \fr{D_{\leq k}^{q+1}}{\pr t^{q+1}} \nab p_{(k)} &= P_{k+1} v \cdot \nab \fr{D_{\leq k+1}^{q}}{\pr t^{q}} \nab p_{(k+1)} + \fr{D_{\leq k}}{\pr t} \de_{(k)} \fr{D_{\leq k}^{q}}{\pr t^{q}} \nab p_{(k)}
}
and applying (\ref{ineq:freqIntBoundNabNabpk}) and our induction hypothesis on  (\ref{ineq:theAnnoyingBound}).

The bound (\ref{ineq:theAnnoyingBound2}) follows from (\ref{ineq:theAnnoyingBound}) and
(\ref{ineq:pleqcKboundsGen}) by inductively commuting spatial and advective derivatives, as in the proof of (\ref{ineq:freqIntBoundNabNabpk})-(\ref{eq:commuteOneNabnbpk}).
\end{proof}

We now define the frequency increments for material derivatives of the pressure
\ali{
 \de_{(k)} \fr{D_{\leq k}^s}{\pr t^s} p_{(k)} &= \fr{D_{\leq k+1}^s}{\pr t^s} p_{(k+1)} - \fr{D_{\leq k}^s}{\pr t^s} p_{(k)} 
}
Theorem (\ref{thm:matDvPCts}) will be deduced from Lemma~\ref{lem:pressIncs} below.
\begin{lem}\label{lem:pressIncs} For all $r(1-\a) - 2 \a < 0$, $\a \neq 1/2$, and all $s + q \leq r + 1$ we have
\ali{
\co{ \nab^A \Dkdt{q} \de_{(k)} \fr{D_{\leq k}^s}{\pr t^{s}} p_{(k)} } &\leq C_A (1 + | k - \kotn|) 2^{(A + (s+q)(1-\a) - 1 + (1-2\a)_+) k} \ctdcxa{v}^{2+q+s} \label{eq:genFreqIntBoundsDdtpk}
}
\end{lem}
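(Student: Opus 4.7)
The proof proceeds by induction on $s \geq 1$, in parallel with the strategy used for Lemma~\ref{lem:velocIncBounds2}.  The additional subtlety here is that one of the two spatial derivatives appearing in $\de p_{(k)}$ must be absorbed by the ``cheap'' bound \eqref{ineq:theAnnoyingBound} on $\nab p_{(k)}$; this is precisely why the exponent $-1+(1-2\a)_+$ appears in~\eqref{eq:genFreqIntBoundsDdtpk} in place of the $-2\a$ of~\eqref{ineq:genBdDdtdepk}.

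For the base case $s=1$, the algebraic identity~\eqref{form:dekDkpk} already provides the splitting
\begin{align*}
\de_{(k)} \fr{D_{\leq k}}{\pr t} p_{(k)} &= \fr{D_{\leq k}}{\pr t} \de p_{(k)} + P_{k+1} v \cdot \nab p_{(k+1)}.
\end{align*}
After applying $\nab^A \Dkdt{q}$, the first summand is $\nab^A \Dkdt{q+1} \de p_{(k)}$ and is controlled directly by~\eqref{ineq:genBdDdtdepk}, which matches the claim for $\a\leq 1/2$ and is strictly sharper than needed for $\a>1/2$.  For the second summand, I would distribute the $q$ material derivatives via the Leibniz rule for $\Dkdt{}$ as a first-order operator and commute $\pr_l$ past the remaining powers of $\Dkdt{}$ using~\eqref{eq:commuteWithPowers}; every term then factors as a product of $\Dkdt{i} P_{k+1} v$ (bounded by~\eqref{ineq:pApproxKboundsGen}), commutator coefficients of the same structure as the $Z_m$ in~\eqref{eq:boundForCommuteTermsDif}, and a terminal factor $\nab^{A'}\Dkdt{j}\nab p_{(k+1)}$, bounded by~\eqref{ineq:theAnnoyingBound} when $A'=0$ and by~\eqref{ineq:freqIntBoundNabNabpk} when $A'\geq 1$.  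A direct tally of exponents confirms that each such product is bounded by the right-hand side of~\eqref{eq:genFreqIntBoundsDdtpk}.

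For the inductive step, the analog of the telescoping identity~\eqref{eq:inductStructureVeloc} for the pressure reads
\begin{align*}
\de_{(k)}\fr{D_{\leq k}^{s+1}}{\pr t^{s+1}} p_{(k)} &= P_{k+1}v\cdot\nab\, \fr{D_{\leq k+1}^s}{\pr t^s} p_{(k+1)} + \fr{D_{\leq k}}{\pr t}\, \de_{(k)}\fr{D_{\leq k}^s}{\pr t^s} p_{(k)}.
\end{align*}
After applying $\nab^A\Dkdt{q}$, the second summand is $\nab^A\Dkdt{q+1}\de_{(k)}\Dkdt{s}p_{(k)}$, which is controlled by the inductive hypothesis since $(s+1)+q \leq r+1$ forces $s + (q+1) \leq r+1$.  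For the first summand I would use $\fr{D_{\leq k}}{\pr t} = \fr{D_{\leq k+1}}{\pr t} - P_{k+1}v\cdot\nab$ to rewrite the outer derivatives in terms of $\fr{D_{\leq k+1}}{\pr t}$, expand via~\eqref{eq:sumExpand}, and then commute the inner $\pr_l$ past the resulting powers of $\fr{D_{\leq k+1}}{\pr t}$ as in the base case.  This reduces the analysis to products of advective derivatives of $P_{k+1}v$ and $\nab P_{\leq k+1} v$ acting on a factor $\fr{D_{\leq k+1}^{s'}}{\pr t^{s'}}\nab p_{(k+1)}$ with $s'\leq s+q\leq r$, which are all controlled by~\eqref{ineq:pApproxKboundsGen}, \eqref{ineq:theAnnoyingBound}, \eqref{ineq:freqIntBoundNabNabpk}, and~\eqref{eq:boundForCommuteTermsDif}.

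The main obstacle is the combinatorial bookkeeping: distributing $q+s$ advective derivatives and commuting them past one spatial derivative produces a proliferation of terms.  The saving grace is that every advective derivative contributes a uniform factor $2^{(1-\a)k}\ctdcxa{v}$ in the estimate---as already verified in the proofs of Proposition~\ref{prop:pressIncBoundsGeneral} and Lemma~\ref{lem:theMainLem}---so that no term produces a worse exponent than the one arising when all $q+s$ advective derivatives fall on $\nab p_{(k+1)}$ and this factor is absorbed via~\eqref{ineq:theAnnoyingBound}; a direct check shows that this worst-case exponent is exactly the bound claimed in~\eqref{eq:genFreqIntBoundsDdtpk}.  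The exclusion of $\a=1/2$ enters only because~\eqref{ineq:theAnnoyingBound} degrades at that borderline exponent.
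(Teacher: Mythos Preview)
Your proposal is correct and follows essentially the same approach as the paper: induction on $s$ using the same telescoping identity $\de_{(k)}\fr{D_{\leq k}^{s+1}}{\pr t^{s+1}} p_{(k)} = P_{k+1}v\cdot\nab\, \fr{D_{\leq k+1}^s}{\pr t^s} p_{(k+1)} + \fr{D_{\leq k}}{\pr t}\, \de_{(k)}\fr{D_{\leq k}^s}{\pr t^s} p_{(k)}$, with the first term handled via~\eqref{ineq:theAnnoyingBound} and the second by the inductive hypothesis.  You spell out the commutator bookkeeping (distributing $\Dkdt{q}$ and commuting $\pr_l$ past advective derivatives) in more detail than the paper, which leaves this implicit.
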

\begin{proof}
%For $s = 1$, we have 
%\ali{
%\de_{(k)} \fr{D_{\leq k}}{\pr t} p_{(k)} &= P_{k+1} v \cdot \nab p_{(k+1)} + \fr{D_{\leq k}}{\pr t} \de p_{(k)}
%}
%So the bound (\ref{eq:genFreqIntBoundsDdtpk}) follows from Lemma (\ref{prop:pressIncBoundsGeneral}) together with (\ref{ineq:theAnnoyingBound}).  

We proceed by induction on $s$.  For $s = 0$, the bound (\ref{eq:genFreqIntBoundsDdtpk}) follows immediately from (\ref{ineq:genBdDdtdepk}), as we have $-2\a \leq -1 + (1 - 2 \a)_+ = - \min \{ 2\a, 1 \} $.  Assuming the bound (\ref{eq:genFreqIntBoundsDdtpk}) for $s$, we write
\ali{
\de_{(k)} \fr{D_{\leq k}^{s+1}}{\pr t^{s+1}} p_{(k)} &= P_{k+1} v \cdot \nab \fr{D_{\leq k}^{s}}{\pr t^{s}} p_{(k)} + \fr{D_{\leq k}}{\pr t} \de_{(k)} \fr{D_{\leq k}^{s}}{\pr t^{s}} p_{(k)}
}
which gives (\ref{eq:genFreqIntBoundsDdtpk}) after applying the induction hypothesis and bounding the first term with (\ref{ineq:theAnnoyingBound2}).
\end{proof}

Applying the case $q = 0$ of Lemma~\ref{lem:pressIncs} and iterating Lemma~\ref{lem:weakConverge}, we have that the series
\ali{
\fr{D_{\leq k}^s}{\pr t^s} p &= \sum_{k = \kotn}^\infty \de_{(k)} \fr{D_{\leq k}^s}{\pr t^s} p_{(k)}
}
converges uniformly for all $s < \fr{\min \{2\a, 1\} }{1-\a} = \fr{1 - (1 - 2 \a)_+}{1 - \a}$.  As in the arguments of Sections \ref{sec:timeRegPress} and \ref{sec:pressIncPressGrad}, we obtain the H\"{o}lder regularity in time and space for $\fr{D^s}{\pr t^s} \nab p$ stated in Theorem \ref{thm:genThm} by interpolating the bounds in Lemma \ref{lem:pressIncs} for first spatial and temporal derivatives ($A = 1, q = 0$ and $A = 0, q = 1$) with the case $q = 0, A = 0$ to conclude the proof. % for \[ \fr{D_{\lk}}{\pr t} \de_{(k)} \fr{D_{\lk}^s}{\pr t^s}  p_{(k)}\] and 
%\[ \nab \de_{(k)} \fr{D_{\lk}^s}{\pr t^s}  p_{(k)}\] 
%gives the H\"{o}lder regularity in time and space for $\fr{D^s}{\pr t^s} \nab p$ stated in Theorem \ref{thm:genThm} as desired. 

\subsection{Proof of the Main Lemma, Intro} \label{sec:introMainLem}

We now turn to the proof of the Main Lemma (\ref{lem:theMainLem}).  The proof proceeds by induction on $r$, so we will assume that Lemma (\ref{lem:theMainLem}) has been proven for $r \leq n$, and we will prove that Lemma (\ref{lem:theMainLem}) also holds for $r = n+1$.  The base cases $r = 0, 1$ have been established in Sections (\ref{sec:boundsLPpieces}) through (\ref{sec:secondMatDvPressIncs}). %, and these proofs contain most of the ideas necessary for the general case of Lemma (\ref{lem:theMainLem}).  

We start the presentation by showing how the case $r = n+1$ of Lemma (\ref{lem:theMainLem}) can be reduced to establishing the bound (\ref{ineq:Plkpressbds}) for $r = n+1$ using the cases $r \leq n$ of Lemma (\ref{lem:theMainLem}) as an inductive hypothesis.  The main step that requires a new trick is to prove the estimate (\ref{ineq:Plkpressbds}) for $r = n+1$.  

\subsection{Reducing to the forcing term estimates} \label{sec:reduceToForcing}

%Once (\ref{ineq:Plkpressbds}) has been established for $r = n+1$, the rest of the Lemma follows from (\ref{ineq:Plkpressbds}) and the cases $r \leq n$ of Lemma (\ref{lem:theMainLem}) as we now explain.

In this Section we assume that Lemma (\ref{lem:theMainLem}) has been proven for $r \leq n$ and furthermore that the bound (\ref{ineq:stressAndLPpressbds}) has been established for $r = n+1$.  In this section, we show how the rest of the statements in the case $r = n+1$ of Lemma (\ref{lem:theMainLem}) excluding \eqref{ineq:stressAndLPpressbds} follow from these assumptions.

\paragraph{The estimate (\ref{ineq:Plkpressbds}). }

We can obtain the estimate (\ref{ineq:Plkpressbds}) for $r = n+1$ by decomposing into frequency increments
\ali{
\fr{D_{\leq k}^{n+1}}{\pr t^{n+1}} \nab^2 P_{\leq k} p &= \sum_{I = \kotn}^{k-1} \de_{(I)} \fr{D_{\leq I}^{n+1}}{\pr t^{n+1}} \nab^2 P_{\leq I} p  \label{eq:kotnFrIntDdtIp} \\
\de_{(I)} \fr{D_{\leq I}^{n+1}}{\pr t^{n+1}} \nab^2 P_{\leq I} p &= \left(\fr{D_{\leq I + 1}^{n+1}}{\pr t^{n+1}} \nab^2 P_{\leq I+1} p - \fr{D_{\leq I}^{n+1}}{\pr t^{n+1}} \nab^2 P_{\leq I} p\right).
}
The bound (\ref{ineq:Plkpressbds}) then follows from the bound 
\ali{
\co{ \nab^A \de_{(I)} \fr{D_{\leq I}^{n+1}}{\pr t^{n+1}} \nab^2 P_{\leq I} p } &\leq 2^{(A + (n+1)(1-\a) + (2-2\a))I} \ctdcxa{v}^{n+3} \label{ineq:freqIncsDdtn1PlqIp}
}
for the frequency increments, because the estimate for the terms in \eqref{eq:kotnFrIntDdtIp} grows geometrically, with the main term coming from the last term $I = k-1$.  

We now focus our attention on proving the inequality \eqref{ineq:freqIncsDdtn1PlqIp}. % The estimate \eqref{ineq:freqIncsDdtn1PlqIp} follows, in turn, by an induction argument as in the proof of Lemma~\ref{lem:velocIncBounds2} above.

\paragraph{Inequality \eqref{ineq:freqIncsDdtn1PlqIp} }
Inequality (\ref{ineq:freqIncsDdtn1PlqIp}) can be proven quickly by induction on $n$ in a manner similar to the proofs of Lemmas \ref{lem:velocIncBounds2} and \ref{lem:pressIncs}.  Here we unwind the induction to give a more direct proof.

Using the ``product rule'' for $\de_{(I)}$, we decompose
\ali{
\de_{(I)} \fr{D_{\leq I}^{n+1}}{\pr t^{n+1}} \nab^2 P_{\leq I} p &= \Ga_I + \fr{D_{\leq I}^{n+1}}{\pr t^{n+1}} \nab^2 P_{I+1} p \label{eq:nplus1partOfFreqInc} \\
\Ga_I &= (P_{I+1} v \cdot \nab)\fr{D_{\leq I+1}^{n}}{\pr t^{n}} \nab^2 P_{\leq I + 1} p + \fr{D_{\leq I}}{\pr t}(P_{I+1} v \cdot \nab)\fr{D_{\leq I+1}^{n-1}}{\pr t^{n-1}} \nab^2 P_{\leq I + 1} p  \notag\\
&+ \ldots + \fr{D_{\leq I}^{n}}{\pr t^{n}} (P_{I+1} v \cdot \nab) \nab^2 P_{\leq I+1} p \\ 
&= \sum_{j = 0}^{n} \fr{D_{\leq I}^j}{\pr t^j} \left( P_{I+1} v \cdot \nab \right) \fr{D_{\leq I+1}^{n-j}}{\pr t^{n-j}} \nab^2 P_{\leq I+1} p \label{eq:formForSiI}
}
The $P_{I+1} p$ term separated from the series in (\ref{eq:nplus1partOfFreqInc}) can be estimated by the case $r = n+1$ of (\ref{ineq:stressAndLPpressbds}).  Since at most $n$ material derivatives fall on $\nab^2 P_{\leq I} p$, the series (\ref{eq:formForSiI}) can be estimated by the $r \leq n$ case of Lemma (\ref{lem:theMainLem}) once each term has been expanded using the commutator rules (\ref{eq:commuteWithPowers}) and (\ref{eq:sumExpand})
\ali{
\fr{D_{\leq I}^j}{\pr t^j} &\left( P_{I+1} v \cdot \nab \right) \fr{D_{\leq I+1}^{n-j}}{\pr t^{n-j}} \nab^2 P_{\leq I+1} p = \fr{D_{\leq I}^j}{\pr t^j} \left( P_{I+1} v \cdot \nab \right) \left( P_{I+1} v \cdot \nab +  \fr{D_{\leq I}}{\pr t}\right)^{n-j}\nab^2 P_{\leq I+1} p  \label{eq:preCommuteNab2pTerm}\\
&= \sum_{1 \leq \ell \leq (n-j)+1} \sum_{r_1, \ldots, r_\ell, m} C_{j, r_1, \ldots, r_\ell} \left(\prod_{i = 1}^\ell \left[ \fr{D_{\leq I}}{\pr t}, \right]^{r_i} \left( P_{I+1} v \cdot \nab \right)\right) \fr{D_{\leq I}^m}{\pr t^m} \nab^2 P_{\leq I+1} p \label{eq:theBigFormForNab2PlqIp}
}
Here the sum only runs over non-negative indices with $r_1 + \ldots + r_\ell + \ell + m = n + 1$ and $\ell \geq 1$.  Therefore, at most $r_1 + \ldots + r_\ell + m = n + 1 - \ell \leq n$ advective derivatives appear in each term of \eqref{eq:theBigFormForNab2PlqIp}.  Applying the $r \leq n$ case of Lemma (\ref{lem:theMainLem}) gives the bound (\ref{ineq:Plkpressbds}).

\paragraph{Estimates (\ref{eq:boundForCommuteTermsDif}) and (\ref{ineq:genCommutEstimates}) for the commutators.}

The estimates (\ref{eq:boundForCommuteTermsDif}) follow from (\ref{ineq:pApproxKboundsGen})-(\ref{ineq:pleqcKboundsGen}) as follows.  For $s = 0$, the bound (\ref{eq:boundForCommuteTermsDif}) is identical to the bound (\ref{ineq:pApproxKboundsGen}).  For $s \leq r+1$, the bound (\ref{eq:boundForCommuteTermsDif}) follows by induction on $s$ from the recursive formula
\ali{
Z_{s+1} \cdot \nab &= \left[ \fr{D_{\leqc k}}{\pr t}, \right] Z_s \cdot \nab \\
&= \left( \fr{D_{\leqc k}}{\pr t} Z_s - Z_s \cdot \nab P_{\leqc k} v \right) \cdot \nab
}
once the bounds (\ref{ineq:pApproxKboundsGen}), (\ref{ineq:pleqcKboundsGen}) are established.

The proof of Proposition (\ref{prop:commutingWithOperators}) explains in detail how to estimate the commutator (\ref{ineq:genCommutEstimates}) using the bounds (\ref{ineq:pApproxKboundsGen})-(\ref{ineq:pleqcKboundsGen}) in the cases $r \leq 1$ and $s \leq 3$.  These cases already contain all the necessary ingredients for the general case.

%Besides introducing more notation and carrying out a straightforward induction, the only additional idea in the proof of Lemma (\ref{lem:theMainLem}) goes into obtaining the higher order material derivative estimates (\ref{ineq:stressAndLPpressbds}) - (\ref{ineq:Plkpressbds}) for the Littlewood-Paley components of the pressure and the Reynolds stress.  Actually, 
\subsection{ Higher order advective derivatives of forcing terms } \label{sec:highOrderAdvecForcing}

To complete the induction, it now remains to show that the estimate (\ref{ineq:stressAndLPpressbds}) holds for $r = n+1$ assuming the cases $r \leq n$ of Lemma (\ref{lem:theMainLem}).  We concentrate first on the bound for the Reynolds stress, as the bounds for Littlewood-Paley projections of the pressure are similar.  First we recall the decomposition obtained in (\ref{eq:RleqkTrichotomy2}), (\ref{eq:RHLgood}), (\ref{eq:RLLgood}) and (\ref{eq:formWeUseHighHigh})
\ali{
R_{\leq k}^{jl} &= R_{\leq k, HH}^{jl} + R_{\leq k, HL}^{jl} + R_{\leq k, LL}^{jl} \label{eq:RleqkTrichotomy2} \\
R_{\leq k, LL}^{jl} &= \int \de_h P_{\leq k} v^j(x) \de_h P_{\leq k} v^l(x) \eta_{\leq k}(h) dh \label{eq:lastLowLowTerm} \\
R_{\leq k, HL}^{jl} &= \int P_{[k, k+2]} v^j(x+h) \de_h P_{\leq k}v^l(x) \eta_{\leq k}(h) dh \notag \\
&+ \int \de_h P_{\leq k} v^j(x) P_{[k, k+2]}v^l(x+h) \eta_{\leq k}(h) dh \label{eq:lastHighLowTerm} \\
R_{\leq k, HH}^{jl} &= \int ( v^j(x+h) - P_{\leq k} v^j(x+h) ) (v^l(x+h) - P_{\leq k}v^l(x+h) ) \eta_{\leq k}(h) dh \\
&= P_{\leq k} [( v^j - P_{\leq k} v^j ) (v^l - P_{\leq k}v^l ) ] \\
&= \sum_{I \geq k} P_{\leq k} ( P_I v^j P_{\approx I} v^l )
}
The last decomposition follows from the bandlimited property of Littlewood-Paley projections.

We already saw in the proofs of Propositions (\ref{prop:firstMatdvRlk}) and (\ref{prop:commutingWithOperators}) one way to estimate material derivatives $\fr{D_{\leqc k}^s}{\pr t^s}$ for terms of the type (\ref{eq:lastLowLowTerm}) and (\ref{eq:lastHighLowTerm}).  It is straightforward to see that we have the desired estimates 
\ali{
\co{ \nab^A \fr{D_{\leqc k}^s}{\pr t^s} R_{\lk, LL} } &\leq C 2^{(A + s(1-\a) - 2 \a)k} \ctdcxa{v}^{s+2} \\
\co{ \nab^A \fr{D_{\leqc k}^s}{\pr t^s} R_{\lk, HL} } &\leq C 2^{(A + s(1-\a) - 2 \a)k} \ctdcxa{v}^{s+2} 
}
for $s \leq n+1$, since the case $r = n$ of Lemma (\ref{lem:theMainLem}) allows us to take up to $n+1$ material derivatives of $P_{\approx k} v$ and $\nab P_{\leqc k} v$ provided $n(1-\a) - 2 \a < 0$.

The restriction $(n+1)(1-\a) - 2 \a < 0$ becomes important for summing the estimates in the High-High terms.  Here our goal is to estimate
\ali{
\Dkdt{n+1} R_{\leq k, HH}^{jl} &= \sum_{I \geq k} \Dkdt{n+1} P_{\leq k} ( P_I v^j P_{\approx I} v^l ) \label{eq:Dkdtnplus1RlkHH} \\
&= \sum_{I \geq k} \Dkdt{n+1} R_{\leq k, HH, I}^{jl}
}
using our bounds for $\fr{D_{\leq I}^{n+1}}{\pr t^{n+1}} P_{\approx I} v$.  

The main idea will be to take advantage of the fact that the bandlimited properties of Littlewood Paley projections allow us to express any operator $\fr{D_{\leqc k}^{n+1}}{\pr t^{n+1}} P_{\leqc k}$ in the form
\ali{
\Dkdt{n+1}& P_{\leqc k} &= {\sum_{\ell=0}^{n+1} \sum_{r_1, \ldots, r_{\ell}, m} C_{r_1, \ldots, r_{\ell}}} \prod_{i = 1}^\ell\left[ \Dkdt{} , \right]^{r_i} \left( P_{\leqc k} P_{\approx k} v \cdot \nab P_{\leqc k} \right) \prod_{j = 1}^m \left( P_{\leqc k} \fr{D_{\leq I}}{\pr t} P_{\leqc k} \right) P_{\leqc k} \label{eq:whatTheProductLooksLike}
}
where the sum runs only over indices $r_1 + \cdots + r_\ell + \ell + m = n+1$.  The frequencies of the projections $P_{\leqc k}$ remain bounded by $C \cdot 2^k$ because the number of factors $n+1$ is bounded in terms of the fixed $\a < 1$.

The starting point for the representation (\ref{eq:whatTheProductLooksLike}) is that we can use the bandlimited property of Littlewood-Paley projections to express
\ali{
\Dkdt{} P_{\leqc k} &= \left( P_{\leqc k} \Dkdt{} P_{\leqc k} \right) P_{\leqc k} \label{eq:stayLowFreq}
}
for some Littlewood-Paley projection $P_{\leqc k}$.  The point here is that each material derivative can only increase the overall frequency support by at most a factor of $C \cdot 2^k$.  

Using the bandlimited property again, the operator (\ref{eq:stayLowFreq}) can then be expressed as
\ali{
P_{\leqc k} \Dkdt{} P_{\leqc k} &=  P_{\leqc k} \left( \fr{D_{\leq I}}{\pr t} - P_{\approx k} v \cdot \nab \right) P_{\leqc k} \label{eq:eachMatDvIsLowPartOfHigh}
}
where the preponderance of the intermediate frequencies between $2^k$ and $2^I$ do not contribute to $P_{\approx k} v$ thanks to the  leftmost projection operator $P_{\leqc k}$.  Our first explicit example of this technique appeared already in the formula \eqref{eq:formWeUseHighHigh} above.

The decomposition (\ref{eq:whatTheProductLooksLike}) is achieved by induction on $n$.  First we use (\ref{eq:eachMatDvIsLowPartOfHigh}) to write
\ali{
\Dkdt{n+1} P_{\leqc k}  &= T_1 - T_2 \\
T_1 &= \Dkdt{n} \left( P_{\leqc k} \fr{D_{\leq I}}{\pr t} P_{\leqc k} \right) P_{\leqc k} \label{eq:nPowerLotsOfOps} \\
T_2 &=   \Dkdt{n} \left(P_{\leqc k} P_{\approx k} v \cdot \nab P_{\leqc k} \right) P_{\leqc k} \label{eq:needToCommuteABunch}
}
The term (\ref{eq:nPowerLotsOfOps}) has the form (\ref{eq:whatTheProductLooksLike}) after the leftmost factor of $\Dkdt{n} P_{\leqc k}$ in (\ref{eq:nPowerLotsOfOps}) has been expressed in the form (\ref{eq:whatTheProductLooksLike}) using the induction hypothesis.

For the term (\ref{eq:needToCommuteABunch}), we commute the material derivatives using the rule (\ref{eq:commuteWithPowers})
\ali{
T_2 &= \sum_{q = 0}^n \binom{n}{q} \left[ \Dkdt{}, \right]^{q} \left(P_{\leqc k} P_{\approx k} v \cdot \nab P_{\leqc k} \right) \Dkdt{n-q} P_{\leqc k} \label{eq:theAboutkTerm}
}
These terms all have the form (\ref{eq:whatTheProductLooksLike}) after the factor $\Dkdt{n-q} P_{\leqc k}$ has been expressed in the form (\ref{eq:whatTheProductLooksLike})  using the induction hypothesis.

 %we expand $\left( P_{\leqc k} \Dkdt{} P_{\leqc k} \right)^{n}$ in the from (\ref{eq:whatTheProductLooksLike}) from the induction hypothesis.  For the term (\ref{eq:needToCommuteABunch}) we use the formula (NEEDREF) to commute.

The formula (\ref{eq:whatTheProductLooksLike}) allows us to expand each term in the series (\ref{eq:Dkdtnplus1RlkHH}) as
\ali{
&\Dkdt{n+1} R_{\leq k, HH, I}^{jl} = \Dkdt{n+1} P_{\leq k}\left[ P_I v^j P_{\approx I} v^l \right] \\
&= {\sum_{\ell=0}^{n+1} \sum_{r_1, \ldots, r_{\ell}, m } C_{r_1, \ldots, r_{\ell}}} \prod_{i = 1}^\ell\left[ \Dkdt{} ,\right]^{r_i} \left( P_{\leqc k} P_{\approx k} v \cdot \nab P_{\leqc k} \right)  \prod_{j=1}^m \left( P_{\leqc k} \fr{D_{\leq I}}{\pr t} P_{\leqc k} \right) P_{\leq k}\left[ P_I v^j P_{\approx I} v^l \right] \label{eq:bigRHHExpand}
}
with $r_1 + \cdots + r_\ell + \ell + m = n+1$.  After fully expanding the commutators using (\ref{eq:commuteTermDefs}) and (\ref{eq:commuteWithPowers}), this decomposition and the cases $r \leq n$ of Lemma (\ref{lem:theMainLem}) give the estimate
\ali{
\co{ \nab^A \Dkdt{n+1} R_{\leq k, HH, I}^{jl} } &\leq C_A 2^{Ak} 2^{((n+1)(1-\a) - 2 \a)I} \ctdcxa{v}^{n+3}, \label{eq:boundForRlkHHI}
}
where the worst estimate arises from the terms of the form $\fr{D_{\leq I}^{n+1}}{\pr t^{n+1}}[P_I v^j P_{\approx I} v^l]$, which appear in the case $\ell = 0$ and $m = n+1$.  Note that the leftmost operator on every term in (\ref{eq:bigRHHExpand}) has the form $\left[ \Dkdt{}, \right]^s P_{\leqc k}$ for some $s \leq n+1$, which ensures that spatial derivatives never cost more than $C 2^k$ by the localization to frequencies $\lesssim 2^k$.  
% the case $r \leq n$ for the bounds (\ref{eq:boundForCommuteTermsDif}) in Lemma (\ref{lem:theMainLem}).  
For $(n+1)(1-\a) - 2 \a < 0$, the bound (\ref{eq:boundForRlkHHI}) can be summed over $I \geq k$ to give
\ali{
\co{ \nab^A \Dkdt{n+1} R_{\leq k, HH}^{jl} } &\leq C 2^{( A + (n+1)(1-\a) - 2 \a)k} \ctdcxa{v}^{n+3},
}
which concludes the proof of estimate (\ref{ineq:stressAndLPpressbds}) for the Reynolds stress.  The bound for the pressure in (\ref{ineq:stressAndLPpressbds}) is proven in essentially the same way using the analogous trichotomy decomposition achieved in Proposition (\ref{prop:DdtnabDPkp}).  From this bound, the Lemma (\ref{lem:theMainLem}) for $r = n+1$ follows from the discussion in Section (\ref{sec:reduceToForcing}), which completes the inductive proof of Lemma (\ref{lem:theMainLem}).

\section{Smoothness of trajectories} \label{sec:smoothTraject}

Here we show how the results of Section \ref{sec:theGeneralCase} can be used to prove the existence of smooth particle trajectories.  We consider the setting of Theorem \ref{thm:smoothTraject} and we now assume that the velocity field $v(t,x)$ has borderline regularity $v(t,x) \in \cap_{\a < 1} L_t^\infty C_x^\a$.  Our goal is to construct a smooth particle trajectory $X(t)$ through $x_0$ at time $0$; that is, a $C^\infty$ solution to
\ali{
X(t) &= x_0 + \int_{0}^t v(s,X(s)) ds \label{eq:uniqueFlow}
}
For the coarse scale velocity fields $P_{\leq k} v$, we have well-defined particle trajectories $X_{(k)}(t)$ satisfying
\ali{
X_{(k)}(t) &= x_0 + \int_{0}^t P_{\leq k} v(s, X_{(k)}(s)) ds \label{eq:approxTrajectory}
}
From the identity \eqref{eq:approxTrajectory} we can see that the curves $X_{(k)}(t)$ are Lipschitz in $t$ uniformly in $k$, and therefore form an equicontinuous family of functions mapping $t \in I$ into $\T^n$.  Thus, the sequence $X_{(k)}(t)$ has a subsequence (not relabeled) converging uniformly on compact sets to some limit as $k \to \infty$.  Let $X(t)$ be any such uniform limit; then $X(t)$ satisfies \eqref{eq:uniqueFlow} by passing to the limit in \eqref{eq:approxTrajectory}. %Fix such a subsequence and %, and we have assumed a unique solution to \eqref{eq:uniqueFlow}.  It follows by a simple contradiction argument that $X_{(k)}(t,x_0) \to X(t,x_0)$ uniformly on compact subsets of $I$.  

Now note that the particle trajectories $X_{(k)}$ are smooth in time by the results of Section \ref{sec:theGeneralCase}, with
\ali{
\fr{d^{r+1}}{dt^{r+1}} X_{(k)}(t) &= \fr{D_{\leq k}^r}{\pr t^r} P_{\leq k} v(t, X_{(k)}(t))
}
In particular, we have Taylor's formula for all $t_0, t_1 \in I$
\ali{
\label{eq:TaylorsFormula}
\begin{split}
X_{(k)}(t_1) &= X_{(k)}(t_0) + \sum_{r=1}^N \fr{D_{\leq k}^{r-1}}{\pr t^{r-1}} P_{\leq k} v(t, X_{(k)}(t_0)) \fr{(t_1 - t_0)^r}{r!} \\
&+ \fr{(t_1 - t_0)^{N+1}}{N!}\int_0^1 \fr{D_{\leq k}^{N}}{\pr t^{N}} P_{\leq k} v(t_s, X_{(k)}(t_s)) (1 - s)^N ds 
\end{split}
\\
t_s &= t_0 + s(t_1 - t_0) \notag
}
By the results of Section \ref{sec:timeRegOfVelocField}, we have furthermore that $\fr{D_{\leq k}^r}{\pr t^r} P_{\leq k} v$ converges uniformly on $I \times \T^n$ to $\fr{D^r}{\pr t^r} v$ for any $r \geq 0$.  This observation and the uniform convergence of $X_{(k)}(t) \to X(t)$ along the subsequence allow us to pass to the limit in Taylor's formula \eqref{eq:TaylorsFormula} and conclude that $X(t)$ is smooth with $\fr{d^{r+1}}{dt^{r+1}} X(t) = \fr{D^r}{\pr t^r}v(t, X(t))$.  

If we assume only that $v \in L_t^\infty C_x^\alpha$ for some $0 < \alpha < 1$, then the same argument yields a trajectory of class $C^r$ for all integers $r < \frac{1}{1-\alpha}$.  Note that the argument only takes advective derivatives of functions that are $C^1$, in which case the weak advective derivative is the same as the classical one.  Thus we have proven Theorem \ref{thm:smoothTraject}.

\section{Concluding Remarks} \label{sec:conclusion}

Several parts of the analysis in this paper give a new point of view on convex integration constructions of Euler flows and the pursuit of Onsager's conjecture and its still open variants.  One point that the analysis clarifies is that some of the special estimates for material derivatives in these constructions are forced by the Euler equations, rather than being artifacts of the constructions.  These bounds give another point of view on the constraints one faces for a type of scheme that could be used to approach the still open variants of Onsager's conjecture such as the two-dimensional case.  For example, the bounds on material derivatives (inequality (\ref{bd:matDvOfLpPiece2}) in particular) show that the natural time scale associated to frequency $\la \approx 2^k$ is on the order $\la^{-(1-\a)} \ctdcxa{v}^{-1}$.  In particular, any time cutoffs employed in a construction at frequency $\la$ should have a lifespan at least least $\sim \la^{-(1-\fr{1}{3})}$ in order to be compatible with the desired spatial regularity $1/3$.  This constraint, which is satisfied in the solution of Onsager's conjecture \cite{isettOnsag}, rules out certain short-time methods that are employed in the construction of $(1/5-\ep)$-H\"{o}lder solutions in \cite{isett, deLSzeBuck}.  See \cite{Buckmaster, buckDeLSzeOnsCrit} for examples of schemes that partially avoid the restrictions on short time methods to make progress on Onsager's conjecture in weaker topologies.

Theorem \ref{thm:energyReg} on the regularity of the total energy suggests some further questions regarding the energy profiles of Euler flows.  De Lellis and Sz\'{e}kelyhidi have shown \cite{deLSzeHoldCts} that the energy profile of an Euler flow constructed by convex integration can be essentially any smooth, positive function (see also \cite{deLSzeBuck}).  Refinements of this result tailored to the initial value problem show that uniqueness for the initial value problem for the Euler equations in H\"{o}lder spaces cannot be restored by many natural ``entropy criteria'' one might propose (see \cite{deLSzeAdmiss, Daneri}). %There is a restriction in \cite{deLSzeHoldCts, deLSzeBuck} that the energy profile is bounded below by a positive constant, but this restriction may be purely technical.  
It is reasonable to suspect that the energy profile can also be made rough as well, and it would be interesting to see whether the regularity in Theorem (\ref{thm:energyReg}) is sharp since the proof of Theorem (\ref{thm:energyReg}) is closely related to the proof of energy conservation in \cite{CET}.  For exponents $\a < 1/5$, the sharpness of regularity for the energy profile has now been proven in \cite{isettOhNonpd}.  %(This result would simultaneously exhibit that the velocity fields obtained from the construction are indeed no better than the claimed exponent.)

It would be of further interest to show that irregularity of the energy profile is a generic behavior for solutions with regularity strictly below $1/3$.  That is, for an Euler flow that is generic in a space such as $C_tC_x^\a$ with $\a < 1/3$, we expect that the energy profile will not belong to any space with better regularity than $C_t^{\fr{2 \a}{1 - \a}}$, and furthermore should fail to be of bounded variation on every time interval.  In particular, a small perturbation of Euler flows in $C_tC_x^\a$ for $\a < 1/3$ should generically lead to an irregular energy profile that fails to be monotonic on every time interval, in contrast to the discussion of the case $\a = 1/3$ in Section~\ref{sec:timeRegEnergy}, which shows the stability of energy dissipation in the critical space.  Such a result would indicate that energy dissipation at regularity below $1/3$, while possible, is an unstable phenomenon, so that the $1/3$ law \eqref{law:oneThird} would be the only possible law for velocity fluctuations that is compatible with the dissipation of energy in a robust sense.  %See \cite{isettOhNonpd, isettOhKinEn} for results in support of this conjecture in the range $\a < 1/5$. 
\bibliographystyle{alpha}
\bibliography{newEulerRegularity}

\end{document}